\newcommand{\N}{\mathbb{N}}
\newcommand{\Z}{\mathbb{Z}}
\newcommand{\Q}{\mathbb{Q}}
\newcommand{\R}{\mathbb{R}}
\newcommand{\C}{\mathbb{C}}
\newcommand{\F}{\mathcal{F}}
\newcommand{\CZ}{{\rm CZ}}
\renewcommand{\P}{\mathscr{P}}
\newcommand{\inte}[1]{{\rm int({#1})}}
\newcommand{\hloc}{\mathscr{H}}
\newcommand{\M}{\mathcal{M}}
\newcommand{\CM}{{\rm CM}}
\newcommand{\HM}{{\rm HM}}
\newcommand{\cl}[1]{\overline{{#1}}}
\newcommand{\intr}{{\rm int}}
\newcommand{\dist}{\mathrm{dist\,}}
\newcommand{\diam}{\mathrm{diam\,}}
\newcommand{\ind}{\mathrm{ind}}
\newcommand{\diag}{\mathrm{diag\,}}
\newcommand{\supp}{\mathrm{supp\,}}
\newcommand{\crit}{\mathrm{Crit}\,}
\newcommand{\A}{\mathbb{A}}
\newcommand{\fix}{{\rm Fix}}
\newcommand{\iso}{{\rm Iso}}
\renewcommand{\div}{{\rm Div}}
\renewcommand{\O}{\mathcal{O}}
\theoremstyle{plain}
\newtheorem{theorem}{Theorem}[section]
\newtheorem{proposition}[theorem]{Proposition}
\newtheorem{lemma}[theorem]{Lemma}
\newtheorem{corollary}[theorem]{Corollary}
\theoremstyle{definition}
\newtheorem{definition}[theorem]{Definition}
\theoremstyle{remark}
\newtheorem{remark}[theorem]{Remark}
\begin{document}

\title[Transversality in local Morse homology with symmetries]{Transversality for local Morse homology with symmetries and applications}

\author[Doris Hein]{Doris Hein}
\author[Umberto Hryniewicz]{Umberto Hryniewicz}
\author[Leonardo Macarini]{Leonardo Macarini}

\address{Doris Hein\\
Mathematisches Institut, 
Albert-Ludwigs-Universit\"at Freiburg, Eckerstrasse 1, 79104 Freiburg, Germany}
\email{doris.hein@math.uni-freiburg.de}

\address{Umberto Hryniewicz\\
Universidade Federal do Rio de Janeiro -- Departamento de Matema\'tica Aplicada, Av. Athos da Silveira Ramos 149, Rio de Janeiro RJ, Brazil 21941-909.}
\email{umbertolh@gmail.com}

\address{Leonardo Macarini\\
Universidade Federal do Rio de Janeiro -- Departamento de Matema\'tica, Av. Athos da Silveira Ramos 149, Rio de Janeiro RJ, Brazil 21941-909.}
\email{leomacarini@gmail.com}




\begin{abstract} 
We prove the transversality result necessary for defining local Morse chain complexes with finite cyclic group symmetry. Our arguments use special regularized distance functions constructed using classical covering lemmas, and an inductive perturbation process indexed by the strata of the isotropy set. A global existence theorem for symmetric Morse-Smale pairs is also proved. Regarding applications, we focus on Hamiltonian dynamics and rigorously establish a local contact homology package based on discrete action functionals. We prove a persistence theorem, analogous to the classical shifting lemma for geodesics, asserting that the iteration map is an isomorphism for good and admissible iterations. We also consider a  Chas-Sullivan product on non-invariant local Morse homology, which plays the role of pair-of-pants product, and study its relationship to symplectically degenerate maxima. Finally, we explore how our invariants can be used to study bifurcation of critical points (and periodic points) under additional symmetries.
\end{abstract}

\setcounter{tocdepth}{2}

\maketitle

\tableofcontents
 

\section{Introduction and main results}

\subsection{Introduction}

The aim of this paper is to provide a rigorous Morse homological construction of certain local invariants of periodic points of Hamiltonian diffeomorphisms via elementary methods. These invariants are of a subharmonic nature, and in general differ from local Floer homology. Their existence was predicted in~\cite{HM} as local contact homology. However, transversality problems are usually (but not always!) present when trying to define versions of contact homology via standard Floer theoretic methods. In full generality one needs an alternative approach. One possibility is the Polyfold Theory due to Hofer, Wysocki and Zehnder~\cite{hofer,HWZ1,HWZ2,HWZ3}, which will provide the analytic background for such constructions.

Our goal is to implement a finite-dimensional Morse homological approach to local contact homology at the chain level. For invariance properties we rely on the interplay with a parallel construction using singular homology. In fact, the invariants could even be defined using singular homology instead of Morse homology, but then local chain complexes would not be directly related to dynamics, the connection to the SFT-like construction from~\cite{HM} would be lost and, most importantly, we would not be able to prove the Persistence Theorem (Theorem~\ref{thm_persistence_invariant}). As we know from the geodesic case, the Persistence Theorem is at the heart of applications. It answers a question raised in~\cite{GGo} about precise iteration properties of the local invariants. A byproduct of our methods is the existence of a symmetric Morse-Smale pair in any closed manifold with a finite-cyclic action.

In one form or another, transversality issues in SFT are usually related to symmetries. These difficulties incarnate in many different forms. For instance, one may try to achieve transversality for Floer homology in period $k>1$ using $1$-periodic data (Hamiltonian and almost complex structure). If this was possible then the Floer chain complex, say in the aspherical case with complex coefficients, would inherit an action of $\Z_k := \Z/k\Z$ by chain maps. The isotypical components in homology would be invariants of the symplectic manifold, provided transversality for $\Z_k$-equivariant continuation maps could also be achieved. In this global situation, the only non-trivial isotypical component is the one corresponding to the trivial action, the resulting invariant is just standard Floer homology. But this construction would still provide interesting dynamical applications. The most immediate ones come from the fact that Floer homology groups would have to be generated by good periodic points, in every period. Hence new multiplicity results, which are invisible for standard Floer homology, could be proved. However in local situations, such as ours, other isotypical components may not vanish and do provide new invariants. This general framework will be exploited in future work, here we are concerned only with the isotypical component corresponding to the trivial action since this is the appropriate substitute of local contact homology.

There may be other approaches to our invariants. For instance by applying the Borel construction, similarly to~\cite{BO} and~\cite{GG2}, in order to build a $\Z_k$-equivariant version of local Floer homology. Here $\Z_k$ acts by time-reparametrization of loops $\gamma(t) \mapsto \gamma(t+1)$. It is hard to recover time-symmetry at the chain level as one perturbs the data to achieve transversality, not to mention further limits in finite-dimensional approximations of $B\Z_k$, making it very hard to work with the symmetry. See Appendix~\ref{app_Borel} for a more detailed explanation based on the toy model of finite-dimensional Morse theory. The approach of~\cite{HM} is geometrically transparent but suffers from usual transversality problems. Symmetry in the chain complex is also lost in the interesting approach outlined by Hutchings~\cite{Hu_blog}.

In order to work on the chain level we study transversality for finite-dimensional local Morse homology at an isolated critical point in the case that both function and critical point are invariant under an ambient $\Z_k$-action. We show (Theorem~\ref{main1}) that we can $C^2$-perturb to achieve transversality keeping $\Z_k$-symmetry, allowing the group to act at the chain level. We apply this result to approach local contact homology via discrete action functionals as in~\cite{chaperon1,chaperon2,mazz_SDM}.

We also look at a finite-dimensional approach to local Floer homology, taking the path of Mazzucchelli~\cite{mazz_SDM} in order to study {\it symplectically degenerate maxima} (SDM) originally defined in~\cite{Gi,Hi}. Using our non-symmetric Persistence Theorem~\ref{thm_persistence_non_invariant}, analogous to the main result of~\cite{GG}, we simplify the definition of SDM's from~\cite{mazz_SDM} in the discretized set-up. Then, using Chas-Sullivan type products, which play the role of local pair-of-pants products, we characterize SDM's in terms of idempotency. This is in alignment with results of Goresky and Hingston~\cite{GH}, where the notion of SDM is not explicit but it is implicit as the study of {\it maximal versus minimal} index growth under iterations; see~\cite[section~12]{GH}. Of course, this topic goes back to Hingston~\cite{hingston,hingston2}. Our idempotency statement is made, but not proved, in~\cite[section~5.2]{GG}, and is in perfect analogy to \cite[section~12]{GH}.

Finally, we remark that our local invariants serve as tools to study bifurcation theory of isolated critical points in the presence of finite cyclic symmetry groups. At the end of this introduction in Section~\ref{sssec_bifurcation_pics} we provide explicit examples in dimension two for certain generic bifurcations studied by Deng and Xia~\cite{xia}: we study cases when two, four and eight critical points bifurcate from the singularity. \\

\noindent {\it Organization of the paper.} In the remaining of the introduction we state and discuss our main results, constructions, and examples. Basic properties of our local homology theory are established in Section~\ref{sec_properties}, where technical details are postponed to Appendix~\ref{app_invariance}. Section~\ref{sec_shifting_lemma} is devoted to persistence theorems (shifting lemmas), both symmetric and non-symmetric versions. Local Chas-Sullivan products are defined in Section~\ref{sec_chas_sullivan}. Sections~\ref{sec_prelim_transv},~\ref{sec:MS_local} and~\ref{sec:MS_global} deal with transversality. Our local transversality result relies on the construction of special distance functions presented in Appendix~\ref{app_reg_dist_functions}, where we modify some of the analysis from Stein~\cite{stein}. In Appendix~C we explain relations between our construction and the Borel construction. \\

\noindent {\it Acknowledgements.} We are grateful to Viktor Ginzburg for useful comments regarding this paper. UH and LM would like to thank J. Fish, M. Hutchings, J. Nelson and K. Wehrheim for organizing the AIM Workshop ``Transversality in contact homology'' in December 2014, where Morse homology in the presence of symmetries was a topic of intense discussion. UH is extremely grateful to A.~Abbondandolo for numerous insightful conversations concerning the analysis involved in this work. UH also thanks the Floer Center of Geometry (Bochum) for its warm hospitality, and acknowledges the generous support of the Alexander von Humboldt Foundation during the preparation of this manuscript.

\subsection{Main results}

In Section~\ref{sec_transv_statements} we state our transversality results, which are used in Section~\ref{sssec_inv_MH} to define local invariant Morse homology. Applications to subharmonic invariants of isolated periodic points are described in Section~\ref{ssec_disc_action}, where the comparison to local contact homology is explained. Our Persistence Theorem is stated in Section~\ref{sssec_persistence_intro}, along with its non-symmetric version. Chas-Sullivan products and symplectically degenerate maxima are discussed in Section~\ref{sssec_SDM_intro}. In Section~\ref{sssec_bifurcation_pics} we compute our invariants in three bidimensional bifurcation scenarios under $\Z_2$-symmetry (according to~\cite{xia}, generically these are the only three cases in two dimensions). In Sections~\ref{sssec_transv_impossible} and~\ref{sssec_global_equiv_hom} we discuss basic global examples.

\subsubsection{Transversality for invariant local Morse homology}\label{sec_transv_statements}

Consider a smooth Riemannian manifold without boundary $(M,\theta)$ of dimension $d$, and let $f:M\to\R$ be smooth. We denote the $\theta$-gradient of $f$ by $\nabla^\theta f$, and the flow of $-\nabla^\theta f$ by $\phi_{f,\theta}^t$.

\begin{definition}\label{def_stable_unstable_mfds}
For $p \in \crit(f)$ we denote
\begin{equation*}
\begin{aligned}
&W^s(p;f,\theta) = \left\{ x\in M \left| \text{$\phi_{f,\theta}^t(x)$ is defined $\forall t\in[0,+\infty)$ and } \lim_{t\to+\infty}\phi_{f,\theta}^t(x)= p \right. \right\} \\
&W^u(p;f,\theta) = \left\{ x\in M \left| \text{$\phi_{f,\theta}^t(x)$ is defined $\forall t\in(-\infty,0]$ and } \lim_{t\to-\infty}\phi_{f,\theta}^t(x)=p \right. \right\}
\end{aligned}
\end{equation*}
\end{definition}

\begin{remark}
If $p$ is non-degenerate and has Morse index $\mu$ then $W^u(p;f,\theta)$ and $W^s(p;f,\theta)$ are smooth embedded balls of dimension $\mu$ and $d - \mu$, respectively. They are called the {\it unstable and stable manifolds of $p$}.
\end{remark}

\begin{definition}\label{def_MS}
The pair $(f,\theta)$ is said to be {\it Morse-Smale on $M$} if 
\begin{itemize}
\item[(i)] $f$ is a Morse function.
\item[(ii)] $W^u(p;f,\theta)$ intersects $W^s(q;f,\theta)$ transversely for all $p,q \in \crit(f)$.
\item[(iii)] There exist compact sets $K_0\subset K_1\subset M$ such that $\crit(f)\subset \inte{K_0}$, $f$ oscillates strictly more than $\max_{p,q\in\crit(f)}|f(p)-f(q)|$ along any piece of $\theta$-gradient trajectory with one endpoint in $K_0$ and the other in $M\setminus\inte{K_1}$.
\end{itemize}
\end{definition}

\begin{remark}
Note that the (pre-)compactness condition (iii) is vacuous when $M$ is compact. This condition is just one of many ways of getting compactness suited to our local problem. Obviously, condition (iii) will hold for small perturbations of a pair when $M$ is taken as an isolating open neighborhood of an isolated critical point (see the definition below). It follows from (iii) that 
\[
W^u(p;f,\theta)\cap W^s(q;f,\theta)\subset \inte{K_1} \qquad \forall p,q\in\crit(f)
\]
and that being Morse-Smale is stable under small perturbations supported on a fixed compact subset.
\end{remark}

The statement below is the main technical tool in defining proper substitutes of the chain complexes of local contact homology. If $p$ is an isolated critical point of a smooth function then a neighborhood $U$ of $p$ will be called {\it isolating for $(f,p)$} if $\crit(f) \cap \cl{U} = \{p\}$.

\begin{theorem}\label{main1}
Let $M$ be a smooth manifold without boundary equipped with a smooth $\Z_k$-action, $f:M\to\R$ be an invariant smooth function and $p\in M$ be a fixed point of the action which is an isolated critical point of $f$. Let $\theta$ be an invariant metric on $M$, and let $U$ be an open, relatively compact, isolating neighborhood of $(f,p)$. Then in any $C^2$-neighborhood of $(f,\theta)$ there exists a $\Z_k$-invariant pair $(f',\theta')$ which is Morse-Smale on $U$.
\end{theorem}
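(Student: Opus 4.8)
The plan is to achieve the three Morse-Smale conditions one at a time while never breaking $\Z_k$-invariance, using an inductive scheme organized by the stratification of the isotropy/fixed-point structure of the action on $U$. First I would reduce to a purely perturbative problem near $p$: by shrinking $U$ if necessary and using condition (iii) from Definition~\ref{def_MS}, which is open, it suffices to produce an invariant $C^2$-small perturbation supported in $U$ that makes $f$ Morse on $U$ with all critical points non-degenerate, and then a further invariant perturbation of the metric making the stable/unstable manifolds intersect transversely; the compactness condition (iii) then persists and $W^u(p')\cap W^s(q')\subset\intr K_1$ as in the remark. The genuinely new difficulty compared to the classical (non-equivariant) Morse-Smale theorem is that the usual trick — perturb $f$ by a generic small function, or perturb the metric freely — is not available: the perturbations must be $\Z_k$-invariant, so they vanish to high order along the fixed-point set and more generally are constrained along each isotropy stratum. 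A random invariant perturbation will be far from generic on the lower strata.

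The key idea, following the abstract mentioned ``inductive perturbation process indexed by the strata of the isotropy set'', is to order the (finitely many, since $k$ is finite and $U$ relatively compact) isotropy types $H\le\Z_k$ by reverse inclusion, so that the fixed-point set $M^{\Z_k}$ (which contains $p$) comes first and the free stratum comes last. On the first stratum, $\nabla^\theta f$ is tangent to the submanifold $U\cap M^{\Z_k}$, and there the equivariant perturbation problem reduces to an \emph{ordinary} Morse-Smale problem on the manifold $M^{\Z_k}$ with no symmetry constraint remaining in the normal directions being irrelevant — so the classical genus argument (e.g. Sard--Smale applied to perturbations of $f|_{M^{\Z_k}}$ and of $\theta|_{M^{\Z_k}}$) makes $f$ Morse and the pair Morse-Smale \emph{within} that stratum. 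One then extends this perturbation $\Z_k$-invariantly to a tubular neighborhood using an invariant cutoff. Inductively, having fixed the perturbation on the union of all strata of isotropy containing $H$, one works on the stratum $M^H\setminus(\text{larger strata})$: there the residual gauge freedom is exactly the $N(H)/H$-action, which acts on the normal bundle, and — this is where the regularized distance functions of Appendix~\ref{app_reg_dist_functions} enter — one builds invariant perturbations that are genuinely generic transverse to the already-fixed strata by using a smoothed distance-to-lower-strata function to control the support and the order of vanishing, so that the perturbation is large enough to be generic on $M^H$ yet compatible with what was already chosen on the closure of the higher strata. Finitely many steps exhaust $U$; at the last step one is on the free part, where invariant perturbations are unconstrained (one perturbs on a fundamental domain and spreads around by the group) and the classical argument applies verbatim.

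Two points require care and constitute the main obstacles. The first, and I expect the hardest, is the \emph{compatibility at the boundaries between strata}: a perturbation chosen to be generic on $M^H$ must not destroy the transversality already achieved on the higher-dimensional-isotropy strata sitting in its closure, and conversely the perturbation must be allowed to be nonzero (in a controlled way) near those strata so as not to be forced to vanish to infinite order and lose all genericity. This is precisely the role of the special regularized distance functions: they give a $\Z_k$-invariant function comparable to $\dist(\cdot,\text{closure of larger strata})$ with uniformly bounded derivatives after rescaling, so that multiplying a model generic perturbation by a suitable power of this function yields something $C^2$-small, invariant, and still generic on the open stratum. The second point is that transversality of stable and unstable manifolds is a condition about whole gradient trajectories, not just about critical points, so after making $f$ Morse one must run the Sard--Smale argument on the space of \emph{invariant} metrics stratum by stratum, checking that the linearized intersection map is surjective onto an invariant complement; away from the fixed-point set the group acts freely on the relevant trajectory spaces so this is standard, and on $M^{\Z_k}$ it is again the classical statement. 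Assembling these, one obtains the desired invariant Morse-Smale pair $(f',\theta')$ arbitrarily $C^2$-close to $(f,\theta)$, and condition (iii) is inherited by openness, completing the proof.
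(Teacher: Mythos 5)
Your skeleton --- induction over the isotropy strata ordered from the fixed-point set to the free part, an equivariant Sard--Smale argument for invariant metric perturbations on the free part of each stratum, and regularized distance functions to mediate between consecutive strata --- matches the paper's architecture, but the proposal is missing the two ideas that actually make the induction close, and the gap sits exactly in the step you gloss as ``extend this perturbation $\Z_k$-invariantly to a tubular neighborhood using an invariant cutoff.'' Transversality of $W^u(x)$ and $W^s(y)$ \emph{inside} a stratum $F_j$ does not imply transversality in the ambient manifold, and when both $x,y$ lie on the isotropy set the connecting trajectories inside $F_j$ can never be perturbed away by invariant data (the gradient of any invariant pair is tangent to $F_j$), while their ambient transversality is governed by normal-direction behavior that in-stratum genericity does not see; the paper's example in Section~\ref{sssec_transv_impossible} shows this ambient transversality can fail for \emph{every} small invariant perturbation. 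The paper's resolution is structural rather than generic: at each critical point produced on a stratum one forces the Hessian to be negative definite in the directions normal to the stratum, so that by Lemma~\ref{lemma_crucial4}~(iii) its stable manifold is contained in the stratum, and then parts (i)--(ii) of the same lemma convert in-stratum transversality into ambient transversality. This is also why the reduction to the totally degenerate case via the equivariant Gromoll--Meyer splitting lemma (Lemma~\ref{lemma_invariant_GM_splitting}) --- absent from your proposal --- is indispensable: only after splitting off the nondegenerate block is the Hessian at the perturbed critical points small enough that a $C^2$-small concave term can dominate it in the normal directions.

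Correspondingly, the role you assign to the regularized distance functions is not the one that makes the argument work. Genericity on the free part of each stratum is obtained by a separate transversality lemma for invariant metrics (Lemma~\ref{lemma_crucial3}), whose hypotheses are precisely the two structural facts above (stable manifolds of isotropy critical points contained in the isotropy set, and transversality already holding among them). The distance functions enter instead to build the invariant concave perturbation $-\tfrac{c_d}{2}\,\phi_\delta(\delta_0^2)\,\delta_1^2$, which coincides with $-\tfrac{c_d}{2}\dist(\cdot,F_d)^2$ near the new critical points, vanishes near the previously treated neighborhood $V_{d'}$, and --- this is the nontrivial point requiring Stein's construction --- has $C^2$-norm bounded independently of the cutoff scale $\delta$. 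Without the totally degenerate reduction and the negative-normal-Hessian mechanism, your induction cannot establish the analogues of conditions (ii)--(iii) in the paper's claims $(C_d)$, and transversality for pairs of critical points on the isotropy set remains unproved.
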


The proof is found in Section~\ref{sec:MS_local}, after preliminaries in Section~\ref{sec_prelim_transv}. Our methods yield the following byproduct of a global nature. For the proof see section~\ref{sec:MS_global}.

\begin{theorem}\label{main2}
On any smooth closed manifold $M$ equipped with a $\Z_k$-action there exists a Morse-Smale $\Z_k$-invariant pair $(f,\theta)$.
\end{theorem}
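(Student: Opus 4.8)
The plan is to deduce Theorem~\ref{main2} from the local result Theorem~\ref{main1} together with a partition-of-unity/patching argument over the (compact) manifold $M$, working stratum-by-stratum along the isotropy type. First I would fix an invariant metric $\theta_0$ on $M$ (averaging any metric over the finite group $\Z_k$) and pick an invariant Morse function $f_0$ whose existence is classical: one can take, e.g., an equivariant embedding $M\hookrightarrow V$ into an orthogonal $\Z_k$-representation and restrict a generic invariant quadratic form, or simply invoke equivariant Morse theory. We may even arrange that $f_0$ is already Morse on $M$ with all critical points fixed by... no — the fixed-point set need not contain all critical orbits, so more care is needed: the critical set $\crit(f_0)$ is $\Z_k$-invariant and decomposes into finitely many orbits, each orbit having a well-defined isotropy subgroup $H\le\Z_k$.

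The key idea is an induction on the partially ordered set of isotropy subgroups, from larger to smaller (equivalently, from lower-dimensional strata of the singular set inward), mirroring the inductive perturbation scheme announced in the abstract. At each stage one has a pair $(f,\theta)$ that is already Morse-Smale near all critical orbits with isotropy strictly containing the current subgroup $H$; one then perturbs in an $H$-invariant (hence, after averaging over $\Z_k/H$-translates, $\Z_k$-invariant) way, supported in small disjoint tubular neighborhoods of the critical orbits of isotropy exactly $H$, to fix up transversality of stable/unstable manifolds among those orbits and against the already-handled ones. The single-orbit local step is exactly Theorem~\ref{main1} applied in a slice: the fixed point $p$ there is the ``center'' of a slice $S_x$ at a point $x$ of the orbit, the ambient symmetry group acting on $S_x$ is the isotropy $H$ (a finite cyclic group), and an $H$-invariant isolating neighborhood of the critical point in the slice transplants, via the $\Z_k$-action, to a $\Z_k$-invariant perturbation on the tube $\Z_k\cdot S_x$. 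Disjointness of the tubes over distinct orbits of the same isotropy type lets us perform all these perturbations simultaneously. For Morse-Smaleness of stable/unstable intersections we also need transversality of the ``long'' trajectories connecting critical orbits; this is the standard non-equivariant Smale argument carried out $\Z_k$-equivariantly using an invariant metric perturbation supported away from all critical orbits — here there is no fixed-point obstruction since the relevant region is free of critical points, so the classical Sard-Smale / Thom transversality theorem applies to the invariant function/metric space directly after quotienting by the free action on the open part, or by a Baire-category argument over invariant data.

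The main obstacle I expect is the \emph{compatibility of local perturbations with the global gradient dynamics}: perturbing near one critical orbit changes stable and unstable manifolds of far-away critical orbits, so the induction must be set up so that each new perturbation is small enough (in $C^2$) that (a) previously achieved transversalities near handled orbits are preserved — this uses openness of the Morse-Smale condition under compactly-supported perturbations, as noted in the Remark after Definition~\ref{def_MS} — and (b) no new critical points are created and condition~(iii) (vacuous here since $M$ is compact) or rather Morse-ness is not destroyed. Concretely one orders the perturbations and chooses the size of the $k$-th one after fixing the first $k-1$, invoking an open-ball-nested argument. A secondary technical point is ensuring the local slices can be chosen so that the gradient flow is ``inward-pointing'' on the tube boundaries, so that broken trajectories decompose as a concatenation of local pieces (inside tubes) and long pieces (outside), which is what lets one handle the two kinds of transversality separately; this is arranged by taking the tubes as sublevel-set neighborhoods adapted to $f$, exactly as in the isolating-neighborhood discussion preceding Theorem~\ref{main1}.

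Finally I would assemble the pieces: after the induction terminates at the trivial isotropy subgroup, the resulting $\Z_k$-invariant pair $(f,\theta)$ is Morse, all stable/unstable manifolds of all pairs of critical points meet transversally, and $M$ being compact makes condition~(iii) automatic, so $(f,\theta)$ is Morse-Smale in the sense of Definition~\ref{def_MS}. The only genuinely new input beyond classical Morse-Smale existence is the equivariant local surgery, which is precisely Theorem~\ref{main1}; everything else is bookkeeping over the finite poset of isotropy subgroups and nested smallness of perturbations.
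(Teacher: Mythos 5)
Your plan is essentially perturbative: start from an arbitrary invariant Morse pair $(f_0,\theta_0)$ and repair it by $C^2$-small invariant modifications --- Theorem~\ref{main1} in slices around the critical orbits, plus a generic invariant metric perturbation away from the critical set to handle the ``long'' trajectories. This is exactly what the paper warns cannot work: the remark following Theorem~\ref{main2} and the example of Section~\ref{sssec_transv_impossible} (the bagel torus with a $\Z_2$-reflection) show that an invariant pair need not be $C^2$-approximable by invariant Morse--Smale pairs. The obstruction does not sit near the critical points but along connecting trajectories lying inside the isotropy strata $F_d=\fix(a^d)$: any invariant perturbation of $f$ or $\theta$ keeps the gradient tangent to $F_d$ (Lemma~\ref{lemma_grad_tang}), so a non-transverse intersection realized by a trajectory inside a stratum --- e.g.\ a saddle-saddle connection forced inside a fixed circle --- survives every small invariant perturbation, wherever it is supported. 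Your justification for the long-trajectory step, that ``there is no fixed-point obstruction since the relevant region is free of critical points,'' conflates absence of critical points with freeness of the action: the isotropy set is a union of submanifolds running through all of $M$, not a neighborhood of the critical orbits, and it is precisely there that the Sard--Smale argument for invariant metrics loses surjectivity. This is why the paper's Transversality Lemma~\ref{lemma_crucial3} carries the hypotheses (i)--(ii) about critical points in the isotropy set instead of delivering transversality for all pairs outright.

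What is missing is a non-perturbative construction on the strata. The paper proves Theorem~\ref{main2} by building the pair from scratch, inducting over the divisors $d$ of $k$ and the sets $G_d=\bigcup_{j\in\div(k),\,j\le d}F_j$: one first \emph{chooses} a Morse--Smale pair on $F_1$ (no smallness constraint) and extends it by Lemma~\ref{lemma_invariant_normal_decreasing} so that the function strictly decreases in the normal directions, forcing the stable manifolds of stratum critical points to be contained in their stratum; at the next stage the induced $\Z_d$-action on $F_d$ is free off $G_{d'}$, so the function and metric can be modified at will there (passing to the quotient, then invoking Lemma~\ref{lemma_crucial3} with its hypotheses verified by the inductive setup), and Lemma~\ref{lemma_crucial4} converts Morse--Smaleness inside the stratum into transversality in $M$. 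The containment of stable manifolds in the strata, arranged by hand, is the mechanism that replaces the ambient generic transversality you tried to obtain; without it, and without the freedom to choose (rather than slightly perturb) the restricted data on each stratum, the induction cannot close.
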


Theorem~\ref{main2} {\bf does not} claim that an invariant pair can be slightly $C^2$-perturbed to an invariant Morse-Smale pair. This is not possible in general, for a simple example see Section~\ref{sssec_transv_impossible}.

\subsubsection{Definition of invariant local Morse homology}\label{sssec_inv_MH}

Let $f,\theta,p,U$ be as in the statement of Theorem~\ref{main1}. The theorem guarantees the existence of a $\Z_k$-invariant pair $(f',\theta')$ arbitrarily $C^2$-close to $(f,\theta)$ which is Morse-Smale on $U$. Choose an orientation of the unstable manifold of each critical point of $f'$ in $U$. Let $\CM(f',\theta',U)$ be the vector space over $\Q$ freely generated by the critical points of $f'$ on $U$, graded by the Morse index. Now, in a standard fashion, the differential is defined by counting signed anti-gradient trajectories in $U$ connecting critical points of index difference~$1$. The differential depends on the choice of orientations, but the resulting homology groups do not (up to isomorphism).

Let $a:M\to M$ be the diffeomorphism generating the $\Z_k$-action. Then $j\in\Z_k$ acts on $\CM(f',\theta',U)$ as follows: $j\cdot x = a^j(x)$ if $a^j$ preserves the chosen orientations of the unstable manifolds of $x$ and $a^j(x)$, or $j\cdot x = -a^j(x)$ otherwise. Using the symmetry of $(f',\theta')$ one can show that this is an action by chain maps. The homology of the subcomplex of invariant chains will be denoted by
\begin{equation}\label{notation_inv_local_Morse_hom}
\HM(f,p)^{\Z_k}
\end{equation}
and called the {\it $\Z_k$-invariant local Morse homology} of $f$ at $p$. As the notation suggests, this invariant is independent of $\theta$ and $U$, and of the perturbation $(f',\theta')$. See Section~\ref{ssec_invariant} for details. This construction will be applied to discrete action functionals in order to provide adequate substitutes of local contact homology groups. It can also be used as a new tool to study bifurcations when a finite-cyclic group symmetry is present, see Section~\ref{sssec_bifurcation_pics}.

Similarly as above, one defines an invariant subcomplex of the Morse chain complex of a pair $(f,\theta)$ given by Theorem~\ref{main2}. It turns out that the homology of this subcomplex, called {\it $\Z_k$-invariant Morse homology}, is isomorphic to $\Z_k$-equivariant homology of $M$. This is only possible since we use coefficients in a field, say $\Q$. Theorem~\ref{main2} puts the alternative Morse-theoretical description of $\Z_k$-equivariant homology from~\cite[appendix]{GHHM} onto rigorous grounds. A simple example is described in Section~\ref{sssec_global_equiv_hom} below.

\subsubsection{Definition of local invariants of isolated periodic points}\label{ssec_disc_action}

Let $H = H_t$ be a $1$-periodic Hamiltonian defined on a symplectic manifold $(M,\omega)$. It generates an isotopy $\varphi^t_H$ by $\frac{d}{dt}\varphi_H^t = X_{H_t}\circ\varphi_H^t$ with initial condition $\varphi_H^0=id$, where $X_{H_t}$ is the Hamiltonian vector field defined as $i_{X_{H_t}}\omega=dH_t$. Suppose that $\varphi^t_H$ is defined for all $t\in[0,1]$ on a neighborhood of $p\in\fix_0(\varphi^1_H)$. We are interested in the germ of $\varphi^1_H$ near $p$. Up to changing the isotopy and choosing Darboux coordinates centered at~$p$, there is no loss of generality to assume that $(M,\omega)=(\R^{2n},\omega_0 = \sum_{j=1}^n dx_j\wedge dy_j)$ with $p=0$ and $dH_t(0)=0$ for all $t$.

With $N\in\N$ fixed, the sequence of germs of diffeomorphisms 
\begin{equation}\label{germs_small_steps}
\psi_i := \varphi_H^{i/N} \circ (\varphi_H^{(i-1)/N})^{-1} \ \ \ \ \ \ (i\in\Z)
\end{equation}
is $N$-periodic. If $N$ is large enough then there are generating functions $S_i$ near the origin, namely
\begin{equation}\label{gen_function_formulas}
\psi_i(x,y) = (X,Y) \ \ \ \Leftrightarrow \ \ \ \left\{ \begin{aligned} X-x &= \nabla_2S_i(x,Y) \\ y-Y &= \nabla_1S_i(x,Y) \end{aligned} \right. \ .
\end{equation}
The quality of being ``large enough'' will be given a precise meaning in Section~\ref{sssec_gen_functions}, i.e., we ask $N$ to be adapted to $H$ as in~\eqref{N_adapted}. We normalize $S_i$ by $S_i(0)=0$. The family $\{S_i\}$ is also $N$-periodic. Fixing $k\in\N$, the discrete action function is
\begin{equation}\label{def_discrete_action}
\A_{H,k,N}(z_1,\dots,z_{kN}) = \sum_{i=1}^{kN} x_i(y_{i+1}-y_i) + S_i(x_i,y_{i+1}) \qquad (i\mod kN)
\end{equation}
defined on a small neighborhood of the origin in $\R^{2nkN}$. Here $z_i = (x_i,y_i) \in \R^{2n}$. All this goes back to Chaperon~\cite{chaperon1,chaperon2}. A $\Z_k$-symmetry of $\A_{H,k,N}$ is generated by the (right-)shift map
\begin{equation}\label{shift_map_discrete_action}
\begin{aligned}
\tau:\R^{2nkN} &\to \R^{2nkN} \\ 
(z_1,\dots,z_{kN}) &\mapsto (z_{(k-1)N+1},\dots,z_{kN},z_1,\dots,z_{(k-1)N})
\end{aligned}
\end{equation}
on discrete loops $(z_1,\dots,z_{kN})$.

Now assume that $0\in\R^{2n}$ is an isolated fixed point of $\varphi^k_H$. Hence $0\in\R^{2nkN}$ is an isolated critical point of $\A_{H,k,N}$. By an application of Theorem~\ref{main1}, we achieve local transversality with $\Z_k$-symmetry and follow the construction explained in Section~\ref{sssec_inv_MH} to define invariant local Morse homology groups $\HM_{*}(\A_{H,k,N},0)^{\Z_k}$. It will be shown in Lemma~\ref{lemma_inflation} that there exist so-called inflation maps $$ \mathscr{I}_N^{\Z_k} : \HM_{*}(\A_{H,k,N},0)^{\Z_k} \to \HM_{*+2nk}(\A_{H,k,N+2},0)^{\Z_k} $$ which are isomorphisms, and with respect to which we can take the direct limit
\begin{equation}
\begin{aligned}
\hloc_*^{\rm inv}(H,k,0) &= \lim_{N\to\infty} \HM_{*+nk2N}(\A_{H,k,2N},0)^{\Z_k}.
\end{aligned}
\end{equation}
These are adequate substitutes of the local contact homology groups; see Remark~\ref{rem_inflation_reason}.

In order to link this to local contact homology, note that $\alpha = (H+c)dt+\lambda_0$ is a contact form on the solid torus $\R/\Z\times B$ when $c\gg1$, having $\gamma:t\in \R/c\Z \mapsto (t/c,0)$ as a closed $\alpha$-Reeb orbit. Here $B$ is a small open ball centered at $0\in\R^{2n}$ and $\lambda_0 = \frac 12 \sum_{i=1}^n x_idy_i - y_idx_i$. Its $k$-th iterate $\gamma^k$ is an isolated closed $\alpha$-Reeb orbit and one defines a chain complex generated over $\Q$ by the {\it good} closed $\alpha'$-Reeb orbits that $\gamma^k$ splits to as we slightly perturb $\alpha$ to a generic $\alpha'$. Grading is given by Conley-Zehnder indices up a to shift depending only on $n$. The differential is given by the count of rigid finite-energy pseudo-holomorphic cylinders in $\R\times\R/\Z\times B$. One needs to prove a compactness statement, which is done in~\cite[section 3]{HM}, and assume the existence of generic almost complex structures, which might not exist. Assuming even more regularity, the resulting homology is shown to be independent of the small perturbation~$\alpha'$. It is denoted by ${\rm HC}(\alpha,\gamma^k)$ and called the {\it local contact homology} of $\gamma^k$. The notion of a {\it good orbit} is reviewed in Section~\ref{sssec_good_adm} below.

Local contact homology can be studied for stable Hamiltonian structures, such as the one on $\R/\Z\times B$ induced by $H$ and $\omega_0$. A $1$-periodic $\omega_0$-compatible almost complex structure $J_t$ on $B$ induces an almost complex structure on $\R\times\R/\Z\times B$ of the kind used in contact homology. After perturbing $H_t$ to a generic $H'_t$ keeping $1$-periodicity, the finite-energy solutions to be counted are nothing but graphs of finite-energy solutions of Floer's equation defined on $\R\times\R/k\Z$ with values on $B$. Transversality can not be achieved keeping $1$-periodicity of $J_t$, but let us assume that this is possible for the moment. Then the action on the time variable $t\mapsto t+1$ generates a $\Z_k$-action by chain maps on the local chain complex associated to $(H',J)$ at period $k$. This is proved in~\cite[Section 6]{HM}, where the following statement is also shown: {\it Local contact homology is the homology of the subcomplex of $\Z_k$-invariant chains.} The transversality problem in local contact homology is finally revealed as the problem of achieving transversality in $k$-periodic local Floer homology using $1$-periodic geometric data. The analogy to our local invariants is transparent if we substitute the action functional $\mathcal{A}_{H,k} = \int \alpha$ by its discretized version $\A_{H,k,N}$~\eqref{def_discrete_action}, and the time shift $t \mapsto t+1$ by its discrete version~\eqref{shift_map_discrete_action}. Such a clear analogy is only possible by our constructions at the chain level, and will be used in~\cite{HHM_prep} to show that $\hloc^{\rm inv}(H,k,0)$ is isomorphic to ${\rm HC}(\alpha,\gamma^k)$ whenever the latter can be defined. We will rely on~\cite[Proposition 2.5]{mazz_SDM} to make sure that the isomorphism is grading-preserving.

Finally, we note that inflation maps without $\Z_k$-symmetry also exist as maps
\[
\mathscr{I}_N : \HM_{*}(\A_{H,k,N},0) \to \HM_{*+nk}(\A_{H,k,N+1},0)
\]
and allow for the definition non-invariant local homology groups
\[
\hloc_*(H,k,0) = \lim_{N\to\infty} \HM_{*+nkN}(\A_{H,k,N},0)
\]
which are discrete versions of local Floer homology groups.

\begin{remark}\label{rem_inflation_reason}
The fact that we have the term $N+2$ in the symmetric inflation map $\mathscr{I}^{\Z_k}_N$ and $N+1$ in the non-symmetric one (and consequently the shift in the degree is $2nk$ for $\mathscr{I}^{\Z_k}_N$ and $nk$ for $\mathscr{I}_N$) is due to an orientation issue; see Remark~\ref{rmk:symmetric x non_symmetric inflation map} for details.
\end{remark}

\begin{remark}[Gradings]\label{rmk_gradings}
The {\it Conley-Zehnder index} of a path $M:[0,T]\to Sp(2n)$ satisfying $M(0)=I$, $\det M(T)-I\neq0$ is defined in a standard way, see for instance~\cite{SZ,RSindex}. Its extension to general paths is not standard. Here we use its maximal lower semicontinuous extension. Namely, if $\det M(T)-I=0$ then its Conley-Zehnder index is defined as the $\liminf$ in $C^0$ as $\tilde M \to M$ of the Conley-Zehnder indices of paths $\tilde M:[0,T]\to Sp(2n)$ satisfying $\det (\tilde M(T)-I)\neq0$. With these conventions, the Conley-Zehnder index of $t\in[0,k]\mapsto d\varphi^t_H(0)$ will be denoted by $\CZ(H,k)$. This extension to degenerate paths is smaller than or equal to the one defined in~\cite{RSindex}, and in general disagrees with it; for instance the constant path equal to the identity matrix will have index $-n$ according to our conventions, but will have index zero according to the conventions of~\cite{RSindex}. Its mean Conley-Zehnder index is denoted by $\Delta_{\CZ}(H,k)\in\R$, and satisfies $\Delta_{\CZ}(H,km)=m\Delta_{\CZ}(H,k)$ for all $k,m\in\N$. Denote $$ \nu(H,k) = \dim \ker (d\varphi_H^k(0)-I). $$ Note that $\nu(H,k)$ is also the nullity of $0\in\R^{2nkN}$ as a critical point of $\A_{H,k,N}$ (see Section~\ref{sec_shifting_lemma}). By~\cite{LL1,LL2} we have
\[
[\CZ(H,k),\CZ(H,k)+\nu(H,k)] \subset [\Delta_{\CZ}(H,k)-n,\Delta_{\CZ}(H,k)+n].
\]
Lemma~\ref{lemma_gradings} will show that the graded groups $\hloc_*(H,k,0)$, $\hloc_*^{\rm inv}(H,k,0)$ are supported in degrees $[\CZ(H,k),\CZ(H,k)+\nu(H,k)]$. Moreover, if $\Delta_{\CZ}(H,k)+n$ or $\Delta_{\CZ}(H,k)-n$ are attained then $0$ is a {\it totally degenerate} fixed point of $\varphi^k_H$, i.e., $1$ is the only eigenvalue of $d\varphi^k_H(0)$, see Lemma~\ref{lemma_grading_tot_deg}.
\end{remark}

We collect here some of the basic properties of $\hloc^{\rm inv}(H,k,0)$ that will be proved in Section~\ref{sec_properties}.
\begin{itemize}
\item {\bf (Homotopy)} If $\{H^\tau_t\}_{\tau\in[0,1]}$ is a family of $1$-periodic Hamiltonians defined near $0\in\R^{2n}$, $dH^\tau_t(0)\equiv0$, such that $0$ is an isolated fixed point of the family $\{\varphi^k_{H^\tau}\}_{\tau\in[0,1]}$ then there exists an isomorphism
\[
\hloc^{\rm inv}_*(H^0,k,0) \simeq \hloc^{\rm inv}_*(H^1,k,0)
\]
\item {\bf (Support)} $\hloc^{\rm inv}_j(H,k,0)$ vanishes if $j\not\in [\CZ(H,k),\CZ(H,k)+\nu(H,k)]$, in particular also if $j\not\in[\Delta_{\CZ}(H,k)-n,\Delta_{\CZ}(H,k)+n]$, see Remark~\ref{rmk_gradings}.
\item {\bf (Change of isotopy)} Let $G_t$ be a $1$-periodic Hamiltonian defined near $0\in\R^{2n}$ such that $\varphi^1_G$ is the identity germ at $0$. If we set $K_t = (G\#H)_t = G_t + H_t \circ (\varphi_G^t)^{-1}$ then there exists an isomorphism 
\[
\hloc^{\rm inv}_*(H,k,0) \simeq \hloc^{\rm inv}_{*+2m}(K,k,0).
\]
where $m$ is the Maslov index of the loop $t\in[0,k] \mapsto d\varphi^t_G(0) \in Sp(2n)$.
\end{itemize}
The {\bf homotopy} and {\bf change of isotopy} properties are proved in Section~\ref{sssec_isotopy}. The {\bf support} property is proved in Section~\ref{sssec_grading}.

\subsubsection{Persistence Theorems}\label{sssec_persistence_intro}

If $\alpha$ is a contact form on some manifold and $\gamma$ is a closed $\alpha$-Reeb orbit such that all iterates $\gamma^j$ are isolated among closed $\alpha$-Reeb orbits, then crucial to dynamical applications are the iteration properties of the sequence ${\rm HC}(\alpha,\gamma^j)$. The extension of Gromoll-Meyer's result from~\cite{GM2} for Reeb flows done in~\cite{HM} relies on that fact that $\dim {\rm HC}(\alpha,\gamma^j) \leq \dim {\rm HF}(\phi^j,p)$ for all $j$, where $p\in\gamma$ and $\phi$ is the local first return map to a transverse local section at $p$. By the result of~\cite{GG} the latter is bounded in $j$, hence so is the former.

More precise information about the sequence ${\rm HC}(\alpha,\gamma^j)$ was not available, with one exception: in~\cite{GGo} it is shown that the Euler characteristics
\[
\chi(\alpha,\gamma^j) = \sum_{i\in\Z} (-1)^i \dim {\rm HC}_i(\alpha,\gamma^j)
\]
are recovered from Lefschetz theory by the non-trivial formula
\begin{equation}\label{formula_GGo}
\chi(\alpha,\gamma^j) = \frac{1}{j} \sum_{d\in\div(j)} \#\{1\leq l< j/d : \gcd(l,j/d)=1 \} \ i_{\phi^d}(p).
\end{equation}
where $i_{\phi^d}(p)$ is the index of the fixed point $p$ of the map $\phi^d$. Hence, the sequence $\chi(\alpha,\gamma^j)$ is periodic in $j$.

\begin{definition}\label{good_admissible_iterations}
Given $M\in Sp(2n)$, $k\in\N$ is an {\it admissible} iteration for $M$ if $1$ has the same algebraic multiplicity for $M$ and $M^k$. It is a {\it good} iteration if the numbers of eigenvalues of $M$ and $M^k$ in $(-1,0)$ have the same parity.
\end{definition}

Let $H_t$ be a $1$-periodic Hamiltonian defined near $0\in\R^{2n}$, $dH_t(0)=0$ for all~$t$, such that $0$ is an isolated fixed point of $\varphi_H^m$, $m\in\N$. The number $k\in\N$ is an admissible iteration for $\varphi_H^m$ if it is admissible for $d\varphi_H^m(0)$. It is a good iteration for $\varphi_H^m$ if it is a good iteration for $d\varphi_H^m(0)$.

\begin{theorem}[Persistence Theorem -- Invariant case]\label{thm_persistence_invariant}
Let $k$ be an admissible and good iteration for $\varphi^m_H$. Then the iteration map
\begin{equation}
\mathcal{I} : \hloc^{\rm inv}_*(H,m,0) \to \hloc^{\rm inv}_{*+s_{k,m}}(H,km,0)
\end{equation}
is well-defined and is an isomorphism, where $s_{k,m}=\CZ(H,km)-\CZ(H,m)$. In particular, if $k_i \to \infty$ is a sequence of admissible and good iterations for $\varphi^m_H$ then $|s_{k_i,m}-k_i\Delta_{\CZ}(H,m)|$ is bounded.
\end{theorem}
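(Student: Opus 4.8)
The proof of the Persistence Theorem in the invariant case should follow the strategy of the classical shifting lemma / Gromoll–Meyer splitting theorem, adapted to the discrete-action setting and incorporating the $\Z_k$-symmetry. First I would reduce to the level of the discretized action functionals: by the inflation isomorphisms $\mathscr{I}_N^{\Z_k}$, it suffices to construct a map at the level of $\HM_*(\A_{H,m,N},0)^{\Z_{m}}$ and $\HM_*(\A_{H,km,N},0)^{\Z_{km}}$ for $N$ large enough (adapted to $H$), and to show it is an isomorphism in a stable range. The iteration map $\mathcal{I}$ itself should be defined as follows: there is a natural embedding of discrete loops of period $m$ into discrete loops of period $km$ by $k$-fold repetition, $(z_1,\dots,z_{mN})\mapsto (z_1,\dots,z_{mN},z_1,\dots,z_{mN},\dots)$; this embedding sends the critical point $0$ to $0$, is equivariant for the $\Z_m$-action sitting inside the $\Z_{km}$-action, and one checks (using the structure of the generating functions $S_i$ and their $N$-periodicity) that near $0$ the functional $\A_{H,km,N}$ restricted to the image of this embedding agrees, up to the factor $k$ and an affine reparametrization of the normal directions, with $\A_{H,m,N}$. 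The content of the admissibility and goodness hypotheses is precisely what makes the passage from the restricted functional to the full functional a homologically trivial operation.

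**Key steps.** (1) Set up the splitting: write a $\Z_{km}$-invariant neighborhood of $0$ in $\R^{2nkmN}$ as a product of the "diagonal" subspace $\Delta$ (the $k$-fold-repeated loops, isomorphic to $\R^{2nmN}$) and its $\theta$-orthogonal complement $\Delta^\perp$, both invariant. (2) Apply a parametrized Morse lemma / Gromoll–Meyer reduction in the $\Delta^\perp$ directions: the Hessian of $\A_{H,km,N}$ at $0$ in the normal directions decomposes, via discrete Fourier analysis on $\Z_k$, into blocks indexed by the nontrivial characters of $\Z_k$, and the null directions of the full Hessian that are not already null directions of $\A_{H,m,N}$ correspond exactly to eigenvalue-$1$ Floer-theoretic data that appears at period $km$ but not at period $m$ — these vanish precisely because $k$ is an \emph{admissible} iteration for $\varphi_H^m$, so there are no new null directions and the reduction produces, transverse to $\Delta$, a nondegenerate quadratic form. (3) Compute the index of that transverse quadratic form and show that the contribution to the grading is exactly $s_{k,m}=\CZ(H,km)-\CZ(H,m)$: this is where one invokes Bott's iteration formula and the index identities of Long–Lu (already cited in Remark~\ref{rmk_gradings}). (4) Show that the transverse quadratic form, together with the $\Z_{km}$-action, contributes a \emph{one-dimensional} piece to invariant local Morse homology, concentrated in a single degree, with no sign ambiguity killing it — here the \emph{good} iteration hypothesis guarantees that the sign of the action of the generator on the relevant orientation line is $+1$, so the invariant part of the shifted-index line survives rather than being annihilated; in the bad case it would be killed and the map would fail to be an isomorphism. (5) Conclude by a Künneth-type argument for invariant local Morse homology (using that we work over $\Q$, so taking $\Z_k$-invariants is exact) that $\mathcal{I}$ is an isomorphism, and pass to the direct limit over $N$ to obtain the statement for $\hloc^{\rm inv}$. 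The final boundedness assertion is then immediate: $s_{k_i,m}=\CZ(H,k_im)-\CZ(H,m)$ and, by the Long–Lu estimate quoted in Remark~\ref{rmk_gradings}, $\CZ(H,k_im)$ differs from $k_i m\,\Delta_{\CZ}(H,1)=k_i\Delta_{\CZ}(H,m)$ by at most $n$, so $|s_{k_i,m}-k_i\Delta_{\CZ}(H,m)|\le 2n+|\CZ(H,m)|$.

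**Main obstacle.** The hardest part will be step (4), i.e., keeping precise track of orientations and the induced $\Z_k$-action on orientation lines through the Gromoll–Meyer reduction, and verifying that "good iteration" is exactly the combinatorial condition ensuring the generator acts trivially (rather than by $-1$) on the orientation line of the transverse negative eigenspace. This is the point where the symmetric and non-symmetric theories genuinely diverge, and where the orientation subtlety flagged in Remark~\ref{rem_inflation_reason} (the $N+2$ versus $N+1$ in the inflation maps) resurfaces; I expect the bookkeeping to require a careful analysis of how complex-conjugate pairs of Fourier blocks for the $\Z_k$-action pair up into orientation-compatible two-dimensional pieces, with the unpaired real blocks (coming from the eigenvalue $-1$ of $d\varphi_H^m(0)$) being governed by the parity condition in Definition~\ref{good_admissible_iterations}. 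A secondary technical point is checking that the Gromoll–Meyer reduction can be performed $\Z_{km}$-equivariantly and compatibly with the Morse–Smale perturbations supplied by Theorem~\ref{main1}, but this should follow by running the equivariant reduction before perturbing and then invoking Theorem~\ref{main1} on the reduced functional.
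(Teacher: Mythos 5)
Your proposal is correct and follows essentially the same route as the paper: reduction to the discrete action functionals, the $k$-fold repetition (diagonal) embedding $\Delta$, an equivariant Gromoll--Meyer splitting transverse to $\Delta$ in which admissibility yields nondegeneracy of the normal Hessian and goodness yields orientation preservation on its negative space $E_-$, a K\"unneth/direct-sum argument for invariant homology over $\Q$, and compatibility with the inflation maps before passing to the direct limit. The only notable divergence is that the paper dispatches what you flag as the main obstacle (your step (4)) not by Fourier-block bookkeeping but by a short parity argument (Lemma~\ref{lemma_preserves_orient_Delta_perp}): the $\Z_{km}$-action fixes no nonzero vector of $\Delta^\bot$, and $\dim E_-=\CZ(H,km)-\CZ(H,m)+nm(k-1)2N$ is even because $k$ is good and admissible (Lemma~\ref{lem_good_ite_paths}) and the invariant theory is deliberately set up with the even discretization parameter $2N$ --- precisely the orientation issue you correctly anticipated from Remark~\ref{rem_inflation_reason} --- while the index shift is computed via~\cite[Proposition~2.5]{mazz_SDM} rather than Bott/Long--Lu.
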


We will prove this theorem in Section~\ref{sec_shifting_lemma}. It completely determines the iteration properties of the sequence $\hloc^{\rm inv}(H,j,0)$ when $0$ is isolated for all $\varphi^j_H$; see~\cite[Remark 3.15]{GGo}. It follows that the sequence $\iota_j := \dim \hloc^{\rm inv}(H,j,0)$ is periodic in $j$, but in fact more is true. One finds a finite set $\mathscr{J}\subset\N$ satisfying $1\in\mathscr{J}$ and if $j,j'\in\mathscr{J}$ then ${\rm lcm}(j,j')\in\mathscr{J}$. Furthermore, the sequence $\{\iota_j\}_{j\in\N}$ is subordinated to $\mathscr{J}$ in the following sense: for all $j\in\N$, we have $\iota_j=\iota_{q(j)}$ where $q(j)$ is the maximal divisor of $j$ in $\mathscr{J}$, i.e., $q(j)=\max\{i\in\mathscr{J} : i\in\div(j)\}$.

We also prove a persistence theorem in the absence of symmetries. Viewing $\hloc(H,k,0)$ as a discrete version of local Floer homology, the statement below is the analogue of the main result of~\cite{GG}. It is also proved in section~\ref{sec_shifting_lemma}. (Notice however that \cite{GG} does not provide a precise description of the shift $s_k$ in the grading.)

\begin{theorem}[Persistence Theorem -- Non-invariant case]\label{thm_persistence_non_invariant}
Let $k$ be an admissible iteration for $\varphi_H^1$. Then the iteration map
\begin{equation}
\mathcal{I} : \hloc_*(H,1,0) \to \hloc_{*+s_k}(H,k,0)
\end{equation}
is well-defined and is an isomorphism, where $s_k=\CZ(H,k)-\CZ(H,1)$. In particular, if $k_i \to \infty$ is a sequence of admissible iterations for $\varphi^1_H$ then $|s_{k_i}-k_i\Delta_{\CZ}(H,1)|$ is bounded.
\end{theorem}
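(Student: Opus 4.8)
The plan is to exploit the $\Z_k$-shift symmetry $\tau$ of the period-$k$ discrete action: in the \emph{invariant} case this symmetry is what forces the extra ``good'' hypothesis, but in the present non-invariant setting it makes iteration-invariance nearly formal. Fix $N$ adapted to $H$ and introduce the $k$-fold covering embedding
\[
\iota_k\colon\R^{2nN}\hookrightarrow\R^{2nkN},\qquad (z_1,\dots,z_N)\longmapsto(z_1,\dots,z_N,z_1,\dots,z_N,\dots,z_1,\dots,z_N)
\]
($k$ identical blocks). Unwinding~\eqref{def_discrete_action} and using the $N$-periodicity of the $S_i$ one checks the identity $\A_{H,k,N}\circ\iota_k=k\,\A_{H,1,N}$, and that $\ran\iota_k$ is exactly the fixed-point set of the shift map~\eqref{shift_map_discrete_action}. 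Since $\tau$ is orthogonal and $\A_{H,k,N}$ is $\tau$-invariant, the Hessian $d^2\A_{H,k,N}(0)$ is $\tau$-invariant, hence splits as $k\,d^2\A_{H,1,N}(0)\oplus B_{k,N}$ along $\R^{2nkN}=\ran\iota_k\oplus(\ran\iota_k)^{\perp}$. I would then invoke admissibility: $1$ having the same algebraic multiplicity for $d\varphi_H^1(0)$ and $d\varphi_H^k(0)$ forces the same \emph{geometric} multiplicity as well, because $d\varphi_H^1(0)$ acts unipotently on its generalized $1$-eigenspace and there $I+U+\dots+U^{k-1}$ is invertible, so $\ker(U^k-I)=\ker(U-I)$ there; thus $\nu(H,k)=\nu(H,1)$. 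As these are the nullities of $0$ as a critical point of $\A_{H,k,N}$ and of $\A_{H,1,N}$ (Remark~\ref{rmk_gradings}), $B_{k,N}$ must be nondegenerate.

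Next I would carry out a $\tau$-equivariant version of the generalized Morse (splitting) lemma. Writing $\R^{2nkN}=W\oplus V$ with $W=\ran\iota_k$, $V=W^{\perp}$, nondegeneracy of $B_{k,N}$ lets one solve $\partial_v\A_{H,k,N}(w,v)=0$ near the origin by a unique germ $v=\eta(w)$ with $\eta(0)=0$, $d\eta(0)=0$. The crucial point is that $\eta$ is automatically $\tau$-equivariant — immediate from uniqueness in the implicit function theorem together with $\tau$-invariance of $\A_{H,k,N}$ and of the splitting — so, since $\tau$ acts trivially on $W$ while $V^{\tau}=V\cap W=\{0\}$, we get $\eta\equiv0$. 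Hence the reduced function is simply $w\mapsto\A_{H,k,N}(w,0)=k\,\A_{H,1,N}(w)$, and the Morse lemma produces a germ of diffeomorphism fixing $0$ after which
\[
\A_{H,k,N}=k\,\A_{H,1,N}(w)\oplus Q_{k,N}(v),
\]
with $Q_{k,N}$ a nondegenerate quadratic form of index $\delta_{k,N}:=\ind B_{k,N}$. Splitting off $Q_{k,N}$ and noting that multiplication by $k>0$ merely reparametrizes anti-gradient trajectories, one obtains a canonical isomorphism $\HM_*(\A_{H,k,N},0)\cong\HM_{*-\delta_{k,N}}(\A_{H,1,N},0)$. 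Tracing through the definitions, $\iota_k$ identifies $W$ with the domain of $\A_{H,1,N}$ and this isomorphism is the composition of that identification with the suspension (Thom) isomorphism along the nondegenerate summand $Q_{k,N}$ — i.e.\ it \emph{is} the iteration map $\mathcal I$ at the level of the $N$-th discretized complexes, which in particular is now seen to be well-defined and bijective for each large $N$.

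It remains to pin down the degree and pass to the limit. By the grading computation of Lemma~\ref{lemma_gradings}, $\ind d^2\A_{H,j,N}(0)=njN+\CZ(H,j)+c$ with $c$ independent of $j$ and $N$; hence $\delta_{k,N}=\ind d^2\A_{H,k,N}(0)-\ind d^2\A_{H,1,N}(0)=n(k-1)N+s_k$ with $s_k=\CZ(H,k)-\CZ(H,1)$. Substituting gives $\HM_{*+nkN}(\A_{H,k,N},0)\cong\HM_{*+nN-s_k}(\A_{H,1,N},0)$; passing to the direct limit over $N$ — the isomorphisms commute with the inflation maps of Lemma~\ref{lemma_inflation}, a routine naturality check since both these maps and $\iota_k$ act by explicit insertion/repetition of coordinate blocks — yields that $\mathcal I\colon\hloc_*(H,1,0)\to\hloc_{*+s_k}(H,k,0)$ is an isomorphism. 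Finally, the ``in particular'' statement needs neither the iteration map nor admissibility: the inclusion recalled in Remark~\ref{rmk_gradings} gives $\CZ(H,k)\in[\Delta_{\CZ}(H,k)-n,\Delta_{\CZ}(H,k)+n]$, and since $\Delta_{\CZ}(H,k_i)=k_i\Delta_{\CZ}(H,1)$ we get $|s_{k_i}-k_i\Delta_{\CZ}(H,1)|\le n+|\CZ(H,1)|$ for all $i$.

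I expect the main obstacle to be the exact grading bookkeeping rather than the conceptual core: identifying $\ind d^2\A_{H,j,N}(0)$ with $njN+\CZ(H,j)$ (Lemma~\ref{lemma_gradings}) is itself a delicate Conley--Zehnder computation, especially under the lower semicontinuous convention of Remark~\ref{rmk_gradings} for degenerate paths, and one must check carefully that the geometrically defined iteration map matches the splitting isomorphism so that the shift is precisely $s_k$ and not merely $s_k$ plus an undetermined constant. By contrast the $\tau$-equivariant splitting is robust: the only thing that can fail is degeneracy of $B_{k,N}$, which is exactly what admissibility rules out — and correspondingly ``good'' is not needed here because there is no invariant subcomplex and the Thom/suspension isomorphism is available for any orientation of $V$.
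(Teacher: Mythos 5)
Your proposal is correct and follows essentially the same route as the paper's proof: reduction along the $k$-diagonal (the fixed locus of the shift $\tau$) via a Gromoll--Meyer-type splitting with $\A_{H,k,N}|_{\Delta}=k\,\A_{H,1,N}$, admissibility forcing nondegeneracy of the Hessian on $\Delta^{\bot}$, the index shift $s_k+n(k-1)N$ from the Robbin--Salamon/Mazzucchelli formula, and commutation with the inflation maps before passing to the direct limit. Your minor variations — using $\tau$-equivariance of the implicit function to force $\eta\equiv 0$, and deducing nondegeneracy of $B_{k,N}$ from $\nu(H,k)=\nu(H,1)$ — are equivalent to the paper's explicit computation that the Euclidean gradient is tangent to $\Delta$ and its kernel-containment argument via Lemma~\ref{lemma_formula_hessian}, so they change nothing essential.
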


\subsubsection{Products, symplectically degenerate maxima and idempotency}\label{sssec_SDM_intro}

The results discussed here disregard the group symmetry. {\it Symplectically degenerate maxima} (SDM) were first used by Hingston~\cite{Hi} in order to confirm the Conley conjecture on standard symplectic tori, although she called such special critical points {\it topologically degenerate}. It was then systematically studied and used by Ginzburg~\cite{Gi} and his collaborators to confirm the Conley conjecture in more general symplectic manifolds. In fact, the term SDM was introduced in~\cite{Gi}. In~\cite{Hi}, Hingston studied the action functional via Fourier series, and in~\cite{Gi}, Ginzburg used Floer homology. Mazzucchelli adapted this notion in~\cite{mazz_SDM} to the set-up of generating functions and discrete action functionals. Here we take the latter viewpoint.

We study a fixed point of a Hamiltonian diffeomorphism on a symplectic manifold. As is well-known, there is no loss of generality to assume that this point is the origin in $(\R^{2n},\omega_0)$, and that the $1$-periodic germ of Hamiltonian $H_t$ defined near the origin satisfies $dH_t(0) = 0$ for all $t$. Inspired by~\cite{Gi,GG} we define

\begin{definition}\label{defn_SDM}
The fixed point $0\in\R^{2n}$ is a {\it symplectically degenerate maximum of $H$} if it is an isolated fixed point of $\varphi^1_H$, $\Delta_{\CZ}(H,1)=0$ and $\hloc_n(H,1,0) \neq 0$.
\end{definition}

\begin{remark}
It is interesting to contrast this with Mazzucchelli's definition~\cite[page 729]{mazz_SDM}. There one asks for the existence of some $N$ large such that the germs $\psi_i$~\eqref{germs_small_steps} admit generating functions $S_i$ for which $0\in\R^{2n}$ is an isolated local maximum for all $i$, and ${\rm C}_{n+nkN}(\A_{H,k,N},0)\neq0$ for infinitely many $k$. Here ${\rm C}_*(\A_{H,k,N},0)$ stands for the local critical groups
\[
{\rm C}_*(\A_{H,k,N},0) = H_*(\{\A_{H,k,N}<0\}\cup\{0\},\{\A_{H,k,N}<0\})
\]
where $H_*$ is singular homology. Standard arguments in Morse theory imply that there is a canonical isomorphism ${\rm C}_*(\A_{H,k,N},0) = \HM_*(\A_{H,k,N},0)$. Hence, \cite[page 729]{mazz_SDM} asks that $\hloc_n(H,k,0)\neq 0$ for infinitely many iterates $k$. Let us examine the consequences. By Remark~\ref{rmk_gradings}, the homology $\hloc_*(H,k,0)$ is supported in degrees
\[
[\Delta_{\CZ}(H,k)-n,\Delta_{\CZ}(H,k)+n] = [k\Delta_{\CZ}(H,1)-n,k\Delta_{\CZ}(H,1)+n].
\]
If $\Delta_{\CZ}(H,1)\neq0$ then $n$ does not belong to this interval when $k$ is large enough. Hence $\Delta_{\CZ}(H,1)=0$. Since an end of this interval is achieved, $0$ must be a totally degenerate fixed point of $\varphi^1_H$ (Remark~\ref{rmk_gradings}). Thus every $k$ is admissible and Theorem~\ref{thm_persistence_non_invariant} implies that $\hloc_n(H,1,0)\neq0$. We have shown that an SDM in the sense of~\cite[page 729]{mazz_SDM} is an SDM in the sense of Definition~\ref{defn_SDM}. The converse can be proved only up to linear symplectic change of coordinates and deformation of the Hamiltonian keeping the time-$1$ map (germ) fixed. In fact, if $0$ is an SDM for $H$ as in Definition~\ref{defn_SDM} then, as explained in Remark~\ref{rmk_gradings}, $0$ must be a totally degenerate fixed point of $\varphi^1_H$. It follows that there exists $M\in Sp(2n)$ such that $M\varphi^1_H M^{-1}=\varphi^1_{M_*H}$ becomes arbitrarily $C^1$-close to $id$. We can now choose $K=K_t$ satisfying $\varphi^1_K=\varphi^1_{M_*H}$ which is arbitrarily and uniformly (in $t$) $C^2$-small. Note that the Maslov index of $t\in\R/\Z \mapsto d\varphi^t_{M_*H}(0)(d\varphi^t_K(0))^{-1}$ is an integer close to $\Delta_{\CZ}(H,1)=0$ because $d\varphi^t_K(0)$ is uniformly close to $I$. Hence this Maslov index vanishes and Lemma~\ref{lemma_change_of_isotopy} yields an isomorphism $\hloc_*(H,1,0)=\hloc_*(K,1,0)$. Taking $K$ sufficiently $C^2$-small then $N=1$ is adapted to $K$ as in~\eqref{N_adapted}. It follows by definition that $0\neq \hloc_n(K,1,0)=\HM_{2n}(F,0)$ where $F$ is a generating function for $\varphi^1_K$. Hence $0$ is an isolated local maximum of $F$. Theorem~\ref{thm_persistence_non_invariant} now implies that $0$ is an SDM for $K$ in the sense of~\cite[page 729]{mazz_SDM}. 
\end{remark}

In~\cite{HHM_prep} we will show that Definition~\ref{defn_SDM} is equivalent to the definition from~\cite{Gi}. Evidence to this fact is given by the following lemma (see~\cite[Proposition 5.1]{GG}).

\begin{lemma}
If $0$ is an isolated fixed point of $\varphi^1_H$ then the following are equivalent.
\begin{itemize}
\item[a)] $0$ is an SDM for $H$.
\item[b)] $\hloc_n(H,k_i,0)\neq0$ for a sequence $k_i\to\infty$ of admissible iterations.
\item[c)] $0$ is totally degenerate, $\hloc_n(H,1,0)\neq0$, and $\hloc_n(H,k,0)\neq0$ for some $k>n$. 
\end{itemize}
\end{lemma}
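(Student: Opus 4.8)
The plan is to prove the cycle of implications $\text{a)} \Rightarrow \text{c)} \Rightarrow \text{b)} \Rightarrow \text{a)}$, leveraging the two facts established in Remark~\ref{rmk_gradings} (support of the local homology and the consequence of attaining an endpoint of the support interval) together with the non-symmetric Persistence Theorem~\ref{thm_persistence_non_invariant}.

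\medskip

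\noindent\emph{Proof of $\text{a)}\Rightarrow\text{c)}$.} Assume $0$ is an SDM, so $\Delta_{\CZ}(H,1)=0$ and $\hloc_n(H,1,0)\neq0$. By the {\bf support} property, $\hloc_*(H,1,0)$ is supported in $[\CZ(H,1),\CZ(H,1)+\nu(H,1)]$, which is contained in $[-n,n]$ since $\Delta_{\CZ}(H,1)=0$. Because $n$ is attained as the degree of a nonzero class, the upper endpoint $\Delta_{\CZ}(H,1)+n=n$ of the interval is attained, so by the last assertion of Remark~\ref{rmk_gradings} the point $0$ is totally degenerate for $\varphi^1_H$. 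Total degeneracy means $1$ is the only eigenvalue of $d\varphi^1_H(0)$, hence the same is true for $d\varphi^k_H(0)=(d\varphi^1_H(0))^k$ for every $k$; in particular the algebraic multiplicity of $1$ is $2n$ for all iterates, so every $k$ is an admissible iteration for $\varphi^1_H$. Applying Theorem~\ref{thm_persistence_non_invariant} with any $k>n$, the iteration map $\mathcal{I}:\hloc_*(H,1,0)\to\hloc_{*+s_k}(H,k,0)$ is an isomorphism with $s_k=\CZ(H,k)-\CZ(H,1)$. Since $\Delta_{\CZ}(H,1)=0$ forces $\Delta_{\CZ}(H,k)=k\Delta_{\CZ}(H,1)=0$, and the endpoint $n$ of the support interval of $\hloc_*(H,k,0)$ must likewise be witnessed, one checks $s_k=0$ (the index $\CZ$ is forced to agree; alternatively: total degeneracy pins $\CZ(H,j)=-n$ for all $j$ by the conventions of Remark~\ref{rmk_gradings}, so $\CZ(H,k)=\CZ(H,1)=-n$). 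Therefore $\hloc_n(H,k,0)\cong\hloc_n(H,1,0)\neq0$, giving c).

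\medskip

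\noindent\emph{Proof of $\text{c)}\Rightarrow\text{b)}$.} This is immediate: if $0$ is totally degenerate then, as above, every iterate is admissible, and by the same application of Theorem~\ref{thm_persistence_non_invariant} (with $s_k=0$) we get $\hloc_n(H,k,0)\cong\hloc_n(H,1,0)\neq0$ for every $k$, so in particular for a sequence $k_i\to\infty$ of admissible iterations.

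\medskip

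\noindent\emph{Proof of $\text{b)}\Rightarrow\text{a)}$.} Suppose $\hloc_n(H,k_i,0)\neq0$ for a sequence $k_i\to\infty$ of admissible iterations. By the {\bf support} property, $\hloc_n(H,k_i,0)\neq0$ forces $n\in[\Delta_{\CZ}(H,k_i)-n,\Delta_{\CZ}(H,k_i)+n]$, i.e.\ $\Delta_{\CZ}(H,k_i)=k_i\Delta_{\CZ}(H,1)\geq0$ and also, using the lower bound with $n\le \Delta_{\CZ}(H,k_i)+n$, that $\Delta_{\CZ}(H,k_i)\ge 0$; combined with the other endpoint we in fact need $\Delta_{\CZ}(H,k_i)\le 2n$ for all $i$. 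Since $k_i\to\infty$ and $\Delta_{\CZ}(H,k_i)=k_i\Delta_{\CZ}(H,1)$, boundedness forces $\Delta_{\CZ}(H,1)=0$. Now $n$ is an attained endpoint of the support interval of $\hloc_*(H,k_i,0)$, so by Remark~\ref{rmk_gradings} the point $0$ is totally degenerate for $\varphi^{k_i}_H$, hence $d\varphi^1_H(0)$ has a power with $1$ as only eigenvalue, which forces $1$ to be the only eigenvalue of $d\varphi^1_H(0)$ itself; thus every iterate is admissible. Apply Theorem~\ref{thm_persistence_non_invariant} to $k=k_i$ (for any one $i$): $\mathcal{I}$ is an isomorphism and $s_{k_i}=0$ as before, so $\hloc_n(H,1,0)\cong\hloc_n(H,k_i,0)\neq0$. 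Together with $\Delta_{\CZ}(H,1)=0$ this says $0$ is an SDM, proving a).

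\medskip

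The main obstacle I anticipate is bookkeeping the grading shift $s_k$ and confirming it vanishes: one must carefully combine the total-degeneracy normalization of the Conley--Zehnder index in Remark~\ref{rmk_gradings} (which gives $\CZ(H,j)=-n$ for all $j$ once $0$ is totally degenerate) with the statement of Theorem~\ref{thm_persistence_non_invariant}. Once $s_k=0$ is pinned down, every implication reduces to a one-line application of the persistence isomorphism together with the support property, so the argument is otherwise routine; the whole proof really amounts to packaging the discussion already carried out in the Remark preceding the lemma.
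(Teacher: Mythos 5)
Your cycle a) $\Rightarrow$ c) $\Rightarrow$ b) $\Rightarrow$ a) could work in principle, but the step you call ``immediate'', c) $\Rightarrow$ b), is where the real content of the lemma lives, and your argument for it fails. Note that you never use the hypothesis $k>n$ anywhere; that alone is a red flag, because c) with ``some $k>n$'' weakened to ``some $k$'' is genuinely weaker than being an SDM. Concretely, take $n=2$ and $H=H_1\oplus H_2$ with $H_2$ a small perturbation with a degenerate local maximum and $H_1=G\# K$, where $K$ has a totally degenerate isolated fixed point and $G$ is a nonconstant loop: then $0$ is totally degenerate, $\Delta_{\CZ}(H,1)=2\neq 0$, and $\hloc_n(H,1,0)\neq0$, yet $\hloc_n(H,k,0)=0$ for all $k>n$ because the support interval $[k\Delta_{\CZ}(H,1)-n,k\Delta_{\CZ}(H,1)+n]$ drifts away from $n$. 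Your c) $\Rightarrow$ b) argument, which invokes ``$s_k=0$ as above'' without first establishing $\Delta_{\CZ}(H,1)=0$, would apply verbatim to this example and prove a false statement (here $s_k\approx 2(k-1)$). What is missing is exactly the paper's c) $\Rightarrow$ a) computation: total degeneracy gives $\Delta_{\CZ}(H,1)\in2\Z$, the two nonvanishing hypotheses force $n\in[\Delta_{\CZ}(H,1)-n,\Delta_{\CZ}(H,1)+n]\cap[k\Delta_{\CZ}(H,1)-n,k\Delta_{\CZ}(H,1)+n]$, and only because $k>n$ does this pin $0\le\Delta_{\CZ}(H,1)<2$, hence $\Delta_{\CZ}(H,1)=0$. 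Any use of Theorem~\ref{thm_persistence_non_invariant} in degree $n$ must come \emph{after} this.

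Two further justifications you give are incorrect as stated. First, ``total degeneracy pins $\CZ(H,j)=-n$'' is false: for an SDM whose linearization is a nontrivial unipotent matrix (e.g.\ in $n=1$ a germ with generating function $-\tfrac12 Y^2-x^4$) one has $\CZ(H,1)=n-\nu(H,1)\neq -n$. The true fact needed is that for a unipotent linearization with $\Delta_{\CZ}=0$ the indices $\CZ(H,k)$ are independent of $k$ (so $s_k=0$); this follows from Bott-type iteration formulas, not from the conventions of Remark~\ref{rmk_gradings}, and your other justification (``the endpoint $n$ must likewise be witnessed'') is circular, since that is what is being proved. (The paper leaves this point implicit as well when it invokes Theorem~\ref{thm_persistence_non_invariant}, so with $\Delta_{\CZ}(H,1)=0$ in hand your a) $\Rightarrow$ c) and b) $\Rightarrow$ a) are at the paper's level of rigor, but the stated reason is wrong.) Second, in b) $\Rightarrow$ a) the inference ``$d\varphi^1_H(0)$ has a power with $1$ as only eigenvalue, which forces $1$ to be the only eigenvalue of $d\varphi^1_H(0)$'' is false in general (a rotation by $2\pi/k_i$ is a counterexample); the correct argument, as in the paper, uses admissibility of $k_i$: the algebraic multiplicity of $1$ for $d\varphi^{k_i}_H(0)$ is $2n$, hence so is that for $d\varphi^1_H(0)$. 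These last two slips are patchable, but the c) $\Rightarrow$ b) step must be rewritten to first derive $\Delta_{\CZ}(H,1)=0$ using $k>n$.
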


\begin{proof}
We will prove a) $\Rightarrow$ b) $\Rightarrow$ c) $\Rightarrow$ a). Implication a) $\Rightarrow$ b) follows from Theorem~\ref{thm_persistence_non_invariant}. 

Assume b). Then $\Delta_{\CZ}(H,1)=0$ since, otherwise, $\hloc_n(H,k_i,0)$ would vanish for $i$ large; see Remark~\ref{rmk_gradings}. Again by Remark~\ref{rmk_gradings}, the point $0$ is a totally degenerate fixed point of $\varphi^{k_i}_H$. Since $k_i$ is admissible, $0$ must be a totally degenerate fixed point of $\varphi^1_H$. In particular, every $k$ is admissible and Theorem~\ref{thm_persistence_non_invariant} implies that $\hloc_n(H,1,0)\neq \hloc_n(H,k,0)$ for every $k$. We have proved b) $\Rightarrow$ c).

Assume c). Then $\Delta_{\CZ}(H,1)\in 2\Z$ by~\cite{SZ} and total degeneracy of $0$. By Remark~\ref{rmk_gradings} we know that
\[
n\in [\Delta_{\CZ}(H,1)-n,\Delta_{\CZ}(H,1)+n] \cap [k\Delta_{\CZ}(H,1)-n,k\Delta_{\CZ}(H,1)+n]
\]
for some $k>n$. In particular, $0\leq \Delta_{\CZ}(H,1)<2$. It follows that $\Delta_{\CZ}(H,1)=0$ and we have shown that c) $\Rightarrow$ a).
\end{proof}

The local invariants $\hloc(H,k,0)$ can be described in terms of singular homology, as explained in Section~\ref{ssec_invariant}. This point of view is not Morse homological, and hence useless if one wants to make a comparison to local Floer homology, but it is helpful to define operations
\begin{equation*}
\bullet^{(m)} : \hloc_{i_1}(H,k_1,0) \otimes \dots \otimes \hloc_{i_m}(H,k_m,0) \to \hloc_{i_1+\dots+i_m-(m-1)n}(H,k_1+\dots+k_m,0)
\end{equation*}
provided $0$ is an isolated fixed point of all $\varphi^{k_i}_H$ and of $\varphi^{k_1+\dots+k_m}_H$.

The map $\bullet^{(2)}$ yields a product
\begin{equation*}
\bullet : \hloc_i(H,k,0) \otimes \hloc_j(H,m,0) \to \hloc_{i+j-n}(H,k+m,0)
\end{equation*}
given by $a\bullet b = \bullet^{(2)}(a,b)$. It is associative and anti-commutative in the sense that $b\bullet a = (-1)^{|a||b|}a\bullet b$. See Section~\ref{sec_chas_sullivan} for details. It plays the role of the pair-of-pants product in local Floer homology, whose definition is not found in the literature but can be easily constructed by the knowledgeable reader. 

We have the following statement: {\it If $0$ is not an SDM then there exists some $ r_0>0$ depending on $H$ such that $\bullet^{(r)}(a_1,\dots,a_r)=0$ for all integers $r\geq r_0$ which are admissible for $\varphi^1_H$, and $a_i\in\hloc_*(H,1,0)$.} This is proved in the context of local Floer homology as~\cite[Proposition~5.3]{GG}. The same proof goes through since it is based on degree considerations. We complement it with a proof of

\begin{proposition}\label{prop_prod_SDM}
If $0$ is an SDM of $H$ then $\hloc(H,1,0)$ is supported in degree $n$, $\hloc_n(H,1,0) \simeq \Q$ and $\bullet^{(r)}(e,\dots,e)\neq0$ for all $r\in\N$ and $e\neq0$ in $\hloc(H,1,0)$.
\end{proposition}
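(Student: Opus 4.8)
The plan is to combine the Persistence Theorem with the dimension inequality relating $\hloc(H,1,0)$ to local critical groups, and then to compute the product geometrically via the singular-homology description.

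First I would pin down the structure of $\hloc_*(H,1,0)$. By Definition~\ref{defn_SDM} we have $\Delta_{\CZ}(H,1)=0$ and $\hloc_n(H,1,0)\neq0$. As explained in Remark~\ref{rmk_gradings}, since $\Delta_{\CZ}(H,1)=0$ and the endpoint $n$ of the support interval $[\Delta_{\CZ}(H,1)-n,\Delta_{\CZ}(H,1)+n]$ is attained, the point $0$ must be totally degenerate, so $d\varphi^1_H(0)=I$ (the only eigenvalue is $1$), hence $\nu(H,1)=2n$ and $\CZ(H,1)=-n$. Next I would argue that the homology is concentrated in the top degree $n$. Up to linear symplectic conjugation and a change of isotopy with vanishing Maslov index (exactly as in the Remark following Definition~\ref{defn_SDM}), one reduces to the case where $N=1$ is adapted to $H$, so $\hloc_*(H,1,0)=\HM_{*+n}(F,0)$ for a single generating function $F$ near $0\in\R^{2n}$. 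Using the isomorphism $\HM_*(F,0)={\rm C}_*(F,0)$ with local critical groups (singular homology of the pair $(\{F<0\}\cup\{0\},\{F<0\})$), the non-vanishing of $\hloc_n(H,1,0)={\rm C}_{2n}(F,0)$ together with $2n=\dim\R^{2n}$ forces $0$ to be a local maximum of $F$ (a nontrivial top-degree local critical group only occurs at a local maximum). For a local maximum, ${\rm C}_*(F,0)\simeq H_*(\text{pt})$ in degree $2n$ and vanishes otherwise, so $\hloc_*(H,1,0)$ is supported in degree $n$ and $\hloc_n(H,1,0)\simeq\Q$.

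It remains to show $\bullet^{(r)}(e,\dots,e)\neq0$ for all $r$ and any $e\neq0$. Here I would use that $\bullet^{(r)}$ lands in $\hloc_{rn-(r-1)n}(H,r,0)=\hloc_n(H,r,0)$, which is the top possible degree, since by the Support property (or Remark~\ref{rmk_gradings}) $\hloc_*(H,r,0)$ lives in $[\Delta_{\CZ}(H,r)-n,\Delta_{\CZ}(H,r)+n]=[-n,n]$ as $\Delta_{\CZ}(H,r)=r\Delta_{\CZ}(H,1)=0$. Since $0$ is totally degenerate for $\varphi^1_H$, every iteration $r$ is admissible, so by the (non-invariant) Persistence Theorem~\ref{thm_persistence_non_invariant} the iteration map $\mathcal{I}:\hloc_n(H,1,0)\to\hloc_{n}(H,r,0)$ (the shift is $s_r=\CZ(H,r)-\CZ(H,1)=0$ by total degeneracy) is an isomorphism; in particular $\hloc_n(H,r,0)\simeq\Q$ for all $r$. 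Now I would invoke the compatibility of the Chas-Sullivan product with the iteration map and with local maxima: in the generating-function model the degree-$n$ class $e$ is represented by the fundamental class of the local maximum, its $r$-fold product $\bullet^{(r)}(e,\dots,e)$ is (up to sign) the corresponding fundamental class of the local maximum of the $r$-fold discretized action $\A_{H,r,N}$ at $0$, which is again a genuine local maximum (a finite sum/concatenation of functions each having an isolated local max at $0$), hence its top critical group is $\Q$ and the class is nonzero. Concretely, I would check that under the identifications $\hloc_n(H,r,0)={\rm C}_{(r n) + n}(\A_{H,r,N},0)$ the product of top classes is the generator, using the explicit description of $\bullet^{(r)}$ in Section~\ref{sec_chas_sullivan} (a ``join''/concatenation map on the sublevel sets) together with the fact that the join of points representing local maxima represents the local maximum of the concatenated functional.

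The main obstacle I anticipate is the last step: verifying that the Chas-Sullivan product of top-degree (local-maximum) classes is again the top-degree generator, rather than zero. This requires unwinding the definition of $\bullet^{(r)}$ on sublevel sets and checking that the relevant map on local critical groups sends fundamental class to fundamental class — equivalently, that concatenation of discrete loops is compatible with the local-maximum structure. One clean way to handle this is to pass, via the change-of-isotopy and adaptedness reductions above, to the case where each $\A_{H,1,1}=F$ literally has an isolated (say strict) local maximum at $0$, so that $\A_{H,r,1}=\sum_{i=1}^{r}(\text{quadratic coupling})+\sum F$ still has a strict local maximum at $0$ after absorbing the symplectic coupling terms; then the product is nonzero for the trivial reason that the target group is $\Q$ and the product map is, by construction, the iteration map on top homology, which Theorem~\ref{thm_persistence_non_invariant} shows is an isomorphism. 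The one technical point to be careful about is that the Chas-Sullivan $\bullet^{(r)}$ and the iteration map $\mathcal{I}$ agree on the top class up to a nonzero scalar — this should follow from functoriality of the singular-homology construction in Section~\ref{sec_chas_sullivan}, but it is worth stating as a lemma and checking signs.
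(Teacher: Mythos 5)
Your first half is fine and essentially mirrors the paper: total degeneracy of $0$, conjugation via \cite[Lemma~5.5]{Gi} and the change-of-isotopy Lemma~\ref{lemma_change_of_isotopy} with Maslov index $m=0$ reduce to a $C^2$-small germ $K$ whose generating function $F=\A_{K,1,1}$ has $\HM_{2n}(F,0)\neq0$, hence a strict local maximum at $0$, giving $\hloc(H,1,0)$ supported in degree $n$ with $\hloc_n\simeq\Q$. The gap is in the product step, and it sits exactly where you flagged the difficulty. Two of your claims there do not hold as stated. First, $\A_{H,r,1}$ does \emph{not} have a strict local maximum at $0$ ``after absorbing the symplectic coupling terms'': the terms $\sum_i x_i(y_{i+1}-y_i)$ contribute positive directions, and indeed the relevant nonzero homology sits in degree $n+rn<2nr$ for $r\geq2$, which is incompatible with a local maximum. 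Second, the assertion that $\bullet^{(r)}(e,\dots,e)$ agrees with the iteration map $\mathcal{I}(e)$ up to a nonzero scalar is unproven and cannot be taken ``by construction'': the iteration map is built from the full diagonal $\Delta$ (all blocks equal) via the equivariant splitting lemma, whereas $\bullet^{(r)}$ is built from the much larger submanifold $Z=\{y_1=y_{N+1}=\dots\}$ (agreement only of the $y$-coordinates at the break points) via a Thom-class cap product and an inclusion into a Gromoll--Meyer-type pair for $\A_{H,r,N}$. Since both source and target are $\Q$, the only content of the statement is that the product map is not zero, so arguing ``the target is $\Q$ and the map is the iteration map'' is circular; establishing the comparison you propose is essentially as hard as the nonvanishing itself.

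What the paper does instead (Section~\ref{ssec_special_case_prod}) is a direct computation tailored to the local-maximum situation: it isolates abstract hypotheses (a), (b), (c) of Lemma~\ref{lemma_non_zero_product} — top-degree local homology for each factor, a linear $Z$, and crucially a linear complement $L$ with $\R^{2nr}=Z\oplus L$ on which the total functional has a local \emph{minimum} at the origin — and then verifies them for $\A_{K,r,1}$ with $Z=\{y_1=\dots=y_r\}$ and $L=\{x_i=y_{i+1}-y_i,\ \sum_i y_i=0\}$, where $\A_{K,r,1}|_L=\sum_i |x_i|^2+S(x_i,y_{i+1})$ is a local minimum because $S$ is $C^2$-small. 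With these choices the Gromoll--Meyer pairs can be taken as products of balls, the Thom-class and projection steps send the product of the top classes to a generator of $H_*(B_1,B_1\setminus B_2)$, and condition (c) is what guarantees that the final inclusion map $k$ into $(\Lambda,\Lambda_-)$ does not kill it. If you want to salvage your route, you would need to carry out precisely this kind of unwinding of $\bullet^{(r)}$ on explicit pairs; the min-on-the-complement condition (c), which is absent from your sketch, is the ingredient that makes the last map nonzero.
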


This statement is found in~\cite[Section 5]{GG} with no proof in the context of local Floer homology. The proof of Proposition~\ref{prop_prod_SDM} can be found in Section~\ref{ssec_special_case_prod}.

\subsubsection{Bifurcations of isolated critical points with symmetry}\label{sssec_bifurcation_pics}

The invariant~\eqref{notation_inv_local_Morse_hom} can be seen as an invariant of bifurcations of isolated critical points which are symmetric with respect to a finite cyclic group action. In general, it is different from standard local Morse homology. It retains information of the birth-death process that happens at the moment of bifurcation, provided that the group symmetry is respected. We give three examples of symmetric bifurcation scenarios in two variables with symmetry group~$\Z_2$, in increasing degree of complexity: firstly two critical points bifurcate, then four and then, finally, eight critical points bifurcate.

The first scenario is shown in Figure~1, where we study the plane with the $\Z_2$-action generated by reflection along an horizontal axis containing the point $x$. A family of $\Z_2$-invariant functions $f_t$ having $x$ as a critical point is analyzed, for $t<0$ we have a saddle at $x$, for $t=0$ bifurcation happens and $x$ is a degenerate isolated critical point of $f_0$, for $t>0$ we have saddles at points $y,z$ which are symmetric to each other, and a maximum at $x$. The metric is the Euclidean one for all $t$. Grey arrows indicate the chosen orientations of the unstable manifolds of the saddles. For $t>0$ we orient the unstable manifold of $x$ by the canonical orientation of the plane. The local Morse chain complex for $t<0$ has a single generator $x$ in degree $1$, and $1\in\Z_2$ acts by $1\cdot x = -x$ because reflection reverts orientation of the grey arrow. Hence $\HM(f_0,x)$ has a generator in degree $1$ but $\HM(f_0,x)^{\Z_2}$ vanishes. By Proposition~\ref{prop_invariance_with_symmetries}, the same conclusion must be achieved when $t>0$. In fact, the complex has a generator $x$ in degree $2$, and two more $y,z$ in degree $1$ . The Morse differential is $\partial x = y-z$. The $\Z_2$-action is determined by $1\cdot x = -x$, since reflection reverts orientations on the plane, and $1\cdot y=-y$, $1\cdot z=-z$ since reflection reverts orientations of grey arrows. Thus, in the basis $\{x,y,z\}$ the operator $1\cdot$ is represented by minus the identity matrix and, as such, certainly commutes with $\partial$, i.e., $\Z_2$ acts by chain maps. Moreover, $1$ is not an eigenvalue of $1\cdot$, confirming that $\HM(f_0,x)^{\Z_2}$ vanishes.

\begin{figure}\label{fig1}
\begin{center}
\includegraphics[width=110mm]{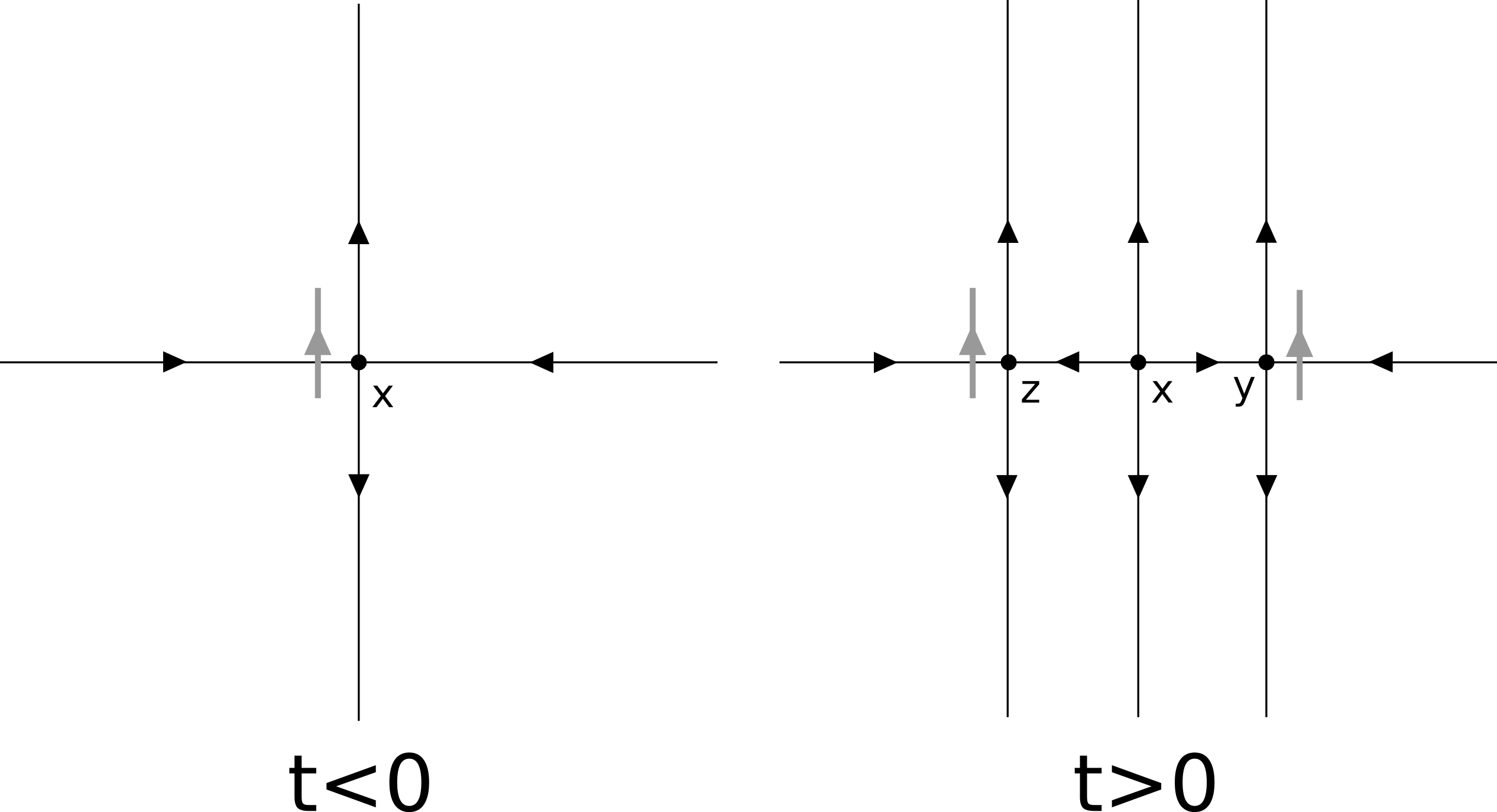}
\caption{{\small Symmetry is given by reflection with respect to the horizontal axis. $\HM(f_0,x)$ has a single generator in degree $1$, $\HM(f_0,x)^{\Z_2}$ vanishes.}}
\end{center}
\end{figure}

The next scenario is shown in Figure~2. Here $\Z_2$ acts by reflection with respect to the vertical axis. For $t<0$ we have an isolated minimum, for $t=0$ bifurcation happens, for $t>0$ there is a maximum at $x$, minima at $y,z$ and saddles at $u,v$. For all $t\neq0$ unstable manifolds of the minima are oriented by $+1$. For $t>0$ the unstable manifold of $x$ is oriented by the canonical orientation of the plane, while the unstable manifolds of the saddles are oriented by the grey arrows. Looking at the trivial local Morse chain complex for $t<0$ we conclude that $\HM(f_0,x) = \HM(f_0,x)^{\Z_2}$ has a single generator in degree $0$. Hence, we must obtain the same conclusion for $t>0$. In fact, $\partial x = u+v$, $\partial u =y-z$ and $\partial v=z-y$. The $\Z_2$-action reads $1\cdot x=-x$, since reflection is orientation reversing on the plane, $1\cdot u=-v$ and $1\cdot v=-u$ since reflection does not preserve orientations of grey arrows, $1\cdot y=y$ and $1\cdot z=z$ since $y,z$ are fixed. It follows that $\partial$ commutes with $1\cdot$, as expected. Moreover, there are no invariant chains in degree $2$, invariant chains are generated by $u-v$ in degree $1$ and by $y,z$ in degree $0$. However, $\partial (u-v)=2(y-z)$ shows that invariant homology vanishes in degree 1 and is generated by the homology class of $y+z$ in degree $0$. The result is again that $\HM(f_0,x)^{\Z_2}$ has one generator in degree~$0$.

\begin{figure}\label{fig2}
\begin{center}
\includegraphics[width=110mm]{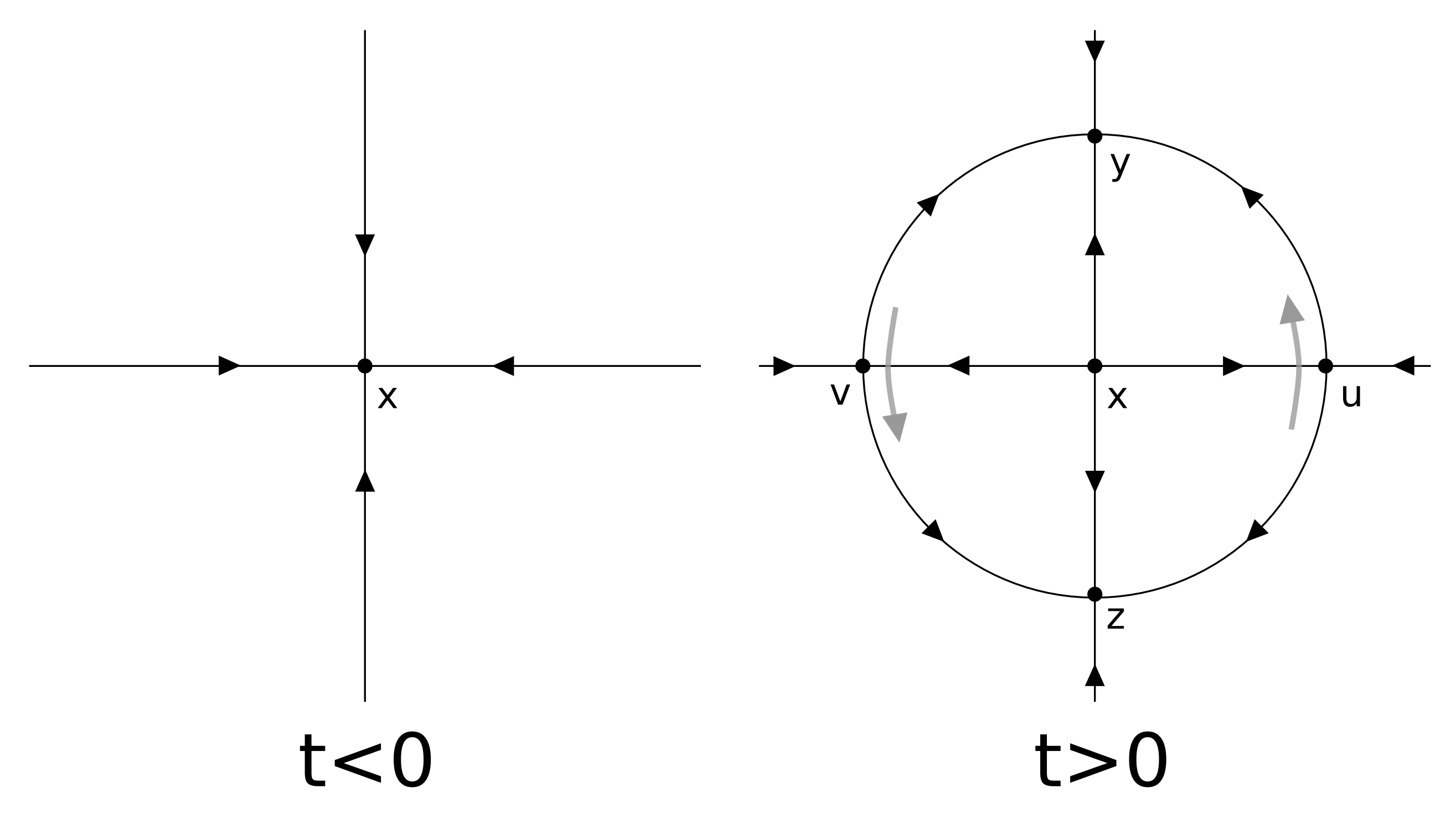}
\caption{{\small Symmetry is given by reflection with respect to the vertical axis. $\HM(f_0,x)=\HM(f_0,x)^{\Z_2}$ has a single generator in degree $0$.}}
\end{center}
\end{figure}

\begin{figure}\label{fig3}
\begin{center}
\includegraphics[width=110mm]{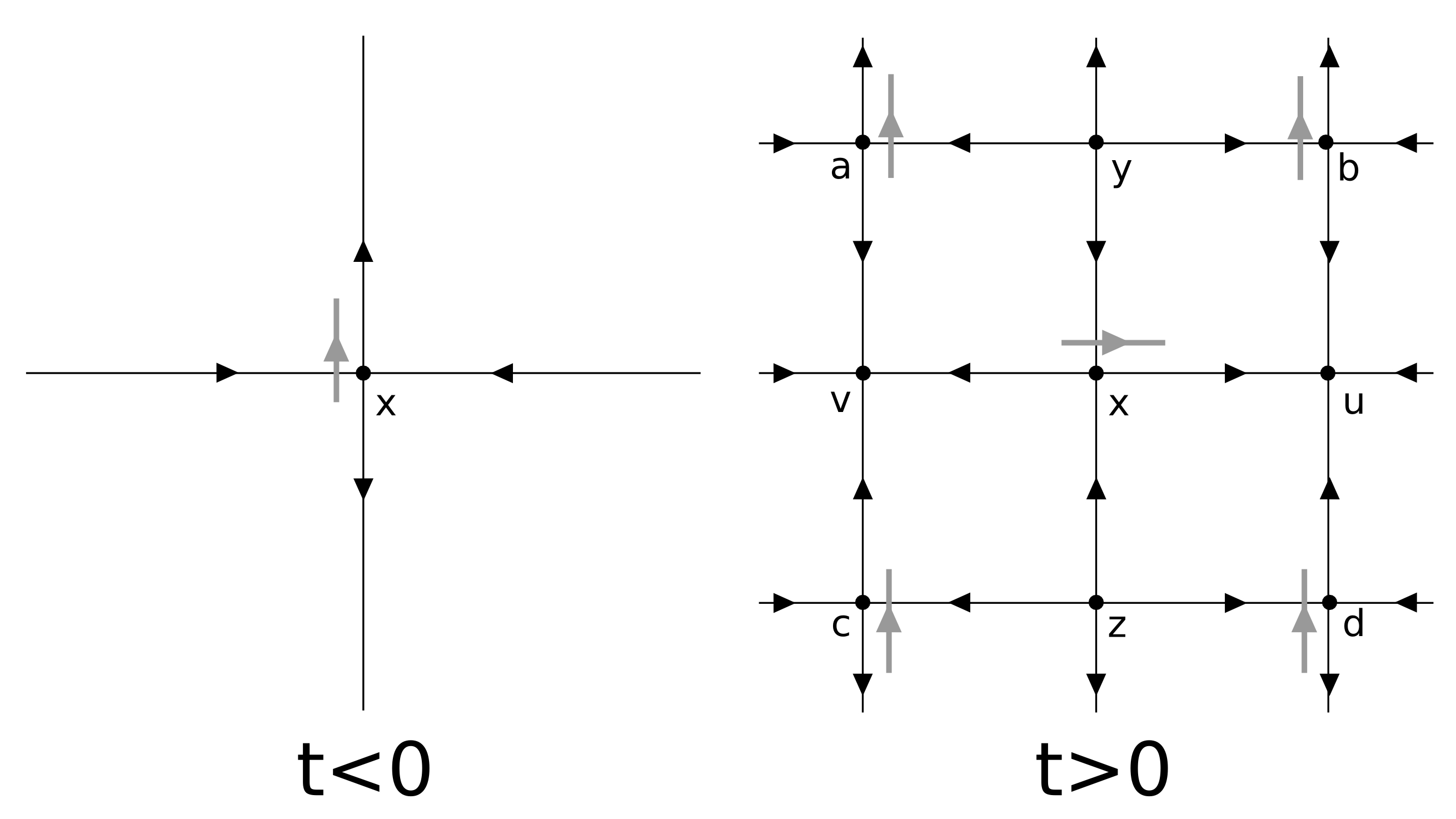}
\caption{{\small Symmetry is given by reflection with respect to the horizontal axis. $\HM(f_0,x)$ has a generator in degree $1$, $\HM(f_0,x)^{\Z_2}$ vanishes.}}
\end{center}
\end{figure}

Our final scenario, where eight critical points bifurcate from $x$, is shown in Figure~3. As in the first scenario, the $\Z_2$-action is generated by reflection along the horizontal axis. Grey arrows orient unstable manifolds of saddles, for all $t\neq 0$. For $t>0$ unstable manifolds of local maxima are oriented by the canonical orientation of the plane, while those of local minima are oriented by $+1$. Looking at $t<0$ we conclude that $\HM(f_0,x)$ has a generator in degree $1$, while $\HM(f_0,x)^{\Z_2}$ vanishes. In fact, for $t<0$ the local Morse chain complex has a single generator $x$ in degree $1$, but $1\cdot x = -x$ since reflection reverses vertical grey arrows. By the continuation property (Proposition~\ref{prop_invariance_with_symmetries}) the same simple conclusion must be obtained by analyzing the more complicated Morse chain complex for $t>0$. We have nine generators: two $\{y,z\}$ in degree $2$, five $\{x,a,b,c,d\}$ in degree $1$ and two $\{u,v\}$ in degree $0$. The local Morse differential reads
\begin{equation*}
\begin{aligned}
& \partial y=b-a+x \\
& \partial z = d-c-x \\
& \partial x=u-v \\
& \partial a = -v = -\partial c \\
& \partial b = -u = -\partial d
\end{aligned}
\end{equation*}
and the action of $1\in\Z_2$ is
\begin{equation*}
\begin{aligned}
& 1\cdot y = -z && 1\cdot z = -y \\
& 1\cdot x = x && \\
& 1\cdot a = -c && 1\cdot c = -a \\
& 1\cdot b = -d && 1\cdot d = -b \\
& 1\cdot u = u && 1\cdot v = v.
\end{aligned}
\end{equation*}
We used that reflection reverses orientations of vertical arrows, preserves orientations of horizontal arrows, and reverts orientations of the plane. The reader can check that $1\cdot$ commutes with $\partial$. The subcomplex of invariant chains has a generator $\{y-z\}$ in degree $2$, three generators $\{x,a-c,b-d\}$ in degree $1$, and two $\{u,v\}$ in degree $0$. Note that $\partial (y-z) = 2x+b-d+c-a$, that the closed invariants chains in degree $1$ are precisely generated by $\partial(y-z)$, and that both $u$ and $v$ are exact. This is in agreement with the vanishing of $\HM(f_0,x)^{\Z_2}$.

\subsubsection{Global transversality with symmetry can not be achieved by $C^2$-small perturbations}\label{sssec_transv_impossible}

It is not always possible to perturb a symmetric pair to a symmetric Morse-Smale pair, as the following simple and well-known example shows. Consider a `bagel-like' $2$-torus embedded in $\R^3$. We cut the bagel open with respect to a plane, and assume that the bagel is symmetric with respect to $\Z_2$-action generated by reflection along this cutting plane. As a $\Z_2$-symmetric Morse function we choose a height function along an axis in the cutting plane, see Figure~4. The metric is the one inherited from the Euclidean metric in $\R^3$. The gradient vector field must be tangent to the fixed-point set by symmetry. Hence for any $C^2$-small symmetric perturbation the grey circle will contain two saddles and anti-gradient trajectories connecting them. Such a configuration is not allowed by the Morse-Smale condition.

\begin{figure}\label{fig4}
\begin{center}
\includegraphics[width=140mm]{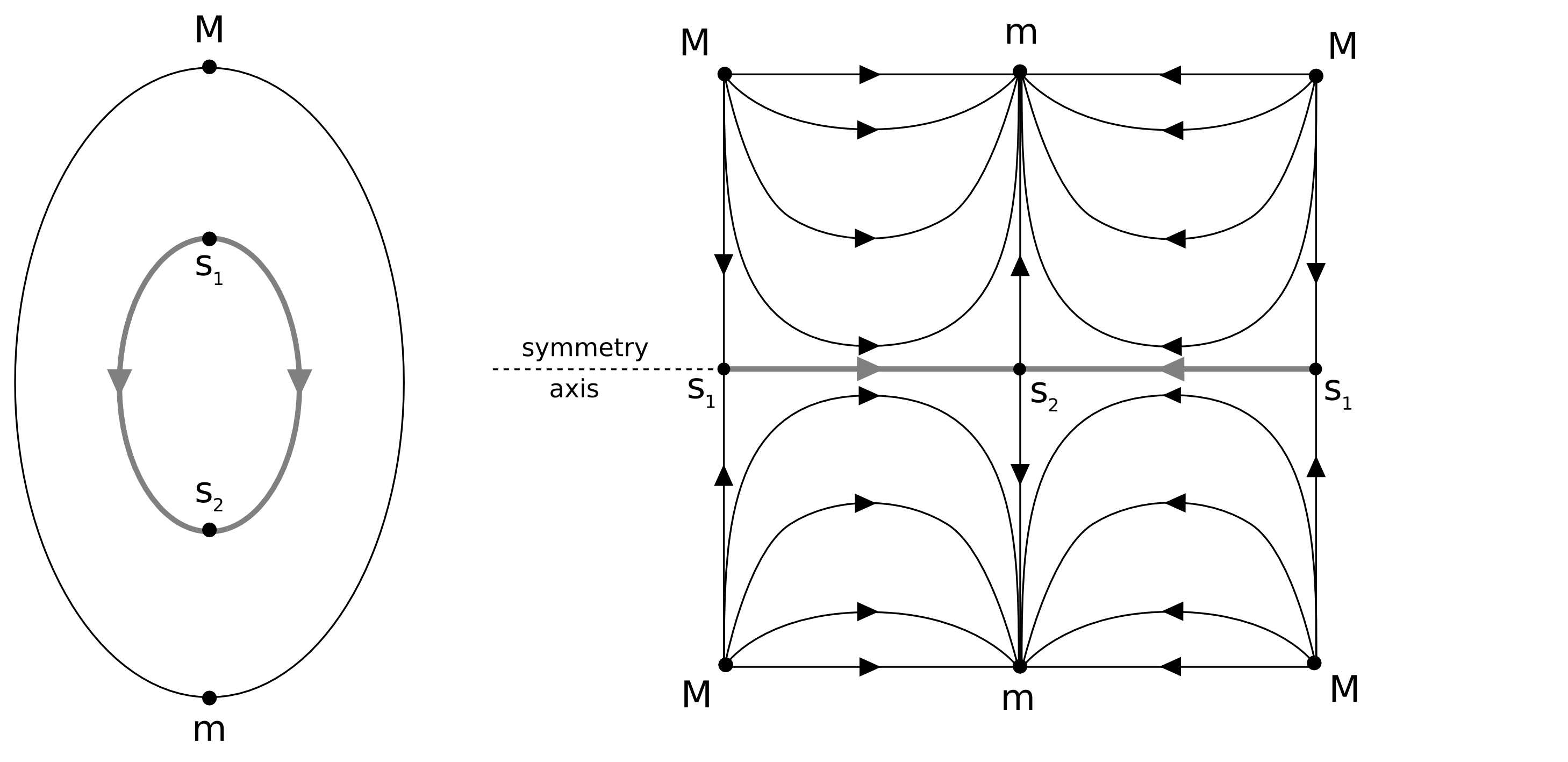}
\caption{\small{On the left we see the bagel cut open along the plane of the paper. The height function is along an axis in this plane through the critical points, it has its maximum at $M$, two saddles at $s_1,s_2$ and its minimum at $m$. On the right we `unwrap' the bagel and draw some anti-gradient flow lines. In both pictures, we draw in grey the saddle-saddle connection in one of the components of the fixed-point set of the action. Such a saddle-saddle connection can not be destroyed by a $\Z_2$-symmetric $C^2$-small perturbation.}}
\end{center}
\end{figure}

\begin{figure}\label{fig5}
\begin{center}
\includegraphics[width=110mm]{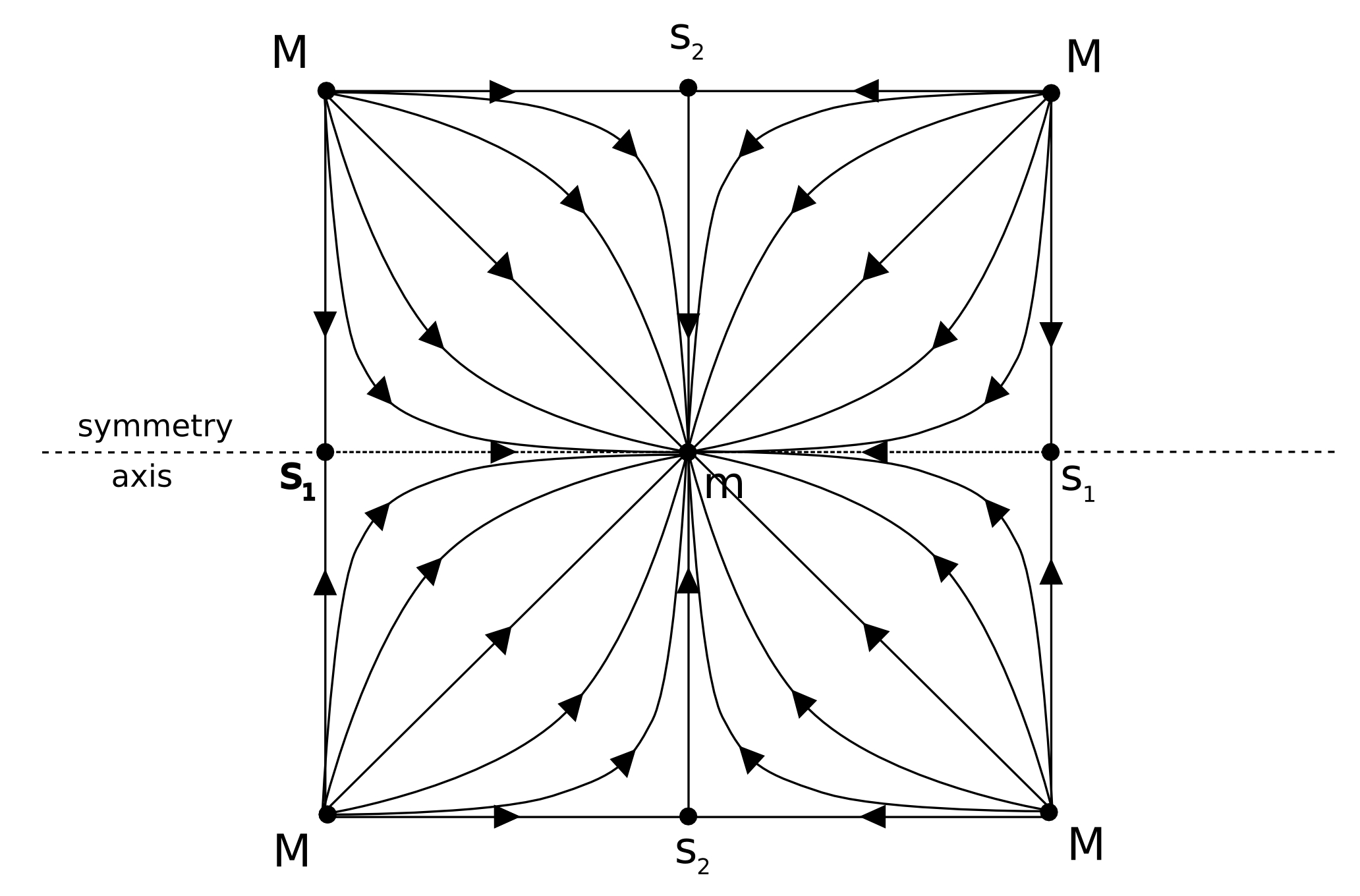}
\caption{{\small The homology of the subcomplex of $\Z_2$-invariant chains has one generator in degrees $1$ and $0$, thus agreeing with the $\Z_2$-equivariant homology.}}
\end{center}
\end{figure}

\subsubsection{Morse homological description of $\Z_k$-equivariant homology}\label{sssec_global_equiv_hom}

Here we compute invariant Morse homology in a simple example to illustrate the fact that invariant and equivariant homologies coincide for finite-cyclic group actions, with $\Q$-coefficients. The proof of this fact was given in~\cite[appendix]{GHHM}, but only now with Theorem~\ref{main2} we get to know that invariant Morse-Smale pairs exist in closed manifolds.

Consider the same $\Z_2$-action on the $2$-torus from Figure~4, but the symmetric Morse function is the one shown in Figure~5. It forms with the obvious flat metric a symmetric Morse-Smale pair. There is again a maximum $M$, two saddles $s_1,s_2$ and a minimum $m$. Let us orient the unstable $2$-disk of $M$ by the canonical orientation of the plane, and the unstable $0$-disk of $m$ by $+1$. The unstable manifold of $s_1$ is oriented to the right, and that of $s_2$ is oriented downwards. The Morse differential vanishes: $\partial M = s_2-s_2 + s_1-s_1=0$, $\partial s_2 = \partial s_1 = m-m = 0$. By the definition of action, $1\in\Z_2$ acts as $1\cdot M=-M$, since reflection reverses orientations of the plane, $1\cdot s_1=s_1$ and $1\cdot s_2=-s_2$ since reflection preserves/reverts orientation on the horizontal/vertical axis, and $1\cdot m=m$ since $m$ is fixed. The subcomplex of invariant chains vanishes in degree $2$, is generated by $s_1$ in degree $1$ and by $m$ in degree $0$. Since the differential vanishes, the homology of this subcomplex has a generator in degrees $0$ and $1$, agreeing with $\Z_2$-equivariant homology.

\section{Local Morse homology and the discrete action functional}\label{sec_properties}

In this section we establish basic properties of invariant and non-invariant local Morse homology groups of discrete action functionals, which are in complete analogy to properties of local Floer homology and local contact homology.

\subsection{Definition and invariance of local Morse homology with symmetries}\label{ssec_invariant}

In the absence of symmetries the material discussed here is absolutely standard and well-known. Aiming at the case where a finite-cyclic group symmetry is present, we start with a discussion of the non-symmetric case.

Let $U$ be an isolating small open neighborhood for $(f,x)$, where $f$ is a smooth function defined on some manifold $X$ without boundary and $x$ is an isolated critical point of~$f$. If a Riemannian metric $\theta$ on $X$ is fixed then one can find an arbitrarily $C^2$-small (even $C^\infty$-small) perturbation $(f',\theta')$ of $(f,\theta)$ which is Morse-Smale on~$U$. For each $j$ one considers the vector space $\CM_j(f',\theta',U)$ over $\Q$ freely generated by the critical points of $f'$ which lie on $U$ and have Morse index equal to $j$. A differential $\partial^{(f',\theta',U)}$ on the graded vector space $\CM_*(f',\theta',U)$ is defined by counting negative $(f',\theta')$-gradient trajectories contained in $U$ connecting critical points in $U$ of index difference one. This differential depends on choices of orientations of the unstable manifolds. We denote the associated homology groups by $\HM_*(f',\theta',U)$. The dependence on the choice of orientations is not made explicit in the notation.

It turns out that for two such pairs $(f',\theta')$, $(f'',\theta'')$ which are $C^2$-close enough to $(f,\theta)$, and Morse-Smale on $U$, there exist chain maps
\begin{equation}\label{chain_map_generic}
\left( \CM_*(f',\theta',U) , \partial^{(f',\theta',U)} \right) \to \left( \CM_*(f'',\theta'',U) , \partial^{(f'',\theta'',U)} \right)
\end{equation}
induced by a certain piece of extra data. Such chain maps can be defined as so-called Floer continuation maps. It follows from the particular way that Floer continuation maps are defined, that two chain maps~\eqref{chain_map_generic} associated to a fixed pair of pairs $(f',\theta')$, $(f'',\theta'')$, together with corresponding choices of orientations, are chain homotopic. Hence the map induced on homology
\begin{equation}\label{floer_continuation_maps_hom}
\Phi_{(f',\theta')}^{(f'',\theta'')} : \HM_*(f',\theta',U) \to \HM_*(f'',\theta'',U)
\end{equation}
does not depend on the extra data. Moreover, when $(f'',\theta'')=(f',\theta')$ and orientations are chosen to be equal, the map on homology is the identity, and these maps make diagrams such as
\begin{equation}\label{canonical_maps_commute}
\xymatrix{
\HM_{*}(f',\theta',U) \ar[d]_{\Phi_{(f',\theta')}^{(f''',\theta''')}} \ar[r]^{\Phi_{(f',\theta')}^{(f'',\theta'')}} & \HM_{*}(f'',\theta'',U) \ar[ld]^{\Phi_{(f'',\theta'')}^{(f''',\theta''')}} \\
\HM_*(f''',\theta''',U)
}
\end{equation}
commutative. All this holds for all pairs $(f',\theta'),(f'',\theta''),(f''',\theta''')$ on a fixed and small $C^2$-neighborhood of $(f,\theta)$, and is proved using a compactness-gluing argument which is ubiquitous in Floer theory, see the book of Schwarz~\cite{schwarz_book} for details.

\begin{remark}\label{rmk_equiv_relations_vector_spaces}
Suppose one is given a collection $\{V_\lambda\}_{\lambda\in\Lambda}$ of vector spaces, and for each pair $(\lambda_1,\lambda_2)\in\Lambda\times\Lambda$ an isomorphism $\Psi_{\lambda_1}^{\lambda_2}:V_{\lambda_1}\to V_{\lambda_2}$ such that $\Psi_\lambda^\lambda = id_{V_\lambda}$ and $\Psi_{\lambda_2}^{\lambda_3} \circ \Psi_{\lambda_1}^{\lambda_2} = \Psi_{\lambda_1}^{\lambda_3}$. Then on $\sqcup_{\lambda\in\Lambda}V_\lambda$ there is an equivalence relation defined by $v_1\sim v_2$ if, and only if $v_2=\Psi_{\lambda_1}^{\lambda_2}(v_1)$, where $v_i\in V_{\lambda_i}$. The associated quotient space, denoted by $V$, has the structure of a vector space such that for each $\lambda$ the quotient projection restricts to an isomorphism $V_\lambda \stackrel{\sim}{\to} V$. 
\end{remark}

The vector spaces $\HM_*(f',\theta',U)$ and maps $\Phi_{(f',\theta')}^{(f'',\theta'')}$, where $(f',\theta')$ and $(f'',\theta'')$ vary on a fixed $C^2$-small neighborhood of $(f,\theta)$, fit in the discussion of Remark~\ref{rmk_equiv_relations_vector_spaces}. We get the local Morse homology $\HM_*(f,\theta,x)$. As the notation suggests, this is independent of $U$. This is easily proved by noting that a small perturbation forces critical points and connecting trajectories to be contained in arbitrarily small neighborhoods of the origin.

Consider $\F = \{f_\lambda\}_{\lambda\in\Lambda}$ a smooth family of smooth functions defined on some manifold $M$, where $\lambda$ varies on the parameter space $\Lambda$. Suppose that $p\in M$ is a common critical point of all $f_\lambda$. One says that $p$ is a {\it uniformly isolated} critical point of $\F$ if there exists a neighborhood of $p$ which is an isolating neighborhood of $(f_\lambda,p)$ for all $\lambda$. Invariance properties in local Morse homology without symmetries are well-known, we summarize them in the statement below which can be proved in a standard fashion using Floer-type continuation maps. In particular, the independence of local Morse homology with respect to the metric follows as a corollary.

\begin{proposition}\label{prop_invariance_non_invariant}
Let $\F = \{f_s\}_{s\in[a,b]}$ be a smooth family of smooth real-valued functions defined on a manifold without boundary, and let $x$ be a uniformly isolated critical point of $\F$. Then for any family of metrics $\{\theta_s\}_{s\in[a,b]}$ there is a special family of so-called continuation isomorphisms
\[
\Theta(\F)_{s_0}^{s_1}:\HM(f_{s_0},\theta_{s_0},x) \to \HM(f_{s_1},\theta_{s_1},x)
\]
parametrized by $a\leq s_0\leq s_1\leq b$, satisfying $\Theta(\F)_{s_0}^{s_2} = \Theta(\F)_{s_1}^{s_2} \circ \Theta(\F)_{s_0}^{s_1}$ for all $a\leq s_0\leq s_1\leq s_2\leq b$. Moreover, $\Theta(\F)_a^b$ depends only on the homotopy class of $\F$ keeping endpoints fixed. 
\end{proposition}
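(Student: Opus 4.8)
The statement is the standard invariance package for local Morse homology along a homotopy of data, so the plan is to run the usual Floer-theoretic continuation argument, being careful about uniform isolation. First I would fix, for each $s\in[a,b]$, an open relatively compact neighborhood $U$ of $x$ which is isolating for $(f_s,p)$ for all $s$ simultaneously; this exists by the uniform isolation hypothesis, and after shrinking $U$ we may assume condition (iii) of Definition~\ref{def_MS} holds robustly for all $s$. By compactness of $[a,b]$ we can subdivide $a=t_0<t_1<\dots<t_m=b$ so finely that, for each $j$, the entire sub-family $\{(f_s,\theta_s)\}_{s\in[t_{j-1},t_j]}$ lies in a $C^2$-neighborhood of $(f_{t_{j-1}},\theta_{t_{j-1}})$ inside which the constructions of the previous paragraphs of the paper apply. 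Within each such window I pick a generic Morse-Smale representative $(f'_{t_j},\theta'_{t_j})$ close to $(f_{t_j},\theta_{t_j})$ and a generic monotone homotopy of data interpolating between consecutive representatives, supported in $U$; counting isolated solutions of the $s$-dependent negative gradient equation that remain in $U$ defines chain maps, whose induced maps on homology I call $\Theta(\F)_{t_{j-1}}^{t_j}$. Composing these across the subdivision and using the identifications $\HM(f_{t_j},\theta_{t_j},x)\cong\HM(f'_{t_j},\theta'_{t_j},U)$ from Remark~\ref{rmk_equiv_relations_vector_spaces} gives the desired $\Theta(\F)_{s_0}^{s_1}$ for the subdivision points; for general $s_0\le s_1$ one refines the subdivision to include them.

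The key properties to verify are then: (a) these maps are isomorphisms, (b) the cocycle (semigroup) identity $\Theta(\F)_{s_0}^{s_2}=\Theta(\F)_{s_1}^{s_2}\circ\Theta(\F)_{s_0}^{s_1}$, and (c) dependence of $\Theta(\F)_a^b$ only on the homotopy class rel endpoints. For (a), the inverse is the continuation map for the reversed homotopy; a standard gluing/concatenation argument shows the composite of a homotopy with its reverse is chain homotopic to the map induced by a constant homotopy, which in turn is chain homotopic to the identity by the argument already cited from Schwarz~\cite{schwarz_book}. For (b), one uses the homotopy-of-homotopies principle: concatenating the homotopy on $[s_0,s_1]$ with the one on $[s_1,s_2]$ is homotopic rel endpoints to a single homotopy on $[s_0,s_2]$, and concatenation of continuation data induces the composite up to chain homotopy — this is the same compactness-gluing argument used to prove \eqref{canonical_maps_commute}. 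For (c), given two homotopies of data $\F,\F'$ from $f_a$ to $f_b$ that are homotopic rel endpoints through a two-parameter family, one builds a parametrized moduli space over the square and uses it to produce a chain homotopy between the two continuation chain maps; the crucial input is that all solutions appearing stay inside $U$, which is guaranteed because $U$ is isolating uniformly along the whole two-parameter family (after shrinking).

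The only genuine subtlety, and the place where uniform isolation is used essentially, is the a priori confinement: one must know that for the $s$-dependent (or $(s,t)$-dependent) gradient equations the relevant finite-energy solutions cannot escape $U$, so that the counts are finite and the moduli spaces compact. This follows because condition (iii) of Definition~\ref{def_MS}, which holds for $(f_s,\theta_s)$ for every $s$ with fixed compact sets $K_0\subset K_1\subset U$, gives a uniform energy barrier preventing a negative gradient trajectory joining critical points in $K_0$ from leaving $\intr K_1$, and this barrier is stable under the small perturbations and the homotopy interpolations performed above; the parametrized versions follow by the same oscillation estimate applied fiberwise. Granting this confinement, everything else is the textbook Floer continuation machinery, and I would simply cite~\cite{schwarz_book} for the chain-level details rather than reproduce them. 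The main obstacle is therefore purely bookkeeping: organizing the subdivision of $[a,b]$ and the choices of auxiliary data so that all the chain homotopies fit together coherently, which is routine but tedious.
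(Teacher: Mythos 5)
Your proposal is correct and follows essentially the same route the paper takes: the paper itself gives no detailed argument for this proposition, appealing to "Floer-type continuation maps" and the compactness--gluing machinery of Schwarz already invoked for \eqref{chain_map_generic} and \eqref{canonical_maps_commute}, which is exactly the scheme you spell out (subdivision into $C^2$-small windows, continuation chain maps confined to a uniformly isolating neighborhood, reversal/gluing for invertibility, homotopy-of-homotopies for the cocycle and rel-endpoints invariance). Your confinement argument via the oscillation condition (iii), stable under small perturbations, is the same mechanism the paper relies on (compare the crossing-energy estimate of Lemma~\ref{lemma_crossing_energy}), so no gap remains.
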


It follows from the above statement and constructions that the local Morse homology of $(f,x)$, denoted as
\begin{equation}
\HM_*(f,x)
\end{equation}
is defined independently of choices of metrics, and stays constant under deformations $(f_s,x)$ through families where $x$ is a uniformly isolated critical point of $\{f_s\}$. This concludes our discussion of the non-invariant case.

We are interested in a version of Proposition~\ref{prop_invariance_non_invariant} under finite cyclic group symmetries. However, we do not have the transversality statement with symmetries necessary to use Floer-theoretic methods to show that continuation maps~\eqref{floer_continuation_maps_hom} are $\Z_k$-equivariant when the local Morse-Smale pairs are $\Z_k$-symmetric. We need another approach. Let us start by recalling the well-known interplay between local Morse homology and more classical local critical groups {\it \`a la} Gromoll-Meyer~\cite{GM}.

Consider a pair $(f,\theta)$. Let an isolated critical point $x$ of $f$ be given. Choose an  open, relatively compact, isolating neighborhood $U$ for $(f,x)$. 

\begin{definition}[Gromoll-Meyer pairs]\label{def_GM_pairs}
A {\it Gromoll-Meyer pair} for $(f,x)$ (in $U$) is a pair $W_- \subset W \subset U$ of closed subsets of $U$ with the following property. There exists a $C^2$-neighborhood $\mathcal N_1$ of $(f,\theta)$ such that for every $(f',\theta')\in\mathcal N_1$ which is Morse-Smale on $U$ one finds an isomorphism
\begin{equation}\label{invariance_map_local_hom}
\tilde\Psi_{(f',\theta',U)} : H_*(W,W_-) \stackrel{\sim}{\to} \HM_*(f',\theta',U).
\end{equation}
Moreover, these maps and the continuation maps~\eqref{floer_continuation_maps_hom} satisfy
\begin{equation}\label{local_continuation_and_pairs}
\Phi_{(f',\theta')}^{(f'',\theta'')} = \Psi_{(f'',\theta'',U)} \circ \left( \Psi_{(f',\theta',U)} \right)^{-1}.
\end{equation}
Here $H_*$ stands for singular homology with rational coefficients.
\end{definition}

Following Conley~\cite{conley}, see also~\cite{salamon} for the instructive case of closed manifolds, one constructs (non-uniquely) Gromoll-Meyer pairs. This construction will be revised in Appendix~\ref{app_invariance}, along with a proof of the proposition below. The maps~\eqref{invariance_map_local_hom} satisfying~\eqref{local_continuation_and_pairs} induce a canonical isomorphism $$ \HM(f,x) \simeq H(W,W_-). $$

\begin{proposition}\label{prop_GM_pairs}
Assume further that the ambient manifold is equipped with a $\Z_k$-action and that $f,\theta,x,U$ are $\Z_k$-invariant. Then there exists a Gromoll-Meyer pair $(W,W_-)$ in $U$ where both sets $W,W_-$ are $\Z_k$-invariant, and a $C^2$-neighborhood $\mathcal{N}_1$ of $(f,\theta)$ such that for every $(f',\theta')\in\mathcal N_1$ which is Morse-Smale on $U$  and $\Z_k$-invariant, the map~\eqref{invariance_map_local_hom} is $\Z_k$-equivariant.
\end{proposition}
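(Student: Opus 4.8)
The plan is to revisit Conley's construction of Gromoll-Meyer pairs and carry it out $\Z_k$-equivariantly by averaging. Recall that the standard construction produces $W_- \subset W \subset U$ as sublevel-type sets for an auxiliary Lyapunov function: one starts with the pair $(f,\theta)$, picks a suitable compact isolating block $N \subset U$ for the isolated invariant set $\{x\}$ of the flow $\phi^t_{f,\theta}$, and sets $W = N$, $W_- = N^-$ (the exit set). First I would observe that, since $f$, $\theta$, $x$, and $U$ are all $\Z_k$-invariant, the negative gradient flow $\phi^t_{f,\theta}$ is $\Z_k$-equivariant and $\{x\}$ is a $\Z_k$-invariant isolated invariant set. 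Conley's theory of isolating blocks then applies $\Z_k$-equivariantly: one may average any constructed isolating block over the finite group $\Z_k$ (finiteness is essential here — averaging a finite collection of closed isolating blocks, or equivalently intersecting/taking unions over the group orbit, preserves the isolating-block property because the flow is equivariant), obtaining a $\Z_k$-invariant isolating block $N$ with $\Z_k$-invariant exit set $N^-$. This yields the invariant Gromoll-Meyer pair $(W,W_-) = (N,N^-)$.

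Next I would address the map~\eqref{invariance_map_local_hom}. For a $\Z_k$-invariant Morse-Smale pair $(f',\theta')$ in a sufficiently small $C^2$-neighborhood $\mathcal{N}_1$ of $(f,\theta)$, the isomorphism $\tilde\Psi_{(f',\theta',U)}$ is itself built out of dynamical data of $\phi^t_{f',\theta'}$ — concretely, via the identification of $\HM_*(f',\theta',U)$ with the singular homology of an appropriate pair of sublevel sets of $f'$ (or of an isolating block for $(f',\theta')$), combined with an excision/deformation-retraction comparison to $(W,W_-)$. The crucial point is that every space and every map entering this chain of identifications can be chosen to depend only on $\phi^t_{f',\theta'}$ and on the already-chosen invariant sets, hence is $\Z_k$-equivariant by functoriality of singular homology applied to equivariant maps of pairs. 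Since $(f',\theta')$ is $\Z_k$-invariant, $a$ acts on $\crit(f')\cap U$ by permutation (up to signs coming from orientations of unstable manifolds, exactly as in Section~\ref{sssec_inv_MH}), and the geometric models for $\HM_*(f',\theta',U)$ carry the corresponding topological $\Z_k$-action; the comparison isomorphism to $H_*(W,W_-)$ intertwines the two, which is precisely $\Z_k$-equivariance of~\eqref{invariance_map_local_hom}. The compatibility~\eqref{local_continuation_and_pairs} with continuation maps is unchanged from the non-equivariant case and requires no new argument.

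The main obstacle, and the step deserving the most care in Appendix~\ref{app_invariance}, is ensuring that the \emph{averaging} of isolating blocks genuinely produces an isolating block and not merely an isolating neighborhood, and that one can simultaneously keep all the auxiliary deformation retractions (used to compare the block for $(f',\theta')$ with the fixed invariant block $(W,W_-)$) equivariant. The standard references construct isolating blocks by smoothing a Lyapunov function and taking regular sublevel sets; to keep this $\Z_k$-equivariant one should average the Lyapunov function over $\Z_k$ before smoothing, and choose regular values that are simultaneously regular for the averaged function — a routine but slightly delicate transversality bookkeeping. I expect no genuine difficulty beyond this, precisely because $\Z_k$ is finite and averaging over a finite group is harmless at the level of the topological (as opposed to Floer-analytic) constructions; this is exactly why the proof of Proposition~\ref{prop_GM_pairs} goes through while a direct Floer-theoretic proof of equivariance of~\eqref{floer_continuation_maps_hom} does not.
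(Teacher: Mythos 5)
Your overall outline (an invariant pair built from invariant Lyapunov-type data, plus an equivariant version of Conley's comparison) matches the strategy of Appendix~\ref{app_invariance}, but the step you dismiss as ``functoriality'' is precisely where the real work lies, and as written it is a gap. The map~\eqref{invariance_map_local_hom} does not compare two singular homologies of invariant spaces: its source is $H_*(W,W_-)$ with the action $a_*$, but its target is the Morse homology of $(f',\theta')$, whose chain-level $\Z_k$-action is the sign-twisted permutation action of Section~\ref{sssec_inv_MH} (namely $j\cdot p=\pm a^j(p)$ according to orientations of unstable manifolds). Saying that every space and map depends only on the equivariant flow does not by itself show that the comparison intertwines these two actions; one must exhibit an explicit chain-level isomorphism and check the signs. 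In the paper this is done by constructing an invariant filtration $N_{-1}\subset N_0\subset\dots\subset N_n$ of $U$, attaching at stage $i$ thickened unstable disks via Conley pairs $(Q_p,Q_p^-)$ chosen coherently along each $\Z_k$-orbit (invariant under the isotropy group of a base point and transported by the action), identifying the generator $p$ with the relative class $\mathscr{D}_p$ of the oriented compactified unstable disk in $H_*(N_{\ind(p)},N_{\ind(p)-1})$, and verifying that the resulting map $T$ is an equivariant chain isomorphism, with the signs matching those of Section~\ref{sssec_inv_MH}; only afterwards does one pass to $H_*(W,W_-)$ by equivariant retraction and excision. Your averaging remark also needs care: unions or intersections of isolating blocks over a group orbit need not be blocks (transversality of the flow to the boundary fails at corners); the safe route, and the one the paper takes, is to define $(W,W_-)$ directly as sublevel sets of the invariant functions $f$ and $f-(a+b)\beta_0$, where $\beta_0$ is an invariant bump function.

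A second, quantitative point is missing: Definition~\ref{def_GM_pairs} requires one fixed pair $(W,W_-)$ to work simultaneously for all Morse--Smale pairs $(f',\theta')$ in a whole $C^2$-neighborhood, whose critical points and connecting trajectories lie near but not at $x$. One needs uniform control guaranteeing that the critical points of $f'$ stay well inside $W$, that their unstable disks exit through $W_-$ before they can reach the region where $U$ is left, and that the level sets used in the comparison are common to $f$ and $f'$. The paper obtains this from the crossing-energy estimate of Lemma~\ref{lemma_crossing_energy} (choosing $a+b$ smaller than the crossing energy $\epsilon$) together with the replacement Lemma~\ref{lemma_local_pairs_support}, which substitutes $(f_1,\theta_1)$ by an invariant pair agreeing with $(f,\theta)$ outside a small invariant neighborhood of $x$ without changing the Morse complex. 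Calling this routine bookkeeping undersells it: without these estimates the pair $(W,W_-)$ is not known to be an index pair for the perturbed flow, and the isomorphism~\eqref{invariance_map_local_hom} is not even defined uniformly over $\mathcal{N}_1$.
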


\begin{remark}
Theorem~\ref{main1} guarantees that the above statement does not concern an empty set of pairs.
\end{remark}

Equivariance of~\eqref{invariance_map_local_hom} is to be understood as follows. The sets $W,W_-$ are $\Z_k$-invariant. If $a:X\to X$ is the diffeomorphism given by the action of $1\in\Z_k$ then on $C_*(W,W_-)$ we consider the $\Z_k$-action by chain maps generated by $a_*$. This induces a $\Z_k$-action on $H_*(W,W_-)$. On $\CM_*(f_1,\theta_1,U)$ we consider the $\Z_k$-action by chain maps described in~\ref{sssec_inv_MH}. Similarly, this induces a $\Z_k$-action on $\HM_*(f_1,\theta_1,U)$. Equivariance of~\eqref{invariance_map_local_hom} is meant with respect to these actions. 

\begin{remark}
As observed above, singular homology of a Gromoll-Meyer pair computes local Morse homology. In Appendix~\ref{app_invariance}, Proposition~\ref{prop_GM_pairs} will be proved by first recalling the construction of $W,W_-,\tilde\Psi_{(f',\theta',U)}$ and then showing that $\tilde\Psi_{f',\theta',U}$ is $\Z_k$-equivariant when the local Morse-Smale pair $(f',\theta')\in\mathcal N_1$ is $\Z_k$-symmetric.
\end{remark}

\begin{remark}
Let $(C_*,\partial)$ be any chain complex over
$\Q$ with a $\Z_k$-action by chain maps. This action induces a $\Z_k$-action on homology $H_*(C,\partial)$. Let $H_*(C,\partial)^{\Z_k} \subset H_*(C,\partial)$ be the subspace of invariant homology classes, and let $(C^{\Z_k}_*,\partial)$ be the subcomplex of invariant chains. There is a natural isomorphism $H_*(C,\partial)^{\Z_k} \simeq H_*(C^{\Z_k},\partial)$. In fact, on $(C_*,\partial)$ we have an averaging chain map $A$ defined by 
\[
Ac = \frac{1}{k} ( c + 1\cdot c + \dots + (k-1)\cdot c )
\]
Then $C^{\Z_k}_* = {\rm im}\ A$. The induced averaging map on homology, still denoted by $A$, also satisfies $H_*(C,\partial)^{\Z_k} = {\rm im}\ A$. Using these operators one checks that the map $H_*(C^{\Z_k},\partial) \to H_*(C,\partial)$ induced by inclusion of complexes is injective, and its image coincides with the image of $A$, i.e. with $H_*(C,\partial)^{\Z_k}$. This provides the desired isomorphism. Naturality in the category of chain complexes over $\Q$ with a $\Z_k$-action by chain maps is again easy to check. $H_*(C,\partial)^{\Z_k}$ is called {\it $\Z_k$-invariant homology}.
\end{remark}


Let $W_-$, $W$, $\mathcal N_1$ be given by Proposition~\ref{prop_GM_pairs}. It follows from this proposition 
that we have induced maps
\[
\overline\Phi_{(f',\theta')}^{(f'',\theta'')} : \HM_*(f',\theta',U)^{\Z_k} \to \HM(f'',\theta'',U)^{\Z_k}
\]
on invariant homology, and that the spaces $\HM_*(f',\theta',U)^{\Z_k}$ and the maps $\overline\Phi_{(f',\theta')}^{(f'',\theta'')}$ again fit in the discussion of Remark~\ref{rmk_equiv_relations_vector_spaces}, where $(f',\theta')$, $(f'',\theta'')$ belong to $\mathcal{N}_1$, are Morse-Smale on $U$ and $\Z_k$-invariant. The induced vector space, denoted by $\HM(f,\theta,x)^{\Z_k}$, is of course isomorphic to $H(W,W_-)^{\Z_k}$. As the notation suggests, this is independent of $U$. A consequence of Proposition~\ref{prop_GM_pairs} is the following analogue of Proposition~\ref{prop_invariance_non_invariant} in the presence of a $\Z_k$-action.

\begin{proposition}\label{prop_invariance_with_symmetries}
Let $\F = \{f_s\}_{s\in[a,b]}$ be a smooth family of $\Z_k$-invariant functions defined on a manifold without boundary equipped with a $\Z_k$-action, and let $x$ be a fixed point of the action, which is also a uniformly isolated critical point of $\F$. For any family $\{\theta_s\}_{s\in[a,b]}$ of $\Z_k$-invariant metrics there exist isomorphisms $$ \Theta^{\Z_k}(\F)_{s_0}^{s_1}:{\HM}(f_{s_0},\theta_{s_0},x)^{\Z_k} \to {\HM}(f_{s_1},\theta_{s_1},x)^{\Z_k} $$ parametrized by $a\leq s_0\leq s_1\leq b$, satisfying $\Theta^{\Z_k}(\F)_{s_0}^{s_2} = \Theta^{\Z_k}(\F)_{s_1}^{s_2} \circ \Theta^{\Z_k}(\F)_{s_0}^{s_1}$ for all $a\leq s_0\leq s_1\leq s_2\leq b$. Moreover, $\Theta^{\Z_k}(\F)_a^b$ depends only on the homotopy class of $\F$ keeping endpoints fixed.
\end{proposition}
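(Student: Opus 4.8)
The plan is to reduce Proposition~\ref{prop_invariance_with_symmetries} to the non-symmetric statement (Proposition~\ref{prop_invariance_non_invariant}) together with the equivariance of Gromoll--Meyer pairs (Proposition~\ref{prop_GM_pairs}). The key point is that, by Proposition~\ref{prop_GM_pairs}, invariant local Morse homology can be expressed purely topologically: for each $s$ we may choose a $\Z_k$-invariant Gromoll--Meyer pair $(W^s,W^s_-)$ for $(f_s,x)$ in a common isolating neighborhood $U$, and then $\HM(f_s,\theta_s,x)^{\Z_k} \simeq H(W^s,W^s_-)^{\Z_k}$ canonically, where the $\Z_k$-action on singular homology is induced by the diffeomorphism $a$ generating the action. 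The continuation isomorphisms will be obtained by comparing the topological pairs along the family, and the $\Z_k$-equivariance will come for free because everything in sight is built from the geometric action $a$, which commutes with all constructions.

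Concretely, I would proceed as follows. First, fix an isolating neighborhood $U$ for $x$ valid for the whole family $\F$ (possible since $x$ is uniformly isolated), and fix $\Z_k$-invariant Gromoll--Meyer pairs $(W^s,W^s_-)$ in $U$; by a standard compactness argument on $[a,b]$ one can use finitely many of them, or argue that nearby parameters share a common pair. Second, invoke the non-symmetric Proposition~\ref{prop_invariance_non_invariant} to obtain continuation isomorphisms $\Theta(\F)_{s_0}^{s_1} : \HM(f_{s_0},\theta_{s_0},x)\to\HM(f_{s_1},\theta_{s_1},x)$; transporting through the isomorphisms $\tilde\Psi_{(f',\theta',U)}$ of~\eqref{invariance_map_local_hom}, these correspond to isomorphisms $H(W^{s_0},W^{s_0}_-)\to H(W^{s_1},W^{s_1}_-)$ induced by the topological continuation (inclusion/excision maps between Gromoll--Meyer pairs along the homotopy). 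Third, observe that because $a$ is a diffeomorphism of $M$ preserving each $f_s$ and each $\theta_s$, it maps $\phi^t_{f_s,\theta_s}$-trajectories to $\phi^t_{f_s,\theta_s}$-trajectories, hence carries any Gromoll--Meyer pair to another one, and more importantly the particular construction of the topological continuation maps (which is geometric, through flow estimates) is natural with respect to $a$. Therefore the continuation maps on $H(W^{s},W^{s}_-)$ commute with $a_*$, so they restrict to the invariant subspaces $H(W^{s},W^{s}_-)^{\Z_k}$, yielding the desired $\Theta^{\Z_k}(\F)_{s_0}^{s_1}$. The composition law $\Theta^{\Z_k}(\F)_{s_0}^{s_2}=\Theta^{\Z_k}(\F)_{s_1}^{s_2}\circ\Theta^{\Z_k}(\F)_{s_0}^{s_1}$ and the homotopy-invariance of $\Theta^{\Z_k}(\F)_a^b$ are inherited directly from the corresponding non-symmetric properties by restriction.

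The main obstacle is establishing that the continuation isomorphisms are genuinely $\Z_k$-equivariant, i.e.\ that one can make a coherent choice of topological continuation maps between the invariant Gromoll--Meyer pairs that intertwines the $a_*$-actions. The subtlety is not in any single map but in coherence: one must either (a) use an invariant version of Conley's construction so that the chosen continuation maps are literally $a$-natural, or (b) invoke the uniqueness part of Proposition~\ref{prop_invariance_non_invariant} (that $\Theta(\F)_a^b$ depends only on the homotopy class) to conclude that the possibly-non-equivariant continuation map and its $a_*$-conjugate agree, hence descend. I expect approach (b) to be cleanest: since $a$ acts on the parameter-independent family $\F$ by an automorphism fixing endpoints, conjugation by $a_*$ sends $\Theta(\F)_{s_0}^{s_1}$ to a continuation map for the same homotopy class, which must coincide with it by uniqueness; equivariance, and thus restriction to invariant homology, then follows. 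The remaining verifications --- that the $\tilde\Psi$ maps are compatible with these identifications, and that invariant homology of the chain complex equals the homology of the invariant subcomplex (already recorded in the Remark following Proposition~\ref{prop_GM_pairs}) --- are routine.
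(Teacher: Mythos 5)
Your overall architecture --- expressing the invariant local homology through $\Z_k$-invariant Gromoll--Meyer pairs via Proposition~\ref{prop_GM_pairs}, covering $[a,b]$ by compactness, and inheriting the composition law and homotopy invariance from the non-symmetric theory --- is exactly the paper's route. The difference lies at the step you single out as the main obstacle, the $\Z_k$-equivariance of the continuation maps. The paper needs neither your option (a) nor your conjugation argument (b): it uses the compatibility \eqref{local_continuation_and_pairs} built into Definition~\ref{def_GM_pairs}. For any two pairs in $\mathcal{N}_1$ which are Morse--Smale on $U$, the homology continuation map is literally $\Psi_{(f'',\theta'',U)}\circ\bigl(\Psi_{(f',\theta',U)}\bigr)^{-1}$; if both pairs are $\Z_k$-invariant (such pairs exist by Theorem~\ref{main1}), Proposition~\ref{prop_GM_pairs} makes both $\Psi$'s equivariant, so the continuation map is a composition of equivariant isomorphisms and restricts to invariant homology with no further argument. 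Your coherence worry is therefore already resolved by \eqref{local_continuation_and_pairs}; note also that, transported to $H(W,W_-)$, the continuation map within a single neighborhood $\mathcal{N}_1$ is the identity rather than a nontrivial ``topological continuation map,'' and the maps $\Theta^{\Z_k}(\F)_{s_0}^{s_1}$ for distant parameters are obtained by chaining finitely many such identifications over a subdivision of $[a,b]$, which is where the composition law and the homotopy invariance come from.

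Concerning your preferred option (b): as stated it has a gap. The uniqueness clause of Proposition~\ref{prop_invariance_non_invariant} says that $\Theta(\F)_a^b$ depends only on the homotopy class of the family $\F$; it does not by itself say that $a_*\circ\Theta(\F)_{s_0}^{s_1}\circ a_*^{-1}$ is again a continuation map for the same family. To get that you must prove a naturality statement: push-forward by the diffeomorphism $a$ intertwines the chain-level Floer-type continuation maps (for the pushed-forward, generally non-invariant, homotopy data) with the chain-level $\Z_k$-action, including the orientation signs entering $j\cdot x=\pm\, a^j(x)$, and you must check that this action is compatible with the canonical identifications between arbitrary, non-invariant Morse--Smale perturbations used to define the action on $\HM(f_s,\theta_s,x)$. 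This is standard but it is precisely the chain-level bookkeeping the Gromoll--Meyer route was designed to avoid; the paper states explicitly that it lacks equivariant transversality for continuation data and for this reason takes the singular-homology detour. Your option (a), an invariant Conley-type construction, is in effect what the proof of Proposition~\ref{prop_GM_pairs} in Appendix~\ref{app_invariance} already carries out, so pursuing it amounts to reproving that proposition rather than adding a new step.
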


Let ${\rm Met}^{\Z_k}$ denote the space of $\Z_k$-invariant metrics on $X$. Given $\theta',\theta''\in{\rm Met}^{\Z_k}$ one considers the family $(1-t)\theta'+t\theta''$, $t\in[0,1]$, of $\Z_k$-invariant metrics which, by Proposition~\ref{prop_invariance_with_symmetries}, yields an isomorphism $\HM(f,\theta',x)^{\Z_k}\to \HM(f,\theta'',x)^{\Z_k}$. These spaces and maps fit into the discussion of Remark~\ref{rmk_equiv_relations_vector_spaces} because ${\rm Met}^{\Z_k}$ is convex. Hence we obtain what we call {\it invariant local Morse homology}
\begin{equation*}
\HM(f,x)^{\Z_k}.
\end{equation*}
Again by the proposition, this is invariant under deformations $(f_s,x)$ through families of $\Z_k$-invariant functions keeping $x$ as a uniformly isolated critical point.

Before moving on to Hamiltonian dynamics, we study the relationship between the {\it continuation maps} from the above propositions and {\it direct sum maps} defined as follows. Let $x\in X$, $y\in Y$ be isolated critical points of $f:X\to\R$, $g:Y\to \R$ where $X,Y$ are manifolds without boundary. Fix metrics $\theta,\alpha$ on $X,Y$ and $U,V$ isolating neighborhoods of $x,y$ respectively. Let $(f',\theta')$, $(g',\alpha')$ be $C^2$-small perturbations of $(f,\theta)$, $(g,\alpha)$ which are Morse-Smale on $U,V$ respectively. Then $(f'\oplus g',\theta'\oplus\alpha')$ is Morse-Smale on $U\times V$ and there exists a chain isomorphism
\begin{equation}\label{isom_tensor_prod_chain}
\begin{aligned}
&(\CM(f'\oplus g',\theta'\oplus\alpha',U\times V),\partial^{\rm Morse}) \\
&= (\CM(f',\theta',U),\partial^{\rm Morse}) \otimes (\CM(g',\alpha',V),\partial^{\rm Morse})
\end{aligned}
\end{equation}
if the orientation of unstable manifold of the critical point $(x,y)$ of $f'\oplus g'$ is the product of the chosen orientations of the unstable manifolds of $x$ and of $y$.
Since we use rational coefficients, the K\"unneth formula yields
\begin{equation}\label{isom_kunneth_formula}
\HM(f\oplus g,(x,y)) \stackrel{\sim}{\to} \HM(f,x) \otimes \HM(g,y).
\end{equation}
If $y$ is a non-degenerate critical point of index $\mu$ then we have an isomorphism
\begin{equation}\label{isom_consideration_no_symm}
\Xi : \HM_*(f\oplus g,(x,y)) \stackrel{\sim}{\to} \HM_{*-\mu}(f,x)
\end{equation}
referred to as a {\it direct sum map}.

In the presence of $\Z_k$-symmetry we claim that~\eqref{isom_consideration_no_symm} is either $\Z_k$-equivariant or $\Z_k$-anti\-equivariant. To see this, we assume that $X,Y$ are equipped with $\Z_k$-actions, and that $f,\theta,x,U$ and $g,\alpha,y,V$ are $\Z_k$-invariant. On $X\times Y$ we consider the induced diagonal $\Z_k$-action. Let us denote the diffeomorphisms generating the actions on $X$, $Y$ or $X\times Y$ all by $1$, with no fear of ambiguity. The definition of the $\Z_k$-actions on the chain complexes in~\eqref{isom_tensor_prod_chain} explained in Section~\ref{sssec_inv_MH}, together with the particular choices of orientations of unstable manifolds which make~\eqref{isom_tensor_prod_chain} valid, imply that under the isomorphism~\eqref{isom_kunneth_formula} the generator $1_*$ on the left-hand side corresponds to $1_*\otimes 1_*$ on the right-hand side. Now assume again that $y$ is a non-degenerate critical point of $g$ of index $\mu$. The linear $\Z_k$-action on $T_yY$ preserves the negative eigenspace $E_-$ of the Hessian $D^2g(y)$, and there are two cases: either it preserves orientation on $E_-$, or reverses it. If it preserves the orientation then $1_*$ acts on $\HM(g,y)$ as the identity and $\Xi$ in~\eqref{isom_consideration_no_symm} satisfies
\begin{equation*}
\Xi \circ 1_* = 1_* \circ \Xi.
\end{equation*}
If it reverses then $1_*$ acts on $\HM(g,y)$ as minus the identity and $\Xi$ in~\eqref{isom_consideration_no_symm} satisfies
\begin{equation*}
\Xi \circ 1_* = - (1_* \circ \Xi).
\end{equation*}
Summarizing we get the following statement: If $1\in\Z_k$ preserves orientations on $E_-$ then $\Xi$ is $\Z_k$-equivariant and induces an isomorphism
\begin{equation}\label{isom_consideration_with_symm}
\Xi^{\Z_k} : \HM_*(f\oplus g,(x,y))^{\Z_k} \stackrel{\sim}{\to} \HM_{*-\mu}(f,x)^{\Z_k}.
\end{equation}
In fact we know slightly more: if $1\in\Z_k$ reverses orientations on $E_-$ then $\Xi$ induces an isomorphism between $\HM_*(f\oplus g,(x,y))^{\Z_k}$ and the $-1$-eigenspace of the $\Z_k$-action on $\HM_{*-\mu}(f,x)$ (which perhaps deserves to be called {\it anti-invariant local Morse homology} and might eventually find dynamical applications).

It follows from the definitions that if $\{f_s\}_{s\in[a,b]}$ is a family having $x$ as a uniformly isolated critical point, and $\{g_s\}_{s\in[a,b]}$ is a family having $y$ as a non-degenerate critical point, then $\{f_s\oplus g_s\}$ has $(x,y)$ as a uniformly isolated critical point and 
\begin{equation}\label{diagram_commutes_continuation_1}
\xymatrix{
\HM_*(f_a,x) \ar[r]^{\Theta} & \HM_*(f_b,x)  \\
\HM_{*+\mu}(f_a\oplus g_a,(x,y)) \ar[u]^{\Xi} \ar[r]^{\Theta} & \HM_{*+\mu}(f_b\oplus g_b,(x,y)) \ar[u]^{\Xi}
}
\end{equation}
commutes, where $\Theta$ are continuation maps. In the presence of symmetry there is a symmetric version of the above commutative diagram
\begin{equation}\label{diagram_commutes_continuation_2}
\xymatrix{
\HM_*(f_a,x)^{\Z_k}  \ar[r]^{\Theta^{\Z_k}} & \HM_*(f_b,x)^{\Z_k}  \\
\HM_{*+\mu}(f_a\oplus g_a,(x,y))^{\Z_k} \ar[u]^{\Xi^{\Z_k}} \ar[r]^{\Theta^{\Z_k}} & \HM_{*+\mu}(f_b\oplus g_b,(x,y))^{\Z_k} \ar[u]^{\Xi^{\Z_k}}
}
\end{equation}
provided that the generator of the induced linear $\Z_k$-action on $T_yY$ preserves the orientation on the negative space of the Hessian of $g_s$ at $y$, for all $s$.

\subsection{Isolated periodic points and their local invariants}

The link between local Hamiltonian dynamics and local Morse theory can be achieved through generating functions. This is a classical subject that goes back to Poincar\'e~\cite{poincare}.

\subsubsection{Generating functions}\label{sssec_gen_functions}

Let $\varphi$ be a germ of symplectic diffeomorphism defined near the fixed point $0$ in $(\R^{2m},\omega_0)$. We use $(x,y)$ to indicate the Lagrangian splitting $\R^{2m} = (\R^m\times0)\oplus (0\times \R^m) \simeq \R^m\times\R^m$. If
\begin{itemize}
\item[(Gen1)] $\R^{2m} = (\R^m\times 0) \oplus d\varphi(0)(0\times \R^m)$
\end{itemize}
holds then, denoting $(\bar x,\bar y) = \varphi(x,y)$, the map $(x,y) \mapsto (x,\bar y)$ defines a diffeomorphism near the origin. Hence we can use $(x,{\bar y})$ as independent coordinates, and consider the $1$-form $\eta := (y-{\bar y})dx + (\bar x-x)d{\bar y}$ in $(x,{\bar y})$-space. It is closed because $\varphi$ is symplectic. Hence there is a primitive $S=S(x,{\bar y})$ near the origin. Such a germ of function $S$ is called a generating function for $\varphi$ and
\begin{itemize}
\item[(Gen2)] For all $(X,Y),(x,y)$ near the origin
\begin{equation*}
\varphi(x,y) = (X,Y) \ \ \ \Leftrightarrow \ \ \ \left\{ \begin{aligned} X-x &= \nabla_2S(x,Y) \\ y-Y &= \nabla_1S(x,Y) \end{aligned} \right.\ .
\end{equation*}
\end{itemize}
Note that $S$ is determined up to an additive constant. There are many other types of generating functions, see~\cite[chapter 9]{McDSal}. In this work the term {\it generating function} refers to those defined as above. 

\begin{lemma}\label{lemma_formula_hessian}
Suppose that the germ $\varphi$ satisfies (Gen1). If $S$ is the generating function as in (Gen2) then $D^2S(0)$ and $d\varphi(0)$ are related by $$ d\varphi(0)-I = -J_0 \ D^2S(0) \ dT(0) $$ where $T$ is the map $T(x,y)=(x,Y)$ with $Y$ defined by $(X,Y)=\varphi(x,y)$, and $J_0$ is the matrix
\[
J_0 = \begin{pmatrix} 0 & -I \\ I & 0 \end{pmatrix}.
\]
Thus, $dT(0)$ provides a linear isomorphism between $\ker (d\varphi(0)-I)$ and $\ker D^2S(0)$.
\end{lemma}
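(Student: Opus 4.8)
The plan is to differentiate the defining relations in (Gen2) at the origin and to reorganize the resulting linear identities into the asserted block-matrix formula.

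First I would record two preliminary observations. Since $\varphi(0)=0$, the coefficients of $\eta=(y-\bar y)dx+(\bar x-x)d\bar y$ all vanish at the origin, so the primitive $S$ satisfies $dS(0)=\eta(0)=0$; in particular $\nabla_1 S(0)=\nabla_2 S(0)=0$, which makes differentiating (Gen2) at $0$ legitimate and keeps the constant terms consistent. Second, condition (Gen1) is precisely the statement that $T(x,y)=(x,Y)$ is a local diffeomorphism at the origin: writing $d\varphi(0)=\begin{pmatrix} A & B \\ C & D\end{pmatrix}$ with respect to the standard splitting $\R^{2m}=(\R^m\times 0)\oplus(0\times\R^m)$, one has $dT(0)=\begin{pmatrix} I & 0 \\ C & D\end{pmatrix}$, and (Gen1) is equivalent to $D$ being invertible, since a vector $d\varphi(0)(0,v)=(Bv,Dv)$ lies in $\R^m\times 0$ exactly when $Dv=0$.

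Next I would use the germ identities $X(x,y)-x=\nabla_2 S(x,Y(x,y))$ and $y-Y(x,y)=\nabla_1 S(x,Y(x,y))$, differentiating each with respect to $x$ and to $y$ at the origin. The chain rule applied to the inner map $Y(x,y)$ contributes the blocks $C$ and $D$, and expanding $D^2 S(0)=\begin{pmatrix} S_{xx} & S_{x\bar y} \\ S_{\bar y x} & S_{\bar y\bar y}\end{pmatrix}$ produces the four block relations
\[
A-I=S_{\bar y x}+S_{\bar y\bar y}C,\qquad B=S_{\bar y\bar y}D,\qquad -C=S_{xx}+S_{x\bar y}C,\qquad I-D=S_{x\bar y}D.
\]
I would then compute the right-hand side $-J_0\,D^2 S(0)\,dT(0)$ directly, block by block; substituting the four relations above shows that its blocks are exactly $A-I$, $B$, $C$ and $D-I$, that is, $-J_0 D^2 S(0) dT(0)=d\varphi(0)-I$, which is the claimed identity.

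Finally, for the isomorphism statement I would note that $J_0$ is invertible and, by the second preliminary observation, $dT(0)$ is a linear isomorphism; hence $d\varphi(0)-I=-J_0 D^2 S(0) dT(0)$ gives $\ker(d\varphi(0)-I)=dT(0)^{-1}\big(\ker D^2 S(0)\big)$, so $dT(0)$ restricts to an isomorphism $\ker(d\varphi(0)-I)\stackrel{\sim}{\to}\ker D^2 S(0)$. The argument is entirely routine; the only step demanding any care is bookkeeping the transposes in the off-diagonal Hessian blocks (recall $S_{x\bar y}$ is the transpose of $S_{\bar y x}$) and applying the chain rule to $Y(x,y)$ consistently, so I do not anticipate a real obstacle.
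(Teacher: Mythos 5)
Your proposal is correct and follows essentially the same route as the paper: differentiate the (Gen2) identities at the origin through the inner map $Y(x,y)$ to obtain the four block relations, multiply out $-J_0\,D^2S(0)\,dT(0)$ block by block, and conclude the kernel isomorphism from the invertibility of $J_0$ and $dT(0)$ (the latter being exactly (Gen1)). The paper merely compresses the final comparison into ``simple inspection,'' so there is no substantive difference.
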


\begin{proof}
Using (Gen2) we get
\begin{equation*}
\begin{aligned}
d\varphi(0)-I &=
\begin{pmatrix}
\nabla_xX-I & \nabla_yX \\
\nabla_xY & \nabla_yY-I
\end{pmatrix} \\
&= \begin{pmatrix}
\nabla_{21}S+\nabla_{22}S\nabla_xY & \nabla_{22}S\nabla_yY \\
-\nabla_{11}S-\nabla_{12}S\nabla_xY & -\nabla_{12}S\nabla_yY
\end{pmatrix}
\end{aligned}
\end{equation*}
where partial derivatives of $S$, $X$ and $Y$ are evaluated at the origin. Simple inspection shows that the last matrix is equal precisely to $-J_0D^2S(0)dT(0)$.
\end{proof}

\subsubsection{Discrete action functionals as generating functions}

Consider a $1$-periodic Hamiltonian $H_t$ defined near $0\in\R^{2n}$ satisfying $dH_t(0)=0$, and choose $N$ large enough such that the local diffeomorphisms $\psi_i$ defined in~\eqref{germs_small_steps} are $C^1$-small and hence satisfy (Gen1). Note that $\psi_i$ is $N$-periodic in $i$. There is a unique $N$-periodic sequence of germs $S_i$ as in (Gen2) normalized by $S_i(0)=0$.


Consider the discrete action functional $\A_{H,k,N}$ defined as in~\eqref{def_discrete_action} and the symplectic diffeomorphism $\Phi_{H,k,N}$ defined near the origin in $\R^{2nkN}$ by
\begin{equation}\label{maps_big_Phi}
\begin{aligned}
& \Phi_{H,k,N}(z_1,\dots,z_{kN}) = (Z_1,\dots,Z_{kN}) \\
& Z_1 = \psi_{kN}(z_{kN}), \ Z_i = \psi_{i-1}(z_{i-1}) \ \forall i=2,\dots,kN .
\end{aligned}
\end{equation}

\begin{lemma}\label{lemma_action_as_gen_function}
$\Phi_{H,k,N}$ satisfies (Gen1) and $\A_{H,k,N}$ is a generating function for $\Phi_{H,k,N}$ as in (Gen2). Moreover, the nullity of $0\in\R^{2nkN}$ as a critical point of $\A_{H,k,N}$ is equal to $\nu(H,k) = \dim \ker d\varphi_H^k(0)-I$. In particular, the origin in $\R^{2nkN}$ is a non-degenerate critical point of $\A_{H,k,N}$ if, and only if, $d\varphi_H^k(0)-I$ is invertible.
\end{lemma}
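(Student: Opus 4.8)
The plan is to verify the generating-function axioms (Gen1) and (Gen2) for the pair $(\A_{H,k,N},\Phi_{H,k,N})$ directly, and then deduce the nullity statement from Lemma~\ref{lemma_formula_hessian} together with a dynamical computation of $\ker(d\Phi_{H,k,N}(0)-I)$.

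\textbf{Step 1: the splitting condition (Gen1).} The ambient space is $\R^{2nkN}$ with Lagrangian splitting given by the $x$- and $y$-coordinates of all the $z_i=(x_i,y_i)$. Writing $(\bar x_i,\bar y_i)$ for the coordinates of $Z_i$, one must check that the $(\bar x,\dots,\bar x)$-columns of $d\Phi_{H,k,N}(0)$ together with the $x$-directions span everything; equivalently that the map sending $(x_1,\dots,x_{kN},y_1,\dots,y_{kN})$ to $(x_1,\dots,x_{kN},\bar y_1,\dots,\bar y_{kN})$ is a local diffeomorphism at the origin. But $\bar y_i$ is the $y$-component of $\psi_{i-1}(z_{i-1})$ (indices mod $kN$), and since each $\psi_j$ satisfies (Gen1) by the choice of $N$, the map $(x_{i-1},y_{i-1})\mapsto(x_{i-1},\bar y_i)$ is a local diffeomorphism; assembling these over all $i$ (they decouple block-wise, each $\bar y_i$ depending only on $z_{i-1}$) gives (Gen1) for $\Phi_{H,k,N}$.

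\textbf{Step 2: $\A_{H,k,N}$ generates $\Phi_{H,k,N}$ (Gen2).} Here I would compute the differential of $\A_{H,k,N}$ in the coordinates $(x_i,\bar y_{i+1})$ and compare with the defining relations of $\Phi_{H,k,N}$. Using the formula \eqref{def_discrete_action}, $\A_{H,k,N}(z_1,\dots,z_{kN})=\sum_i x_i(y_{i+1}-y_i)+S_i(x_i,y_{i+1})$, and recalling that $Z_{i+1}=\psi_i(z_i)$ so that $\bar y_{i+1}=Y$ is exactly the intermediate coordinate appearing in (Gen2) for $\psi_i$, one sees that the condition $\Phi_{H,k,N}(z)=Z$ unravels to the relations $\bar x_{i+1}-x_{i+1}=\nabla_{2}S_i(x_i,\bar y_{i+1})$ wait --- more precisely one writes down $\partial\A/\partial x_i$ and $\partial\A/\partial \bar y_{i+1}$, substitutes the generating-function equations for each $\psi_i$, and matches terms. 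The bookkeeping is routine but must be done carefully with the cyclic indexing mod $kN$; the key point is that $\psi_i(x_i,y_i)=(x_{i+1},y_{i+1})$ via $S_i$ plus the ``shift'' structure $Z_{i+1}=\psi_i(z_i)$ is precisely engineered so that the Euler--Lagrange equations of $\A_{H,k,N}$ reproduce the graph of $\Phi_{H,k,N}$. This identification is classical, going back to Chaperon~\cite{chaperon1,chaperon2}, so I would cite it while indicating the computation.

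\textbf{Step 3: the nullity.} By Lemma~\ref{lemma_formula_hessian}, $dT(0)$ is a linear isomorphism $\ker(d\Phi_{H,k,N}(0)-I)\xrightarrow{\sim}\ker D^2\A_{H,k,N}(0)$, so the nullity of $0$ as a critical point equals $\dim\ker(d\Phi_{H,k,N}(0)-I)$. It remains to identify this with $\nu(H,k)=\dim\ker(d\varphi_H^k(0)-I)$. A vector $(v_1,\dots,v_{kN})$ is fixed by $d\Phi_{H,k,N}(0)$ iff $v_1=d\psi_{kN}(0)v_{kN}$ and $v_i=d\psi_{i-1}(0)v_{i-1}$ for $i\ge 2$; thus $v_i$ is determined by $v_1$ via the partial compositions $d\psi_{i-1}(0)\cdots d\psi_1(0)$, and the cyclic closure condition on $v_1$ becomes $d\psi_{kN}(0)\cdots d\psi_1(0)\,v_1=v_1$. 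But $d\psi_{kN}(0)\cdots d\psi_1(0)=d\varphi_H^{kN/N}(0)=d\varphi_H^k(0)$ by the telescoping definition \eqref{germs_small_steps} of the $\psi_i$ (using $N$-periodicity and that there are $kN$ factors). Hence the fixed subspace is linearly isomorphic, via $v\mapsto v_1$, to $\ker(d\varphi_H^k(0)-I)$, giving nullity $=\nu(H,k)$; the final ``non-degenerate iff $d\varphi_H^k(0)-I$ invertible'' is then immediate.

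\textbf{Main obstacle.} The genuinely delicate part is Step~2: getting the signs, the placement of $\nabla_1$ versus $\nabla_2$, and the cyclic index shifts exactly right so that the first-order conditions for $\A_{H,k,N}$ coincide with (Gen2) for $\Phi_{H,k,N}$ rather than for its inverse or a reindexed variant. The structural inputs (each $\psi_i$ has a generating function, the $\psi_i$ telescope to $\varphi_H^k$, and Lemma~\ref{lemma_formula_hessian}) are all in hand, so once the generating-function identity is pinned down the nullity computation is a short linear-algebra argument.
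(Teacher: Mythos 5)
Your proposal is correct and follows essentially the same route as the paper: (Gen1) from the block structure of $d\Phi_{H,k,N}(0)$ (each $\delta Z_i = d\psi_{i-1}(0)\delta z_{i-1}$), (Gen2) by differentiating $\A_{H,k,N}$ and matching with the relations $X_i-x_{i-1}=\nabla_2S_{i-1}(x_{i-1},Y_i)$, $y_{i-1}-Y_i=\nabla_1S_{i-1}(x_{i-1},Y_i)$ for each $\psi_{i-1}$, and the nullity via Lemma~\ref{lemma_formula_hessian} together with the telescoping identity $d\psi_{kN}(0)\cdots d\psi_1(0)=d\varphi_H^k(0)$ identifying the $1$-eigenspace of $d\Phi_{H,k,N}(0)$ with $\ker(d\varphi_H^k(0)-I)$. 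The only missing piece is to actually carry out the bookkeeping you defer in Step~2, which is precisely the short computation of $\nabla_{Y_i}\A_{H,k,N}$ and $\nabla_{x_i}\A_{H,k,N}$ done explicitly in the paper's proof.
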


\begin{proof}
Consider $x_i,y_i,X_i,Y_i \in \R^n$ defined by $z_i=(x_i,y_i)$, $Z_i=(X_i,Y_i)$. Set 
\[
\begin{aligned}
& x=(x_1,\dots,x_N), \ y=(y_1,\dots,y_N) \\ 
& X=(X_1,\dots,X_N), \ Y=(Y_1,\dots,Y_N) .
\end{aligned}
\]
Note that
\begin{equation}\label{formula_differential_big_phi}
\begin{aligned}
&d\Phi_{H,k,N}(0,\dots,0)\cdot(\delta z_1,\dots,\delta z_{kN}) = (\delta Z_1,\dots,\delta Z_{kN}) \ \ \text{and} \\
&\delta Z_i = d\psi_{i-1}(0)\cdot\delta z_{i-1}.
\end{aligned}
\end{equation}
Hence $\Phi_{H,k,N}$ satisfies (Gen1) since so does each $\psi_i$ by our choice of $N$. 

With these formulas we see that $1$ is an eigenvalue of $d\Phi_{H,k,N}$ if, and only if, it is an eigenvalue of $d\varphi^k_H(0)$, in which case their geometric multiplicities coincide.

Now we wish to show that
\begin{equation}\label{big_gen_func_identities}
( X, Y) = \Phi_{H,k,N}( x, y) \ \Leftrightarrow \left\{ \begin{aligned} &  X -  x = \nabla_2\A_{H,k,N}( x, Y) \\ &  y -  Y = \nabla_1\A_{H,k,N}( x, Y) \end{aligned} \right.
\end{equation}
holds for all $( x, y),( X, Y)$ close enough to the origin in $\R^{2nkN}$. But $( X, Y) = \Phi_{H,k,N}( x, y)$ if, and only if, $Z_i=\psi_{i-1}(z_{i-1})$, which happens precisely when
\begin{equation*}
\begin{aligned}
X_i - x_{i-1} &= \nabla_2S_{i-1}(x_{i-1},Y_i) \\
y_{i-1} - Y_i &= \nabla_1S_{i-1}(x_{i-1},Y_i).
\end{aligned}
\end{equation*}
Using the first of these equations we get
\begin{equation}\label{X_i-x_i}
\begin{aligned}
X_i-x_i 
&= X_i - x_{i-1} + x_{i-1} - x_i \\
&= x_{i-1} - x_i + \nabla_2S_{i-1}(x_{i-1},Y_i).
\end{aligned}
\end{equation}
Now since
\begin{equation*}
\A_{H,k,N}( x, Y) = \sum_{i=1}^{kN} x_i(Y_{i+1}-Y_i) + S_i(x_i,Y_{i+1})
\end{equation*}
where the index $kN+1$ is to be replaced by $1$, we get
\begin{equation}\label{nablaY_iA_H,k,N(vec x,vec Y)}
\nabla_{Y_i}\A_{H,k,N}( x, Y) = x_{i-1} - x_i + \nabla_2S_{i-1}(x_{i-1},Y_i).
\end{equation}
Combining~\eqref{X_i-x_i} and~\eqref{nablaY_iA_H,k,N(vec x,vec Y)} for every $i$ we obtain the first equation in the right hand side of~\eqref{big_gen_func_identities}. The second equation in the right hand side of~\eqref{big_gen_func_identities} is obtained analogously. We have proved the first assertion in the statement of the lemma. 

To prove the other assertions one uses Lemma~\ref{lemma_formula_hessian} to identify the kernel of the Hessian of $\A_{H,k,N}$ at the origin with the $1$-eigenspace of $d\Phi_{H,k,N}(0)$, and then one uses the above mentioned fact that this eigenspace is linearly isomorphic to $\ker d\varphi^k_H(0)-I$.
\end{proof}

\subsubsection{Definition of local invariants, with or without symmetry}

As above, consider a germ $H=H_t$ of a $1$-periodic Hamiltonian near $0 \in \R^{2n}$ satisfying $dH_t(0)=0$ for all $t$, and denote $\varphi = \varphi_H^1$. We say that $N\geq1$ is {\it adapted to $H$} if
\begin{equation}\label{N_adapted}
t_0<t_1, \ t_1-t_0 \leq (2N)^{-1} \ \Rightarrow \ \varphi_H^{t_1} \circ (\varphi_H^{t_0})^{-1} \ \text{satisfies (Gen1).}
\end{equation}
Assuming~\eqref{N_adapted}, the sequence of germs $\psi_i = \varphi_H^{i/N}\circ(\varphi_H^{(i-1)/N})^{-1}$ satisfies (Gen1) and we find generating functions $S_i$ for $\psi_i$ as in (Gen2), normalized by $S_i(0)=0$. The sequences $\psi_i$ and $S_i$ are $N$-periodic in $i\in\Z$. Fix $k\in\N$ and consider the discrete action functional $\A_{H,k,N}$ defined in~\eqref{def_discrete_action}. As observed before, the function $\A_{H,k,N}$ is invariant under the $\Z_k$-action generated by the shift map~\eqref{shift_map_discrete_action}.

Assume that $0\in\R^{2n}$ is an isolated fixed point of $\varphi^k$. Thus $0\in\R^{2nkN}$ is an isolated critical point of $\A_{H,k,N}$ and with the use of Theorem~\ref{main1} we can define the local $\Z_k$-invariant Morse homology groups $\HM(\A_{H,k,N},0)^{\Z_k}$ as explained in Section~\ref{sssec_inv_MH}. One may also consider the usual non-invariant local Morse homology $\HM(\A_{H,k,N},0)$.

\begin{lemma}[Inflation isomorphisms]\label{lemma_inflation}
If $N$ is adapted to $H$ as in~\eqref{N_adapted} then there exists an isomorphism
\begin{equation}\label{inflation_map_non_symmetric}
\mathscr{I}_N : \HM_{*}(\A_{H,k,N},0) \to \HM_{*+nk}(\A_{H,k,N+1},0) \\
\end{equation}
and an isomorphism
\begin{equation}\label{inflation_map_symmetric}
\mathscr{I}^{\Z_k}_N: \HM_{*}(\A_{H,k,N},0)^{\Z_k} \to \HM_{*+2nk}(\A_{H,k,N+2},0)^{\Z_k}.
\end{equation}
\end{lemma}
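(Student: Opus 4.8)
The plan is to exhibit, for each passage from $N$ to $N+1$, an explicit ``stabilization'' relating the discrete action functional $\A_{H,k,N}$ to $\A_{H,k,N+1}$ up to adding a nondegenerate quadratic form, and then to invoke the direct sum maps from Section~\ref{ssec_invariant}. More precisely, going from $N$ to $N+1$ inserts one extra time-step per period, hence $k$ extra copies of $\R^{2n}$ in the domain, i.e., the domain grows from $\R^{2nkN}$ to $\R^{2nk(N+1)}$, a jump of $2nk$ real dimensions. The key point is that refining the partition does not change the time-$1$ germ $\varphi_H^1$, nor the dynamics it generates; only the bookkeeping of intermediate points changes. So one expects $\A_{H,k,N+1}$ to be, after a suitable change of coordinates near the origin, isomorphic (as a germ of function up to the equivalence that preserves local Morse homology) to $\A_{H,k,N} \oplus Q$, where $Q$ is a nondegenerate quadratic form on $\R^{2nk}$ of signature $(-nk,+nk)$ — the signature being forced by the fact that inserting a generating-function step for a $C^1$-small symplectic germ contributes a ``hyperbolic'' block, consistent with the index shift $nk$ recorded in~\eqref{inflation_map_non_symmetric}. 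Then~\eqref{isom_consideration_no_symm} with $\mu = nk$ (or rather its iterate, splitting off the $nk$ negative directions of $Q$) produces the isomorphism $\mathscr{I}_N$, and commutativity of~\eqref{diagram_commutes_continuation_1} with the homotopy from Remark~\ref{rmk_gradings}'s change-of-isotopy setup shows it is canonical.

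First I would make the stabilization precise. Fix $i_0$ and consider the effect of subdividing the $i_0$-th step of each of the $k$ periods into two steps. At the level of generating functions this replaces one germ $S_{i_0}$ generating $\psi_{i_0}$ by two germs $S'_{i_0}, S'_{i_0+1}$ with $S'_{i_0} \# S'_{i_0+1}$ generating $\psi_{i_0}$ in the appropriate composition sense; the new variable $z$ introduced at the intermediate time enters $\A_{H,k,N+1}$ in a block of the form $x \cdot (Y - y) + S'(x,Y)$ whose Hessian at the origin, by Lemma~\ref{lemma_formula_hessian}, is related to $d\psi_{i_0}(0) - I$ and is nondegenerate in the new variables because $\psi_{i_0}$ is $C^1$-small (so $d\psi_{i_0}(0)$ has no eigenvalue $1$ in the relevant block). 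A Morse-Bott / splitting-lemma argument in these $2n$ (times $k$) new variables, i.e., eliminating them by the implicit function theorem applied to $\nabla S' = 0$ in those directions, shows $\A_{H,k,N+1}$ is germ-equivalent to $\A_{H,k,N} \oplus Q$ with $Q$ nondegenerate of index $nk$; the index count comes from the fact that the inserted block has $n$ negative and $n$ positive eigenvalues for each of the $k$ periods. For the symmetric statement~\eqref{inflation_map_symmetric} one cannot do this in a single step while keeping the $\tau$-symmetry, because $\tau$ cyclically permutes the $k$ periods and the newly inserted variables must be permuted accordingly; to keep things $\Z_k$-equivariant one inserts the same intermediate step in all $k$ periods simultaneously (which the above already does) but one must also arrange that the splitting off of $Q$ respects the diagonal $\Z_k$-action and that the generator of the action on the negative eigenspace $E_-$ of $Q$ preserves orientation — this is exactly the hypothesis needed for~\eqref{isom_consideration_with_symm} and~\eqref{diagram_commutes_continuation_2}. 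It is precisely the possible failure of orientation-preservation after a single subdivision that forces passing from $N$ to $N+2$ rather than $N+1$ in the symmetric case: doing two subdivisions squares the relevant sign, landing in the orientation-preserving case, and doubles the index shift to $2nk$. This matches Remark~\ref{rem_inflation_reason}.

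So the steps, in order, are: (1) write down the explicit subdivision of the partition and the corresponding relation $S_{i_0} \leftrightarrow (S'_{i_0}, S'_{i_0+1})$ between generating functions; (2) compute the Hessian of the inserted block via Lemma~\ref{lemma_formula_hessian} and check it is nondegenerate of index $n$ per period, using $C^1$-smallness of $\psi_{i_0}$; (3) apply the splitting lemma to conclude $\A_{H,k,N+1}$ is germ-equivalent to $\A_{H,k,N}\oplus Q$, $Q$ nondegenerate of index $nk$, and invoke~\eqref{isom_consideration_no_symm} to get $\mathscr{I}_N$; (4) in the symmetric case, track the diagonal $\Z_k$-action through the subdivision, observe that a single step may reverse the induced orientation on $E_-$, perform two steps to cure this, and apply~\eqref{isom_consideration_with_symm} to get $\mathscr{I}_N^{\Z_k}$. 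The main obstacle I expect is step~(2)–(3) done \emph{equivariantly}: controlling the sign of the $\Z_k$-action on the negative eigenspace of the inserted quadratic block, since the shift map $\tau$ permutes the $k$ blocks and the orientation it induces on $E_- \cong (\R^{n})^{k}$ (as a permutation-plus-reflection) is not obviously trivial — this is the technical heart, and it is what dictates the $N \to N+2$ stabilization and the degree shift $2nk$ rather than $nk$.
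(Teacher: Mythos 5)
Your overall architecture --- stabilize $\A_{H,k,N}$ by a nondegenerate quadratic form, apply the direct sum maps \eqref{isom_consideration_no_symm} and \eqref{isom_consideration_with_symm}, and explain the passage to $N+2$ in the symmetric case by the parity of the negative eigenspace of the inserted block per period --- is the same as the paper's, and your ``squaring the sign'' explanation of why two insertions cure the orientation issue is exactly Remark~\ref{rmk:symmetric x non_symmetric inflation map}. But two steps need repair. First, your nondegeneracy argument in step (2) is wrong as stated: $C^1$-smallness of $\psi_{i_0}$ makes $d\psi_{i_0}(0)$ \emph{close to the identity}, so its eigenvalues are close to $1$, and $d\psi_{i_0}(0)-I$ --- equivalently, by Lemma~\ref{lemma_formula_hessian}, $D^2S_{i_0}(0)$ --- may perfectly well be singular; for $H\equiv 0$ it vanishes identically, yet the lemma must hold there too. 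The true source of nondegeneracy is the coupling term: the inserted block reads $x_{i_0}(y'-y_{i_0})+S'_{i_0}(x_{i_0},y')+x'(y_{i_0+1}-y')+S'_{i_0+1}(x',y_{i_0+1})$ in the new variables $(x',y')$, and its Hessian at the origin in $(x',y')$ is a small perturbation of the hyperbolic pairing $-x'\cdot y'$ (the off-diagonal block is $-I$, while the diagonal blocks $\nabla_{11}S'_{i_0+1}(0)$ and $\nabla_{22}S'_{i_0}(0)$ are small), hence nondegenerate of signature $(n,n)$ per period; this, not invertibility of $d\psi_{i_0}(0)-I$, is what produces the shift $nk$ (and signature $(2n,2n)$ per period, shift $2nk$, after two insertions).

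Second, and more seriously, subdividing one step per period does not produce $\A_{H,k,N+1}$: it produces the discrete action functional of the same Hamiltonian for a \emph{non-uniform} partition (equivalently, of a time-reparametrized Hamiltonian), built from a different $(N+1)$-periodic sequence of generating functions, and your step (3) identifies it with $\A_{H,k,N+1}$ by an unproven ``germ equivalence''. What is actually needed is a continuation argument: join the two functionals through the family $\A_{H^\tau,k,N+1}$ with $H^\tau_t=\beta'_\tau(t)H_{\beta_\tau(t)}$, note that critical points always correspond to fixed points of the $\tau$-independent germ $\varphi^k_H$, so the origin is a uniformly isolated critical point, and apply Proposition~\ref{prop_invariance_non_invariant} (Proposition~\ref{prop_invariance_with_symmetries} in the invariant case); one must also check that these continuation maps commute with the direct-sum identification, via \eqref{diagram_commutes_continuation_1} and \eqref{diagram_commutes_continuation_2}, so that the composite really assembles into a single isomorphism $\mathscr{I}_N$, respectively $\mathscr{I}_N^{\Z_k}$. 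This is precisely the second half of the paper's proof. Note also that the paper bypasses your fiberwise elimination/parametric splitting step: it inserts identity steps (zero generating functions) per period and decouples $\A_{H,k,N}\oplus Q$ by an explicit linear change of variables followed by the interpolation $\A^+_s$, again controlled by the continuation propositions. Your reduction route can be made to work, but the elimination and normal-form steps must then be carried out $\Z_k$-equivariantly (Lemma~\ref{lemma_invariant_GM_splitting} is the available tool), and the continuation step above is still needed at the end.
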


\begin{proof}
We only prove~\eqref{inflation_map_symmetric} since~\eqref{inflation_map_non_symmetric} is analogous and easier. The functional $\A_{H,k,N}$ is defined near the origin in $\R^{2nkN} \simeq (\R^{2n})^{Nk}$, where a typical point is $(z_1,\dots,z_{kN})$, $z_i = (x_i,y_i)$. It was constructed using the $N$-periodic sequence $S_i$ of generating functions near the origin in $\R^{2n}$. Now consider the $(N+2)$-periodic sequence obtained by inserting the germ $0$ twice between positions $\lambda N$ and $\lambda N+1$, $\lambda\in\Z$. This gives
$$ 
(\dots S_{N-1},S_N,0,0,S_1,S_2,\dots,S_{N-1},S_N,0,0,S_1,S_2 \dots) .
$$ 
For each $\lambda=1,\dots,k$ consider two extra variables $w^1_\lambda = (u^1_\lambda,v^1_\lambda)$ and $w^2_\lambda = (u^2_\lambda,v^2_\lambda)$. Now we have $k(N+2)$ variables belonging to $\R^{2n}$ and we can consider the function $\A^+$ near the origin in $\R^{2nk(N+2)}$ of the form $\A^+ = \A' + \A''$ where 
\[
\begin{aligned}
\A' = \sum_{\lambda=1}^k \sum_{i=1}^{N-1} x_{(\lambda-1)N+i}(&y_{(\lambda-1)N+i+1}-y_{(\lambda-1)N+i}) \\
&+ S_i(x_{(\lambda-1)N+i},y_{(\lambda-1)N+i+1})
\end{aligned}
\]
and
\[
\A'' = \sum_{\lambda=1}^k x_{\lambda N} (v^1_\lambda-y_{\lambda N}) + S_N(x_{\lambda N},v^1_\lambda) + u^1_\lambda(v^2_\lambda-v^1_\lambda) + u^2_\lambda(y_{\lambda N+1}-v^2_\lambda)
\]
where $kN+1$ is to be identified with $1$.

We claim that
\begin{equation}\label{inflation_homology}
\begin{aligned}
& \HM_*(\A_{H,k,N},0)^{\Z_k} \simeq  \HM_{*+2nk}(\A^+,0)^{\Z_k} .
\end{aligned}
\end{equation}
To this end consider new variables
\begin{equation*}
\begin{aligned}
& \zeta^1_\lambda := v^2_\lambda - v^1_\lambda \\
& \xi^1_\lambda := x_{\lambda N} - u^1_\lambda \\
& \zeta^2_\lambda := y_{\lambda N+1} - v^2_\lambda \\
& \xi^2_\lambda := u^1_\lambda - u^2_\lambda \\
& (\lambda=1,\dots,k) \\
\end{aligned}
\end{equation*}
where $kN+1$ is to be replaced by $1$. This gives a new set of independent variables
\[
(x_1,\dots,x_{kN},y_1,\dots,y_{kN},\xi^1_1,\dots,\xi^1_k,\xi^2_1,\dots,\xi^2_k,\zeta^1_1,\dots,\zeta^1_k,\zeta^2_1,\dots,\zeta^2_k)
\]
with respect to which $\A'$ keeps the same form, and $\A''$ takes the form
\[
\begin{aligned}
&\A'' = \\
&= \sum_{\lambda=1}^k \left( \begin{aligned} & x_{\lambda N}(y_{\lambda N+1}-\zeta^1_\lambda-\zeta^2_\lambda-y_{\lambda N}) + S_N(x_{\lambda N},y_{\lambda N+1}-\zeta^1_\lambda-\zeta^2_\lambda) \\ & + (x_{\lambda N}-\xi^1_\lambda)\zeta^1_\lambda + (x_{\lambda N}-\xi^1_\lambda-\xi^2_\lambda)\zeta^2_\lambda \end{aligned} \right) \\
&= \sum_{\lambda=1}^k - \xi^1_\lambda\zeta^1_\lambda - \xi^2_\lambda\zeta^2_\lambda - \xi^1_\lambda\zeta^2_\lambda + x_{\lambda N}(y_{\lambda N+1}-y_{\lambda N}) + S_N(x_{\lambda N},y_{\lambda N+1}-\zeta^1_\lambda-\zeta^2_\lambda).
\end{aligned}
\]
For $s\in[0,1]$ define a family $\A^+_s = \A' + \A''_s$ where
\[
\begin{aligned}
&\A''_s = \\
&= \sum_{\lambda=1}^k - \xi^1_\lambda\zeta^1_\lambda - \xi^2_\lambda\zeta^2_\lambda - s\xi^1_\lambda\zeta^2_\lambda + x_{\lambda N}(y_{\lambda N+1}-y_{\lambda N}) + S_N(x_{\lambda N},y_{\lambda N+1}-s\zeta^1_\lambda-s\zeta^2_\lambda).
\end{aligned}
\]

We claim that the origin in $\R^{2nk(N+2)}$ is a uniformly isolated critical point of the family~$\A^+_s$. To see this first compute partial derivatives with respect to $x$
\begin{equation}\label{partial_x_1}
\begin{aligned}
& i\in \{1,\dots,N-1\} \Rightarrow \\
& \nabla_{x_{(\lambda-1)N+i}}\A^+_s = \nabla_{x_{(\lambda-1)N+i}} \A' \\
& = y_{(\lambda-1)N+i+1}-y_{(\lambda-1)N+i} + \nabla_1S_i(x_{(\lambda-1)N+i},y_{(\lambda-1)N+i+1})
\end{aligned}
\end{equation}
and
\begin{equation}\label{partial_x_2}
\begin{aligned}
& \nabla_{x_{\lambda N}}\A^+_s = \nabla_{x_{\lambda N}}\A''_s \\
& = y_{\lambda N+1} - y_{\lambda N} + \nabla_1S_N(x_{\lambda N},y_{\lambda N+1}-s\zeta^1_\lambda -s\zeta^2_\lambda)
\end{aligned}
\end{equation}
where $\lambda \in \{1,\dots,k\}$. Now we compute partial derivatives with respect to $y$-variables
\begin{equation}\label{partial_y_1}
\begin{aligned}
& i\in \{2,\dots,N\} \Rightarrow \\
& \nabla_{y_{(\lambda-1)N+i}}\A^+_s \\
&= x_{(\lambda-1)N+i-1}-x_{(\lambda-1)N+i} + \nabla_2S_{i-1}(x_{(\lambda-1)N+i-1},y_{(\lambda-1)N+i})
\end{aligned}
\end{equation}
and
\begin{equation}\label{partial_y_2}
\begin{aligned}
& \nabla_{y_{(\lambda-1) N+1}}\A^+_s \\
& = x_{(\lambda-1)N} - x_{(\lambda-1)N+1} + \nabla_2S_N(x_{(\lambda-1)N},y_{(\lambda-1)N+1}-s\zeta^1_{\lambda-1}-s\zeta^2_{\lambda-1})
\end{aligned}
\end{equation}
where $\lambda$ is to be taken modulo $k$.
Finally we compute partial derivatives with respect to $\xi^1_\lambda,\zeta^1_\lambda,\xi^2_\lambda,\zeta^2_\lambda$
\begin{equation}\label{partial_xi_zeta}
\begin{aligned}
& \nabla_{\xi^1_\lambda}\A^+_s = -\zeta^1_\lambda -s\zeta^2_\lambda \\
& \nabla_{\xi^2_\lambda}\A^+_s = -\zeta^2_\lambda \\
& \nabla_{\zeta^1_\lambda}\A_s^+ = -\xi^1_\lambda - s\nabla_2S_N(x_{\lambda N},y_{\lambda N+1}-s\zeta^1_\lambda-s\zeta^2_\lambda) \\
& \nabla_{\zeta^2_\lambda}\A_s^+ = -\xi^2_\lambda -s\xi^1_\lambda - s\nabla_2S_N(x_{\lambda N},y_{\lambda N+1}-s\zeta^1_\lambda-s\zeta^2_\lambda).
\end{aligned}
\end{equation}
In formulas~\eqref{partial_x_1}-\eqref{partial_xi_zeta} all indices appearing as subscripts of $x,y$ are to be taken modulo $kN$, and the index $\lambda$ is to be taken modulo $k$. Let $U$ be a neighborhood of the origin in $\R^{2nkN}$ satisfying
\[
z \in U, \ \nabla\A_{H,k,N}(z)=0 \ \Rightarrow \ z=0.
\]
We claim that $U\times\R^{4nk}$ is a uniformly isolating neighborhood for the origin in $\R^{2nk(N+2)}$ with respect to the family $\{\A^+_s\}$. At a critical point in $U\times \R^{4nk}$, the first two identities in~\eqref{partial_xi_zeta} give $\zeta^1_\lambda = \zeta^2_\lambda=0$ for all $\lambda$. Substituting into \eqref{partial_x_1}-\eqref{partial_y_2}, we find that $(z_1,\dots,z_{kN})$ is a critical point of $\A_{H,k,N}$ in $U$. Hence $z_1=\dots=z_{kN}=0$. Since $\nabla S_i(0,0)=0$ for all $i$ we obtain $\xi_\lambda=0$ for all $\lambda$ from the last two identities in~\eqref{partial_xi_zeta}. Finally note that the family $\A^+_s$ is $\Z_k$-invariant under the corresponding shift map which takes the block $(z_{(\lambda-1)N+1},\dots,z_{\lambda N},(\xi^1_\lambda,\zeta^1_\lambda),(\xi^2_\lambda,\zeta^2_\lambda))$ to $(z_{\lambda N+1},\dots,z_{(\lambda+1)N},(\xi^1_{\lambda+1},\zeta^1_{\lambda+1}),(\xi^2_{\lambda+1},\zeta^2_{\lambda+1}))$. Moreover, $\A^+_1 = \A^+$ and
\begin{equation}\label{intermediate_action}
\A^+_0 = \A_{H,k,N} - \sum_{\lambda=1}^k (\xi^1_\lambda\zeta^1_\lambda+\xi^2_\lambda\zeta^2_\lambda).
\end{equation}
Note that the quadratic form in the right hand side of the equation above has $2nk$ positive and $2nk$ negative eigenvalues, its negative space is the diagonal 
\[
\{\xi_\lambda^1=\zeta_\lambda^1, \ \ \xi_\lambda^2=\zeta_\lambda^2; \ \ \lambda=1,\dots,k\}
\]
on $\R^{4nk} = \R^{2nk} \times \R^{2nk}$, which is a $2nk$-dimensional vector space since it is the product of $k$ diagonals in $\R^{4n}$. In general, if $V$ is an $m$-dimensional space the cyclic shift on $V^k$ is orientation preserving when $m$ is even. In our case we get that the shift on the negative eigenspace of the quadratic form $- \sum_{\lambda=1}^k (\xi^1_\lambda\zeta^1_\lambda+\xi^2_\lambda\zeta^2_\lambda)$ is orientation preserving. Thus Proposition~\ref{prop_invariance_with_symmetries} and the considerations leading to~ \eqref{isom_consideration_with_symm} together imply~\eqref{inflation_homology}. Our claim is proved.

Next we claim that there is a continuation isomorphism
\begin{equation}\label{second_step_inflation}
\HM_*(\A^+,0)^{\Z_k} \simeq \HM_*(\A_{H,k,N+2},0)^{\Z_k}.
\end{equation}
To see this, consider a smooth function $\beta:[0,1]\to[0,1]$ satisfying:
\begin{itemize}
\item $\beta'\geq0$
\item $\beta(t)=\frac{(N+2)t}{N}$ on $[0,\frac{N-1}{N+2}]$
\item $\beta\equiv 1$ on $[\frac{N}{N+2},1]$.
\end{itemize}
Now consider $\beta_\tau(t)=(1-\tau)t+\tau\beta(t)$ and $H^\tau_t = \beta_\tau'(t)H_{\beta_\tau(t)}$. Note that $H=H^0$ and $\A^+ = \A_{H^1,k,N+2}$. It follows from~\eqref{N_adapted} that there is a well-defined smooth family of discrete action functionals $\A_{H^\tau,k,N+2}$, $\tau\in[0,1]$. Since critical points of $\A_{H^\tau,k,N+1}$ are in 1-1 correspondence with fixed points of the ($\tau$-independent) local diffeomorphism $\varphi^k$, we conclude that the origin in $\R^{2nk(N+2)}$ is a uniformly isolated critical point of the family $\A_{H^\tau,k,N+2}$. Hence~\eqref{second_step_inflation} follows from Proposition~\ref{prop_invariance_with_symmetries} because $\A_{H^\tau,k,N+2}$ is a $\Z_k$-invariant family. This provides the desired isomorphisms between corresponding $\Z_k$-invariant local Morse homologies. For the non-invariant version of this argument one uses Proposition~\ref{prop_invariance_non_invariant} instead of Proposition~\ref{prop_invariance_with_symmetries}.

The proof is complete, but we end by noting that both in the invariant and non-invariant case, the corresponding inflation map is a composition of continuation maps and a direct sum map.
\end{proof}

\begin{remark}
\label{rmk:symmetric x non_symmetric inflation map}
Note that in the symmetric inflation map \eqref{inflation_map_symmetric} it is crucial to take $N+2$ instead of $N+1$ for the non-symmetric map \eqref{inflation_map_non_symmetric}. The point here is that the quadratic form $ \sum_{\lambda=1}^k (\xi^1_\lambda\zeta^1_\lambda+\xi^2_\lambda\zeta^2_\lambda)$ in the right hand side of \eqref{intermediate_action} is defined on $\R^{4nk}$ with negative eigenspace isomorphic to $\R^{2nk}$. This ensures that the cyclic shift is orientation preserving for every $k$ and $n$. If we took $N+1$, the quadratic form in the right hand side of \eqref{intermediate_action} would be defined on $\R^{2nk}$ and given by $\sum_{\lambda=1}^k \xi_\lambda\zeta_\lambda$ (the extra variables would be $\xi_\lambda,\zeta_\lambda$ instead of $\xi^1_\lambda,\zeta^1_\lambda,\xi^2_\lambda,\zeta^2_\lambda$). The negative eigenspace of this quadratic form is isomorphic to $\R^{nk}$ and therefore the cyclic shift would be orientation reversing if $n$ is odd and $k$ is even.
\end{remark}

The inflation isomorphisms from Lemma~\ref{lemma_inflation} allow us to consider the directed system of graded groups $\{\HM_{*+nkN}(\A_{H,k,N},0)\}$ indexed by the positive integers $N\geq 1$ which are adapted to $H$ as in~\eqref{N_adapted}. The homomorphism 
\[
\HM_{*+nkN}(\A_{H,k,N},0) \to \HM_{*+nk(N+j)}(\A_{H,k,(N+j)},0)
\]
is, by definition, the grading preserving isomorphism $\mathscr{I}_{N+j-1} \circ \dots \circ \mathscr{I}_{N}$. In the presence of $\Z_k$-symmetry, the directed system is $\{\HM_{*+nk2N}(\A_{H,k,2N},0)^{\Z_k}\}$ indexed by the integers $N\geq1$ such that $2N$ is adapted to $H$. The homomorphism 
\[
\HM_{*+nk2N}(\A_{H,k,2N},0)^{\Z_k} \to \HM_{*+nk2(N+j)}(\A_{H,k,2(N+j)},0)^{\Z_k}
\]
is now given by $\mathscr{I}_{2(N+j-1)} \circ \dots \circ \mathscr{I}_{2(N+1)} \circ \mathscr{I}_{2N}$ ($j$ factors). 

\begin{definition}[Local invariants]\label{def_local_invariants}
The direct limits
\[
\begin{aligned}
\hloc_*(H,k,0) &= \lim_{N \to \infty} \HM_{*+nkN}(\A_{H,k,N},0) \\
\hloc^{\rm inv}_*(H,k,0) &= \lim_{N \to \infty} \HM_{*+nk2N}(\A_{H,k,2N},0)^{\Z_k}
\end{aligned}
\]
are called the {\it non-invariant} and {\it invariant local homologies of $(H,k,0)$}, respectively, which are always well-defined provided $0$ is an isolated fixed point of $\varphi_H^k$.
\end{definition}

\begin{remark}
Before moving on, we note that the isomorphisms $\mathscr{I}_N$ and $\mathscr{I}_N^{\Z_k}$ are compositions of a direct sum map with two continuation maps, according to the nomenclature established in Section~\ref{ssec_invariant}. For instance, inspecting the proof of Lemma~\ref{lemma_inflation} in the symmetric case, we find a non-degenerate quadratic form $Q$ on $\R^{4nk}$ with $2nk$ negative eigenvalues, and a family of functions $\{\A^+_s\}_{s\in[0,1]}$ having $(0,0) \in \R^{2nk(N+2)} \simeq \R^{2nkN} \times \R^{4nk}$ as a uniformly isolated critical point such that $\A^+_0 = \A_{H,k,N} \oplus Q$, see~\eqref{intermediate_action}, and $\A^+_1=\A_{H^1,k,N+2}$. Here $H^1$ is the final point of a family of Hamiltonians $\{H^\tau\}_{\tau\in[0,1]}$ such that $H^0=H$, the germ $\varphi^k_{H^\tau}$ is independent of $\tau$, and the $\Z_k$-action preserves orientations on the negative space of $Q$. Hence $\mathscr{I}_N^{\Z_k}$ is a composition
\begin{equation}\label{diagram_description_inflation}
\xymatrix{
\HM_*(\A_{H,k,N},0)^{\Z_k} \ar[r] & \HM_{*+2nk}(\A_{H,k,N}\oplus Q,(0,0))^{\Z_k} \ar[d] \\
& \HM_{*+2nk}(\A_{H^1,k,N+2},(0,0))^{\Z_k} \ar[d] \\
& \HM_{*+2nk}(\A_{H,k,N+2},(0,0))^{\Z_k}
}
\end{equation}
where the horizontal arrow is a direct sum map, while the vertical arrows are continuation maps. The non-invariant version of $\mathscr{I}_N$ has an analogous description.
\end{remark}

\subsubsection{Grading}\label{sssec_grading}

As before, $H_t$ is a smooth $1$-periodic family of germs of real-valued functions defined near $0\in\R^{2n}$ such that $dH_t(0)=0$ for all $t$. Assume that $0$ is an isolated fixed point of $\varphi^k_H$. The following important statement can be found in~\cite[Proposition~2.5]{mazz_SDM}.

\begin{proposition}[\cite{RSpath,mazz_SDM}]\label{prop_mazz_grading}
Let the family of germs $K_t$ be uniformly $C^2$-close to $H_t$. Then the Morse index of a critical point of $\A_{K,k,N}$ near $0\in\R^{2nkN}$ is equal to $\CZ+nkN$, where $\CZ$ denotes the Conley-Zehnder index of the corresponding $k$-periodic orbit of $\varphi^t_K$.
\end{proposition}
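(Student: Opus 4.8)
The plan is to reduce the computation of the Morse index of a critical point of $\A_{K,k,N}$ to a linear-algebraic statement about the Hessian, and then to identify that Hessian with a standard object whose signature is governed by the Conley--Zehnder index. Let $q$ be a critical point of $\A_{K,k,N}$ near $0 \in \R^{2nkN}$; by Lemma~\ref{lemma_action_as_gen_function} applied to the Hamiltonian $K$ (note $K$ is uniformly $C^2$-close to $H$, so $N$ is still adapted and the generating functions $S_i^K$ exist), $q$ corresponds to a fixed point $z$ of $\varphi^k_K$, hence to a $k$-periodic orbit $\Gamma$ of $\varphi^t_K$. The Morse index of $\A_{K,k,N}$ at $q$ is the number of negative eigenvalues of $D^2\A_{K,k,N}(q)$, which by definition equals $2nkN$ minus the number of non-negative eigenvalues; since the nullity is $\nu$ (the dimension of the $1$-eigenspace of $d\varphi^k_K(z)$, by Lemma~\ref{lemma_action_as_gen_function}), it suffices to compute the signature of $D^2\A_{K,k,N}(q)$ on its non-degenerate part.

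First I would write $D^2\A_{K,k,N}(q)$ explicitly using the formula \eqref{def_discrete_action}: it is a block-circulant-like quadratic form built from the standard pairing $x_i(y_{i+1}-y_i)$ plus the Hessians $D^2 S_i(x_i, y_{i+1})$ along the orbit. Each factor $D^2 S_i$ is related, via Lemma~\ref{lemma_formula_hessian}, to the linearized return map $d\psi_i$ at the relevant point. The key step is then to recognize $D^2\A_{K,k,N}(q)$, up to a change of coordinates with positive-definite or controlled-signature error terms, as the Hessian of the classical discrete action and to invoke the result of Robbin--Salamon \cite{RSpath} (quoted via \cite[Proposition~2.5]{mazz_SDM}) which identifies the index of such a quadratic form with $\CZ(\Gamma) + nkN$, where the shift $nkN$ comes from the $kN$ copies of the "neutral" hyperbolic pairing $\sum x_i y_{i+1}$ contributing $n$ to the index per step. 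Concretely: the unperturbed part $\sum_{i} x_i(y_{i+1}-y_i)$ is a non-degenerate quadratic form on $\R^{2nkN}$ whose negative eigenspace has dimension $nkN$, and the generating-function Hessians perturb this; the net change in index as one "turns on" the $S_i$ is precisely $\CZ(\Gamma)$, by the spectral-flow interpretation of the Conley--Zehnder index.

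The cleanest route is probably not to redo the linear algebra from scratch but to cite \cite[Proposition~2.5]{mazz_SDM} directly for the case $K=H$ at the critical point $0$, and then argue the general statement by a continuity/deformation argument: the family $\tau \mapsto (1-\tau)H + \tau K$ (or any path of germs from $H$ to $K$ staying $C^2$-close) gives a continuous family of discrete action functionals, and the critical point $q$ of $\A_{K,k,N}$ is connected through critical points of $\A_{(1-\tau)H + \tau K, k, N}$ to a critical point near $0$; since the Morse index and the Conley--Zehnder index of the corresponding orbit are both locally constant away from degeneracies and jump by the same amount across degeneracies (this is the content of the Robbin--Salamon lower-semicontinuity/spectral-flow statement, matched to our grading conventions in Remark~\ref{rmk_gradings}), the identity Morse index $= \CZ + nkN$ propagates. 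Alternatively, since $K$ is merely $C^2$-close to $H$ and we only need the statement near $0$, one may observe that a critical point $q$ near $0$ has $d\varphi^k_K$ at the corresponding point close to $d\varphi^k_H(0)$, so its orbit $\Gamma$ is $C^0$-close to the constant orbit at $0$, and then the index formula for $\A_{K,k,N}$ at $q$ is the same computation as for $\A_{H,k,N}$ at $0$ with the linearized data perturbed within a neighborhood where the index is determined by \cite{RSpath}.

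The main obstacle I expect is bookkeeping the grading conventions: the Conley--Zehnder index used here is the \emph{maximal lower semicontinuous extension} (Remark~\ref{rmk_gradings}), which differs from the Robbin--Salamon convention in \cite{RSpath} precisely at degenerate paths (e.g. the constant path gets index $-n$ here versus $0$ there). So the delicate point is to check that the shift $nkN$ is the correct one for \emph{this} normalization, i.e. that the discrepancy between the two index conventions is exactly absorbed into how one counts the nullity $\nu$ and the "boundary" eigenvalues $0$ of the Hessian. Since \cite[Proposition~2.5]{mazz_SDM} was proved with an eye toward discrete action functionals and \cite{mazz_SDM} is cited here as using compatible conventions, the safest writeup quotes that proposition as a black box and only verifies that the germ set-up of this paper (the $N$-periodic generating functions $S_i$, the normalization $S_i(0)=0$, and the relation in Lemma~\ref{lemma_formula_hessian}) matches the hypotheses there; I would flag any convention-matching as the one genuine point requiring care rather than a routine citation.
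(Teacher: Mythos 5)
Your proposal matches the paper's treatment: Proposition~\ref{prop_mazz_grading} is not proved in the paper at all but is quoted directly from \cite[Proposition~2.5]{mazz_SDM} (which in turn rests on \cite[Theorem~4.1]{RSpath}), which is exactly the black-box citation you recommend in your final paragraph, together with the routine check that the germ set-up and generating functions here fit Mazzucchelli's hypotheses. Your caveat about index conventions is also well placed: the paper's choice of the maximal lower semicontinuous extension of $\CZ$ (Remark~\ref{rmk_gradings}) is precisely what makes the formula usable at degenerate critical points as well, e.g.\ when it is applied to $0$ as a critical point of $\A_{H,k,N}$ in the proof of Lemma~\ref{lemma_gradings}.
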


We also need the following general Morse-theoretical fact.

\begin{lemma}\label{lem_general_Morse_theo_fact}
Let the smooth function $f$ have an isolated critical point $p$ with Morse index $\mu_p$ and nullity $\nu_p$. Fix a relatively compact isolating neighborhood $U$ for $(f,p)$. If $f'$ is $C^2$-close enough to $f$ and all critical points of $f'$ in $U$ are non-degenerate, then all critical points of $f'$ in $U$ have Morse indices in $[\mu_p,\mu_p+\nu_p]$.
\end{lemma}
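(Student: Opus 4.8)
The plan is to exploit the fact that being a critical point of $f'$ near $p$, together with $f'$ being $C^2$-close to $f$, forces the Hessian of $f'$ at such a point to be a small perturbation of the Hessian of $f$ at $p$, and then apply the standard fact that eigenvalues of symmetric matrices depend continuously on the matrix. Concretely, first I would pass to a coordinate chart centered at $p$ in which $U$ is identified with a bounded open subset of $\R^d$ and $Df(p) = 0$. Write $A := D^2 f(p)$, a symmetric $d\times d$ matrix whose number of negative eigenvalues is $\mu_p$ and whose kernel has dimension $\nu_p$; thus $A$ has exactly $\mu_p$ eigenvalues in $(-\infty,0)$, exactly $\nu_p$ eigenvalues equal to $0$, and exactly $d - \mu_p - \nu_p$ eigenvalues in $(0,+\infty)$. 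Choose $\delta > 0$ smaller than the absolute value of every nonzero eigenvalue of $A$.

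Next I would fix a slightly smaller closed isolating neighborhood: since $p$ is the only critical point of $f$ in $\overline{U}$ and $|\nabla f|$ is continuous, there is $r > 0$ with $\overline{B_r(p)} \subset U$ and $|\nabla f| \geq c > 0$ on $\overline{U} \setminus B_r(p)$. If $f'$ is $C^1$-close enough to $f$, then $|\nabla f'| > 0$ on $\overline{U} \setminus B_r(p)$, so every critical point $q$ of $f'$ in $U$ actually lies in $B_r(p)$. On the compact set $\overline{B_r(p)}$ the map $x \mapsto D^2 f(x)$ is uniformly continuous, so after shrinking $r$ we may also assume $\|D^2 f(x) - A\| < \delta/3$ for all $x \in \overline{B_r(p)}$. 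Finally, if $f'$ is $C^2$-close enough to $f$ we get $\|D^2 f'(x) - D^2 f(x)\| < \delta/3$ uniformly on $U$; combining, at any critical point $q$ of $f'$ in $U$ (necessarily in $B_r(p)$) we have $\|D^2 f'(q) - A\| < \delta$.

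Now invoke the continuity of eigenvalues under $\|\cdot\|$-perturbation of bounded size $< \delta$: by the min-max (Courant–Fischer) characterization, the $j$-th eigenvalue of $D^2 f'(q)$ differs from the $j$-th eigenvalue of $A$ by at most $\|D^2 f'(q) - A\| < \delta$. Since $q$ is non-degenerate, $D^2 f'(q)$ has no zero eigenvalue, so each of the $\nu_p$ eigenvalues of $A$ equal to $0$ perturbs to an eigenvalue of $D^2 f'(q)$ which is either positive or negative; meanwhile the $\mu_p$ strictly negative eigenvalues of $A$ stay strictly negative (their perturbation is bounded by $\delta$, which is less than their absolute value) and the $d - \mu_p - \nu_p$ strictly positive ones stay strictly positive. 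Hence the number of negative eigenvalues of $D^2 f'(q)$, i.e. the Morse index $\mu_q$ of $q$, is at least $\mu_p$ and at most $\mu_p + \nu_p$, which is exactly the claim. I do not anticipate a serious obstacle here; the only point requiring a little care is the \emph{order} of the quantifiers — one must first fix $\delta$ from $f$ alone, then shrink the neighborhood $B_r(p)$ using continuity of $D^2 f$, and only then demand $C^2$-closeness of $f'$, so that the final bound $\|D^2 f'(q) - A\| < \delta$ holds at every critical point $q$ of $f'$ in $U$ simultaneously.
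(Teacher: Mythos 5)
Your proof is correct and is exactly the argument the paper sketches: $C^1$-closeness confines the critical points of $f'$ in $U$ to a small ball around $p$, where $C^2$-closeness makes their Hessians $\delta$-close to $D^2f(p)$ with $\delta$ below the smallest nonzero eigenvalue modulus, so eigenvalue continuity pinches each Morse index into $[\mu_p,\mu_p+\nu_p]$. The only cosmetic point is that $U$ need not lie in a single chart; but you only need a chart containing the small ball $B_r(p)$ together with a background metric (or finite atlas) to bound $|\nabla f|$ from below on $\overline{U}\setminus B_r(p)$, so nothing essential changes.
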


\begin{proof}[Sketch of proof]
The Hessians at critical points of $f'$ in $U$ are close to the Hessian of $f$ at $p$. Hence they have at least $\mu_p$ negative directions, and no more than $\mu_p+\nu_p$ negative directions.
\end{proof}

\begin{lemma}\label{lemma_gradings}
If $j\not\in [\CZ(H,k),\CZ(H,k)+\nu(H,k)]$ then $\hloc_j(H,k,0)$ and $\hloc^{\rm inv}_j(H,k,0)$ are trivial.
\end{lemma}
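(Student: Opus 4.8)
The strategy is to combine the grading statement for critical points of the discrete action functional (Proposition~\ref{prop_mazz_grading}) with the general Morse-theoretic index bound (Lemma~\ref{lem_general_Morse_theo_fact}) to control the support of $\HM_*(\A_{H,k,N},0)$ and $\HM_*(\A_{H,k,N},0)^{\Z_k}$, and then transport this through the inflation isomorphisms of Lemma~\ref{lemma_inflation} to the direct limit. First I would recall that by Lemma~\ref{lemma_action_as_gen_function} the origin in $\R^{2nkN}$ is a critical point of $\A_{H,k,N}$ with nullity $\nu(H,k)$, and by Proposition~\ref{prop_mazz_grading} (applied to $K=H$, i.e.\ at $C^2$-distance zero) the Conley--Zehnder interpretation tells us that its ``Morse index plus correction'' data is governed by $\CZ(H,k)$. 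More precisely, I would argue that the Morse index $\mu_0$ of the (possibly degenerate) critical point $0$ of $\A_{H,k,N}$ equals $\CZ(H,k)+nkN$: this follows from Proposition~\ref{prop_mazz_grading} by a limiting argument using the lower semicontinuous convention for the Conley--Zehnder index of degenerate paths described in Remark~\ref{rmk_gradings}, since the Morse index is itself lower semicontinuous under perturbation and the nullity is $\nu(H,k)$.

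Next I would invoke Lemma~\ref{lem_general_Morse_theo_fact}: fixing a relatively compact isolating neighborhood $U$ for $(\A_{H,k,N},0)$ and taking a Morse--Smale perturbation $(f',\theta')$ of $\A_{H,k,N}$ that is $C^2$-close, all critical points of $f'$ in $U$ have Morse index in $[\mu_0,\mu_0+\nu(H,k)] = [\CZ(H,k)+nkN,\ \CZ(H,k)+\nu(H,k)+nkN]$. Consequently the chain complex $\CM_*(f',\theta',U)$, hence $\HM_*(\A_{H,k,N},0)$, is supported in that range of degrees; the same holds for the invariant subcomplex $\CM_*(f',\theta',U)^{\Z_k}$ and thus for $\HM_*(\A_{H,k,N},0)^{\Z_k}$, since passing to the subcomplex of invariant chains cannot enlarge the set of degrees where the homology is nonzero. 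By the definition of the local invariants (Definition~\ref{def_local_invariants}), $\hloc_j(H,k,0)$ is the direct limit of $\HM_{j+nkN}(\A_{H,k,N},0)$ over $N$, and by the grading-preserving nature of the inflation isomorphisms $\mathscr{I}_N$ (the degree shift $nk$ exactly compensates the shift $nkN \mapsto nk(N+1)$) the degree $j$ in the limit corresponds to degree $j+nkN$ in $\HM_*(\A_{H,k,N},0)$; this lies in the support range precisely when $j \in [\CZ(H,k),\ \CZ(H,k)+\nu(H,k)]$. Hence $\hloc_j(H,k,0)=0$ outside this interval, and likewise $\hloc^{\rm inv}_j(H,k,0)=0$ using $\mathscr{I}^{\Z_k}_N$ and the direct limit over $2N$.

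The main obstacle I anticipate is the careful identification $\mu_0 = \CZ(H,k)+nkN$ in the \emph{degenerate} case: Proposition~\ref{prop_mazz_grading} as stated gives the Morse index of \emph{nearby non-degenerate} critical points of $\A_{K,k,N}$ for $K$ close to $H$, not directly the Morse index of the degenerate critical point $0$ of $\A_{H,k,N}$ itself. I would handle this by perturbing $H$ to $K$ with $d\varphi_K^k(0)-I$ invertible, observing that the $k$-periodic orbit through $0$ of $\varphi^t_K$ then has a genuine Conley--Zehnder index which, under $K \to H$ in $C^2$, has $\liminf$ equal to $\CZ(H,k)$ by the lower-semicontinuous convention of Remark~\ref{rmk_gradings}; simultaneously the Morse index of the (now non-degenerate) critical point near $0$ of $\A_{K,k,N}$ is $\liminf$-controlled from below by $\mu_0$ and from above by $\mu_0+\nu(H,k)$ via Lemma~\ref{lem_general_Morse_theo_fact}. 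Matching these via Proposition~\ref{prop_mazz_grading} pins down $\mu_0$ in $[\CZ(H,k)+nkN,\ \CZ(H,k)+\nu(H,k)+nkN]$, which is all that is needed for the support statement. A minor additional point to check is that the relevant isolating neighborhood $U$ for $\A_{H,k,N}$ and the perturbations used are compatible with the $\Z_k$-symmetry, but this is guaranteed by Theorem~\ref{main1} and does not affect the degree bound, since the symmetric Morse--Smale perturbation still has all its critical points with Morse index in the same interval.
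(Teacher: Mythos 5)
Your proposal follows essentially the same route as the paper's proof: identify the Morse index and nullity of $0$ as a critical point of $\A_{H,k,N}$ with $\CZ(H,k)+nkN$ and $\nu(H,k)$ (Proposition~\ref{prop_mazz_grading}, Lemma~\ref{lemma_action_as_gen_function}), bound the Morse indices of the critical points of a nearby non-degenerate perturbation by Lemma~\ref{lem_general_Morse_theo_fact}, and transport the resulting support bound through the grading-preserving inflation maps to the direct limit; your treatment of the invariant case (the invariant subcomplex is supported in the same degrees as the full complex) is equivalent to the paper's remark that $\hloc^{\rm inv}$ is a quotient of $\hloc$.

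One caveat about your closing paragraph: the weaker conclusion you settle for there, namely $\mu_0\in[\CZ(H,k)+nkN,\,\CZ(H,k)+\nu(H,k)+nkN]$, is \emph{not} ``all that is needed''. With only this containment, Lemma~\ref{lem_general_Morse_theo_fact} places the perturbed Morse indices in $[\mu_0,\mu_0+\nu(H,k)]$, which could reach as high as $\CZ(H,k)+2\nu(H,k)+nkN$, so the upper end of the support interval would be lost. What the argument requires is the equality $\mu_0=\CZ(H,k)+nkN$, which your main paragraph correctly asserts; it does hold with the convention of Remark~\ref{rmk_gradings}, because the Morse indices of nearby non-degenerate perturbations fill the whole range $[\mu_0,\mu_0+\nu(H,k)]$ (in particular the value $\mu_0$ is attained, e.g.\ by perturbations acting positively on the kernel of the Hessian), each such index equals the corresponding Conley--Zehnder index plus $nkN$, and hence the $\liminf$ defining $\CZ(H,k)$ equals $\mu_0-nkN$. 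Keep the equality, justified along these lines, and drop the remark that the interval containment suffices.
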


\begin{proof}
Let $N$ be adapted to $H$ as in~\eqref{N_adapted}. It is well-known that there exists a $1$-periodic $C^\infty$-small perturbation $K=K_t$ of $H$ such that all $k$-periodic orbits of $\varphi^t_K$ which bifurcate from $0$ are non-degenerate. These $k$-periodic orbits correspond to non-degenerate critical points of $\A_{K,k,N}$ that are close to $0\in\R^{2nkN}$. By Proposition~\ref{prop_mazz_grading} and Lemma~\ref{lemma_action_as_gen_function}, the Morse index and the nullity of $0$ as a critical point of $\A_{H,k,N}$ are equal to $\CZ(H,k)+nkN$ and $\nu(H,k)$, respectively. Lemma~\ref{lem_general_Morse_theo_fact} implies that critical points of $\A_{K,k,N}$ close to the origin have Morse indices in
\[
[\CZ(H,k)+nkN,\CZ(H,k)+\nu(H,k)+nkN].
\]
Hence $\hloc_j(H,k,0)=0$ if $j \not\in [\CZ(H,k),\CZ(H,k)+\nu(H,k)]$. Since $\hloc^{\rm inv}$ is a quotient of $\hloc$ the same conclusion must hold for $\hloc^{\rm inv}_*(H,k,0)$.
\end{proof}

\begin{lemma}\label{lemma_grading_tot_deg}
If $\hloc_{\Delta_\CZ(H,k)\pm n}(H,k,0)\neq 0$ then $0$ is a totally degenerate fixed point of $\varphi^k_H$. The same holds replacing $\hloc$ by $\hloc^{\rm inv}$.
\end{lemma}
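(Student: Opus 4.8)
The plan is to combine the support statement of Lemma~\ref{lemma_gradings} with the index interval comparison quoted in Remark~\ref{rmk_gradings} coming from~\cite{LL1,LL2}, namely
\[
[\CZ(H,k),\CZ(H,k)+\nu(H,k)] \subset [\Delta_{\CZ}(H,k)-n,\Delta_{\CZ}(H,k)+n],
\]
and then to analyze when one of the endpoints $\Delta_{\CZ}(H,k)\pm n$ of the right-hand interval actually lies in the left-hand interval. First I would observe that by Lemma~\ref{lemma_gradings}, the nonvanishing hypothesis $\hloc_{\Delta_{\CZ}(H,k)\pm n}(H,k,0)\neq 0$ forces $\Delta_{\CZ}(H,k)\pm n \in [\CZ(H,k),\CZ(H,k)+\nu(H,k)]$. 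Since $\hloc^{\rm inv}$ is a quotient (more precisely a subquotient built from the invariant part) of $\hloc$, its nonvanishing in degree $\Delta_{\CZ}(H,k)\pm n$ likewise forces $\hloc_{\Delta_{\CZ}(H,k)\pm n}(H,k,0)\neq 0$, so the two cases reduce to the same statement and it suffices to treat $\hloc$.

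Next I would push on the endpoint condition. Suppose the upper endpoint is attained, i.e.\ $\Delta_{\CZ}(H,k)+n \in [\CZ(H,k),\CZ(H,k)+\nu(H,k)]$. Combined with the inclusion above, which gives $\CZ(H,k)+\nu(H,k)\leq \Delta_{\CZ}(H,k)+n$, we get equality $\CZ(H,k)+\nu(H,k)=\Delta_{\CZ}(H,k)+n$, and moreover the inclusion forces $\CZ(H,k)\geq \Delta_{\CZ}(H,k)-n$. The key point is that the length of the Conley--Zehnder interval $[\CZ(H,k),\CZ(H,k)+\nu(H,k)]$ is exactly $\nu(H,k)$, while the length of $[\Delta_{\CZ}(H,k)-n,\Delta_{\CZ}(H,k)+n]$ is $2n$; attaining the upper endpoint of the big interval while staying inside it forces $\nu(H,k)\ge 2n$ unless the lower endpoint of the small interval is also the lower endpoint of the big one. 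Since $\nu(H,k)=\dim\ker(d\varphi_H^k(0)-I)\leq 2n$, we conclude $\nu(H,k)=2n$, i.e.\ $d\varphi_H^k(0)-I$ is nilpotent with full-dimensional kernel, hence $d\varphi_H^k(0)=I$ is unipotent with $1$ as its only eigenvalue; in particular $0$ is a totally degenerate fixed point of $\varphi^k_H$. The case where the lower endpoint $\Delta_{\CZ}(H,k)-n$ is attained is symmetric: it forces $\CZ(H,k)=\Delta_{\CZ}(H,k)-n$ and then $\CZ(H,k)+\nu(H,k)\leq\Delta_{\CZ}(H,k)+n$ together with $\nu(H,k)\geq 2n$ again gives $\nu(H,k)=2n$ and total degeneracy.

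The main obstacle is making precise the elementary interval arithmetic so that the conclusion $\nu(H,k)=2n$ is forced, and in particular confirming that the relevant quantity controlled by~\cite{LL1,LL2} is exactly the nullity $\nu(H,k)=\dim\ker(d\varphi_H^k(0)-I)$ appearing as the length of the Conley--Zehnder interval (this is precisely the content of Lemma~\ref{lemma_action_as_gen_function} and the normalization of $\CZ$ in Remark~\ref{rmk_gradings}). Once that identification is in place, the argument is a short combination of the support lemma and the mean-index interval inclusion, with no analysis required. I would also note explicitly that if $\nu(H,k)=2n$ then $\Delta_{\CZ}(H,k)$ must be an even integer (being the mean index of a totally degenerate path), which is consistent with both endpoints $\Delta_{\CZ}(H,k)\pm n$ being integers, but this is a consistency check rather than a needed step.
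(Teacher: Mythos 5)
Your reduction of the $\hloc^{\rm inv}$ case to $\hloc$ and your use of Lemma~\ref{lemma_gradings} to place $\Delta_{\CZ}(H,k)\pm n$ inside $[\CZ(H,k),\CZ(H,k)+\nu(H,k)]$ are fine, but the interval arithmetic that is supposed to finish the argument does not work. From $\CZ(H,k)+\nu(H,k)=\Delta_{\CZ}(H,k)+n$ together with $\CZ(H,k)\geq\Delta_{\CZ}(H,k)-n$ you can only deduce $\nu(H,k)\leq 2n$, which is vacuous; the step ``attaining the upper endpoint \dots forces $\nu(H,k)\geq 2n$'' has the inequality backwards and is not a consequence of the non-strict inclusion quoted in Remark~\ref{rmk_gradings}. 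Moreover the intermediate statement you are steering towards, $\nu(H,k)=2n$ (equivalently $d\varphi^k_H(0)=I$), is strictly stronger than total degeneracy and is false in general: already for $n=1$ and $H=-\tfrac{\epsilon}{2}x^2-x^4-y^4$ the origin is an isolated fixed point, $d\varphi^1_H(0)$ is a shear (totally degenerate, $\nu=1\neq 2$, not the identity), $\Delta_{\CZ}(H,1)=0$, and $\hloc_{1}(H,1,0)=\HM_{2}(H,0)\neq0$, so the endpoint $\Delta_{\CZ}+n$ is attained without $\nu=2n$. Hence no refinement of the interval count can close the gap.

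The missing ingredient is a strict statement that genuinely goes beyond the inclusion $[\CZ,\CZ+\nu]\subset[\Delta_{\CZ}-n,\Delta_{\CZ}+n]$ of \cite{LL1,LL2}: if some Floquet multiplier of $d\varphi^k_H(0)$ is different from $1$, then the Conley--Zehnder indices of the nondegenerate $k$-periodic orbits bifurcating from $0$ under a small perturbation lie in the \emph{open} interval $(\Delta_{\CZ}(H,k)-n,\Delta_{\CZ}(H,k)+n)$; this is the input from \cite{SZ} that the paper uses. Concretely, the paper's proof perturbs $H$ to a $C^\infty$-small $K$ all of whose bifurcating $k$-periodic orbits are nondegenerate, notes that nonvanishing of $\hloc_{\Delta_{\CZ}\pm n}(H,k,0)$ produces a critical point of $\A_{K,k,N}$ of Morse index $\Delta_{\CZ}(H,k)\pm n+nkN$, identifies this Morse index with $\CZ(z)+nkN$ via Proposition~\ref{prop_mazz_grading}, and then the strict inequality forces all Floquet multipliers of $d\varphi^k_H(0)$ to equal $1$. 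If you wanted to argue without perturbing, you would need the equality characterizations in Long's index iteration theory for $\CZ+\nu=\Delta_{\CZ}+n$ and $\CZ=\Delta_{\CZ}-n$, which again are sharper than the non-strict inclusion you rely on.
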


\begin{proof}
We only need to prove the lemma for $\hloc$. As in the proof of Lemma~\ref{lemma_gradings}, consider $1$-periodic and $C^\infty$-small perturbations $K$ of $H$ such that the $k$-periodic orbits of $\varphi^t_K$ which bifurcate from $0$ are non-degenerate. If $\hloc_{\Delta_\CZ(H,k)+n}(H,k,0)\neq 0$ then for any such $K$, and large $N$, we find at least one critical point $z$ of $\A_{K,k,N}$ near $0\in\R^{2nkN}$ with Morse index equal to $\Delta_{\CZ}(H,K)+n+nkN$. Such a critical point corresponds to a $k$-periodic orbit of $\varphi^t_K$ whose Conley-Zehnder index we denote by $\CZ(z)$. By Proposition~\ref{prop_mazz_grading} the Morse index of $z$ is $\CZ(z)+nkN$. Thus $\CZ(z)=\Delta_{}(H,k)+n$. Now results from~\cite{SZ} imply that $0$ is a totally degenerate fixed point of $\varphi^k_H$. The case $\hloc_{\Delta_\CZ(H,k)-n}(H,k,0)\neq 0$ is identical.
\end{proof}

\subsubsection{Good and admissible iterations}\label{sssec_good_adm}

The notion of good and bad iterations is not only defined for periodic orbits, but also for symplectic matrices. We start this section with the following

\begin{remark}
Let $\ell$ be the number of eigenvalues of $M\in Sp(2n)$ in $(-1,0)$. If $\ell$ is even then all $k\in\N$ are {\it good} iterations of $M$. If $\ell$ is odd then only the odd iterations are good, the even ones are not and will be called {\it bad}.
\end{remark}

The following material is rather standard and included here for completeness as these results play a crucial role in this paper.

\begin{lemma}\label{lem_good_ite_paths}
Let the continuous path $\phi:\R\to Sp(2n)$ satisfy $\phi(t+1)=\phi(t)\phi(1)$, $\phi(0)=I$. If $k$ is a good and admissible iteration for $\phi(1)$ then $\CZ(t\in[0,k]\mapsto \phi(t))$ and $\CZ(t\in[0,1]\mapsto \phi(t))$ have the same parity.
\end{lemma}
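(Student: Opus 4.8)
The plan is to reduce the statement about Conley--Zehnder indices of paths to elementary parity properties of the Conley--Zehnder index under products and iteration, working with a convenient normal form for the path $\phi$. First I would observe that $\CZ(t\in[0,k]\mapsto\phi(t))$ can be computed by decomposing the path into its $k$ consecutive unit-time pieces, each a translate of the basic loop $t\in[0,1]\mapsto\phi(t)$. The classical catenation (``gluing'') formula for the Conley--Zehnder index states that for concatenated symplectic paths the index is additive up to a correction term controlled by the Maslov index of a loop of the endpoints (see~\cite{SZ,RSindex}); the only subtlety is that the intermediate endpoints $\phi(1),\phi(2),\dots,\phi(k-1)$ are in general degenerate (they may have $1$ as an eigenvalue), so one must work with the lower-semicontinuous extension fixed in Remark~\ref{rmk_gradings}. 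For this reason I would \emph{first} perturb: replace $\phi$ by a nearby path $\tilde\phi$ with $\tilde\phi(t+1)=\tilde\phi(t)\tilde\phi(1)$ and $\det(\tilde\phi(1)-I)\neq 0$. Such a perturbation exists because one can perturb $\phi(1)$ off the hypersurface $\{\det(A-I)=0\}$ among symplectic matrices and extend to a path satisfying the cocycle relation; by lower-semicontinuity of $\CZ$ and admissibility of $k$ for $\phi(1)$, the indices $\CZ(t\in[0,j]\mapsto\tilde\phi(t))$ and $\CZ(t\in[0,j]\mapsto\phi(t))$ differ by an amount one can pin down, but crucially the \emph{parities} are controlled once $k$ is a good iteration.

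The key step is then the parity statement for nondegenerate paths: if $\psi:[0,1]\to Sp(2n)$, $\psi(0)=I$, $\det(\psi(1)-I)\neq 0$, and we form the $k$-fold iterate $\psi^k$ (the path $t\in[0,k]\mapsto\psi(t)\psi(1)^{\lfloor t\rfloor}$, suitably normalized), then the parity of $\CZ(\psi^k)-k\,\CZ(\psi)$ depends only on the eigenvalue configuration of $\psi(1)$ in a way that vanishes precisely when $k$ is a good iteration of $\psi(1)$. This is the content of the classical iteration formula for the Conley--Zehnder index; the parity of $\CZ$ is governed by the number of real eigenvalues of $\psi(1)$ in $(-1,0)$ (counted with multiplicity modulo $2$), see~\cite{SZ} and the discussion preceding the lemma. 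When $k$ is good, that count has the same parity for $\psi(1)$ and $\psi(1)^k$, and one deduces $\CZ(\psi^k)\equiv k\,\CZ(\psi)\equiv \CZ(\psi)\pmod 2$ — the last congruence because for any fixed path $k\,\CZ(\psi)$ and $\CZ(\psi)$ have the same parity. Combining with the perturbation step and admissibility (which guarantees the lower-semicontinuous extension does not jump parity when passing from $\tilde\phi$ back to $\phi$, as the nullity of $\phi(1)^k$ equals $k$ times... more precisely equals the nullity of $\phi(1)$ by admissibility) yields $\CZ(t\in[0,k]\mapsto\phi(t))\equiv\CZ(t\in[0,1]\mapsto\phi(t))\pmod 2$.

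**Main obstacle.** The delicate point is bookkeeping the behavior of the \emph{lower-semicontinuous extension} of $\CZ$ under both perturbation and concatenation when the endpoints are degenerate. One must check that: (i) perturbing $\phi(1)$ to a nondegenerate matrix changes $\CZ(t\in[0,1]\mapsto\phi(t))$ by a definite amount equal to (half the) jump prescribed by lower-semicontinuity, and that the analogous jump at period $k$ is the \emph{same parity} thanks to admissibility of $k$ (which forces $1$ to have the same algebraic multiplicity for $\phi(1)$ and $\phi(1)^k$, hence the same nullity, hence the same size of semicontinuity correction); and (ii) the catenation correction terms (Maslov indices of loops of endpoints) are all even, so they do not affect parity. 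Once these two parity accountings are in place, the argument is a short combination of the iteration formula with the trivial fact that $k a\equiv a\pmod 2$ when... in fact one needs $k$ \emph{odd} for that, which is exactly the case when $\ell$ is odd; when $\ell$ is even, \emph{every} $k$ is good and the iteration formula gives $\CZ(\psi^k)\equiv \CZ(\psi)$ directly. So the obstacle is entirely in the degenerate endpoint analysis, and I would handle it by first proving the nondegenerate case cleanly and then a separate lemma controlling the semicontinuity jump via admissibility.
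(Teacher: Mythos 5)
Your overall strategy is the same as the paper's: perturb $\phi$ to a nearby path $\tilde\phi$ still satisfying the cocycle relation with $\det(\tilde\phi(1)-I)\neq 0$, keeping $k$ good and admissible and controlling the lower semicontinuous extension of $\CZ$ at periods $1$ and $k$, and then invoke the nondegenerate case. The paper leaves both steps as a sketch, so the question is whether your account of the nondegenerate case and of the perturbation step is correct — and there are concrete problems with both. Your key intermediate claim, that for nondegenerate $\psi(1)$ the parity of $\CZ(\psi^k)-k\,\CZ(\psi)$ vanishes precisely when $k$ is good, is false: take $n=1$ and $\psi(t)$ the rotation by angle $2\pi\theta t$ with $\theta$ a small positive irrational, so every $k$ is good and admissible; then $\CZ(\psi)=1$ and $\CZ(\psi^k)=2\lfloor k\theta\rfloor+1$, so $\CZ(\psi^k)-k\,\CZ(\psi)$ is odd for every even $k$. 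Likewise, ``the parity of $\CZ$ is governed by the number of real eigenvalues of $\psi(1)$ in $(-1,0)$'' is not the right statement: for nondegenerate endpoints the parity is governed by $\sgn\det(\psi(1)-I)$ (one has $(-1)^{n-\CZ}=\sgn\det(\psi(1)-I)$), and this sign is controlled by the pairs of real eigenvalues in $(0,1)$; eigenvalues in $(-1,0)$ are irrelevant to the parity of $\CZ(\psi)$ itself and enter only because, for even $k$, each pair $\{\lambda,1/\lambda\}$ with $\lambda\in(-1,0)$ flips the sign of $\det(\psi(1)^k-I)$ relative to $\det(\psi(1)-I)$, while admissibility rules out new unit eigenvalues $1$ (including $(-1)^k$ for $k$ even). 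That comparison of determinant signs is the clean nondegenerate argument; your route through $k\,\CZ(\psi)$, which you yourself notice breaks down for even $k$, cannot be repaired by the vague appeal to ``the iteration formula gives it directly.''

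The perturbation step also has a gap as you describe it: from the fact that admissibility forces $\phi(1)$ and $\phi(1)^k$ to have equal nullity you infer ``the same size of semicontinuity correction,'' but the jump of $\CZ$ under a perturbation is not a function of the nullity — it depends on the direction of the perturbation and can be anything between $0$ and the nullity. What is actually needed (and what the paper asserts) is a single perturbation $\tilde\phi$, compatible with $\tilde\phi(t+1)=\tilde\phi(t)\tilde\phi(1)$, for which the lower semicontinuous extensions are \emph{unchanged} simultaneously at period $1$ and at period $k$, while $k$ stays good and admissible for $\tilde\phi(1)$; this requires choosing the perturbation of the eigenvalue-$1$ part of $\phi(1)$ coherently for both periods, not a parity bookkeeping of independent jumps. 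The concatenation/Maslov-loop corrections you introduce are unnecessary once the path is written as $t\mapsto\phi(t)$ on $[0,k]$, and the claim that those correction terms are ``all even'' is neither needed nor justified.
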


\begin{proof}[Sketch of the proof]
This is well-known if $\det \phi(1)-I\neq 0$. If $\det \phi(1)-I= 0$ then the proof follows from the fact that there is a small perturbation $\tilde\phi$ of $\phi$, still satisfying $\tilde\phi(0)=I$, $\tilde\phi(t+1)=\tilde\phi(t)\tilde\phi(1)$, such that $\det( \tilde\phi(1)-I)\neq 0$, $k$ is good and admissible for $\tilde\phi(1)$ and
\[
\begin{aligned}
& \CZ(t\in[0,k]\mapsto \tilde\phi(t)) = \CZ(t\in[0,k]\mapsto \phi(t)) \\
& \CZ(t\in[0,1]\mapsto \tilde\phi(t)) = \CZ(t\in[0,1]\mapsto \phi(t)).
\end{aligned}
\]
The conclusion follows from the non-degenerate case applied to $\tilde\phi$.
\end{proof}

Now we can consider the case of periodic orbits.
Let $H_t$ be a $1$-periodic germ of Hamiltonian near $0\in\R^{2n}$ satisfying $dH_t(0)=0$.

\begin{definition}
Fix $m\in\N$. The number $k\in\N$ is an {\it admissible iteration} for $\varphi_H^m$ if it is admissible for $d\varphi_H^m(0)$. It is a {\it good iteration} for $\varphi_H^m$ if it is a good iteration for $d\varphi_H^m(0)$.
\end{definition}

\begin{lemma}\label{lemma_preserves_orient_Delta_perp}
Assume that $k$ is a good and admissible iteration for $\varphi_H^m$. Let $\Delta$ denote the $k$-diagonal in $\R^{2nmk2N}= (\R^{2nm2N})^k$. Then $D^2\A_{H,mk,2N}(0)$, with $0\in\R^{2nmk2N}$, preserves the splitting $\R^{2nmk2N}=\Delta\oplus \Delta^\bot$. Let $E_-\subset \Delta^\bot$ be the negative eigenspace of $D^2\A_{H,mk,2N}(0)|_{\Delta^\bot}$. Then the $\Z_{km}$-action on $\R^{2nmk2N}=(\R^{2n2N})^{mk}$ (by cyclic shift to the right) preserves orientations on $E_-$.
\end{lemma}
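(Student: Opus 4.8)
The plan is to reduce this statement to the non-degenerate case via a small perturbation, exactly as in the proof of Lemma~\ref{lem_good_ite_paths}, and then to analyze the eigenvalue structure of the iterated action functional directly. First I would observe that $\A_{H,mk,2N}$ is the generating function of the symplectic map $\Phi_{H,mk,2N}$ (Lemma~\ref{lemma_action_as_gen_function}), and by Lemma~\ref{lemma_formula_hessian} the kernel and, more importantly, the signature data of $D^2\A_{H,mk,2N}(0)$ on each invariant subspace is controlled by the corresponding iterated linear map $d\varphi_H^{mk}(0)$. The $\Z_{km}$-action by cyclic shift permutes the $mk$ blocks $\R^{2n2N}$, so $D^2\A_{H,mk,2N}(0)$ commutes with this action; hence it preserves the decomposition into the diagonal $\Delta$ and its orthogonal complement $\Delta^\perp$. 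This gives the first assertion essentially for free. For the second assertion, I would decompose $\Delta^\perp$ further using the characters of $\Z_{km}$: writing $\zeta = e^{2\pi i/km}$, the complexification of $\Delta^\perp$ splits as $\bigoplus_{j=1}^{km-1} V_j$ where $V_j$ is the $\zeta^j$-eigenspace of the shift, and these come in conjugate pairs $V_j \leftrightarrow V_{km-j}$. The cyclic shift acts on $E_-$ preserving orientation iff the total contribution to the ``$-1$-eigenvalue parity'' coming from the real eigenspace $V_{km/2}$ (present only when $km$ is even) has even dimension intersected with $E_-$; the complex pairs always contribute even-dimensional real pieces on which the shift acts by a rotation, hence orientation-preservingly.

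So the crux reduces to: \emph{when $km$ is even, the intersection of $E_-$ with the real subspace $W := V_{km/2}$ (the $(-1)$-eigenspace of the shift, restricted to $\Delta^\perp$) is even-dimensional.} Here I would use Lemma~\ref{lemma_formula_hessian} together with Proposition~\ref{prop_mazz_grading}: the signature of $D^2\A_{H,mk,2N}(0)$ restricted to the part of $\Delta^\perp$ corresponding to ``primitive frequency $km/2$'' should be computable in terms of Conley--Zehnder indices of iterates of the path $t\mapsto d\varphi_H^t(0)$. Concretely, on the $\zeta^{km/2}$-isotypic piece the relevant linear data is $-d\varphi_H^{m}(0)$ versus $d\varphi_H^{m}(0)$, or more precisely an index-difference formula. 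I expect the dimension of $E_- \cap W$ to be expressible, modulo $2$, in terms of $\CZ$-indices attached to $\varphi_H^{m}$ and $\varphi_H^{2m}$, and the hypothesis that $k$ is a \emph{good} iteration for $\varphi_H^m$ (numbers of eigenvalues in $(-1,0)$ of $d\varphi_H^m(0)$ and $d\varphi_H^{mk}(0)$ have the same parity, together with admissibility) is precisely what forces this to be even.

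The technically delicate step—and the main obstacle—will be making the last parity count precise: tracking how the negative inertia of the Hessian of the discretized functional distributes across the Fourier modes of the $\Z_{km}$-action, and identifying the mode-$km/2$ contribution with a quantity whose parity is governed by the good-iteration hypothesis. I would handle this by first treating the linear model (replace $H_t$ by a quadratic Hamiltonian with $d\varphi_H^t(0)$ the given path, which does not change $D^2\A(0)$), then using the block-circulant structure: $D^2\A_{H,mk,2N}(0)$ becomes block-circulant over the $mk$ blocks, so it is block-diagonalized by the discrete Fourier transform, each Fourier block being the Hessian of a ``twisted'' discrete action which is itself the generating-function Hessian of $\zeta^j d\varphi_H^{m}(0)$ (up to the inner $2N$-discretization, which contributes equally to all blocks and can be normalized away using the inflation/grading analysis of Section~\ref{sssec_gen_functions} and Proposition~\ref{prop_mazz_grading}). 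The signature of each such block is then $\CZ$ of a rotated path, and standard index iteration inequalities (Bott-type formulas, as used implicitly via~\cite{LL1,LL2,SZ}) give the parity of $\dim(E_-\cap W)$ in terms of the parity of the number of eigenvalues of $d\varphi_H^m(0)$ in $(-1,0)$. Goodness and admissibility of $k$ close the argument. A cleaner alternative, which I would pursue in parallel, is to avoid Fourier analysis entirely and argue that $E_-\cap W$ is, up to even-dimensional corrections, linearly isomorphic to the negative eigenspace attached to the symplectic map $-I \cdot d\varphi_H^{m}(0)$-type data, reducing directly to the definition of good iteration; this is morally the content of the iteration theory and should make the parity transparent.
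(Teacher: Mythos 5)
Your reduction of the orientation question is sound as far as it goes: the generator has finite order, the complex-conjugate eigenvalue pairs act orientation-preservingly, and orientation is preserved on $E_-$ exactly when the multiplicity of the eigenvalue $-1$ there is even. The first assertion (the splitting $\Delta\oplus\Delta^\perp$ is preserved) is also fine. But the step you yourself single out as the crux --- computing the parity of $\dim(E_-\cap W)$ by block-circulant Fourier diagonalization, signatures of ``twisted'' generating functions, and Bott-type iteration formulas --- is precisely the part you do not prove, and it is not a routine gap: making it precise would require an index statement relating the inertia of each Fourier block to the Conley--Zehnder index of a $\zeta^j$-rotated path, i.e.\ a twisted analogue of Proposition~\ref{prop_mazz_grading}, which is not available in the paper and which you only posit (``I expect\dots'', ``should be computable''). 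As written, the proposal establishes the framework for the second assertion but not the assertion itself.

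What you are missing is a short-cut that makes all of the mode-by-mode analysis unnecessary, and it is how the paper argues. Since the fixed-point set of the $\Z_k$-action generated by $m\in\Z_{km}$ is exactly $\Delta$, the shift has no non-zero fixed vector in $\Delta^\perp$, hence no eigenvalue $+1$ on $E_-$; consequently the multiplicity of $-1$ on $E_-$ is congruent mod $2$ to $\dim E_-$ itself, and it suffices to show that $\dim E_-$ is even. The diagonal inclusion identifies the negative space of $D^2\A_{H,m,2N}(0)$ with the negative space of the restriction of $D^2\A_{H,mk,2N}(0)$ to $\Delta$, so $\dim E_-=\mu_{km}-\mu_m=\CZ(H,km)-\CZ(H,m)+nm(k-1)2N$ by Proposition~\ref{prop_mazz_grading}, and this is even because $nm(k-1)2N$ is even and $\CZ(H,km)-\CZ(H,m)$ has even parity when $k$ is good and admissible (Lemma~\ref{lem_good_ite_paths}). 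So the goodness hypothesis enters only through the parity of the total index jump; there is no need to isolate the $(-1)$-isotypic contribution, which is where your plan stalls.
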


\begin{proof}
$D^2\A_{H,mk,2N}(0)$, $0\in\R^{2nmk2N}$, splits as a quadratic form into $Q\oplus Q^\bot$ according to the orthogonal splitting $\R^{2nmk2N} = \Delta\oplus \Delta^\bot$. This, as usual, just follows from the fact that the Euclidean gradient of $\A_{H,mk,2N}$ is tangent to $\Delta$ at points of $\Delta$. It remains to show that the $\Z_{km}$-action preserves orientations on the negative eigenspace $E_-$ of $Q^\bot$. Obviously $E_-$ is $\Z_{km}$-invariant because $\A_{H,km,2N}$ is $\Z_{km}$-invariant. 
Note that the $\Z_{km}$-action fixes no non-zero vector in $\Delta^\bot$. This follows from the fact that the fixed point set of the $\Z_k$-action induced by $m\in\Z_{km}$ is precisely $\Delta$. Hence, the $\Z_{km}$-action fixes no non-zero vector in $E_-$. The proof will be finished if we can show that $\dim E_-$ is even.

Let $\mu_{km}$ be the Morse index of $0\in\R^{2nmk2N}$ as a critical point of $\A_{H,km,2N}$, and $\mu_m$ be the Morse index of $0\in\R^{2nm2N}$ as a critical point of $\A_{H,m,2N}$. Let $\CZ^{(km)}$ be the Conley-Zehnder index of the path $t\in[0,km]\mapsto d\varphi^t_{H}(0)$, and $\CZ^{(m)}$ be the Conley-Zehnder index of the path $t\in[0,m]\mapsto d\varphi^t_{H}(0)$. Note that these paths might be degenerate, and we take the lower semi-continuous extension of the Conley-Zehnder index. The diagonal inclusion $\R^{2nm2N} \to\Delta$ identifies the negative eigenspace of $D^2\A_{H,m,2N}(0)$ at $0\in\R^{2nm2N}$ with the negative eigenspace of $Q$. Using~\cite[Proposition~2.5]{mazz_SDM} we get
\[
\dim E_- = \mu_{km} - \mu_m = \CZ^{(km)} - \CZ^{(m)} + nm(k-1)2N
\]
which is even precisely when $\CZ^{(km)} - \CZ^{(m)}$ is even. But the latter is even because $k$ is good and admissible (Lemma~\ref{lem_good_ite_paths}).
\end{proof}

\begin{remark}\label{rmk_dim_E_-}
We point out that the proof above reveals the following formula for $\dim E_-$:
\begin{equation*}
\dim E_- = \CZ^{(km)} - \CZ^{(m)} + nm(k-1)2N
\end{equation*}
where $\CZ^{(km)}$ is the Conley-Zehnder index of the path $t\in[0,mk]\mapsto d\varphi^t_{H}(0)$, and $\CZ^{(m)}$ is the Conley-Zehnder index of the path $t\in[0,m]\mapsto d\varphi^t_{H}(0)$. 
\end{remark}

\subsubsection{Effect of changing the Hamiltonian isotopy}\label{sssec_isotopy}

Let $H^s_t$, $s\in[0,1]$, be a smooth family of $1$-periodic Hamiltonians defined near $0\in\R^{2n}$ satisfying $dH^s_t(0)=0$ for all $s,t$. We will say that $N$ is {\it adapted to the family $H^s$} if
\begin{equation}\label{N_adapted_family}
\begin{aligned}
& s\in[0,1], \ t_0<t_1, \ t_1-t_0 \leq (2N)^{-1} \\ 
&\Rightarrow \ \varphi_{H^s}^{t_1} \circ (\varphi_{H^s}^{t_0})^{-1} \ \text{satisfies (Gen1).}
\end{aligned}
\end{equation}

\begin{lemma}\label{lemma_isomorphism_deformation}
Let $k\in\N$ and assume that $\varphi_{H^s}^k$ 
has $0$ as a uniformly (in $s$) isolated fixed point. Then there are isomorphisms
\begin{equation*}
\begin{aligned}
\hloc_*(H^0,k,0) &\to \hloc_*(H^1,k,0) \\
\hloc_*^{\rm inv}(H^0,k,0) &\to \hloc_*^{\rm inv}(H^1,k,0).
\end{aligned}
\end{equation*}
\end{lemma}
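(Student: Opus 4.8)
The plan is to reduce Lemma~\ref{lemma_isomorphism_deformation} to Proposition~\ref{prop_invariance_non_invariant} (in the non-invariant case) and Proposition~\ref{prop_invariance_with_symmetries} (in the invariant case), by exhibiting $\hloc_*(H^0,k,0) \to \hloc_*(H^1,k,0)$ as a limit of continuation isomorphisms at each fixed discretization level $N$. First I would fix an integer $N$ adapted to the whole family $H^s$, $s\in[0,1]$, in the sense of~\eqref{N_adapted_family}; such $N$ exists because $[0,1]$ is compact and (Gen1) is an open condition on $\psi_i$. With this $N$ fixed, the sequences of generating functions $S_i^s$ depend smoothly on $s$, so $\{\A_{H^s,k,N}\}_{s\in[0,1]}$ is a smooth family of functions on a fixed neighborhood of the origin in $\R^{2nkN}$.

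The key point is that $0$ is a \emph{uniformly} isolated critical point of this family. By Lemma~\ref{lemma_action_as_gen_function}, for each $s$ the critical points of $\A_{H^s,k,N}$ near the origin are in bijective correspondence with fixed points of $\varphi^k_{H^s}$ near $0\in\R^{2n}$; since the latter is a uniformly (in $s$) isolated fixed point by hypothesis, there is a neighborhood $U\subset\R^{2nkN}$ that is isolating for $(\A_{H^s,k,N},0)$ for all $s$ simultaneously. Now Proposition~\ref{prop_invariance_non_invariant} applied to $\F=\{\A_{H^s,k,N}\}$ (with any smooth family of metrics, e.g.\ the constant Euclidean one, which is $\Z_k$-invariant) yields a continuation isomorphism
\[
\Theta(\F)_0^1 : \HM_{*}(\A_{H^0,k,N},0) \xrightarrow{\ \sim\ } \HM_{*}(\A_{H^1,k,N},0),
\]
and Proposition~\ref{prop_invariance_with_symmetries} yields its $\Z_k$-invariant counterpart $\Theta^{\Z_k}(\F)_0^1$ on $\HM_*(\cdot)^{\Z_k}$, since the shift map~\eqref{shift_map_discrete_action} makes the whole family $\Z_k$-invariant. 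After the appropriate degree shift by $nkN$ (resp.\ $nk2N$ when $N$ is replaced by $2N$), these are maps on the graded pieces defining $\hloc_*$ and $\hloc_*^{\rm inv}$.

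The remaining step is compatibility with the inflation isomorphisms $\mathscr{I}_N$, $\mathscr{I}_N^{\Z_k}$ of Lemma~\ref{lemma_inflation}, so that the level-$N$ continuation maps assemble into a well-defined map of direct limits. Here I would use the last remark in the proof of Lemma~\ref{lemma_inflation}: each inflation map is a composition of a direct sum map with continuation maps associated to the family $\{\A_{H^\tau,k,N+2}\}$ obtained by reparametrizing time. Since continuation maps depend only on the homotopy class of the family with fixed endpoints (the last sentence of Propositions~\ref{prop_invariance_non_invariant} and~\ref{prop_invariance_with_symmetries}), and direct sum maps commute with continuation maps by the commutative diagrams~\eqref{diagram_commutes_continuation_1} and~\eqref{diagram_commutes_continuation_2}, the square relating the level-$N$ and level-$(N+1)$ (resp.\ level-$(N+2)$) continuation maps commutes: both ways around are continuation maps for homotopic families of functions with the same endpoints, hence equal. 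Passing to the limit over $N$ then produces the desired isomorphisms $\hloc_*(H^0,k,0) \to \hloc_*(H^1,k,0)$ and $\hloc_*^{\rm inv}(H^0,k,0) \to \hloc_*^{\rm inv}(H^1,k,0)$.

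The main obstacle I anticipate is this last compatibility-with-inflation check: verifying that the relevant families of functions are genuinely homotopic rel endpoints requires care, because the inflation procedure involves both inserting trivial generating functions (a purely algebraic/direct-sum operation) and reparametrizing the time interval via the cutoff $\beta$, and one must check that performing the $s$-deformation $H^0\rightsquigarrow H^1$ before versus after these operations gives families that differ only by a homotopy fixing endpoints. The direct sum part is handled by~\eqref{diagram_commutes_continuation_1}–\eqref{diagram_commutes_continuation_2}; the time-reparametrization part needs the observation that $\beta_\tau$ and the $s$-deformation act on independent parameters, so the two two-parameter families they generate are homotopic rel corners. Once this is in place, everything else is a routine diagram chase using results already established in the excerpt.
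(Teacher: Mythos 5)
Your proposal is correct and follows essentially the same route as the paper: fix $N$ adapted to the whole family, use Lemma~\ref{lemma_action_as_gen_function} to see that the origin is a uniformly isolated critical point of $\{\A_{H^s,k,N}\}$, apply Propositions~\ref{prop_invariance_non_invariant} and~\ref{prop_invariance_with_symmetries} to get level-$N$ continuation isomorphisms, and then check compatibility with the inflation maps via their decomposition~\eqref{diagram_description_inflation} into direct sum and continuation maps together with the commutative diagrams~\eqref{diagram_commutes_continuation_1}--\eqref{diagram_commutes_continuation_2}. Your extra attention to the homotopy-rel-endpoints issue in the inflation step is a finer articulation of what the paper treats implicitly, but it is the same argument.
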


\begin{proof}
Let $N$ be adapted to the family $H^s$ as in~\eqref{N_adapted_family}. We claim that there are continuation isomorphisms 
\begin{equation}\label{continuation_isom_pre}
\begin{aligned}
\Theta : \HM_*(\A_{H^0,k,N},0) &\to \HM_*(\A_{H^1,k,N},0) \\
\Theta^{\Z_k} : \HM_*(\A_{H^0,k,2N},0)^{\Z_k} &\to \HM_*(\A_{H^1,k,2N},0)^{\Z_k}.
\end{aligned}
\end{equation}
We only work out $\Theta$, the map $\Theta^{\Z_k}$ is handled in a similar way.
Denote $\Phi_s=\Phi_{H^s,k,N}$ and $\A_s = \A_{H^s,k,N}$. By Lemma~\ref{lemma_action_as_gen_function}, (Gen1) holds for all $\Phi_s$ and $\A_s$ is generating function for the germ $\Phi_s$ in the sense of (Gen2). Our assumptions imply that the origin in $\R^{2nkN}$ is a uniformly isolated fixed point for $\Phi_s$ and consequently a uniformly isolated critical point for $\A_s$. Hence, by Proposition~\ref{prop_invariance_non_invariant}, we have $\HM_*(\A_0,0) \simeq \HM_*(\A_1,0)$ via a continuation map $\Theta$.

To conclude the proof in the non-invariant case we need to show that the first isomorphism~\eqref{continuation_isom_pre} makes the diagram
\[
\xymatrixcolsep{6pc}\xymatrix{
\HM_{*}(\A_{H^0,k,N},0) \ar[d]^{\Theta} \ar[r]^{\mathscr{I}_N} &\HM_{*+nk}(\A_{H^0,k,N+1},0) \ar[d]^{\Theta} \\
\HM_{*}(\A_{H^1,k,N},0) \ar[r]^{\mathscr{I}_N} & \HM_{*+nk}(\A_{H^1,k,N+1},0)
}
\]
commutative, where $\mathscr{I}_N$ are inflation maps given by Lemma~\ref{lemma_inflation}. This follows from the fact that $\mathscr{I}_N$ is a composition of direct sum maps and continuation maps as described in~\eqref{diagram_description_inflation}. These two kinds of isomorphisms commute with continuation maps, as explained in diagram~\eqref{diagram_commutes_continuation_1}.

The version of this argument with symmetries follows from applying Proposition~\ref{prop_invariance_with_symmetries} instead of Proposition~\ref{prop_invariance_non_invariant}, noting that the corresponding family of action functionals preserve $\Z_k$-symmetry, and the version of diagram~\eqref{diagram_description_inflation} with symmetries combined with diagram~\eqref{diagram_commutes_continuation_2}.
\end{proof}

Suppose now that we are given two $1$-periodic Hamiltonians $H_t,K_t$ near the origin in $\R^{2n}$, satisfying $dH_t(0)=dK_t(0)=0$ for all $t$, and assume that the time-$1$ germs coincide
\[
\varphi_H^1 = \varphi = \varphi_K^1.
\]
Then we find $G_t$ such that $\varphi_G^t \circ \varphi_H^t = \varphi_K^t$ and $\varphi_G^1=id$, i.e. $\{\varphi_G^t\}_{t\in[0,1]}$ is a loop of germs based at the identity. Normalizing Hamiltonians to vanish at $0$ we find $K = G\# H$ where 
\[
(G\#H)_t = G_t + H_t \circ (\varphi_G^t)^{-1}.
\]

\begin{lemma}\label{lemma_maslov_loop_deformation}
Fix $k\in\N$ and suppose that the Maslov index of $t\in[0,k] \mapsto d\varphi_G^t(0)$ vanishes. Then there exists a smooth family  $G^\tau$, $\tau\in[0,1]$, of $1$-periodic Hamiltonians satisfying $dG_\tau^t(0)=0$ for all $(\tau,t)$, $G^0=G$, $G^1=0$ and $\varphi_{G^\tau}^1=id$ for all~$\tau$.
\end{lemma}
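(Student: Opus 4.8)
The statement asserts that a loop of germs of Hamiltonian diffeomorphisms based at the identity, whose linearized loop $t \in [0,k] \mapsto d\varphi_G^t(0)$ has vanishing Maslov index, can be deformed — through loops of germs based at the identity and Hamiltonians normalized to vanish at $0$ with $dG^\tau_t(0) = 0$ — to the constant loop generated by $G^1 = 0$. The natural approach is to split the deformation into two stages: first kill the \emph{linear part} of the loop, reducing to a loop whose linearization at the origin is trivial; then contract the remaining loop of germs tangent to the identity to first order.

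\textbf{Stage 1: linearizing via the Maslov hypothesis.} The loop $\Lambda(t) := d\varphi_G^t(0) \in Sp(2n)$, $t \in [0,k]$, is a loop based at $I$ (since $\varphi_G^1 = \mathrm{id}$ forces $\Lambda(1) = I$, hence $\Lambda(k) = I$), and by hypothesis its Maslov index vanishes. Since $\pi_1(Sp(2n)) \simeq \Z$ is detected precisely by the Maslov index, $\Lambda$ is contractible in $Sp(2n)$; choose a smooth homotopy $\Lambda_\tau$, $\tau \in [0,1]$, with $\Lambda_0 = \Lambda$, $\Lambda_1 \equiv I$, each $\Lambda_\tau$ a loop based at $I$ (reparametrizing so the homotopy fixes $t=0,1$). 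Each $\Lambda_\tau$ is the linearized flow of a (time-dependent) quadratic Hamiltonian $Q^\tau_t(z) = \tfrac12 \langle S^\tau_t z, z\rangle$, obtained from $\dot\Lambda_\tau(t) = J_0 S^\tau_t \Lambda_\tau(t)$, and this depends smoothly on $\tau$; since $\Lambda_\tau(1) = I$ for all $\tau$, the loop $\varphi^t_{Q^\tau}$ of linear symplectomorphisms is closed. Writing $G^\tau_t := Q^\tau_t + (G_t - Q^0_t$-type correction$)$ — more carefully, interpolate so that at $\tau = 0$ we recover $G$ and at $\tau$ the linearization at $0$ of $\varphi^t_{G^\tau}$ is $\Lambda_\tau$ while the loop property $\varphi^1_{G^\tau} = \mathrm{id}$ and the normalizations $dG^\tau_t(0) = 0$, $G^\tau_t(0) = 0$ are preserved. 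The cleanest bookkeeping is: let $A^\tau_t$ be the linear symplectic isotopy with $A^\tau_1 = I$ interpolating from $d\varphi^t_G(0)$ to $I$, and set $\varphi^t_{G^\tau} := A^\tau_t \circ (d\varphi^t_G(0))^{-1} \circ \varphi^t_G$ (composition of germs near $0$), which is a loop of germs based at the identity with linearization $A^\tau_t$; its generating Hamiltonian is the corresponding "$\#$"-combination, smooth in $\tau$, and vanishes to first order at $0$ after normalization.

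\textbf{Stage 2: contracting a loop tangent to the identity.} After Stage 1 we are reduced to the case where $d\varphi^t_G(0) = I$ for all $t$, i.e. $G$ generates a loop of germs, based at $\mathrm{id}$, each of which is tangent to the identity at $0$. Such loops can be contracted by the standard radial rescaling trick: for $\epsilon \in (0,1]$ set $\varphi^t_{G^{(\epsilon)}} := \epsilon^{-1} \varphi^t_G(\epsilon \,\cdot\,)$ (conjugation by the dilation $z \mapsto \epsilon z$), which is still a loop of germs based at the identity, tangent to $I$ at $0$, generated by the Hamiltonian $\epsilon^{-2} G_t(\epsilon z)$, smooth in $\epsilon$ down to $\epsilon = 0$ precisely because $G_t$ vanishes to second order at the origin (as $dG_t(0) = 0$ and the linearization of the flow is trivial, one checks $D^2 G_t(0) = 0$ as well, or at worst the limit $\epsilon \to 0$ gives the quadratic Hamiltonian $\tfrac12\langle D^2G_t(0)z,z\rangle$ generating the trivial linear loop, hence $D^2 G_t(0)=0$ after the Stage-1 normalization, so the limit is $G \equiv 0$). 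Concatenating with the reparametrization $\epsilon = 1-\tau$ gives a family ending at $G^1 = 0$. Throughout, $dG^\tau_t(0) = 0$ and $\varphi^1_{G^\tau} = \mathrm{id}$ are manifest.

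\textbf{Main obstacle.} The conceptual content is entirely in Stage 1: everything hinges on converting the \emph{topological} vanishing of the Maslov index into an honest smooth family of Hamiltonian loops, keeping simultaneously (a) the closed-loop condition $\varphi^1_{G^\tau} = \mathrm{id}$, (b) the fixed point at $0$, and (c) the normalization $dG^\tau_t(0) = 0$, $G^\tau_t(0) = 0$. The bookkeeping of how the generating Hamiltonian of a $\#$-product or conjugation transforms — and that it stays smooth in the parameter and retains the vanishing of value and differential at the origin — is routine but must be done with care; the dilation argument in Stage 2 is standard (it is the germ-level analogue of the Weinstein/Banyaga contractibility of loops of Hamiltonian diffeomorphisms fixing a point to first order) and presents no real difficulty once Stage 1 has reduced the linear part to the identity.
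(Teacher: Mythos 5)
Your proof is correct and uses essentially the same ingredients as the paper's: the dilation $z\mapsto\epsilon z$ to dispose of the nonlinear part of the germ loop, and contractibility of the linearized loop in $Sp(2n)$ coming from the Maslov hypothesis; the paper merely performs the two stages in the opposite order (first rescaling the loop of germs down to its linearization $d\varphi^t_G(0)$, then contracting that linear loop, then realizing everything by normalized $1$-periodic Hamiltonians). The one point worth making explicit, as the paper does, is that the hypothesis concerns the loop over $[0,k]$, so one must observe $km=0\Rightarrow m=0$ for the Maslov index $m$ of the $1$-periodic loop $t\in[0,1]\mapsto d\varphi^t_G(0)$ in order to contract through loops that are $1$-periodic in $t$ and thereby keep each $G^\tau$ $1$-periodic.
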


\begin{proof}
For $\tau\in[0,1/2]$ consider the family of germs $\psi_{\tau,t}$ given by
\[
\psi_{\tau,t}(z) = 
\left\{ 
\begin{aligned} 
& \frac{1}{1-2\tau}\varphi_G^t((1-2\tau)z)  \ \ \ \text{if $\tau\in[0,1/2)$} \\ 
& d\varphi_G^t(0) \cdot z \ \ \ \text{if $\tau=1/2$}.
\end{aligned} 
\right.
\]
Note that $\psi_{\tau,t}$ is $1$-periodic in $t$ because so is $\varphi_G^t$. 
If $m$ is the Maslov index of $t\in[0,1] \mapsto d\varphi_{G}^t(0)$ then $km=0 \Rightarrow m=0$, by assumption. Hence we can continue $\psi_{\tau,t}$ to all $\tau\in[0,1]$ keeping $1$-periodicity in $t$ in such a way that $\psi_{1,t} = id$ for all $t$. Since we work locally, we can find smooth family of $1$-periodic (in $t$) Hamiltonians $G^\tau_t$ such that $dG^\tau_t(0)=0$, $\varphi_{G^\tau}^t = \psi_{\tau,t}$ and $G^1=0$, as desired.
\end{proof}

\begin{lemma}\label{lemma_change_of_isotopy}
Suppose that $0$ is an isolated fixed point of $\varphi^k$, for some $k\in\N$. Then 
\[
\begin{aligned}
\hloc_*(K,k,0) &\simeq \hloc_{*-2m}(H,k,0) \\
\hloc_*^{\rm inv}(K,k,0) &\simeq \hloc_{*-2m}^{\rm inv}(H,k,0)
\end{aligned}
\]
where $m$ is the Maslov index of the loop $t\in\R/k\Z \mapsto d\varphi_G^t(0)$ and the Hamiltonians $H,K$ and $G$ are related via $K=G\# H$.
\end{lemma}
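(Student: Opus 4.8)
The plan is to deform $G$ to a standard model loop and then compute with discrete action functionals. Note first that $\varphi_H^k=\varphi^k=\varphi_K^k$, so $0$ is an isolated fixed point of both $\varphi_H^k$ and $\varphi_K^k$, and all four local homologies are defined (Definition~\ref{def_local_invariants}).

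\emph{Step 1: reduction to a model loop.} The scaling deformation $\psi_{\tau,t}(z)=\frac{1}{1-2\tau}\varphi_G^t((1-2\tau)z)$ for $\tau\in[0,1/2)$, extended by $\psi_{1/2,t}(z)=d\varphi_G^t(0)\,z$, from the proof of Lemma~\ref{lemma_maslov_loop_deformation}, exhibits a deformation retraction of the space of loops of germs of symplectomorphisms fixing $0$ and based at the identity onto the space of linear such loops, i.e.\ onto the loop space of $Sp(2n)$; since $\pi_1(Sp(2n))=\Z$ is detected by the Maslov index, $G$ is homotopic, through loops of germs fixing $0$ and based at the identity, to the loop $L_0$ generated by the time-independent quadratic Hamiltonian with $\varphi_{L_0}^t(z)=\diag(R(2\pi m_1 t),I_{2n-2})\,z$, where $R(\cdot)$ is the standard rotation of $\R^2$ and $m_1$ is the Maslov index of $t\in[0,1]\mapsto d\varphi_G^t(0)$, so that $m=km_1$. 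Writing $G^\tau$ for this homotopy and $K^\tau=G^\tau\#H$, we have $\varphi_{K^\tau}^1=\varphi_{G^\tau}^1\circ\varphi_H^1=\varphi$ and $dK^\tau_t(0)=0$, so $0$ is a $\tau$-independent, hence uniformly, isolated fixed point of $\varphi_{K^\tau}^k=\varphi^k$; Lemma~\ref{lemma_isomorphism_deformation} then gives grading-preserving isomorphisms $\hloc_*(K,k,0)\simeq\hloc_*(L_0\#H,k,0)$ and $\hloc_*^{\rm inv}(K,k,0)\simeq\hloc_*^{\rm inv}(L_0\#H,k,0)$. It therefore suffices to prove the lemma for $K=L_0\#H$.

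\emph{Step 2: the model-loop computation.} Here $\varphi_{L_0\#H}^t=\Lambda_t\circ\varphi_H^t$ with $\Lambda_t=\diag(R(2\pi m_1 t),I_{2n-2})$, so $\varphi_{L_0\#H}^k=\varphi^k$ and, along the $k$-periodic orbit through a fixed point $x_0$ of $\varphi^k$ near $0$, the linearized flow is $t\in[0,k]\mapsto\Lambda_t\,d\varphi_H^t(x_0)$; since $t\mapsto\Lambda_t$ over $[0,k]$ is a loop of Maslov index $km_1=m$, the Conley--Zehnder index of this orbit equals that of the corresponding $\varphi_H$-orbit plus $2m$. With Proposition~\ref{prop_mazz_grading} and Lemma~\ref{lem_general_Morse_theo_fact} this already shows the two local homologies are supported in windows differing by a shift of $2m$. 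To obtain an isomorphism I would argue as in the proof of Lemma~\ref{lemma_inflation}: after first deforming $L_0$ (through model loops, as in Step~1) so that its rotation is concentrated in a short time subinterval on which $\varphi_H^t$ stays $C^1$-close to a fixed germ, the discrete action functional $\A_{L_0\#H,k,N'}$, for suitably large $N'$ adapted to the data, is carried by a linear change of variables and a deformation through functionals keeping $0$ uniformly isolated to $\A_{H,k,N}\oplus Q$, where $Q$ is a nondegenerate quadratic form on $\R^{2nk(N'-N)}$ which is a sum of $k$ copies (one per period) of a fixed quadratic form, whose index exceeds $nk(N'-N)$ by exactly $2m$, and on whose negative eigenspace the cyclic $\Z_k$-shift acts preserving orientation (arranged by choosing $N'-N$ suitably, exactly as the passage to $N+2$ in Lemma~\ref{lemma_inflation}). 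Applying the direct-sum map $\Xi$ of~\eqref{isom_consideration_no_symm} and its equivariant version $\Xi^{\Z_k}$ of~\eqref{isom_consideration_with_symm}, followed by a continuation map, all commuting with the inflation maps by~\eqref{diagram_commutes_continuation_1} and~\eqref{diagram_commutes_continuation_2}, and passing to the direct limit, produces $\hloc_*(L_0\#H,k,0)\simeq\hloc_{*-2m}(H,k,0)$ and $\hloc_*^{\rm inv}(L_0\#H,k,0)\simeq\hloc_{*-2m}^{\rm inv}(H,k,0)$. Together with Step~1 this is the assertion of the lemma.

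\emph{Expected main obstacle.} The delicate point is the chain-level statement in Step~2: showing that the Conley--Zehnder shift $2m$ is realized not merely as a shift of the gradings of the generators but as a genuine direct-sum stabilization of the discrete action functional by a quadratic form of index exactly $2m$, done compatibly with the $\Z_k$-symmetry. When $\varphi_H^t$ is linear this is just Sylvester's law of inertia applied to the two quadratic forms $\A_{L_0\#H,k,N'}$ and $\A_{H,k,N}\oplus Q$, which have equal dimension and equal index; in general it requires the deformation-and-continuation bookkeeping sketched above, of the same kind as in the proof of Lemma~\ref{lemma_inflation}.
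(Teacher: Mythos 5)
Your Step~1 (deform $G$, through loops of germs based at the identity and with vanishing differential at $0$, to the standard linear model loop of the same Maslov index, and then invoke Lemma~\ref{lemma_isomorphism_deformation} for the family $G^\tau\#H$) is a sound reduction; it is a mild extension of Lemma~\ref{lemma_maslov_loop_deformation} and causes no trouble. The genuine gap is Step~2, and it is exactly the point you yourself flag as the ``expected main obstacle'': the claim that, after a linear change of variables and a deformation through functionals keeping $0$ uniformly isolated, $\A_{L_0\#H,k,N'}$ becomes $\A_{H,k,N}\oplus Q$ with $Q$ a nondegenerate $\Z_k$-block quadratic form of index exactly $nk(N'-N)+2m$ is asserted, not proved. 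The analogous computation in Lemma~\ref{lemma_inflation} works because the inserted germs are identities, so their generating functions vanish identically: the explicit partial-derivative computation \eqref{partial_x_1}--\eqref{partial_xi_zeta} then shows that the coupling terms can be switched off while the origin stays uniformly isolated, and the leftover form $-\sum_\lambda(\xi^1_\lambda\zeta^1_\lambda+\xi^2_\lambda\zeta^2_\lambda)$ is visibly split and orientation-compatible with the shift. For your inserted rotation steps the generating functions are nonzero quadratic-plus-corrections, the decoupling change of variables does not leave the $\A_{H,k,N}$-part untouched, uniform isolation along the interpolating family has to be re-established from scratch, and the statement that the decoupled form $Q$ has index $2m$ above $nk(N'-N)$ (uniformly in $H$, and with the cyclic shift preserving orientation on its negative eigenspace) is itself a nontrivial generating-function fact -- in the degenerate case it cannot be read off from Proposition~\ref{prop_mazz_grading} alone, which only localizes the support of the homologies. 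So as written the central assertion of the lemma is deferred rather than established.

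For contrast, the paper's proof avoids any approximate decoupling: it stabilizes by an auxiliary factor $(\R^2,h)$ with $\varphi^1_h$ an irrational rotation and transfers the loop to that factor, comparing $h\oplus(G\#H)$ with $(g\#h)\oplus H$, which have the same time-$1$ germ and are joined by Lemma~\ref{lemma_maslov_loop_deformation} (the comparing loop has Maslov index zero) and a continuation map. Since these are honest direct sums of Hamiltonians, the discrete action functionals split exactly, the shift $2m$ is the difference of the nondegenerate Morse indices of $0$ for $\A_{h,k,2N}$ and $\A_{g\#h,k,2N}$ computed via~\cite{RSpath}, and equivariance of the direct-sum maps comes from Lemma~\ref{lemma_preserves_orient_Delta_perp} applied to the auxiliary factor. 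If you want to salvage your Step~2, the cleanest fix is to replace it by this stabilization device; otherwise you must actually carry out the uniformly-isolated deformation and the index/orientation computation for $Q$, which is a substantial argument not contained in your sketch.
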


\begin{proof}
We only work out the invariant case. The non-invariant case is simpler since it does not make use of the notion of good iterations; see the end of this proof.

We claim that there are isomorphisms
\begin{equation}\label{before_inflation}
\HM_*(\A_{K,k,2N},0)^{\Z_k} \simeq \HM_{*-2m}(\A_{H,k,2N},0)^{\Z_k}
\end{equation}
provided $N$ is large enough, and these commute with inflation maps. First consider $N\in \N$ large enough such that $2N$ is adapted to both $H$ and $K$ in the sense of~\eqref{N_adapted}.

Let us first assume that the spectrum of the symplectic matrix $d\varphi_H^1(0)=d\varphi_K^1(0)$ consists of eigenvalues of the form $e^{i2\pi\alpha}$, $\alpha\not\in\Q$. By Lemma~\ref{lemma_action_as_gen_function}, the origin in $\R^{2nk2N}$ is a non-degenerate critical point of both $\A_{H,k,2N}$ and $\A_{K,k,2N}$. According to~\cite[Theorem 4.1]{RSpath}, see also~\cite[Proposition 2.5]{mazz_SDM}, the Morse index of the origin as a critical point of $\A_{H,k,2N}$ is
\[
\CZ\left(t\in[0,k] \mapsto d\varphi_H^t(0)\right) + nk2N
\]
while the Morse index of the origin as a critical point of $\A_{K,k,2N}$ is
\[
\CZ\left(t\in[0,k] \mapsto d\varphi_K^t(0)\right) + nk2N
\]
where $\CZ$ denotes the Conley-Zehnder index of a path in $Sp(2n)$ starting at the identity and ending away from the Maslov cycle. By the properties of the Conley-Zehnder index we have
\[
\CZ\left(t\in[0,k] \mapsto d\varphi_K^t(0)\right) = \CZ\left(t\in[0,k] \mapsto d\varphi_H^t(0)\right) + 2m
\]
which would immediately give the desired conclusion in this case when group symmetries are not present. For the symmetric case we need to argue a bit more, because we still need to show that the $\Z_k$ actions on the negative eigenspaces of the Hessians of $\A_{H,k,2N}$ and of $\A_{K,k,2N}$ at the origin both preserve orientations. But this follows as a consequence of Lemma~\ref{lemma_preserves_orient_Delta_perp}.

To handle the general case, consider a $1$-periodic Hamiltonian $h_t$ defined near $0\in\R^2$ satisfying $dh_t(0)=0 $ for all $t$, such that $\varphi_h^1$ is an irrational rotation. In particular, $0\in\R^2$ is totally non-degenerate and all iterations are good and admissible. The integer $m$ is divisible by $k$ because $m/k$ is the Maslov index of the loop $t\in[0,1] \mapsto d\varphi^t_G(0)$ (recall that $\varphi_G^1=id$). Let $g_t$ be a $1$-periodic Hamiltonian, again defined near $0\in\R^2$, satisfying $dg_t(0)=0 $ for all $t$, $\varphi_g^1=id$ and the Maslov index of $t\in[0,1]\mapsto d\varphi_g^t(0)$ is $m/k$. Then the $1$-periodic Hamiltonians defined near $(0,0) \in \R^2 \oplus \R^{2n}$ as
\[
\begin{array}{ccc}
(g\#h) \oplus H & \text{ and } & h \oplus (G\#H) = h\oplus K
\end{array}
\]
have the same time-$1$ germ. It follows from the properties of the Maslov index that the Maslov index of the loop
\[
\begin{aligned}
t \in [0,1] &\mapsto d\varphi_{h\oplus(G\#H)}^t(0) \ \left( d\varphi_{(g\#h)\oplus H}^t(0)\right)^{-1} \\
&= [d\varphi_{h}^t(0) d\varphi_{h}^t(0)^{-1} d\varphi_{g}^t(0)^{-1}] \oplus [d\varphi_{G}^t(0) d\varphi_{H}^t(0) d\varphi_{H}^t(0)^{-1}] \\
&= d\varphi_g^t(0)^{-1} \oplus d\varphi_G^t(0)
\end{aligned}
\]
is equal to $(-m/k)+(m/k)=0$. By Lemma~\ref{lemma_maslov_loop_deformation}, there exists a family $\{\Lambda^\tau_t\}_{\tau\in[0,1]}$ of $1$-periodic Hamiltonians defined near $(0,0) \in \R^2 \oplus \R^{2n}$ such that $d\Lambda^\tau_t(0,0)=0$ for all $(\tau,t)$, satisfying $\Lambda^0 = h\oplus (G\#H)$ and $\Lambda^1 = (g\#h)\oplus H$ and, moreover, such that the family of germs $\varphi_{\Lambda^\tau}^1$ is independent of~$\tau$.

Increasing $N$, we can assume that it is adapted to the family $\Lambda^\tau$ as in~\eqref{N_adapted_family}. 
Proposition~\ref{prop_invariance_with_symmetries} implies that there is a continuation isomorphism
\[
\HM_*(\A_{\Lambda^0,k,2N},0)^{\Z_k} \simeq \HM_*(\A_{\Lambda^1,k,2N},0)^{\Z_k}
\]
where $0$ denotes the origin in $\R^{2(n+1)k2N}$. But $\Lambda^0,\Lambda^1$ are direct sums of Hamiltonians, from where it follows that
\[
\begin{aligned}
& \A_{\Lambda^0,k,2N} = \A_{h,k,2N} \oplus \A_{K,k,2N} \\
& \A_{\Lambda^1,k,2N} = \A_{g\#h,k,2N} \oplus \A_{H,k,2N}.
\end{aligned}
\]
The summand $\A_{h,k,2N}$ has the origin in $\R^{2k2N}$ as a non-degenerate critical point whose Morse index we denote by $i$. Then the same point is a non-degenerate critical point of $\A_{g\#h,k,2N}$ of Morse index $i+2m$. This is seen as in the first part of the proof for the totally non-degenerate case; it follows as a consequence of~\cite[Theorem~4.1]{RSpath}. We can finally compute, using direct sum maps and continuation maps as explained in Section~\ref{ssec_invariant},
\[
\begin{aligned}
\HM_*(\A_{K,k,2N},0)^{\Z_k} 
&\simeq \HM_{*+i}(\A_{h,k,2N}\oplus \A_{K,k,2N},(0,0))^{\Z_k} \\
&= \HM_{*+i}(\A_{\Lambda^0,k,2N},(0,0))^{\Z_k} \\
&\simeq \HM_{*+i}(\A_{\Lambda^1,k,2N},(0,0))^{\Z_k} \\
&= \HM_{*+i}(\A_{g\#h,k,2N} \oplus \A_{H,k,2N},(0,0))^{\Z_k} \\
&\simeq \HM_{*+i-(i+2m)}(\A_{H,k,2N},0)^{\Z_k} \\
&= \HM_{*-2m}(\A_{H,k,2N},0)^{\Z_k}
\end{aligned}
\]
where $0$ in each line denotes the origin in the appropriate space. This proves~\eqref{before_inflation}. Note that Lemma~\ref{lemma_preserves_orient_Delta_perp} and the properties of $h$ were strongly used to conclude that we can actually use the $\Z_k$-invariant versions of direct sum maps.

To conclude the proof we only need to show that the above chain of isomorphisms commutes with inflation maps, so we can take direct limits. This follows from diagram~\eqref{diagram_commutes_continuation_2} and complementary analogous diagrams stating that two kinds of direct sum maps commute with each other.
\end{proof}

\section{Iteration map and Persistence Theorems}\label{sec_shifting_lemma}

We outline the content of this section. In Section~\ref{ssec_equiv_GM_lemma} we prove an equivariant version of the celebrated Gromoll-Meyer splitting lemma~\cite{GM} which is crucial to our analysis of finite cyclic actions. In Section~\ref{ssec_persistence_no_symmetries} we prove the Persistence Theorem without symmetries, which is our discrete version of that of Ginzburg-G\"urel~\cite{GG} for local Floer homology. In Section~\ref{ssec_persistence_symmetries} we prove the invariant version of the Persistence Theorem which is the discrete version of a conjectural persistence theorem for local contact homology.

\subsection{Equivariant Gromoll-Meyer splitting lemma, and refinements}\label{ssec_equiv_GM_lemma}

First we review the proof of the classical splitting lemma when no group symmetries are present.

\begin{lemma}[Gromoll-Meyer Splitting Lemma]\label{lem_GM_non_equiv}
Let $f: \R^n = \R^{n_1} \times \R^{n_2} \to \R$ be a smooth function such that $(0,0)$ is an isolated critical point. Assume that $D^2f(0,0)$ preserves the splitting, and that $\ker D^2f(0,0) \subset \R^{n_1}\times\{0\}$. Then there is a neighborhood $U$ of $(0,0)$ and an embedding $\Psi: U \to \R^n$ fixing $(0,0)$ such that
$D\Psi(0,0)=I$ and
\[
f\circ \Psi(z_1,z_2) = g(z_1)+h(z_2),
\]
where $0\in\R^{n_2}$ is a non-degenerate critical point of $h$. In particular, we must have $D^2g(0)=D^2f(0,0)|_{\R^{n_1}\times \{0\}}$ and $D^2h(0) = D^2f(0,0)|_{\{0\}\times \R^{n_2}}$. Moreover, if $\nabla f$ is tangent to $\R^{n_1}\times\{0\}$ at points of $\R^{n_1}\times\{0\}$ then $\Psi$ can be arranged so that
\[
\begin{array}{ccc} 
g(z_1)=f(z_1,0) & \text{and} & h(z_2) = \frac{1}{2} \left< D^2f(0,0)\cdot (0,z_2),(0,z_2) \right>
\end{array}
\]
and $\Psi(z_1,0)=(z_1,0) $ for all $(z_1,0)\in U$.
\end{lemma}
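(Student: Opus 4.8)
The plan is to prove the lemma in two stages: first an implicit-function-theorem step that straightens the locus where the $\R^{n_2}$-partial gradient of $f$ vanishes, and then a parametric Morse lemma in the $\R^{n_2}$-directions with the $\R^{n_1}$-variable as a parameter. Throughout write $z=(z_1,z_2)$, let $\nabla_1,\nabla_2$ denote the partial gradients, and put $A:=D^2f(0,0)$. Since $A$ preserves the splitting and $\ker A\subset\R^{n_1}\times\{0\}$, the block $\nabla_{22}f(0,0)=A|_{\{0\}\times\R^{n_2}}$ is invertible while the mixed block $\nabla_{12}f(0,0)$ vanishes.

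First I would apply the implicit function theorem to $(z_1,z_2)\mapsto\nabla_2 f(z_1,z_2)$: invertibility of $\nabla_{22}f(0,0)$ produces a unique smooth germ $z_1\mapsto\phi(z_1)$ with $\phi(0)=0$ and $\nabla_2 f(z_1,z_2)=0\Leftrightarrow z_2=\phi(z_1)$ near the origin, and differentiating this relation together with $\nabla_{12}f(0,0)=0$ forces $D\phi(0)=0$. The coordinate change $\Psi_1(w_1,w_2):=(w_1,w_2+\phi(w_1))$ then fixes the origin, satisfies $D\Psi_1(0)=I$, and for $\tilde f:=f\circ\Psi_1$ one has $\nabla_2\tilde f(w_1,w_2)=0\Leftrightarrow w_2=0$; a short computation using $\nabla_{12}f(0,0)=0$ and $D\phi(0)=0$ shows that $D^2\tilde f(0,0)=A$ still preserves the splitting. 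In the special case where $\nabla f$ is tangent to $\R^{n_1}\times\{0\}$ along $\R^{n_1}\times\{0\}$, i.e. $\nabla_2 f(z_1,0)\equiv0$, uniqueness in the implicit function theorem forces $\phi\equiv0$, hence $\Psi_1=\mathrm{id}$.

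Renaming, I may assume $\nabla_2 f(z_1,z_2)=0\Leftrightarrow z_2=0$ near the origin; set $g(z_1):=f(z_1,0)$ and $F(z_1,z_2):=f(z_1,z_2)-g(z_1)$, so that for each fixed $z_1$ the origin is a critical point of $z_2\mapsto F(z_1,z_2)$ with critical value $0$ and Hessian $\nabla_{22}F(z_1,0)$ of constant signature, equal to $A|_{\{0\}\times\R^{n_2}}$ at $z_1=0$. The heart of the proof is the parametric Morse lemma applied to $F$: a $z_1$-dependent linear change of $z_2$ normalizes $\nabla_{22}F(\cdot,0)$ to the constant symmetric matrix $A|_{\{0\}\times\R^{n_2}}$; Hadamard's lemma writes $F(z_1,z_2)=\langle Q(z_1,z_2)z_2,z_2\rangle$ with $Q$ smooth, symmetric and $Q(z_1,0)=\frac12 A|_{\{0\}\times\R^{n_2}}$; and one produces a smooth matrix germ $R(z_1,z_2)$ with $R(z_1,0)=I$ solving $R^{\top}\bigl(A|_{\{0\}\times\R^{n_2}}\bigr)R=2Q$ by the usual symmetric-square-root construction. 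This yields a smooth family of local diffeomorphisms $z_2\mapsto\Psi_{2,z_1}(z_2)$ fixing $0$, with derivative the identity at $z_2=0$ when $z_1=0$, such that $F(z_1,\Psi_{2,z_1}(z_2))=\frac12\langle A(0,z_2),(0,z_2)\rangle=:h(z_2)$. Setting $\Psi_2(z_1,z_2):=(z_1,\Psi_{2,z_1}(z_2))$ and $\Psi:=\Psi_1\circ\Psi_2$ we get $f\circ\Psi(z_1,z_2)=g(z_1)+h(z_2)$; since $\Psi_{2,z_1}(0)=0$ for all $z_1$ the mixed derivative of $\Psi_2$ at the origin vanishes, so $D\Psi_2(0,0)=I$ and hence $D\Psi(0,0)=I$. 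In the tangency case $\Psi_1=\mathrm{id}$, so $\Psi(z_1,0)=(z_1,0)$, $g(z_1)=f(z_1,0)$, and $h$ is exactly the stated quadratic form. Finally $h$ is a non-degenerate quadratic form by hypothesis, so $0\in\R^{n_2}$ is a non-degenerate critical point of $h$; differentiating $f\circ\Psi=g\oplus h$ twice at the origin, using $D\Psi(0,0)=I$ and $\nabla f(0,0)=0$, gives $A=D^2g(0)\oplus D^2h(0)$, which together with the splitting of $A$ yields $D^2g(0)=A|_{\R^{n_1}\times\{0\}}$ and $D^2h(0)=A|_{\{0\}\times\R^{n_2}}$.

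The main obstacle is the parametric Morse lemma itself, concretely the construction of the conjugating matrix $R(z_1,z_2)$ — smooth in both the parameter $z_1$ and the variable $z_2$, equal to $I$ at $z_2=0$, and solving $R^{\top}\bigl(A|_{\{0\}\times\R^{n_2}}\bigr)R=2Q$ on a neighborhood of $z_2=0$. This is the classical mechanism underlying the Morse and splitting lemmas, and the parameter dependence introduces no new difficulty, since all the matrices involved vary smoothly and the relevant non-degeneracy is an open condition. The implicit-function step, the chain-rule bookkeeping giving $D\Psi(0,0)=I$ and the Hessian identities, and the reduction to the tangency normalization are all routine.
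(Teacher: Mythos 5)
Your proposal is correct and follows essentially the same route as the paper's proof: an implicit-function-theorem step straightening $\{\nabla_2 f=0\}$ (with $D\phi(0)=0$ from the vanishing mixed block, and $\phi\equiv 0$ in the tangency case), followed by the Taylor/Hadamard representation $f-g=\left<Q\,z_2,z_2\right>$ and a smooth matrix square-root conjugation normalizing the quadratic part, which is exactly the paper's construction with $B=H^{-1}H(0,0)$, $C=\sqrt{B}$ and $\Phi_s(z_1,z_2)=(z_1,C_s(z_1,z_2)^{-1}z_2)$. Your extra preliminary $z_1$-dependent linear normalization of $\nabla_{22}F(\cdot,0)$ is harmless but unnecessary, since the square root of the non-constant family already absorbs it, as in the paper.
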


\begin{remark}
One difference with the splitting lemma from~\cite{GM} is that we do not assume $\R^{n_1}\times\{0\}$ to be equal to the kernel of the Hessian, but only that it contains the kernel. Then, of course, the Hessian of $g$ at $0\in\R^{n_1}$ might not vanish.
\end{remark}

\begin{proof}
Before handling the general statement we prove a \\

\noindent {\bf Preliminary step.} There is a smooth family of embeddings $\Upsilon_s: V \to \R^n$, $s \in [0,1]$, defined on some neighborhood $V$ of $(0,0)$ satisfying $\Upsilon_0 = id$, $D\Upsilon_s(0,0)=I $ for all $s$, and such that for every $z_1$ close enough to $0\in\R^{n_1}$ the function $z_2 \mapsto f\circ\Upsilon_1(z_1,z_2)$ has a non-degenerate critical point at $0\in\R^{n_2}$.  \\

\noindent {\it Proof of the preliminary step.} Since $\ker D^2f(0,0) \subset \R^{n_1}\times\{0\}$ and $D^2f(0,0)$ respects the splitting $\R^{n_1} \times \R^{n_2}$, we know that $D_{22}f(0,0)$ is non-singular. Thus the equation $D_2f=0$ defines implicitly a smooth $\R^{n_2}$-valued function $\phi$ on a neighborhood of $0\in\R^{n_1}$ with the following property: a point $(z_1,z_2)$ near $(0,0)$ satisfies $D_2f(z_1,z_2)=0$ if, and only if, $z_2=\phi(z_1)$. Hence $\phi(0)=0$. Differentiating $D_2f(z_1,\phi(z_1))=0$ at $0\in\R^{n_1}$ we get $D_{21}f(0,0) + D_{22}f(0,0)D\phi(0)=0$. Since $D^2f(0,0)$ splits we have $D_{21}f(0,0)=0$, so $D\phi(0)=0$. This fact implies that we can take $\Upsilon_s(z_1,z_2) = (z_1,z_2+s\phi(z_1))$. \\

\noindent {\it Remark on the proof of the preliminary step.} If $\nabla f$ is tangent to $\R^{n_1}\times\{0\}$ at points of $\R^{n_1}\times\{0\}$, in other words $D_2f(z_1,0)\equiv0$ for $z_1\in\R^{n_1}$ close to $0$, then the above argument gives the trivial family $\Upsilon_s\equiv id$. \\

After the above preliminary step there is no loss of generality to assume, in addition to the hypothesis of the lemma, that for all $z_1$ close enough to $0\in \R^{n_1}$ the point $0\in\R^{n_2}$ is a non-degenerate critical point of $z_2 \mapsto f(z_1,z_2)$. In fact, this can be done after replacing $f$ by $f\circ\Upsilon_1$. And, as just observed, this leaves $f$ unchanged in the case $\nabla f$ is tangent to $\R^{n_1}\times\{0\}$.

The rest of the argument follows Gromoll and Meyer in \cite{GM} closely. For $(z_1,z_2)$ close to $(0,0)$ we find, using Taylor's formula, a smooth symmetric $n_2\times n_2$ matrix $H(z_1,z_2)$ such that $f(z_1,z_2) = g(z_1) + \left<H(z_1,z_2)z_2,z_2\right>$ where $g(z_1)=f(z_1,0)$ and $H$ satisfies $2H(z_1,0) = D_{22}f(z_1,0)$.

Set $B(z_1,z_2)=H(z_1,z_2)^{-1}H(0,0)$, which is also smooth. Then using the symmetry of $H$ we compute
\begin{equation*}
\begin{aligned}
B(z_1,z_2)^TH(z_1,z_2) &= H(0,0)^T(H(z_1,z_2)^{-1})^TH(z_1,z_2) \\
&= H(0,0) \\ &= H(z_1,z_2)B(z_1,z_2).
\end{aligned}
\end{equation*}
Let $\{c_k\}_{k\geq0}$ satisfy $\sqrt{1+x}=\sum_{k=0}^\infty c_kx^k$ for $x\sim 0$. Since $B(0,0)$ is the identity matrix of order $n_2$, on a neighborhood of $(0,0)$ the power series
\begin{equation*}
C(z_1,z_2) = \sum_{k=0}^\infty c_k(B(z_1,z_2)-I)^k
\end{equation*}
converges uniformly, together with all its derivatives, to a smooth function $C(z_1,z_2)$ satisfying $C^2=B$. Note that $C(0,0)=I$.  As proved above $B^TH=HB$, so the same holds for every polynomial in $B$, and hence also for $C$ since it is a uniform limit of polynomials in $B$. 

Setting $C_s = (1-s)I+sC$ we have $C_s(0,0)=I$ for all $s$, and $C_s(z_1,z_2)$ is invertible for all $s\in[0,1]$ and $(z_1,z_2)$ on a fixed small neighborhood of $(0,0)$. Now define
\begin{equation}
\Phi_s(z_1,z_2)=(z_1,C_s(z_1,z_2)^{-1}z_2).
\end{equation}
Here, as above, $s\in[0,1]$ and $(z_1,z_2)$ lies on a small neighborhood of $(0,0)$. For every $s$, $D\Phi_s(0,0)$ is the identity, so by the implicit function theorem there is a neighborhood $U$ of $(0,0)$ and a smooth family of embeddings $\Phi_s^{-1}:U \to \R^n$ which invert $\Phi_s$ near $(0,0)$. Finally define $\Psi_s := \Phi_s^{-1}$ and $\Psi := \Psi_1$. We claim that $\Psi$ is our desired embedding. Indeed,
\begin{equation*}
f_1(\zeta_1,\zeta_2) := f \circ \Phi_1^{-1}(\zeta_1,\zeta_2) = g(z_1) + \left<H(z_1,z_2)z_2,z_2\right>
\end{equation*}
where $\Phi_1(z_1,z_2)=(\zeta_1,\zeta_2)$. This means that $\zeta_1=z_1$ and $z_2=C(z_1,z_2)\zeta_2$, so substituting we get
\begin{equation}
\begin{aligned}
f_1(\zeta_1,\zeta_2) &= g(\zeta_1) + \left<H(z_1,z_2)C(z_1,z_2)\zeta_2,C(z_1,z_2)\zeta_2\right> \\
&= g(\zeta_1) + \left<C(z_1,z_2)^TH(z_1,z_2)C(z_1,z_2)\zeta_2,\zeta_2\right> \\
&= g(\zeta_1) + \left<H(z_1,z_2)C(z_1,z_2)^2\zeta_2,\zeta_2\right> \\
&= g(\zeta_1) + \left<H(z_1,z_2)B(z_1,z_2)\zeta_2,\zeta_2\right> \\
&= g(\zeta_1) + \left<H(0,0)\zeta_2,\zeta_2\right>
\end{aligned}
\end{equation}
as claimed.
\end{proof}

The next result shows that the embedding $\Psi$ can be chosen equivariant with respect to a linear isometric $\Z/k\Z$-action if the function $f$ is $\Z/k\Z$-invariant. This fact is well known to experts but, since it will be crucial in this work, we provide a detailed proof.

\begin{lemma}[Invariant version of the splitting lemma]\label{lemma_invariant_GM_splitting}
In the same setting of Lemma~\ref{lem_GM_non_equiv}, let $A = (A_1,A_2) \in O(n_1)\times O(n_2) \subset O(n_1+n_2)$ be an Euclidean isometry of $\R^{n_1} \times \R^{n_2} \simeq \R^{n_1+n_2}$, and assume that $f$ is $A$-invariant. Then all the conclusions of Lemma~\ref{lem_GM_non_equiv} hold with an embedding $\Psi$ which commutes with $A$.
\end{lemma}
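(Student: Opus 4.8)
The plan is to revisit each step in the proof of Lemma~\ref{lem_GM_non_equiv} and check that every construction can be performed $A$-equivariantly, using the fact that $A$ preserves the splitting $\R^{n_1}\times\R^{n_2}$ and commutes with $D^2f(0,0)$ (the latter because $f$ is $A$-invariant and $A$ is linear, so $f\circ A=f$ forces $A^TD^2f(0,0)A=D^2f(0,0)$, and since $A$ is orthogonal, $A$ commutes with $D^2f(0,0)$). First I would treat the \emph{preliminary step}: the implicit function $\phi$ solving $D_2f(z_1,\phi(z_1))=0$ is automatically $A_1$-equivariant, i.e.\ $\phi(A_1z_1)=A_2\phi(z_1)$, because $A$-invariance of $f$ gives $D_2f(A_1z_1,A_2z_2)=A_2\,D_2f(z_1,z_2)$ (differentiating $f\circ A=f$), so $A_2\phi(z_1)$ also solves the defining equation at $A_1z_1$, and uniqueness in the implicit function theorem yields equivariance. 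Consequently $\Upsilon_s(z_1,z_2)=(z_1,z_2+s\phi(z_1))$ commutes with $A$, and $f\circ\Upsilon_1$ is again $A$-invariant. So without loss of generality we may assume, in addition, that $0\in\R^{n_2}$ is a non-degenerate critical point of $z_2\mapsto f(z_1,z_2)$ for all $z_1$ near $0$, keeping $A$-invariance.

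Next I would check the main construction. Writing $f(z_1,z_2)=g(z_1)+\langle H(z_1,z_2)z_2,z_2\rangle$ with $g(z_1)=f(z_1,0)$, the symmetric matrix $H$ can be produced by Taylor's integral formula, and that formula is natural, so $A$-invariance of $f$ together with $A_2^TA_2=I$ forces $H(A_1z_1,A_2z_2)=A_2\,H(z_1,z_2)\,A_2^T$; in particular $H(0,0)$ commutes with $A_2$. Then $B=H(z_1,z_2)^{-1}H(0,0)$ satisfies $B(A_1z_1,A_2z_2)=A_2\,B(z_1,z_2)\,A_2^T$, and since the square-root power series $C=\sum c_k(B-I)^k$ is built entrywise from $B$ by a fixed convergent series, it inherits the same conjugation-equivariance: $C(A_1z_1,A_2z_2)=A_2\,C(z_1,z_2)\,A_2^T$. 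The same holds for $C_s=(1-s)I+sC$. Therefore the map $\Phi_s(z_1,z_2)=(z_1,C_s(z_1,z_2)^{-1}z_2)$ satisfies
\[
\Phi_s(A_1z_1,A_2z_2)=\bigl(A_1z_1,\;(A_2C_s(z_1,z_2)A_2^T)^{-1}A_2z_2\bigr)=(A_1\times A_2)\,\Phi_s(z_1,z_2),
\]
i.e.\ $\Phi_s$ commutes with $A$. Since $A$ is linear it maps neighborhoods of $(0,0)$ to neighborhoods of $(0,0)$, so after shrinking $U$ to an $A$-invariant neighborhood (replace $U$ by $\bigcap_{j} A^j(U)$, a finite intersection since $A$ generates a finite group) the local inverses $\Phi_s^{-1}$ also commute with $A$, by uniqueness of inverses. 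Setting $\Psi=\Phi_1^{-1}$ gives an $A$-equivariant embedding, and the computation that $f\circ\Psi(\zeta_1,\zeta_2)=g(\zeta_1)+\langle H(0,0)\zeta_2,\zeta_2\rangle$ is unchanged from the non-equivariant proof; here $g$ and $h(z_2)=\tfrac12\langle D^2f(0,0)(0,z_2),(0,z_2)\rangle=\langle H(0,0)z_2,z_2\rangle$ are both automatically $A$-invariant. In the special case where $\nabla f$ is tangent to $\R^{n_1}\times\{0\}$ along that subspace, the preliminary step is trivial ($\Upsilon_s\equiv\mathrm{id}$) and one checks directly that $\Psi(z_1,0)=(z_1,0)$, as before.

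I do not expect a serious obstacle here; the content is essentially bookkeeping, tracking the $A$-conjugation through each formula. The one point that requires a little care, and which I would state explicitly, is that \emph{every} auxiliary object in the Gromoll--Meyer argument is produced by a \emph{canonical} (formula-based, not choice-based) procedure from $f$ and the splitting: the implicit function $\phi$, the Taylor matrix $H$, the matrix square root $C$, the convex interpolations, and the local inversion. Canonicity plus naturality under the linear substitution $(z_1,z_2)\mapsto(A_1z_1,A_2z_2)$ is exactly what forces equivariance at each stage. The only genuinely non-canonical move --- choosing the domain neighborhood --- is harmless because we can intersect over the (finite) orbit of $A$ to make the domain invariant. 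Thus all conclusions of Lemma~\ref{lem_GM_non_equiv} hold with $\Psi$ commuting with $A$.
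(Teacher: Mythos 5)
Your proposal is correct and follows essentially the same route as the paper's proof: equivariance of the implicit function $\phi$ via uniqueness, the conjugation identity $H(A_1z_1,A_2z_2)=A_2H(z_1,z_2)A_2^T$ from the Taylor integral formula, its propagation to $B$, $C$, $C_s$, and hence equivariance of $\Phi_s$ and $\Psi_s=\Phi_s^{-1}$. Your extra remark about shrinking the domain to an $A$-invariant neighborhood (e.g.\ a Euclidean ball suffices, with no need for finiteness of the group generated by $A$) is a harmless refinement the paper leaves implicit.
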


\begin{proof}
We show that in each step of the above proof we can obtain $A$-equivariance. First we check the preliminary step: The splitting is respected by the isomorphism $A$ by assumption and we write $A(z_1, z_2)=(A_1z_1, A_2z_2)$. From $(Df\circ A)A=Df$ evaluated at the point $(z_1,\phi(z_1))$ we get
\[
D_2f(A_1z_1,A_2\phi(z_1))A_2 = D_2f(z_1,\phi(z_1)) = 0.
\]
By the uniqueness statement of the implicit function theorem we conclude that 
\begin{equation}\label{equiv_phi}
A_2\phi(z_1) = \phi(A_1z_1)
\end{equation}
holds for all $z_1$ close enough to $0 \in \R^{n_1}$. This is equivalent to the embeddings $(z_1,z_2) \mapsto (z_1,z_2+s\phi(z_1))$ being $A$-equivariant, and we are done showing that the {\it preliminary step} in the proof of Lemma~\ref{lem_GM_non_equiv} can be done equivariantly. This means, as before, that we can assume $D_2f(z_1,0)\equiv0$ for $z_1$ near $0\in\R^{n_1}$.

It is easy to see that in the formula $f(z_1,z_2) = g(z_1) + \left<H(z_1,z_2)z_2,z_2\right>$, the function $g$ is $A_1$-invariant. From this it follows that the second term must be $A$-invariant. The explicit form of $H(z_1,z_2)$ given by
\[
H(z_1,z_2) = \int_0^1\int_0^1 \tau D_{22}f(z_1,\lambda\tau z_2) \, d\lambda \, d\tau
\]
shows that, in fact, 
\[
A_2^TH(A_1z_1,A_2z_2)A_2 = A_2^{-1}H(A_1z_1,A_2z_2)A_2 = H(z_1,z_2).
\]
This follows simply from $A_2^T(D_{22}f\circ A)A_2 = D_{22}f$, which in turn follows from $A^T(D^2f\circ A)A=D^2f$. The same property holds for $B$ as can be seen by the following computation:
\[
\begin{aligned}
A_2^{-1}B(A_1z_1,A_2z_2)A_2 &= A_2^{-1}H(A_1z_1,A_2z_2)^{-1}H(0,0)A_2 \\
&= A_2^{-1}H(A_1z_1,A_2z_2)^{-1}A_2H(0,0) \\
&= (A_2^{-1}H(A_1z_1,A_2z_2)A_2)^{-1}H(0,0) \\
&= H(z_1,z_2)^{-1}H(0,0) \\
&= B(z_1,z_2).
\end{aligned}
\]
By construction, the same properties also hold for $C$ and $C_s$.

Now consider the function $\Phi_s(z_1,z_2)=(z_1,C_s(z_1,z_2)^{-1}z_2)$. This function is $A$-equivariant, since
\begin{equation}
\begin{aligned}
\Phi_s(A_1z_1,A_2z_2) &= (A_1z_1,C_s(A_1z_1,A_2z_2)^{-1}A_2z_2) \\
&= (A_1z_1,A_2C_s(z_1,z_2)^{-1}z_2) \\
&= A(z_1,C_s(z_1,z_2)^{-1}z_2) \\
&= A\Phi_s(z_1,z_2).
\end{aligned}
\end{equation}
Thus $\Psi_s := \Phi_s^{-1}$ is also $A$-equivariant.
\end{proof}


\begin{corollary}\label{lemma_local_MH_tangent_submfd}
Let $f$ be a smooth real-valued function defined near $0\in\R^n$ with an isolated critical point at $0$, and let $\Delta\subset\R^n$ be a linear subspace. Using the Euclidean metric to compute gradients, suppose that $p\in \Delta\Rightarrow \nabla f(p)\in \Delta$ and that $D^2f(0)|_{\Delta^\bot}$ is non-degenerate. Let us write $(x,y)$ for Euclidean coordinates with respect to the orthogonal splitting $\R^n = \Delta\oplus \Delta^\bot$. Then there exists a local diffeomorphism $\Phi$ fixing points in $\Delta$ near the origin such that 
\begin{equation}\label{quite_explict}
f\circ \Phi(x,y) = f(x,0) + \frac{1}{2} \left< D^2f(0,0)\cdot(0,y),(0,y) \right>
\end{equation}
for all $(x,y)$ close enough to zero. In particular, if $D^2f(0,0)|_{\Delta^\bot}$ has signature $(p,q)$ then there is an isomorphism
\[
\HM_*(f|_\Delta,0) \stackrel{\sim}{\to} \HM_{*+q}(f,0)
\]
of local Morse homologies. Moreover, if $A\in O(n)$ leaves $\Delta$ and $f$ invariant and generates a $\Z_\ell$-action, $\ell\in\N$, then $\Phi$ can be taken $\Z_\ell$-equivariant. Clearly $A$ leaves the negative space $E_-$ of $D^2f(0,0)|_{\Delta^\bot}$ invariant, and if $A|_{E_-}$ preserves orientations then there is an isomorphism of invariant local Morse homologies
\[
\HM_{*}(f|_\Delta,0)^{\Z_\ell} \stackrel{\sim}{\to} \HM_{*+q}(f,0)^{\Z_\ell}.
\]
\end{corollary}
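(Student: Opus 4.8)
\emph{Plan.} The whole statement is a packaging of the invariant splitting lemma (Lemma~\ref{lemma_invariant_GM_splitting}) applied with $\R^{n_1}=\Delta$, $\R^{n_2}=\Delta^\bot$, followed by the direct sum isomorphisms of Section~\ref{ssec_invariant}. First I would check that the hypotheses translate correctly. Since $p\mapsto\nabla f(p)$ carries a neighborhood of $0$ in $\Delta$ into $\Delta$, differentiating along $v\in\Delta$ gives $D^2f(0)v=D_v(\nabla f)(0)\in\Delta$; by symmetry of $D^2f(0)$, for $w\in\Delta^\bot$ and $v\in\Delta$ one has $\langle D^2f(0)w,v\rangle=\langle w,D^2f(0)v\rangle=0$, so $D^2f(0)$ preserves the orthogonal splitting $\R^n=\Delta\oplus\Delta^\bot$. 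As $D^2f(0)|_{\Delta^\bot}$ is non-degenerate this forces $\ker D^2f(0)\subset\Delta\times\{0\}$, which is the standing hypothesis of Lemma~\ref{lem_GM_non_equiv}. Moreover the assumption $\nabla f(p)\in\Delta$ for $p\in\Delta$ is exactly the hypothesis of the ``Moreover'' clause there, so Lemma~\ref{lem_GM_non_equiv} applies with $g(x)=f(x,0)=f|_\Delta(x)$, $h(y)=\tfrac12\langle D^2f(0,0)(0,y),(0,y)\rangle$, and with the embedding fixing $\Delta$ pointwise near $0$. Writing $A=A_1\oplus A_2$ with $A_1=A|_\Delta\in O(n_1)$ and $A_2=A|_{\Delta^\bot}\in O(n_2)$ (a linear isometry preserving $\Delta$ preserves $\Delta^\bot$), Lemma~\ref{lemma_invariant_GM_splitting} upgrades this embedding to a $\Z_\ell$-equivariant one; set $\Phi$ to be this embedding, which yields identity~\eqref{quite_explict}.

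\emph{Non-equivariant homology isomorphism.} Because $\Phi$ is a local diffeomorphism fixing $0$, it induces an isomorphism $\HM_*(f,0)\simeq\HM_*(f\circ\Phi,0)$: local Morse homology depends only on the germ of $f$ at the critical point up to diffeomorphism, which one sees either from the critical-group description or by pulling back a metric by $\Phi$ inside a Gromoll--Meyer pair. By~\eqref{quite_explict} we have $f\circ\Phi=f|_\Delta\oplus Q$ near $(0,0)\in\Delta\times\Delta^\bot$, where $Q(y)=\tfrac12\langle D^2f(0,0)(0,y),(0,y)\rangle$ is a non-degenerate quadratic form of signature $(p,q)$, so $0\in\Delta^\bot$ is a non-degenerate critical point of $Q$ of Morse index $q$ (note also that $0$ is then automatically an isolated critical point of $f|_\Delta$, by~\eqref{quite_explict} and isolatedness of $0$ for $f$). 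The direct sum map~\eqref{isom_consideration_no_symm} — equivalently the K\"unneth isomorphism~\eqref{isom_kunneth_formula} together with $\HM_*(Q,0)\simeq\Q$ concentrated in degree $q$ — gives $\HM_*(f\circ\Phi,0)\simeq\HM_{*-q}(f|_\Delta,0)$, and composing produces the isomorphism $\HM_*(f|_\Delta,0)\stackrel{\sim}{\to}\HM_{*+q}(f,0)$.

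\emph{Equivariant refinement.} From $A$-invariance of $f$ one gets $A^TD^2f(0)A=D^2f(0)$, i.e.\ $A$ commutes with $D^2f(0)$; hence $A_2$ commutes with $D^2f(0)|_{\Delta^\bot}$ and preserves its negative eigenspace $E_-$ (this is the ``clearly'' in the statement). Since $\Phi$ is $\Z_\ell$-equivariant and fixes $0$, it induces a $\Z_\ell$-equivariant isomorphism on (invariant) local Morse homology: a $\Z_\ell$-invariant Gromoll--Meyer pair for $(f\circ\Phi,0)$ furnished by Proposition~\ref{prop_GM_pairs} is carried by $\Phi$ to a $\Z_\ell$-invariant Gromoll--Meyer pair for $(f,0)$, and singular homology is functorial, so passing to invariant subcomplexes yields $\HM_*(f,0)^{\Z_\ell}\simeq\HM_*(f\circ\Phi,0)^{\Z_\ell}=\HM_*(f|_\Delta\oplus Q,(0,0))^{\Z_\ell}$, with $\Z_\ell$ acting diagonally by $A_1$ on $\Delta$ and $A_2$ on $\Delta^\bot$. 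If $A_2|_{E_-}$ preserves orientation, then the symmetric direct sum map~\eqref{isom_consideration_with_symm} applies and gives $\HM_*(f|_\Delta\oplus Q,(0,0))^{\Z_\ell}\simeq\HM_{*-q}(f|_\Delta,0)^{\Z_\ell}$; combining, $\HM_*(f|_\Delta,0)^{\Z_\ell}\stackrel{\sim}{\to}\HM_{*+q}(f,0)^{\Z_\ell}$.

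\emph{Main obstacle.} The substantive input, Lemma~\ref{lemma_invariant_GM_splitting}, is already available, so the rest is bookkeeping; the step needing the most care is making precise that an equivariant diffeomorphism induces an equivariant isomorphism of \emph{invariant} local Morse homology. This is why one routes the argument through the $\Z_\ell$-invariant Gromoll--Meyer pairs of Proposition~\ref{prop_GM_pairs} rather than directly through equivariant Morse--Smale data, for which transversality is exactly the difficulty addressed by Theorem~\ref{main1}.
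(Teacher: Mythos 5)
Your proposal is correct and follows essentially the same route as the paper: apply Lemma~\ref{lem_GM_non_equiv} (with its ``Moreover'' clause, upgraded equivariantly via Lemma~\ref{lemma_invariant_GM_splitting} after splitting $A=A_1\oplus A_2$) to obtain~\eqref{quite_explict}, then compose the isomorphism induced by replacing $f$ with $f\circ\Phi$ with the direct-sum maps~\eqref{isom_consideration_no_symm}--\eqref{isom_consideration_with_symm}, exactly as indicated in Remark~\ref{rmk_explicit}. Your preliminary verifications (that $D^2f(0)$ preserves the splitting with kernel in $\Delta$, that $A$ commutes with $D^2f(0)$ so $E_-$ is $A$-invariant, and that $0$ is isolated for $f|_\Delta$) are accurate and fill in the bookkeeping the paper leaves implicit.
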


\begin{remark}\label{rmk_explicit}
The isomorphisms of local Morse homologies in Corollary~\ref{lemma_local_MH_tangent_submfd}, non-invariant or invariant, are given by composing direct sum isomorphisms~\eqref{isom_consideration_no_symm}-\eqref{isom_consideration_with_symm} with the isomorphisms induced by changing $f\circ \Phi$ to $f$. Using the quite explicit form~\eqref{quite_explict} of $f\circ \Phi$, these maps on homology can be given obvious and explicit descriptions at the chain level using chain complexes of certain preferred perturbations.
\end{remark}


The particular case of the above corollary which is relevant for us is as follows. We consider two germs of smooth real-valued functions in different spaces. Let $f$ be such a germ near $0\in\R^{nm}$ with isolated critical point at the origin, and let $h$ be such a germ near $0\in\R^{nmk}$, again with an isolated critical point at the origin. 

Equip both spaces with the corresponding Euclidean metrics. Identifying $\R^{nmk}=(\R^{nm})^k$, let $\Delta$ be the $k$-diagonal linear subspace. Identifying $\R^{nmk}=(\R^n)^{mk}$ we can consider the $\Z_{mk}$-action generated by the cyclic shift to the right, which is in $O(nmk)$. Identifying $\R^{nm} = (\R^n)^m$ we can consider the $\Z_m$-action generated by the cyclic shift to the right, which is in $O(nm)$. Assume that $f$ is $\Z_m$-invariant and that $h$ is $\Z_{mk}$-invariant. In particular, $\nabla h$ is tangent to $\Delta$ at points in $\Delta$, and obviously so is $\nabla (\oplus^k f)=\oplus^k \nabla f$. Assume further that $h$ coincides with $\oplus^kf$ on $\Delta$, and that $D^2h(0)|_{\Delta^\bot}$ is non-degenerate; let $E_-\subset\Delta^\bot$ be its negative space and denote $q=\dim E_-$.

Let us use $(x,y)$ as Euclidean coordinates according to the splitting $\R^{nmk} = \Delta\oplus\Delta^\bot$. We claim that if the $\Z_{km}$-action preserves orientations on $E_-$ then Corollary~\ref{lemma_local_MH_tangent_submfd} provides an isomorphism
\begin{equation}\label{abstract_isom_inv_iterations}
\HM_*(h,(0,0))^{\Z_{km}} \simeq \HM_{*-q}(f,0)^{\Z_m}.
\end{equation}
On the right-hand side $0$ denotes the origin in $\R^{nm}$. In fact, Corollary~\ref{lemma_local_MH_tangent_submfd} provides a germ of diffeomorphism $\Phi$ near $0\in\R^{nmk}$ satisfying $\Phi|_\Delta=id|_\Delta$, $\Phi$ is $\Z_{km}$-equivariant, $D\Phi(0,0)=I$ and
\[
h\circ \Phi(x,y) = h(x,0) + \frac{1}{2} \left< D^2h(0,0)\cdot(0,y),(0,y) \right>
\]
near the origin. The quadratic form in the variable $y$ on the right-hand side is $\Z_{km}$-invariant and, by assumption, the $\Z_{km}$-action preserves orientations on its negative space $E_-$. Hence there is an isomorphism as in~\eqref{isom_consideration_with_symm}
\begin{equation}\label{first_equiv_isom}
\HM_*(h\circ\Phi,(0,0))^{\Z_{km}} \simeq \HM_{*-q}(h\circ \Phi|_{\Delta},0)^{\Z_{km}}.
\end{equation}
But the $\Z_{km}$-action restricted to $\Delta$ is not faithful, in fact, it is the $k$-th iteration of a $\Z_m$-action: the element $m\in\Z_{km}$ generates the identity on $\Delta$. Identifying $\R^{nm} = \Delta$ via the diagonal inclusion, it corresponds to the $\Z_m$-action on $\R^{nm}$. Note also that $h\circ\Phi|_\Delta = h|_\Delta = \oplus^k f|_\Delta$ and again the diagonal inclusion allows us to identify $\oplus^k f|_\Delta$ with $kf$. Hence we have isomorphisms
\[
\HM_{*-q}(h\circ \Phi|_{\Delta},0)^{\Z_{km}} \simeq \HM_{*-q}(h\circ \Phi|_{\Delta},0)^{\Z_m} \simeq H_{*-q}(f,0)^{\Z_m}.
\]
Composing these isomorphisms with the isomorphism~\eqref{first_equiv_isom} and the obvious isomorphism given by changing $h$ to $h\circ \Phi$, we obtain~\eqref{abstract_isom_inv_iterations}.

In the absence of cyclic group actions, a simpler argument provides an isomorphism
\begin{equation}\label{abstract_isom_noninv_iterations}
\HM_*(h,(0,0)) \simeq \HM_{*-q}(f,0).
\end{equation}

The proofs of the Persistence Theorem, with or without symmetries, will be given by showing that discrete action functionals fit into the above abstract framework.

\subsection{Persistence Theorem without symmetries}\label{ssec_persistence_no_symmetries}

Now let us go back to Hamiltonian dynamics and prove Theorem~\ref{thm_persistence_non_invariant}. Associated to a $1$-periodic germ $H=\{H_t\}$ near $0\in\R^{2n}$ satisfying $dH_t(0)=0$, $H_{t+1}=H_t$ for all $t$, we have a local Hamiltonian isotopy $\varphi_H^t$ defined by $d\varphi_H^t/dt=X_{H_t} \circ \varphi_H^t$, $\varphi_H^0=id$. This data defines the action functionals $\A_{H,k,N}$ for all $k\geq 1$, as in~\eqref{def_discrete_action}. Denote $\varphi = \varphi_H^1$. Let $N$ be adapted to $H$ in the sense of~\eqref{N_adapted}. Now consider maps $\Phi_{H,1,N}$ and $\Phi_{H,k,N}$ defined as in~\eqref{maps_big_Phi}. The first is defined near the origin in $\R^{2nN}$ while the second is defined near the origin in $\R^{2nkN}$. Let $\Delta \subset \R^{2nkN}$ be the $k$-diagonal, where we identify $\R^{2nkN} \simeq (\R^{2nN})^k$. We have a standard Lagrangian splitting $\R^{2nkN} \simeq \R^{nkN} \oplus \R^{nkN}$ with respect to which we write $\Phi_{H,k,N}(x,y)=(X,Y)$. Define $T(x,y)=(x,Y)$. By Lemma~\ref{lemma_action_as_gen_function} this map satisfies (Gen1), i.e., $T$ is a local diffeomorphism fixing the origin.

\begin{lemma}
The subspace $\Delta$ is invariant under $\Phi_{H,k,N}$ and $T$.
\end{lemma}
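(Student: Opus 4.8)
The plan is to use the block-cyclic symmetry of $\A_{H,k,N}$ at the level of the map $\Phi_{H,k,N}$: I will show that $\Phi_{H,k,N}$ (and then $T$) commutes with the shift $\tau$ of~\eqref{shift_map_discrete_action}, and then invoke that the fixed-point set of an equivariant self-map is invariant. Under the identification $\R^{2nkN}\simeq(\R^{2nN})^k$ the $k$-diagonal is precisely $\Delta=\fix(\tau)$: indeed $\tau(z)_i=z_{i-N}$ (indices mod $kN$), so $\tau(z)=z$ exactly when the discrete loop $(z_1,\dots,z_{kN})$ is $N$-periodic.

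First I would rewrite~\eqref{maps_big_Phi} in the uniform form $\Phi_{H,k,N}(z)_i=\psi_{i-1}(z_{i-1})$, valid for every $i\in\Z/kN\Z$: here the $z$-indices are cyclic mod $kN$, the sequence $(\psi_i)$ is $N$-periodic, and the displayed special case $Z_1=\psi_{kN}(z_{kN})$ is just the instance $i=1$ together with $\psi_{kN}=\psi_0$. Equivariance is then a one-line computation using only $N$-periodicity of $(\psi_i)$:
\[
\Phi_{H,k,N}(\tau z)_i=\psi_{i-1}\big((\tau z)_{i-1}\big)=\psi_{i-1}(z_{i-1-N})=\psi_{i-1-N}(z_{i-1-N})=\Phi_{H,k,N}(z)_{i-N}=\big(\tau\,\Phi_{H,k,N}(z)\big)_i .
\]
Hence $\Phi_{H,k,N}\circ\tau=\tau\circ\Phi_{H,k,N}$, so $\Phi_{H,k,N}$ maps $\fix(\tau)=\Delta$ into itself; since each $\psi_i$ is a local diffeomorphism fixing the origin, $\Phi_{H,k,N}$ restricts to a local diffeomorphism of $\Delta$ near the origin.

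For $T$ I would note that the Lagrangian splitting $\R^{2nkN}=\R^{nkN}\oplus\R^{nkN}$ into $x$- and $y$-coordinates is $\tau$-invariant, since $\tau$ permutes the full blocks $z_i=(x_i,y_i)$ and therefore acts as the same cyclic shift on the $x$-part and on the $y$-part; correspondingly $\Delta$ splits as $\Delta_x\oplus\Delta_y$. Now if $z=(x,y)\in\Delta$ then $\Phi_{H,k,N}(z)=(X,Y)\in\Delta$ by the previous step, hence $Y\in\Delta_y$, while $x\in\Delta_x$ already, so $T(z)=(x,Y)\in\Delta$. (Equivalently, the equivariance just proved shows that $\tau$ commutes with the partial map $(x,y)\mapsto Y$, hence with $T=(x,y)\mapsto(x,Y)$, so $T$ preserves $\fix(\tau)=\Delta$.) Recall from Lemma~\ref{lemma_action_as_gen_function} that $T$ is a genuine local diffeomorphism fixing the origin, so $T|_\Delta$ is as well.

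There is essentially no obstacle here; the only point demanding a moment's care is the cyclic index bookkeeping — in particular recognizing the wrap-around term $\psi_{kN}(z_{kN})$ as the $i=1$ instance of the uniform formula, using $kN\equiv0\pmod N$. Everything else is immediate from $N$-periodicity of $(\psi_i)$ together with the identification $\Delta=\fix(\tau)$.
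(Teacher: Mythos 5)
Your proof is correct and is essentially the paper's argument: the paper checks directly from \eqref{maps_big_Phi} and the $N$-periodicity of $(\psi_i)$ that $\Phi_{H,k,N}$ sends $N$-block-periodic sequences to $N$-block-periodic sequences, and then observes that both the $x$- and $Y$-blocks are $N$-periodic, so $T$ preserves $\Delta$. Your packaging via $\Delta=\fix(\tau)$ and the equivariance $\Phi_{H,k,N}\circ\tau=\tau\circ\Phi_{H,k,N}$ rests on exactly the same one-line computation, so it is the same proof in slightly more symmetric language.
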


\begin{proof}
A point $(z_1,\dots,z_{kN}) \in \Delta$ is a sequence such that $(z_{(\lambda-1)N+1},\dots,z_{\lambda N})$ is independent of $\lambda \in \{1,\dots,k\}$. Denoting $\Phi_{H,k,N}(z_1,\dots,z_{kN}) = (Z_1,\dots,Z_{kN})$, it follows from~\eqref{maps_big_Phi} that $\Phi_{H,k,N}(\Delta) = \Delta$ locally near the origin. Denoting $z_i=(x_i,y_i)$ and $Z_i=(X_i,Y_i)$ we get that $(x_{(\lambda-1)N+1},\dots,x_{\lambda N})$ and $(Y_{(\lambda-1)N+1},\dots,Y_{\lambda N})$ are both independent of $\lambda$. Hence $T$ also preserves $\Delta$.
\end{proof}

\begin{lemma}
$\Delta$ is invariant under the Euclidean gradient flow of $\A_{H,k,N}$.
\end{lemma}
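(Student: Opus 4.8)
The plan is to deduce invariance of $\Delta$ from the $\Z_k$-symmetry of $\A_{H,k,N}$ together with the fact that $\Delta$ is precisely the fixed-point set of that symmetry. First I would observe that the shift map $\tau$ of~\eqref{shift_map_discrete_action} merely permutes the $2n$-dimensional coordinate blocks $z_1,\dots,z_{kN}$ cyclically by $N$ places; in particular $\tau\in O(2nkN)$, so $\tau$ is a Euclidean isometry of $\R^{2nkN}$. Since $\A_{H,k,N}\circ\tau=\A_{H,k,N}$, the standard fact that the gradient of an invariant function with respect to an invariant metric is equivariant gives
\[
\nabla\A_{H,k,N}\circ\tau = \tau\circ\nabla\A_{H,k,N},
\]
where $\nabla$ denotes the Euclidean gradient.

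Next I would identify the fixed-point set of $\tau$. A point $(z_1,\dots,z_{kN})$ is fixed by $\tau$ if and only if $z_i=z_{i+N}$ for all $i$ (indices mod $kN$), i.e. the discrete loop is $N$-periodic; under the identification $\R^{2nkN}\simeq(\R^{2nN})^k$ obtained by grouping consecutive blocks of length $N$, this is exactly the $k$-diagonal $\Delta$. Hence $\Delta=\fix(\tau)$.

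Combining the two observations, for $p\in\Delta$ one has $\tau\bigl(\nabla\A_{H,k,N}(p)\bigr)=\nabla\A_{H,k,N}(\tau(p))=\nabla\A_{H,k,N}(p)$, so $\nabla\A_{H,k,N}(p)\in\fix(\tau)=\Delta$. Thus the Euclidean gradient vector field of $\A_{H,k,N}$ is tangent to $\Delta$ along $\Delta$, and therefore the (locally defined) gradient trajectories through points of $\Delta$ remain in $\Delta$, which is the assertion. There is essentially no obstacle here; the only point needing a line of verification is the identification $\Delta=\fix(\tau)$. If one prefers to avoid invoking the symmetry, one can instead differentiate $\A_{H,k,N}$ directly, as in the proof of Lemma~\ref{lemma_action_as_gen_function}, and observe --- using the $N$-periodicity of the sequence $\{S_i\}$ --- that the Euclidean gradient of an $N$-periodic discrete loop is again $N$-periodic, which is the same conclusion.
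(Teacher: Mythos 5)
Your proof is correct, but it takes a genuinely different route from the paper's. The paper argues by direct computation: it writes out the components
\[
\nabla_{x_i}\A_{H,k,N} = y_{i+1}-y_i+D_1S_i(x_i,y_{i+1}), \qquad
\nabla_{y_i}\A_{H,k,N} = x_{i-1}-x_i+D_2S_{i-1}(x_{i-1},y_i),
\]
with indices mod $kN$, and observes that at an $N$-periodic point the $N$-periodicity of the family $\{S_i\}$ makes the block of gradient components indexed by $(\lambda-1)N+1,\dots,\lambda N$ independent of $\lambda$, i.e.\ the gradient is tangent to $\Delta$ --- which is exactly the alternative you sketch in your last sentence. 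Your main argument instead packages this into the symmetry: the shift $\tau$ of~\eqref{shift_map_discrete_action} is orthogonal, $\A_{H,k,N}\circ\tau=\A_{H,k,N}$ (the natural domain, a product of identical neighborhoods of $0\in\R^{2n}$, is $\tau$-invariant, and the invariance is how the paper introduces the $\Z_k$-symmetry), hence the Euclidean gradient is $\tau$-equivariant; since $\Delta=\fix(\tau)$ is a linear subspace, the gradient at any point of $\Delta$ is a $\tau$-fixed vector and so lies in $\Delta$, and tangency plus uniqueness for ODEs gives invariance of the locally defined flow. Your identification $\fix(\tau)=\Delta$ is the only step needing verification and you do it correctly ($z_i=z_{i+N}$ for all $i$ mod $kN$ is exactly the $k$-diagonal under the grouping $\R^{2nkN}\simeq(\R^{2nN})^k$). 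Note also that your equivariance step is an instance of the paper's own Lemma~\ref{lemma_grad_tang} (gradients of invariant functions with respect to invariant metrics are tangent to fixed-point sets), which you could cite instead of reproving. What the symmetry argument buys is conceptual economy: it uses nothing about the specific formula for $\A_{H,k,N}$, only its $\tau$-invariance, so it generalizes verbatim to any invariant function. What the paper's computation buys is the explicit form of the gradient along $\Delta$, which makes the subsequent identification of $\A_{H,k,N}|_\Delta$ with the $k$-fold diagonal restriction of $\oplus^k\A_{H,1,N}$ and the splitting of the Hessian transparent; either route is fully adequate for the lemma as stated.
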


\begin{proof}
The Euclidean gradient of $\A_{}$ has components
\begin{equation*}
\begin{aligned}
& \nabla_{x_i}\A = y_{i+1}-y_i+D_1S_i(x_i,y_{i+1}) \\
& \nabla_{y_i}\A = x_{i-1}-x_i+D_2S_{i-1}(x_{i-1},y_i)
\end{aligned}
\end{equation*}
where here the index $i$ runs from $1$ to $kN$ with the convention that $0$ is to be identified with $kN$, while $kN+1$ is to be identified with $1$. From these formulas we see that $((\nabla_{x_{(\lambda-1)N+1}}\A,\nabla_{y_{(\lambda-1)N+1}}\A),\dots,(\nabla_{x_{\lambda N}}\A,\nabla_{y_{\lambda N}}\A))$ is independent of $\lambda \in \{1,\dots,k\}$ when computed at points of $\Delta$. In other words, the gradient of $\A$ is tangent to $\Delta$ at points of $\Delta$.
\end{proof}

\begin{corollary}
The subspaces $\Delta$ and $\Delta^\bot$ are orthogonal with respect to the symmetric matrix $D^2\A_{H,k,N}(0)$.
\end{corollary}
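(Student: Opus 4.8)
The plan is to deduce the statement directly from the preceding lemma, which asserts that $\nabla\A_{H,k,N}$ is tangent to $\Delta$ at points of $\Delta$. Since $\Delta$ is a \emph{linear} subspace, tangency means simply that $\nabla\A_{H,k,N}(p)\in\Delta$ for every $p\in\Delta$ near the origin. First I would use that $0\in\Delta$ is a critical point, so $\nabla\A_{H,k,N}(0)=0$, and differentiate the smooth map $p\mapsto\nabla\A_{H,k,N}(p)$ restricted to a neighbourhood of $0$ in $\Delta$: its differential at $0$ is the restriction of $D^2\A_{H,k,N}(0)$ to $\Delta$, and being the derivative of a $\Delta$-valued map defined on an open subset of $\Delta$, it sends $\Delta$ into $\Delta$. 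This gives $D^2\A_{H,k,N}(0)\,\Delta\subseteq\Delta$.

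Next I would invoke the symmetry of $D^2\A_{H,k,N}(0)$ with respect to the Euclidean inner product: a symmetric endomorphism that preserves a subspace also preserves its Euclidean orthogonal complement, hence $D^2\A_{H,k,N}(0)\,\Delta^\bot\subseteq\Delta^\bot$ as well. Finally, for $v\in\Delta$ and $w\in\Delta^\bot$ one has $D^2\A_{H,k,N}(0)v\in\Delta$, which is Euclidean-orthogonal to $w$, so $\langle D^2\A_{H,k,N}(0)v,\,w\rangle=0$; this is precisely the statement that $\Delta$ and $\Delta^\bot$ are orthogonal for the bilinear form associated to $D^2\A_{H,k,N}(0)$.

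There is essentially no obstacle; the only step requiring a line of care is the differentiation argument, where one must record the elementary observation that for a linear subspace ``tangent to $\Delta$'' literally means ``lies in $\Delta$'', so that the derivative along $\Delta$ of a map taking values in $\Delta$ again takes values in $\Delta$. (One could alternatively bypass the differentiation entirely and argue via the relation $d\varphi_H^k(0)-I = -J_0\,D^2\A_{H,k,N}(0)\,dT(0)$ from Lemma~\ref{lemma_formula_hessian} together with the previous lemma on invariance of $\Delta$ under $\Phi_{H,k,N}$ and $T$, but the direct route above is shorter.)
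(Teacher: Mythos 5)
Your proof is correct and is essentially the paper's argument in coordinate-free form: the paper writes coordinates $(u,v)$ for $\Delta\oplus\Delta^\bot$, notes $\nabla_v\A(u,0)\equiv0$ by the tangency lemma, differentiates to get that the mixed Hessian block vanishes, and invokes symmetry — which is exactly your observation that $D^2\A_{H,k,N}(0)$ preserves $\Delta$ together with symmetry of the Hessian. No gap; the two presentations differ only cosmetically.
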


\begin{proof}
If $(u,v)$ are coordinates associated to the splitting $\Delta \oplus \Delta^\bot$ then from the previous lemma we get $\nabla_v\A(u,0) \equiv 0$. Thus $\nabla^2_{vu}\A$ vanishes along $\Delta$, and the conclusion follows from the symmetry of the Hessian.
\end{proof}

From now on, we assume that $k$ is an admissible iteration. The diagonal map $d:\R^{2nN} \to \Delta$ defined by $d(z)= (z,\dots,z)$ determines a map from $\ker d\Phi_{H,1,N}(0)-I$ to $\ker d\Phi_{H,k,N}(0)-I$ which is, in principle, only injective (even with no assumptions on $k$). Note that the image of $\ker d\Phi_{H,1,N}(0)-I$ under the map $d$ is contained in $\Delta$ since, obviously, $d$ takes values in $\Delta$. There are isomorphisms
\begin{equation*}
\begin{aligned}
\ker d\varphi(0)-I &\simeq \ker d\Phi_{H,1,N}(0)-I \\
\ker d\varphi^k(0)-I &\simeq \ker d\Phi_{H,k,N}(0)-I
\end{aligned}
\end{equation*}
given by
\begin{equation*}
\begin{aligned}
&\delta z \mapsto (\delta z,d\varphi^H_{1/N}\cdot \delta z,\dots,d\varphi^H_{(N-1)/N}(0)\cdot \delta z) \\
&\delta z \mapsto (\delta z,d\varphi^H_{1/N}\cdot \delta z,\dots,d\varphi^H_{(kN-1)/N}(0)\cdot \delta z)
\end{aligned}
\end{equation*}
respectively. Since $k$ is admissible, $\ker d\varphi(0)-I = \ker d\varphi^k(0)-I$. Hence 
the subspaces $\ker d\Phi_{H,1,N}(0)-I$ and $\ker d\Phi_{H,k,N}(0)-I$ have the same dimension. It follows that the image of $\ker d\Phi_{H,1,N}(0)-I$ under $d$ coincides precisely with $\ker d\Phi_{H,k,N}(0)-I$, in particular, $\ker d\Phi_{H,k,N}(0)-I$ is contained in $\Delta$. Now, the formula
\[
d\Phi_{H,k,N}(0)-I = -J_0 \ D^2\A_{H,k,N}(0) \ dT(0)
\]
given by Lemma~\ref{lemma_formula_hessian} tells us that the kernel of $D^2\A_{H,k,N}(0)$ is contained in $\Delta$. In fact, consider $V \in \ker D^2\A_{H,k,N}(0)$. By the above formula one gets $dT(0)^{-1}V\in \ker d\Phi_{H,k,N}(0)-I \subset \Delta$. Thus $V\in\Delta$ since $dT(0)(\Delta)=\Delta$. We can finally conclude that $D^2\A_{H,k,N}(0)|_{\Delta^\bot}$ is an isomorphism of $\Delta^\bot$.

Combining the above arguments with the fact that $\Delta$ is invariant under the Euclidean gradient flow of $\A_{H,k,N}$ and that $\A_{H,k,N}$ coincides with $\oplus^k\A_{H,1,N}$ along $\Delta$, we conclude that there is an isomorphism as in~\eqref{abstract_isom_noninv_iterations}
\[
\mathcal{I} : \HM_*(\A_{H,1,N},0) \to \HM_{*+q}(\A_{H,k,N},0)
\]
for $k\geq 1$ admissible. It is called the {\it iteration map}. Here, the index shift $q$ is the algebraic number of negative eigenvalues of $D^2\A_{H,k,N}(0)$ on $\Delta^\bot$. The Hessian $D^2\A_{H,k,N}(0)$ respects the splitting $\Delta \oplus \Delta^\bot$, and the diagonal isomorphism $\R^{2nN} \to \Delta$ identifies the negative space of $D^2\A_{H,1,N}(0)$ with the negative space of $D^2\A_{H,k,N}(0)|_\Delta$. It follows that $q$ is precisely the difference between the Morse index of $0\in\R^{2nkN}$ as a critical point of $\A_{H,k,N}$ and the Morse index of $0\in\R^{2nN}$ as a critical point of $\A_{H,1,N}$. By~\cite[Proposition~2.5]{mazz_SDM} we get
\begin{equation*}
q = \CZ^{(k)} - \CZ^{(1)} + n(k-1)N
\end{equation*}
where
\begin{equation*}
\begin{aligned}
& \CZ^{(k)} = \CZ(t\in[0,k]\mapsto d\varphi^t_H(0)) \\
& \CZ^{(1)} = \CZ(t\in[0,1]\mapsto d\varphi^t_H(0))
\end{aligned}
\end{equation*}
and $\CZ$ stands for the lower semi-continuous extension of the Conley-Zehnder index to degenerate paths.

The maps $\mathcal{I}$ above commute with the inflation maps constructed in Lemma~\ref{lemma_inflation}. More precisely, there is a commutative diagram
\begin{equation}
\xymatrixcolsep{6pc}\xymatrix{
\HM_*(\A_{H,1,N},0) \ar[d]^{\mathcal{I}} \ar[r]^{\mathscr{I}^{(1)}_N} & \HM_{*+nN}(\A_{H,1,N+1},0) \ar[d]^{\mathcal{I}} \\
\HM_{*+q}(\A_{H,k,N},0) \ar[r]^{\mathscr{I}^{(k)}_N} & \HM_{*+nN+q}(\A_{H,k,N+1},0)
}
\end{equation}
This follows since the various direct sum maps and continuation maps involved in the definitions of $\mathscr{I}_N^{(1)},\mathscr{I}^{(k)}_N$ and $\mathcal{I}$ commute with each other; see the end of Section~\ref{ssec_invariant} and Remark~\ref{rmk_explicit}. These diagrams and the formula above for $q$ (which depends on $N$!) in terms of Conley-Zehnder indices yield an iteration map
\[
\begin{array}{ccc}
\mathcal{I} : \hloc_*(H,1,0) \to \hloc_{*+s_k}(H,k,0) & \text{ with } & s_k = \CZ^{(k)} - \CZ^{(1)}.
\end{array}
\]
This concludes the proof of Theorem~\ref{thm_persistence_non_invariant}.



\subsection{Persistence Theorem with symmetries}\label{ssec_persistence_symmetries}

As before we study a $1$-periodic germ $H=\{H_t\}$ defined near $0\in\R^{2n}$ satisfying $dH_t(0)=0$ for all $t$. We fix $m\in\N$ such that $\varphi_H^m$ has an isolated fixed point at $0\in\R^{2n}$. Let $N\gg1$ be adapted to~$H$. Assume that $k$ is admissible and good for $\varphi^m_H$. Hence $0\in\R^{2n}$ is an isolated fixed point of $\varphi^{km}_H$. Identifying $\R^{2nkm2N} = (\R^{2nm2N})^k$, let $\Delta$ be the $k$-diagonal. Since the Euclidean gradient $\nabla\A_{H,km,2N}$ is tangent to $\Delta$ at points of $\Delta$, the Hessian $D^2\A_{H,km,2N}(0)$ at $0\in\R^{2nkm2N}$ splits as a quadratic form as $Q\oplus Q^\bot$ according to the splitting $\R^{2nkm2N} = \Delta\oplus\Delta^\bot$. Moreover $Q^\bot$ is non-degenerate because $k$ is admissible (see the non-invariant case above). By Lemma~\ref{lemma_preserves_orient_Delta_perp} the $\Z_{km}$-action preserves orientations on the negative space $E_-\subset \Delta^\bot$ of $Q^\bot$. Note that (Remark~\ref{rmk_dim_E_-}) the dimension of $E_-$ is
\[
\dim E_- = \CZ^{(km)} - \CZ^{(m)} + n(k-1)m2N
\]
where $\CZ^{(km)}$ and $\CZ^{(m)}$ are the (lower semi-continuous extensions of the) Conley-Zehnder indices of the paths $t\in[0,km]\mapsto d\varphi^t_H(0)$ and $t\in[0,m]\mapsto d\varphi^t_H(0)$, respectively. Set 
\[
s_{k,m} = \CZ^{(km)} - \CZ^{(m)}.
\]
We have checked all the prerequisites for applying the discussion following Corollary~\ref{lemma_local_MH_tangent_submfd} to conclude that we have an iteration map
\begin{equation*}
\HM_*(\A_{H,m,2N},0)^{\Z_m} \stackrel{\sim}{\to} \HM_{*+s_{k,m}+n(k-1)m2N}(\A_{H,km,2N},0)^{\Z_{km}}
\end{equation*}
as in~\eqref{abstract_isom_inv_iterations}, which is an isomorphism. As in the non-invariant case, such isomorphisms (note the dependence on $N$) commute with inflation maps of Lemma~\ref{lemma_inflation}. Hence we obtain the iteration map as an isomorphism
\begin{equation*}
\mathcal{I} : \hloc^{\rm inv}_*(H,m,0) \to \hloc^{\rm inv}_{*+s_{k,m}}(H,km,0).
\end{equation*}
Summarizing, we have proved Theorem~\ref{thm_persistence_invariant}.

\section{Discrete Chas-Sullivan product}\label{sec_chas_sullivan}

Here we construct a loop product of Chas-Sullivan type on non-invariant local homologies of discrete action functionals. In the analogy with local Floer homology this is to be thought of as the pair-of-pants product.

\subsection{Abstract products in local Morse homologies}\label{sec_products_local_Morse}

Let $(f_1,x_1),\dots,(f_k,x_k)$ be pairs consisting of functions and isolated critical points. This means we are given smooth manifolds without boundary $X_1,\dots,X_k$, smooth functions $f_i:X_i\to \R$, and isolated critical points $x_i\in X_i$ of $f_i$. For what follows there is no loss of generality to assume that $f_i(x_i)=0$ for all $i$. Let $h:X_1\times\dots\times X_k\to\R$ be smooth and such that $(x_1,\dots,x_k)$ is an isolated critical point of $h$. Finally, assume that $Z\subset X_1\times\dots\times X_k$ is a properly embedded co-oriented submanifold of codimension~$r$ such that $(x_1,\dots,x_k) \in Z$ and $h|_Z = f_1\oplus \dots\oplus f_k|_Z$. From this data we would like to construct a map
\begin{equation}\label{product_local_Morse}
\bullet^{(k)} : \HM_{i_1}(f_1,x_1) \otimes\dots\otimes \HM_{i_k}(f_k,x_k) \to \HM_{i_1+\dots+i_k-r}(h,(x_1,\dots,x_k)).
\end{equation}

For each $i$ let $U_i$ be an open, relatively compact, isolating neighborhood for $(f_i,x_i)$. Choose small open neighborhoods $D_i$ and $D_i'$ of $X_i\setminus U_i$ satisfying $\overline D_i \subset D'_i$. Note that
\[
\begin{aligned}
F = D_1 &\times X_2\times\dots\times X_k \cup X_1 \times D_2\times X_3\dots\times X_k \\
&\cup \dots \cup X_1\times\dots\times X_{k-1}\times D_k
\end{aligned}
\]
and
\[
\begin{aligned}
F' = D_1' &\times X_2\times\dots\times X_k \cup X_1 \times D_2'\times X_3\dots\times X_k \\
&\cup \dots \cup X_1\times\dots\times X_{k-1}\times D_k'
\end{aligned}
\]
are small open neighborhoods of $X_1\times\dots\times X_k \setminus U_1\times\dots\times U_k$ satisfying $\overline F\subset F'$.

With $\epsilon>0$ small set
\[
W_i = \{ f_i < \epsilon \} \cap U_i, \ \ \ \ W_{i,-} = (\{ f_i < -k\epsilon \} \cap U_i) \cup (W_i \cap D_i)
\]
and
\[
\begin{aligned}
C &= \{ f_1\oplus\dots\oplus f_k < k\epsilon \} \cap U_1\times\dots\times U_k \\
C_- &= (\{ f_1\oplus\dots\oplus f_k < -\epsilon \} \cap U_1\times\dots\times U_k) \cup (C \cap F) \\
C'_- &= (\{f_1\oplus\dots\oplus f_k <-\epsilon/2\}\cap U_1\times\dots\times U_k) \cup (C \cap F').
\end{aligned}
\]
The closure of $C_-$ with respect to $C$ is contained in $C'_-$.  Finally, set
\[
\begin{aligned}
\Lambda &= \{h<k\epsilon\}\cap U_1\times \dots\times U_k \\ 
\Lambda_- &= (\{h<-\epsilon/2\}\cap U_1\times\dots\times U_k) \cup (F' \cap \Lambda)
\end{aligned}
\]

Our constructions require a technical statement.

\begin{lemma}
Let $X$ be a smooth manifold without boundary, $f:X\to\R$ be a smooth function, $x\in X$ be an isolated critical point of $f$ such that $f(x)=0$, $U$ be an open relatively compact isolating neighborhood for $(f,x)$ and $\O$ be a small open neighborhood of $X\setminus U$. With $a,b>0$ consider
\[
W = \{ f<a \} \cap U \qquad W_- = (\{ f<-b \} \cap U) \cup (W\cap \O).
\]
Define $E=\{f\leq a\}\cap U$. If $a+b$ is small enough then there exist $E^1_-,E^0_-$ closed subsets of $U$ satisfying
\begin{itemize}
\item[a)] Their interiors $\dot E^1_-,\dot E^0_-$ satisfy $\dot E^1_- \subset W_- \subset \dot E^0_-$.
\item[b)] Both $(E,E^0_-)$ and $(E,E^1_-)$ are Gromoll-Meyer pairs for $(f,x)$ in $U$.
\item[c)] The inclusions of pairs $(\dot E,\dot E^j_-)\subset (E,E^j_-)$ induce isomorphisms on homology $H_*(\dot E,\dot E^j_-) \simeq H_*(E,E^j_-)$, for $j=0,1$.
\end{itemize}
In particular the inclusions $(\dot E,\dot E^1_-) \subset (W,W_-)$, $(W,W_-) \subset (\dot E,\dot E^0_-)$ induce maps
\begin{equation}\label{rough_maps_nbds}
\HM(f,x) \to H_*(W,W_-) \qquad H_*(W,W_-) \to \HM(f,x).
\end{equation}
\end{lemma}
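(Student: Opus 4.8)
The plan is to build the sets $E^1_-$ and $E^0_-$ by thickening and thinning the ``negative'' part $W_-$ slightly, using the gradient flow of $f$ to control the topology, and then to invoke the standard machinery recalled in Section~\ref{ssec_invariant} (Conley's construction of Gromoll--Meyer pairs, plus Definition~\ref{def_GM_pairs}) to verify the three properties. First I would fix the pair $(f,\theta)$ with $\theta$ an arbitrary background metric and recall that the characteristic condition~(iii) of Definition~\ref{def_MS} holds on $U$ for all pairs $C^2$-close to $(f,\theta)$, so that Morse homology $\HM(f,x)$, Gromoll--Meyer pairs, and the isomorphisms~\eqref{invariance_map_local_hom} are all available. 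The key geometric input is that for $a,b>0$ sufficiently small the set $E=\{f\le a\}\cap U$ is positively invariant under the anti-gradient flow up to exit through $\partial U$, and that the ``exit set'' is contained in a small neighbourhood $\O$ of $X\setminus U$; this is exactly the local version of Conley's index pair construction (compare~\cite{conley,salamon}).

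The main steps are as follows. Step one: choose $a,b$ small enough that $E$ is an isolating block for the restricted flow, that $\crit(f)\cap E=\{x\}$, and that the exit set $E^-:=\{z\in E : \phi^t_{f,\theta}(z)\notin E \text{ for some small } t>0\}$ is contained in $W\cap\O$. Step two: define $E^1_-$ to be a closed neighbourhood of $E^-$ (pushed slightly inward along the flow), and $E^0_-$ to be a slightly larger closed set of the form $(\{f\le -b'\}\cap U)\cup(\overline{W\cap\O'})$ with $-b<-b'<0$ and $\O\subset\O'$ chosen so that $\dot E^1_-\subset W_-\subset\dot E^0_-$; the freedom here comes precisely from the strict inequalities $-b$ vs.\ $0$ in the definition of $W_-$ and from having two nested neighbourhoods $\O,\O'$ of $X\setminus U$. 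Step three: prove (b), namely that $(E,E^0_-)$ and $(E,E^1_-)$ are Gromoll--Meyer pairs, by exhibiting each as a deformation retract of a block--exit-set pair constructed from Conley's theory; this gives the isomorphisms~\eqref{invariance_map_local_hom} and the compatibility~\eqref{local_continuation_and_pairs} automatically, since those are intrinsic to the Conley--Gromoll--Meyer construction. Step four: prove (c) by a collaring/deformation argument --- the inclusion $(\dot E,\dot E^j_-)\hookrightarrow(E,E^j_-)$ is a deformation retract because $\{f=a\}\cap U$ and the boundary of $E^j_-$ in $U$ are transverse to the anti-gradient flow (after the small perturbations of Step one), so one retracts a half-open collar onto its inner boundary. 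Finally, (a) is immediate from the construction, and the last display~\eqref{rough_maps_nbds} follows by composing the isomorphisms of (c) with the inclusion-induced maps $(\dot E,\dot E^1_-)\subset(W,W_-)\subset(\dot E,\dot E^0_-)$ and identifying $H_*(E,E^j_-)\simeq\HM(f,x)$ via (b).

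The hard part will be arranging, with a single choice of $a$ and $b$ small, that \emph{all three} desiderata hold simultaneously: the set $W\cap\O$ must be sandwiched cleanly between $\dot E^1_-$ and $\dot E^0_-$ while each $E^j_-$ is still a bona fide exit set for an isolating block, and the level set $\{f=a\}$ must meet $U$ transversally enough for the collar retraction in (c) to work. Morse functions and transversality are not assumed here --- $f$ is merely smooth with an isolated critical point --- so one cannot literally appeal to transversality of level sets; instead the retraction must be carried out using the flow itself (the flow is transverse to $\{f=a\}$ wherever $\nabla^\theta f\ne 0$, and near $x$ the set $\{f\le a\}$ can be absorbed into a Gromoll--Meyer neighbourhood). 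Handling the interface between the ``sublevel'' piece $\{f<-b\}\cap U$ and the ``neighbourhood of infinity'' piece $W\cap\O$ --- making sure no connecting orbit sneaks between $\dot E^1_-$ and $\dot E^0_-$ --- is where the smallness of $a+b$ is really used, via the oscillation condition~(iii) of Definition~\ref{def_MS}. Once the sets are in place, properties (a)--(c) and the maps~\eqref{rough_maps_nbds} are formal consequences of the already-recalled invariance properties of local Morse homology.
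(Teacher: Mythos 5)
Your overall sandwich strategy is the right one, and it is the same one the paper uses: produce two closed Gromoll--Meyer pairs $(E,E^1_-)$, $(E,E^0_-)$ whose negative sets have interiors on either side of $W_-$, then obtain the two maps in~\eqref{rough_maps_nbds} by composing (b), (c) with the inclusions. The gap is in how you propose to realize the two negative sets and, consequently, in step (b). Your candidate $E^0_-=(\{f\le -b'\}\cap U)\cup\overline{W\cap\O'}$ has a boundary piece lying along $\partial\O'$, over which you have no control: the anti-gradient flow can be tangent to it or re-enter through it, so this set is not an exit set of an isolating block, and the deformation-retract argument you invoke for (b), as well as the collar retraction you invoke for (c), do not apply to it as stated. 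The same vagueness affects $E^1_-$ (``a closed neighbourhood of $E^-$ pushed slightly inward along the flow''): you never verify the required containment of its interior in $W_-$, nor property (b). Note also that in this paper a Gromoll--Meyer pair is the robust object of Definition~\ref{def_GM_pairs} --- an isomorphism $H_*(W,W_-)\simeq\HM_*(f',\theta',U)$ for \emph{every} Morse--Smale pair $(f',\theta')$ in a fixed $C^2$-neighborhood, compatible with the continuation maps --- so ``computes the Conley index of $\{x\}$ for the unperturbed flow'' is not enough; you would need an argument stable under $C^2$-perturbation, which your sets do not obviously admit. Finally, your appeal to condition (iii) of Definition~\ref{def_MS} for the role of the smallness of $a+b$ is off target: what is actually used is the crossing-energy Lemma~\ref{lemma_crossing_energy} (one needs $a+b<\epsilon$) together with the Lyapunov-type estimate~\eqref{aux_estimate_h}.

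The paper sidesteps all of these boundary issues by taking \emph{both} negative sets to be sublevel sets of bump-function deformations of $f$: choose $\phi_1$ with $\supp(1-\phi_1)\subset U\setminus\O$ (so $\phi_1\equiv1$ on $\O$) and $\phi_0$ vanishing near $U\setminus\O$, and set $E^j_-=\{f-(a+b)\phi_j\le-b\}\cap U$. Then (b) is exactly the family of pairs certified by the construction in Appendix~\ref{app_invariance}; (c) is immediate because for $a+b$ small $a$ is a regular value of $f$ on $U$ and $-b$ is a regular value of each $f-(a+b)\phi_j$, so $E$ and $E^j_-$ are smooth compact domains whose interiors are the corresponding strict-inequality sets (no flow transversality or collar along $\partial\O'$ is needed); and (a) reduces to a two-case check ($z\in\O$ versus $z\in U\setminus\O$) using $\phi_1\equiv1$ on $\O$ and $\phi_0\equiv0$ off $\O$. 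If you want to keep your sets, you must supply the missing verification of (b) and (c) for them, and that is precisely where the absence of any transversality of $\partial\O'$ to the flow will stop you; replacing them by the deformed sublevel sets is the efficient fix.
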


\begin{remark}
Simple examples show that both maps in~\eqref{rough_maps_nbds} might not be isomorphisms. But the sets $E^0_-,E^1_-$ may be constructed in such a way that the first map is injective, and the second map is surjective.
\end{remark}

\begin{proof}
In Appendix~\ref{app_invariance} a Gromoll-Meyer pair in $U$ for $(f,x)$ is exhibited as follows. Choose a smooth bump function $\phi:X\to[0,1]$ such that $\phi\equiv0$ near $x$ and $1-\phi$ is compactly supported in $U$. Then the sets $\{f\leq a\} \cap U$, $\{ f-(a+b)\phi\leq-b\} \cap U$ form a Gromoll-Meyer pair provided $a,b>0$ are small enough. How small the numbers $a,b$ need to be depends on $f,x,U,\phi$.

We repeat this construction with two bump functions $\phi_0,\phi_1$ as above such that $\supp(1-\phi_1) \subset U\setminus\O$, and $\phi_0=0$ near $U\setminus\O$. Set
\begin{equation*}
E^1_- = \{ f-(a+b)\phi_1\leq-b\} \cap U \qquad E^0_- = \{ f-(a+b)\phi_0\leq-b\} \cap U
\end{equation*}
Assertion b) follows from the construction in Appendix~\ref{app_invariance}. Note that when $a+b$ is small enough then $a$ is a regular value of $f$ on $U$, and $-b$ is a regular value of both $f-(a+b)\phi_j$ on $U$, $j=0,1$.

It is simple to check that if $a+b$ is small enough then $E,E^1_-,E^0_-$ are smooth top dimensional (domains) with boundary of $U$. Their interiors are
\begin{equation*}
\begin{aligned}
& \dot E = \{f<a\}\cap U, \\
& \dot E^1_- = \{ f-(a+b)\phi_1<-b\} \cap U, \\
& \dot E^0_- = \{ f-(a+b)\phi_0<-b\} \cap U.
\end{aligned}
\end{equation*}
It follows that assertion c) holds. Note that the sets $E,E^1_-,E^0_-$ coincide outside of 
of $\supp(1-\phi_0)$.

Let $z\in W_-$. If $z\in\O$ then $z\in W\cap \O$ because $W_-\cap\O = W\cap\O$. In particular $f(z)<a$. But since $\phi_1(z)=1$, $f(z)<a$ is the same as $f(z)-(a+b)\phi_1(z)<-b$, so we get $z\in \dot E^1_-$. If $z\in U\setminus\O$ then $f(z)-(a+b)\phi_1(z) \leq f(z)<-b$. We proved that $W_- \subset \dot E^1_-$.

Let $z\in \dot E^0_-$. If $z\in U\setminus\O$ then $\phi_0(z)=0$, so that $-b>f(z)-(a+b)\phi_0(z) = f(z)$, and $z\in W_-$ in this case. If $z\in \O$ then we use that $a\geq(a+b)\phi_0(z)-b$ to conclude that the inequality $-b>f(z)-(a+b)\phi_0(z)$ implies $f(z)<a$. Then $z\in W\cap\O \subset W_-$ in this case. We proved that $E^0_- \subset W_-$. Assertion a) is proved.

Existence of the maps~\eqref{rough_maps_nbds} follows from a), b) and c) since the vector spaces $H_*(E,E^j_-)$ are canonically isomorphic to $\HM(f,x)$, see Definition~\ref{def_GM_pairs}.
\end{proof}


By the above lemma, we can find maps
\begin{equation}\label{rough_maps_tau_i}
\tau_i : \HM_*(f_i,x_i) \to H_*(W_i,W_{i,-})
\end{equation}
and a map
\begin{equation}\label{rough_map_Theta}
\Theta : H_*(\Lambda,\Lambda_-) \to \HM_*(h,(x_1,\dots,x_k))
\end{equation}
provided $\epsilon$ is small enough.

The product of pairs $(W_i,W_{i,-})$ is 
$$ 
\begin{aligned}
&\prod_i (W_i,W_{i,-}) \\ &= (W_1\times\dots\times W_k,W_{1,-}\times W_2\times\dots\times W_k \cup W_1\times W_{2,-}\times\dots\times W_k \cup \dots)
\end{aligned}
$$
and hence there are inclusions 
$$
\prod_i(W_i,W_{i,-}) \hookrightarrow (C,C_-) \hookrightarrow (C,C_-\cup(C\setminus Z)).
$$
We get a composite of two inclusions 
\[
\iota: \prod_i(W_i,W_{i,-}) \hookrightarrow (C,C_-\cup(C\setminus Z)).
\]
Composing with the cross-product $\times$ we get a map
\begin{equation}
\iota \circ \times^k : H_{i_1}(W_1,W_{1,-}) \otimes \dots \otimes H_{i_k}(W_k,W_{k,-}) \to H_{i_1+\dots+i_k}(C,C_-\cup(C\setminus Z)).
\end{equation}

Note that $Z\cap C$ is a properly embedded submanifold of the open set $C$, and we consider an open tubular neighborhood $N$ of $Z \cap C$ in $C$. This means that if $\nu$ denotes the normal bundle of $Z\cap C$ then there exists a diffeomorphism $N \simeq \nu$ which identifies the inclusion $Z\cap C \subset N$ with the inclusion of the zero section into $\nu$. This equips $N$ with the structure of an oriented rank $r$ vector bundle $\pi:N \to Z\cap C$. The set $C\setminus N$ is closed in $C$ and is contained in the open subset $C_- \cup (C\setminus Z)$ of $C$. Excision yields an isomorphism
\[
{\rm exc}: H_*(C,C_-\cup(C\setminus Z)) \to H_*(N,(N\cap C_-)\cup(N\setminus Z)).
\]

Consider the vertical saturation $\pi^{-1}(\pi(N\cap C_-)) \supset N\cap C_-$ and let $j$ denote the inclusion
\[
j: H_*(N,(N\cap C_-)\cup(N\setminus Z)) \to H_*(N,\pi^{-1}(\pi(N\cap C_-))\cup(N\setminus Z)).
\]
The relative Thom isomorphism theorem provides an isomorphism
\[
u \cap : H_*(N, \pi^{-1}(\pi(N\cap C_-)) \cup (N\setminus Z)) \to H_{*-r}(N,\pi^{-1}(\pi(N\cap C_-)))
\]
where $u \in H^r(N, \pi^{-1}(\pi(N\cap C_-)) \cup (N\setminus Z))$ is the Thom class. Obviously
\[
\pi : H_{*}(N,\pi^{-1}(\pi(N\cap C_-))) \to H_*(Z\cap C,\pi(N\cap C_-))
\]
is an isomorphism. If $N$ is small enough then $\pi(N\cap C_-) \subset Z\cap C'_-$. From this and from our crucial assumption $f_1\oplus\dots\oplus f_k|_Z = h|_Z$ it follows that 
there is an inclusion
\[
k : H_*(Z\cap C,\pi(N\cap C_-)) \hookrightarrow H_* \left( \Lambda,\Lambda_- \right). 
\]
The map~\eqref{product_local_Morse} is finally defined as the composition
\begin{equation*}
\bullet^{(k)}(a_1,\dots,a_k) = \Theta \circ k \circ \pi\circ (u\cap) \circ j \circ {\rm exc} \circ \iota (\tau_1(a_1) \times \dots \times \tau_k(a_k)).
\end{equation*}
It follows from this formula that $\bullet^{(k)}$ is $k$-multilinear.

This construction for $k=2$ yields a product
\begin{equation*}
\bullet : \HM_{i_1}(f_1,x_1) \otimes \HM_{i_2}(f_2,x_2) \to \HM_{i_1+i_2-r}(h,(x_1,x_2))
\end{equation*}
defined by
\begin{equation*}
a_1 \bullet a_2 = \bullet^{(2)}(a_1,a_2).
\end{equation*}
Let $M:X_1\times X_2 \to X_2\times X_1$ be the diffeomorphism $(q_1,q_2) \mapsto (q_2,q_1)$. Following the above construction with $f_1\oplus f_2,h,Z$ replaced by $f_2\oplus f_1,M_*h,M(Z)$ one defines a product $$ \bullet' : \HM_{i_2}(f_2,x_2) \otimes \HM_{i_1}(f_1,x_1) \to \HM_{i_1+i_2-r}(M_*h,(x_2,x_1)). $$ There is an obvious map $M_*:\HM(h,(x,y)) \to \HM(M_*h,(y,x))$. From functoriality properties given by the Eilenberg-Zilber theorem we get 
$$
 M_*(a\bullet b)=(-1)^{|a||b|} \ b\bullet'a .
 $$

\subsection{Products for discrete action functionals}\label{sec_products_local_action}

Fix a (germ of) $1$-periodic Hamiltonian $H$ near $0\in \R^{2n}$ such that $dH_t(0)=0 \ \forall t \in \R/\Z$. For $k_1,\dots,k_m \in \N$ fixed we assume that the constant trajectory $z(t) \equiv 0$ is isolated when seen as a $k_i$-periodic solution of Hamilton's equation $\dot z(t)=X_{H_t}(z(t))$ for all $i$, and also when seen as a $(k_1+\dots+k_m)$-periodic solution.

Take $N$ large and for each $1\leq i\leq N$ consider the generating function $S_i$ for the germ $\psi_i$ as in~\eqref{germs_small_steps} normalized by $S_i(0)=0$. Extend the family $\{S_i\}$ to a family $\{S_i\,;\, i\in \Z\}$ by $N$-periodicity. Consider discrete action functionals $\A_{H,k_i,N}$ and $\A_{H,k,N}$ where $k=k_1+\dots+k_m$, as defined in~\eqref{def_discrete_action}. It is important to note the drastic difference between $\A_{H,k_1,N} \oplus\dots\oplus \A_{H,k_m,N}$ and $\A_{H,k,N}$, where both are defined near the origin in $\R^{2nkN}$: at a first glance their formulas look the same, but these functionals are, in fact, very different because indices are taken with different periodicity conventions.

Identifying $\R^{2nk_iN} \simeq (\R^{2n})^{k_iN}$, we think of a point $(z_1,\dots,z_{k_iN})$ as a discrete loop with $z_1$ as the base point. Consider the linear manifold
\[
Z = \{ y_1 = y_{k_1N+1} = y_{(k_1+k_2)N+1} = \dots = y_{(k_1+\dots+k_{m-1})N+1} \} \subset \R^{2nkN}
\]
where $z_i=(x_i,y_i)$ denotes the standard Lagrangian splitting $\R^{2n} = \R^n \times \R^n$. Hence $Z$ has codimension $(m-1)n$ and its Euclidean orthogonal complement is
\[
Z^\bot = \left\{ \begin{aligned} & x_i=0 \ \forall i \\ & y_i=0 \ \forall i\not\in\{1,k_1N+1,(k_1+k_2)N+1,\dots,(k_1+\dots+k_{m-1})N+1\} \\ & y_1 + y_{k_1N+1} + y_{(k_1+k_2)N+1} + \dots + y_{(k_1+\dots+k_{m-1})N+1}=0 \end{aligned} \right\}.
\]
We co-orient $Z$ by identifying $Z^\bot \simeq (\R^n)^{m-1}$ via the isomorphism induced by projecting onto $(y_1,y_{k_1N+1},\dots,y_{k_{m-2}N+1})$ and pulling back the canonical orientation. The important observation is that
\begin{equation}\label{identity_action_functionals_product}
\A_{H,k_1,N} \oplus \dots \oplus \A_{H,k_m,N}|_Z = \A_{H,k,N}|_Z.
\end{equation}
\begin{proof}[Proof of~\eqref{identity_action_functionals_product}]
Set $\lambda_1=0$ and $\lambda_l = k_1+\dots+k_{l-1}$ when $2\leq l\leq m+1$. The formula for $\A_{H,k_1,N} \oplus \dots \oplus \A_{H,k_m,N}$ is
\begin{equation*}
\sum_{l=1}^m \left( \begin{aligned} &\left\{ \sum_{i=1}^{k_lN-1} x_{\lambda_lN+i}(y_{\lambda_lN+i+1}-y_{\lambda_lN_i}) + S_{\lambda_lN+i}(x_{\lambda_lN+i},y_{\lambda_lN+i+1}) \right\} \\ & + x_{\lambda_{l+1}N}(y_{\lambda_lN+1}-y_{\lambda_{l+1}N}) + S_{\lambda_{l+1}N}(x_{\lambda_{l+1}N},y_{\lambda_lN+1})\end{aligned} \right)
\end{equation*}
If the point lies in $Z$ then in the second term of each term inside the parenthesis we can replace $y_{\lambda_lN+1}$ by $y_{\lambda_{l+1}N+1}$ (identifying indices $\lambda_{m+1}N+1=kN+1$ and~$1$). This gives exactly the formula for $\A_{H,k,N}$.
\end{proof}

Thus we can apply the construction of the previous section, and the map~\eqref{product_local_Morse} yields a multilinear map 
\begin{equation*}
\bullet^{(m)}: \HM_{i_1}(\A_{H,k_1,N},0) \otimes\dots\otimes \HM_{i_m}(\A_{H,k_m,N},0) \to \HM_{i_1+\dots+i_m-(m-1)n}(\A_{H,k,N},0).
\end{equation*}
These operations interact in the right manner with direct sum maps and continuation maps from Section~\ref{sec_properties}. We get a multilinear map
\begin{equation*}
\bullet^{(m)}: \hloc_{i_1}(H,k_1,0) \otimes\dots\otimes \hloc_{i_m}(H,k_m,0) \to \hloc_{i_1+\dots+i_m-(m-1)n}(H,k,0).
\end{equation*}

As observed before, the case $m=2$ yields a product
\begin{equation}\label{product_action_asymmetric}
\hloc_{i_1}(H,k_1,0) \otimes \hloc_{i_2}(H,k_2,0) \to \hloc_{i_1+i_2-n}(H,k_1+k_2,0)
\end{equation}
defined by
\[
a\bullet b = \bullet^{(2)}(a,b).
\]
It is also important to notice that if we set $M$ to be the linear isomorphism of $\R^{2n(k_1+k_2)N}$ defined by
\[
M(z_1,\dots,z_{(k_1+k_2)N}) = (z_{k_1N+1},\dots,z_{(k_1+k_2)N},z_1,\dots,z_{k_1N})
\]
then
\begin{equation}\label{symmetry_coord_discrete_action}
\A_{H,k_1+k_2,N} \circ M = \A_{H,k_1+k_2,N}.
\end{equation}
Let us still denote by $\bullet$ the product $$ \hloc_{i_1}(H,k_2,0) \otimes \hloc_{i_2}(H,k_1,0) \to \hloc_{i_1+i_2-n}(H,k_1+k_2,0) $$ which, as the reader will notice, happens to be defined using the submanifold $M(Z)=Z$. Super-commutativity
\[
b \bullet a = (-1)^{|a||b|} a\bullet b
\] 
follows from the discussion at the end of Section~\ref{sec_products_local_Morse}, in analogy with loop space homology.

It remains to address associativity, which is a standard property in usual loop space homology and is also true in our context. Identify $$ V = \R^{2n(k_1+k_2+k_3)N} \simeq \R^{2nk_1N} \times \R^{2nk_2N} \times \R^{2nk_3N} $$ with the normal bundle of each of the linear subspaces
\[
\begin{aligned}
& Z'=\{y_1=y_{k_1N+1}\} \\
& Z''=\{y_{k_1N+1}=y_{(k_1+k_2)N+1}\}.
\end{aligned}
\]
These subspaces intersect transversely and both have codimension $n$. We get Thom classes $u' \in H^{n}(V,V\setminus Z')$, $u''\in H^{n}(V,V\setminus Z'')$. Associativity of the product~\eqref{product_action_asymmetric} will follow from the formula
\[
a_1 \bullet (a_2\bullet a_3) = \bullet^{(3)}(a_1,a_2,a_3) = (a_1\bullet a_2)\bullet a_3
\]
which, in turn, follows basically from the fact that $u' \cup u''$ is the Thom class of $Z'\cap Z''$ and from associativity of the cross-product.

\begin{remark}
The product~\eqref{product_action_asymmetric} plays the role of the pair-of-pants product in local Floer homology. In fact, in~\cite{HHM_prep} we will show that there are isomorphisms between ${\rm HF}(\varphi_H^k,0)$ and $\hloc(H,k,0)$ which intertwine the above product and the local pair-of-pants product.
\end{remark}

\subsection{Calculation of a special case}\label{ssec_special_case_prod}

Our goal here is to prove Proposition~\ref{prop_prod_SDM}. In the same set-up as in  Section~\ref{sec_products_local_Morse} above, consider the case where 
\begin{itemize}
\item[(a)] $X_i=\R^{d_i}$, $x_i=0$ is the origin in $\R^{d_i}$ and $\HM_{d_i}(f_i,0)\neq 0$ for all $i$.
\item[(b)] $Z\subset \R^{d_1}\times\dots\times\R^{d_k}$ is a linear subspace (of codimension $r$).
\item[(c)] There is a linear complement $L$ of $Z$ such that $\R^{d_1}\times\dots\times\R^{d_k} = Z \oplus L$, such that $h|_L$ has a local minimum at $(0,\dots,0)$.
\end{itemize}
It is a standard fact, shown in~\cite{Gi}, that $\HM_j(f_i,0)=0$ if $j\neq d_i$, $\HM_{d_i}(f_i,0)=\Q$. Moreover, each $f_i$ has a strict local maximum at $0\in\R^{d_i}$. We use this information and follow the notation in Section~\ref{sec_products_local_Morse}.

The pairs $(W_i,W_{i,-})$ can be chosen so that $W_i=U_i$ is a small open ball centered at the origin and $W_{i,-}$ is the complement in $W_i$ of an open ball centered at the origin of a smaller radius. The homology $H_*(W_i,W_{i,-})$ already computes local Morse homology in this case.

Let $B_1\supset B_2$ be small open balls in $Z$ centered at the origin of different radii, and let $B_1^\bot\supset B_2^\bot$ be small open balls in $Z^\bot$ centered at the origin of different radii. Since $(0,\dots,0)$ is a strict local maximum of $f_1\oplus\dots\oplus f_k$, we can use instead the pair $(C,C_-)$ defined as
\[
(C,C_-) = (B_1,B_1\setminus B_2)\times(B_1^\bot,B_1^\bot\setminus B_2^\bot). 
\]
The homology of this pair computes local Morse homology of $(f_1\oplus\dots\oplus f_k,(0,\dots,0))$. Then both pairs $\prod_i(W_i,W_{i,-})$ and $(C,C_-)$ have the exact same form: they are homeomorphic to a pair $(Q,Q_-)$ where $Q$ is an open ball centered at the origin in $\R^{d_1}\times\dots\times\R^{d_k}$ and $Q_-$ is the complement in $Q$ of an open ball centered at the origin with smaller radius. Since the critical points in question are strict local maxima, there are direct identifications 
\[
\begin{aligned}
& \HM(f_i,0) = H(W_i,W_{i,-}) \\
& \HM(f_1\oplus\dots\oplus f_k,(0,\dots,0)) = H(\prod_i(W_i,W_{i,-})) = H(C,C_-)
\end{aligned}
\]
with no need to consider the maps $\tau_i$~\eqref{rough_maps_tau_i} and $\Theta$~\eqref{rough_map_Theta} of Section~\ref{sec_products_local_Morse}.
The generators are clear from these descriptions. For instance, $H_{d_i}(W_i,W_{i,-})$ is generated by the class $e_i$ represented by a closed $d_i$-cell containing the origin in its interior and having boundary in $W_{i,_-}$. The same picture holds for the generators of $H(\prod_i(W_i,W_{i,-}))$ and $H(C,C_-)$ in degree $d_1+\dots+d_k$.

The tubular neighborhood $N$ in Section~\ref{sec_products_local_Morse} can be taken as $B_1\times B_3^\bot$ where $B_3^\bot$ is an open ball in $Z^\bot$ centered at the origin with radius much smaller than the radius of $B_2^\bot$. Then $Z\cap C = B_1$ and $\pi:N\to Z\cap C$ is the projection onto the first factor $B_1\times B_3^\bot \to B_1$. Since $N\cap C_-$ is already $\pi$-saturated, the map $j$ in Section~\ref{sec_products_local_Morse} is the identity. Let $e$ be a generator in $H_{d_1+\dots+d_k}(\prod_i(W_i,W_{i,-}))$. By the properties of the Thom class we get that
\[
\pi \circ (u\cap) \circ j \circ {\rm exc} \circ \iota : H_*(\prod_i(W_i,W_{i,-})) \to H_{*-r}(Z\cap C,\pi(N\cap C_-)) = H_{*-r}(B_1,B_1\setminus B_2)
\]
maps $e$ to a generator of $H_{d_1+\dots+d_k-r}(B_1,B_1\setminus B_2)$. Now, it follows from condition (c) that the map $k$ in Section~\ref{sec_products_local_Morse} satisfies $k(e)\neq 0$.

Let $e_i$ be a generator in $\HM_{d_i}(f_i,0)=\Q$. Identifying $\HM(f_i,0) = H(W_i,W_{i,-})$ we know that $e_1\times\dots\times e_k$ is a generator of $H_{d_1+\dots+d_k}(\prod_i(W_i,W_{i,-}))=\Q$. Summarizing, we have proved

\begin{lemma}\label{lemma_non_zero_product}
If (a), (b) and (c) hold then $\bullet^{(k)}(e_1,\dots,e_k)\neq0$.
\end{lemma}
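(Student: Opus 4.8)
The plan is to run the general construction from Section~\ref{sec_products_local_Morse} in the very favorable situation described by hypotheses (a), (b), (c), where every critical point is a strict local maximum along the relevant subspace, so that Gromoll--Meyer pairs can be taken as explicit pairs of concentric balls and all the ``rough maps'' $\tau_i$ and $\Theta$ become honest identifications. Concretely, I would first record the standard fact (see~\cite{Gi}) that $\HM_j(f_i,0)=0$ for $j\neq d_i$, $\HM_{d_i}(f_i,0)=\Q$, and that $0$ is a strict local maximum of each $f_i$; this lets me choose $W_i=U_i$ a small ball and $W_{i,-}=W_i\setminus \overline{B}$ for a smaller ball $B$, so that $H_*(W_i,W_{i,-})$ \emph{is} $\HM_*(f_i,0)$, with a preferred generator $e_i\in H_{d_i}$ represented by a $d_i$-cell around the origin whose boundary lies in $W_{i,-}$. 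The same applies to $f_1\oplus\dots\oplus f_k$, again a strict local maximum, so the pair $(C,C_-)$ may be replaced by a product of ball pairs $(B_1,B_1\setminus B_2)\times(B_1^\bot,B_1^\bot\setminus B_2^\bot)$, and both $\prod_i(W_i,W_{i,-})$ and $(C,C_-)$ have the form $(Q,Q_-)$ with $Q$ a ball and $Q_-$ the complement of a smaller ball. Hence the cross-product $e_1\times\dots\times e_k$ is a generator of $H_{d_1+\dots+d_k}(\prod_i(W_i,W_{i,-}))$.

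Next I would trace the generator $e=e_1\times\dots\times e_k$ through the chain of maps defining $\bullet^{(k)}$. Because the critical points are strict local maxima, the maps $\tau_i$ and $\Theta$ of~\eqref{rough_maps_tau_i}--\eqref{rough_map_Theta} are not needed: one has direct identifications $\HM(f_i,0)=H(W_i,W_{i,-})$ and $\HM(f_1\oplus\dots\oplus f_k,(0,\dots,0))=H(C,C_-)$. The tubular neighborhood $N$ can be taken as $B_1\times B_3^\bot$ with $B_3^\bot$ much smaller than $B_2^\bot$; then $Z\cap C=B_1$, the projection $\pi$ is onto the first factor, $N\cap C_-$ is already $\pi$-saturated so $j$ is the identity, and excision is transparent. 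The key computation is that the composite $\pi\circ(u\cap)\circ j\circ{\rm exc}\circ\iota$ sends $e$ to a generator of $H_{d_1+\dots+d_k-r}(B_1,B_1\setminus B_2)$: this is exactly the defining property of the Thom class $u$, namely that capping with $u$ is an isomorphism and carries the fundamental class of a fiber-ball pair to the fundamental class of the base-ball pair.

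It then remains to see that the final inclusion $k:H_*(Z\cap C,\pi(N\cap C_-))\hookrightarrow H_*(\Lambda,\Lambda_-)$ does not kill this generator. Here hypothesis (c) enters: writing $\R^{d_1}\times\dots\times\R^{d_k}=Z\oplus L$ with $h|_L$ having a local minimum at the origin, the pair $(\Lambda,\Lambda_-)$ is, up to the excisions and homotopy equivalences already in place, homeomorphic to the product of $(B_1,B_1\setminus B_2)$ (from the $Z$ direction, where $h|_Z=f_1\oplus\dots\oplus f_k|_Z$ is a strict local maximum) with a pair $(D,D\setminus\{0\})$ in the $L$ direction (where $h|_L$ is a local minimum, so the sublevel set is punctured). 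Thus $k$ is, on the relevant degree, the inclusion inducing the cross-product with the local minimum generator, hence injective on $H_{d_1+\dots+d_k-r}$, so $k(e)\neq 0$; composing with the identification $H(\Lambda,\Lambda_-)=\HM(h,(0,\dots,0))$ gives $\bullet^{(k)}(e_1,\dots,e_k)\neq 0$, which is Lemma~\ref{lemma_non_zero_product}. The main obstacle I anticipate is purely bookkeeping: making the various ball radii and the open neighborhoods $D_i,D_i',F,F',N$ consistent so that all the stated inclusions hold simultaneously and the homeomorphisms to product pairs are literally correct; none of this is deep, but it requires care, and one should state at the outset that $\epsilon$ and all radii are chosen small enough (in the order $\epsilon\ll \mathrm{rad}(B_3^\bot)\ll\mathrm{rad}(B_2^\bot)\ll\mathrm{rad}(B_2)\ll\mathrm{rad}(B_1)$, etc.) for everything to go through.
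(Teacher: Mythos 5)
Your proposal follows the paper's own proof essentially verbatim up to and including the Thom-class step: the same ball pairs $(W_i,W_{i,-})$, the same replacement pair $(C,C_-)=(B_1,B_1\setminus B_2)\times(B_1^\bot,B_1^\bot\setminus B_2^\bot)$, the same tubular neighborhood $N=B_1\times B_3^\bot$ with $j=\mathrm{id}$, and the same conclusion that $\pi\circ(u\cap)\circ j\circ \mathrm{exc}\circ\iota$ carries $e_1\times\dots\times e_k$ to a generator of $H_{d_1+\dots+d_k-r}(B_1,B_1\setminus B_2)$. Up to that point there is nothing to object to.

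The one place where you go beyond the paper's (very terse) text --- justifying that the inclusion $k$ does not kill this generator --- is where your argument, as written, fails. You claim that $(\Lambda,\Lambda_-)$ is, up to excision and homotopy, the product of $(B_1,B_1\setminus B_2)$ with $(D,D\setminus\{0\})$ in the $L$-direction. But $H_*(D,D\setminus\{0\})$ is concentrated in degree $\dim L=r$, so that product pair has relative homology concentrated in degree $d_1+\dots+d_k$; if this were the correct model, then $H_{d_1+\dots+d_k-r}(\Lambda,\Lambda_-)$ would vanish whenever $r>0$ and you would have shown $k(e)=0$, the opposite of the lemma. The slip is in the parenthetical ``the sublevel set is punctured'': since $h|_L$ has a local minimum with value $0$, near the origin the set $\{h<-\epsilon/2\}$ misses an entire neighborhood of $0$ in $L$, so in a split model the $L$-factor contributes the pair $(D,\emptyset)$ --- the degree-zero local homology of an isolated minimum --- and the relevant model is $(B_1\times D,(B_1\setminus B_2)\times D)$, whose homology does sit in degree $d_1+\dots+d_k-r$ with the $Z$-disk as generator. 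Moreover, even after this correction, the asserted product/homotopy identification of $(\Lambda,\Lambda_-)$ is not a formal consequence of (c): hypothesis (c) only constrains the restriction $h|_L$, and $h$ is not assumed to split as a direct sum over $Z\oplus L$ (in the intended application $h=\A_{K,k,1}$ it does not split). This is precisely the content the paper compresses into ``it follows from condition (c) that $k(e)\neq0$'', so a complete write-up must supply a genuine argument at this point (in the split case it is a K\"unneth statement, with the $L$-direction contributing the degree-zero class of the minimum; in general some deformation or comparison argument is needed), rather than the product identification as you stated it.
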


Now we apply this lemma to action functionals.  

\begin{lemma}\label{lemma_non_zero_prod_action}
Let $k\in\N$ and $K_t$ be a $1$-periodic Hamiltonian defined near $0\in\R^{2n}$ such that $dK_t(0)=0$ for all $t$, $0$ is an isolated fixed point for $\varphi^1_K$ and $k$ is admissible for~$\varphi^1_K$. If $K$ is $C^2$-small enough and $\hloc_n(K,1,0)\neq 0$ then the following hold.
\begin{itemize}
\item[(i)] $\hloc(K,1,0)$ is supported in degree $n$ and $\hloc_n(K,1,0)=\Q$.
\item[(ii)] If $e$ generates $\hloc_n(K,1,0)$ then $\bullet^{(k)}(e,\dots,e)\neq 0$ in $\hloc_n(K,k,0)$.
\end{itemize}
\end{lemma}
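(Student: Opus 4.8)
The plan is to reduce the statement about discrete action functionals to the abstract Lemma \ref{lemma_non_zero_product} by verifying that, when $K$ is sufficiently $C^2$-small, the functionals $\A_{K,1,N}$ and $\A_{K,k,N}$ satisfy conditions (a), (b) and (c) from Section \ref{ssec_special_case_prod}, with $Z$ the linear subspace introduced in Section \ref{sec_products_local_action} (for $m=k$ and $k_1=\dots=k_k=1$, so that $Z$ has codimension $(k-1)n$, which is exactly the degree shift $r$ appearing in the product). The point is that for $K$ small enough we may take $N=1$ adapted to $K$, so that $\A_{K,1,1}$ is simply a generating function $F$ for $\varphi^1_K$ near $0\in\R^{2n}$, and $\A_{K,k,1}$ is built from $k$ copies of the same germ $S_1$.

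First I would establish (i). By Lemma \ref{lemma_inflation} and Definition \ref{def_local_invariants}, $\hloc_*(K,1,0)\simeq \HM_{*+n}(\A_{K,1,1},0)=\HM_{*+n}(F,0)$ where $F$ is the generating function of $\varphi^1_K$, normalized by $F(0)=0$; since $dK_t(0)=0$ and $K$ is $C^2$-small, $F$ is $C^2$-small and $0$ is an isolated critical point of $F$ with $D^2F(0)$ small. The hypothesis $\hloc_n(K,1,0)\neq 0$ means $\HM_{2n}(F,0)\neq 0$, i.e. $H_{2n}$ of a Gromoll--Meyer pair for $(F,0)$ in $\R^{2n}$ is nonzero. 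A local critical group in the top degree $2n = \dim\R^{2n}$ is nonzero if and only if $0$ is an isolated local maximum of $F$ (this is the standard fact used in the remark after Definition \ref{defn_SDM} and in \cite{Gi}), and in that case the critical groups are concentrated in degree $2n$ and equal $\Q$. Hence $\hloc_*(K,1,0)$ is supported in degree $n$ and $\hloc_n(K,1,0)=\Q$, which is (i). Because $0$ is a strict local maximum of each summand, the same is true of $\A_{K,1,1}\oplus\dots\oplus\A_{K,1,1}$ (the $k$-fold direct sum on $\R^{2nk}$), which has $(0,\dots,0)$ as a strict local maximum; this is condition (a) and part of the set-up of Lemma \ref{lemma_non_zero_product}.

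Next I would check (b) and (c). Condition (b) is immediate: $Z$ is linear of codimension $r=(k-1)n$ by construction in Section \ref{sec_products_local_action}, and \eqref{identity_action_functionals_product} gives $\A_{K,1,1}\oplus\dots\oplus\A_{K,1,1}|_Z = \A_{K,k,1}|_Z$, which is the compatibility hypothesis $h|_Z = f_1\oplus\dots\oplus f_k|_Z$. For (c), I take $L=Z^\bot$, the explicit complement from Section \ref{sec_products_local_action}, and must show that $\A_{K,k,1}|_{Z^\bot}$ has a local minimum at the origin. When $K$ is $C^2$-small, $\A_{K,k,1}$ is a $C^2$-small perturbation of its quadratic part $Q_0(z_1,\dots,z_k)=\sum_{i=1}^{k} x_i(y_{i+1}-y_i)$ (the $S_i$ contributing only higher-order and $C^2$-small terms). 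Restricting the model quadratic form $Q_0$ to $Z^\bot$ — where all $x_i=0$, all $y_i=0$ except the base-point coordinates $y_1,y_{k_1\cdots}$ ... subject to their sum being zero — one computes that $Q_0|_{Z^\bot}$ is (a constant multiple of) a positive definite form on $Z^\bot$; this is the same computation that in the non-invariant Persistence Theorem (Section \ref{ssec_persistence_no_symmetries}) identifies the relevant index contribution, specialized to $N=1$ and $k_i=1$. Since the quadratic form is positive definite on $Z^\bot$ and $\A_{K,k,1}$ is $C^2$-close to it, $\A_{K,k,1}|_{Z^\bot}$ is a small perturbation of a positive definite quadratic form and hence has a strict local minimum at $0$. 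This gives (c).

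The main obstacle I expect is precisely this last point: making rigorous and clean the assertion that, once $K$ is $C^2$-small, the restriction of $\A_{K,k,1}$ to the $(k-1)n$-dimensional complement $Z^\bot$ is positive definite at the origin. One has to pin down the splitting $\R^{2nk}=Z\oplus Z^\bot$ relative to which the Hessian $D^2\A_{K,k,1}(0)$ decomposes, verify that the block on $Z^\bot$ is exactly the restriction of $D^2 Q_0$ (using that the correction terms from $S_1$ vanish to second order at the origin after the $C^2$-small assumption, or are themselves $C^2$-small), and then diagonalize the model block $Q_0|_{Z^\bot}$ explicitly. After that the proof is essentially bookkeeping: by Lemma \ref{lemma_non_zero_product}, if $e_i$ generates $\HM_{d_i}(\A_{K,1,1},0)=\Q$ (with $d_i=2n$) then $\bullet^{(k)}(e_1,\dots,e_k)\neq 0$ in $\HM_{2n\cdot k-(k-1)n}(\A_{K,k,1},0)=\HM_{n(k+1)}(\A_{K,k,1},0)$, which under the identification of Definition \ref{def_local_invariants} (shift by $nkN=nk$ with $N=1$) corresponds to degree $n$ in $\hloc_n(K,k,0)$; since by (i) the single generator $e$ of $\hloc_n(K,1,0)$ corresponds to such an $e_i$, and since the product $\bullet^{(k)}$ on $\hloc$ is defined as the limit of the $\bullet^{(k)}$ on the $\HM(\A_{K,\cdot,N},0)$ and is compatible with the inflation isomorphisms, we conclude $\bullet^{(k)}(e,\dots,e)\neq 0$ in $\hloc_n(K,k,0)$, which is (ii). This will in turn yield Proposition \ref{prop_prod_SDM}, after invoking the change-of-isotopy argument (as in the remark following Definition \ref{defn_SDM}) to reduce an arbitrary SDM $H$ to such a $C^2$-small $K$ with $\varphi^1_K=\varphi^1_{M_*H}$.
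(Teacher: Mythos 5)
Your overall strategy is the paper's: reduce to the abstract Lemma~\ref{lemma_non_zero_product} with $(f_i,x_i)=(S,0)$ for $S=\A_{K,1,1}$ a $C^2$-small generating function, verify (a) via the strict-local-maximum characterization of top-degree local homology, get (b) from~\eqref{identity_action_functionals_product}, and then do the degree bookkeeping through $\HM_{n+kn}(\A_{K,k,1},0)$ and the inflation/continuation compatibilities. Parts (i), (a), (b) and the final bookkeeping are fine.

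However, your verification of condition (c) contains a genuine error, and it is exactly at the step you yourself flag as the delicate one. You propose $L=Z^\bot$ and claim that the model quadratic form $Q_0(z)=\sum_i x_i(y_{i+1}-y_i)$ restricts to a positive definite form on $Z^\bot$. But by the description of $Z^\bot$ in Section~\ref{sec_products_local_action} (specialized to $N=1$, $k_1=\dots=k_m=1$), every point of $Z^\bot$ has \emph{all} $x_i=0$, so $Q_0|_{Z^\bot}\equiv 0$; there is no positive definite block there. Worse, the full restriction is $\A_{K,k,1}|_{Z^\bot}=\sum_i S(0,y_{i+1})$, and since by your part (i) the origin is a strict local maximum of $S$, this restriction has a local \emph{maximum} at $0$ — the opposite of what (c) demands. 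So with $L=Z^\bot$ the argument breaks down. The fix is to use a non-orthogonal complement: condition (c) only asks for \emph{some} linear complement of $Z$, and the paper takes $L=\{\,x_i=y_{i+1}-y_i \ (i \bmod k),\ y_1+\dots+y_k=0\,\}$. On this $L$ one has $\A_{K,k,1}|_L=\sum_i |x_i|^2+S(x_i,y_{i+1})$, a $C^2$-small perturbation of a positive definite quadratic form (the map $y\mapsto(y_{i+1}-y_i)_i$ is injective on $\{\sum_i y_i=0\}$), hence a strict local minimum at the origin. With this substitution your proof closes; without it, condition (c) is not verified and the conclusion of Lemma~\ref{lemma_non_zero_product} cannot be invoked.
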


\begin{proof}
Since $K$ is $C^2$-small we know that $N=1$ is adapted to $K$ in the sense of~\eqref{N_adapted}. If $S$ is a generating function for $\varphi^1_K$ then $\A_{K,1,1}=S$ and $\hloc_*(K,1,0) \simeq \HM_{*+n}(S,0)$ by definition. Note that $S$ is $C^2$-small. We get $\HM_{2n}(S,0)\neq 0$ by definition. Thus, the repeated pairs $(f_i,x_i) = (S,0)$ satisfy~(a). Moreover, (i) holds. For the subspace $Z\subset \R^{2nk}$ we take
\[
Z = \{ y_1=\dots=y_k \}
\]
where a point in $\R^{2nk} \simeq (\R^{2n})^k$ is $(z_1,\dots,z_k)$ with $z_i=(x_i,y_i)$. For the function $h$ we take $\A_{K,k,1}$ and note that $\A_{K,k,1}|_Z = S\oplus\dots\oplus S|_Z$ ($k$ factors). Finally, for the complement $L$ we take 
\[
L = \{ x_i = y_{i+1}-y_i \ (i\ {\rm mod}\ k), \ y_1+\dots+y_k=0 \}.
\] 
From the formula $\A_{K,k,1} = \sum_{i\mod k} x_i(y_{i+1}-y_i)+S(x_i,y_{i+1})$ we get 
\[
\A_{K,k,1}|_L = \sum_{i\mod k} |x_i|^2+S(x_i,y_{i+1}).
\]
Hence $\A_{K,k,1}|_L$ has a local minimum at the origin because $S$ is $C^2$-small. We have checked (a), (b) and (c) for the pairs $(\A_{K,1,1},0)$, $(\A_{K,k,1},(0,\dots,0))$ and the splitting $\R^{2nk}=Z\oplus L$. It follows from Lemma~\ref{lemma_non_zero_product} that if $e$ is the generator of $\HM_{2n}(\A_{K,1,1},0)$ then $\bullet^{(k)}(e,\dots,e)\neq 0$ in $\HM_{n+kn}(\A_{K,k,1},0)$. The proof is complete in view of the easily checked compatibility between the products defined in Section~\ref{sec_products_local_action} and continuation and direct-sum maps defined in Section~\ref{sec_properties}.
\end{proof}

\begin{proof}[Proof of Proposition~\ref{prop_prod_SDM}]
To prove this, first we claim that if $0$ is an SDM for $H$ then $1$ is the only Floquet multiplier of $\varphi^1_H$. This follows because, as proved in~\cite{SZ}, the Conley-Zehnder indices of the $1$-periodic orbits which bifurcate from $0$ as we perturb $H$ to a generic $H'$ lie on the interval $[\Delta_{\CZ}(H,1)-n,\Delta_{\CZ}(H,1)+n]$, but they must lie on the open interval $(\Delta_{\CZ}(H,1)-n,\Delta_{\CZ}(H,1)+n)$ if some Floquet multiplier is not equal to $1$. Since $\hloc_n(H,1,0)\neq0$ by assumption, it must be the case that all such generic perturbations produce $1$-periodic orbits with Conley-Zehnder index~$n$. This fact combined with $\Delta_{\CZ}(H,1)=0$ proves the claim. In particular, every $k\in\N$ is admissible for $\varphi^1_H$.

Now, any symplectic matrix having $1$ as the only eigenvalue is linearly symplectically conjugated to a matrix arbitrarily close to the identity; this is proved by Ginzburg in~\cite[Lemma~5.5]{Gi}. Hence, up to a linear symplectic change of coordinates, we may assume that $\varphi^1_H$ is arbitrarily $C^1$-close to $id$. In particular, we find an arbitrarily $C^2$-small germ $K$ of a $1$-periodic Hamiltonian defined near $0\in\R^{2n}$ such that $\varphi^1_K=\varphi^1_H$. Apply Lemma~\ref{lemma_change_of_isotopy} to get 
\[
\hloc_*(K,1,0) = \hloc_{*-2m}(H,1,0)
\]
where $m$ is the Maslov index of the loop $M:t\in\R/\Z \mapsto d\varphi^t_K(0)(d\varphi^t_H(0))^{-1}$. Note that $M(t)^{-1} d\varphi^t_K(0)= d\varphi^t_H(0)$. Since $K$ is $C^2$-small it follows that $-2m$ is close to $\Delta_{\CZ}(H,1)=0$, but $m$ is an integer and we conclude that $m=0$. In particular, $\hloc_n(K,1,0)\neq0$. The desired conclusion now follows from a direct application of Lemma~\ref{lemma_non_zero_prod_action}.
\end{proof}

\section{Preliminaries to transversality statements}\label{sec_prelim_transv}

\subsection{Finite cyclic group actions and invariant functions}\label{subsec_props} 

Let $(M,\theta)$ be a Riemannian manifold without boundary, possibly not compact. Let $k\geq 1$, and let $a$  be a $k$-periodic isometry of $(M,\theta)$ generating an action of $\Z_k$.

For each $p\in M$ one may consider $r=\min\{j \in \{1,\dots,k\} \mid a^j(p)=p\}$. Then $r$ divides $k$ and the isotropy at $p$ is isomorphic to a copy of $\Z_\ell$, $\ell = k/r$, embedded inside $\Z_k$ as $\{0,r,2r,\dots,(\ell-1)r\}$. Let $N$ denote the dimension of $M$. The $\Z_k$-action is linearizable in the sense that one finds a diffeomorphism between a $\Z_\ell$-invariant open neighborhood of $p$ and a Euclidean ball in $\R^N$ centered at the origin, mapping $p$ to the origin, that conjugates $a^r$ to some $A\in O(N)$ satisfying $A^\ell=I$.

For all $m\in\Z$ set
\begin{equation}
\label{set_F}
F_m = \fix(a^m).
\end{equation}
Since $\Z_k$-actions are linearizable  as explained above, each $F_m$ is a smooth submanifold and 
\[
T_xF_m = \ker (da^m|_x-I) \ \ \forall x\in F_m. 
\]

\begin{lemma}\label{lemma_intersection}
$F_i \cap F_j = F_{\gcd(i,j)}$ for all $i,j\geq 1$.
\end{lemma}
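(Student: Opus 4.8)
The plan is to prove the set-theoretic identity $F_i \cap F_j = F_{\gcd(i,j)}$ directly, using only elementary facts about the cyclic group $\Z_k$ and the definition $F_m = \fix(a^m)$. Throughout, recall that $a^k = \mathrm{id}$, so $a^m$ depends only on $m \bmod k$, and $F_m = F_{m'}$ whenever $m \equiv m' \pmod k$.

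For the inclusion $F_{\gcd(i,j)} \subseteq F_i \cap F_j$, let $d = \gcd(i,j)$ and suppose $x \in F_d$, i.e. $a^d(x) = x$. Then $x$ is fixed by every power of $a^d$; in particular, since $d \mid i$ and $d \mid j$, both $a^i = (a^d)^{i/d}$ and $a^j = (a^d)^{j/d}$ fix $x$, so $x \in F_i \cap F_j$. For the reverse inclusion, suppose $x \in F_i \cap F_j$, so $a^i(x) = x$ and $a^j(x) = x$. Consider the isotropy subgroup $G_x = \{\, m \bmod k \mid a^m(x) = x \,\} \subseteq \Z_k$. It is a subgroup containing the classes of $i$ and of $j$, hence it contains the subgroup they generate, which (by the standard structure of subgroups of a cyclic group) is generated by the class of $\gcd(i,j,k)$. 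In particular $a^{\gcd(i,j,k)}(x) = x$. But $\gcd(i,j,k)$ and $\gcd(i,j)$ differ by a factor dividing $k$: more precisely, writing $d = \gcd(i,j)$, we have $\gcd(i,j,k) = \gcd(d,k)$, and since $a^k = \mathrm{id}$, the condition $a^{\gcd(d,k)}(x) = x$ is equivalent to $a^d(x) = x$ — indeed $\gcd(d,k)$ is an integer combination $\alpha d + \beta k$, so $a^{\gcd(d,k)} = a^{\alpha d}$ as a map, hence $a^d(x) = x \iff a^{\gcd(d,k)}(x) = x$. Therefore $x \in F_d = F_{\gcd(i,j)}$.

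I do not anticipate a genuine obstacle here; the statement is purely a bookkeeping fact about the cyclic group action and the only subtlety is keeping track of the reduction modulo $k$ (so that, for instance, $F_{\gcd(i,j)}$ and $F_{\gcd(i,j,k)}$ coincide). One should simply be careful to phrase the argument in terms of the isotropy subgroup $G_x \le \Z_k$ and invoke that subgroups of $\Z_k$ are cyclic, generated by divisors of $k$. An alternative, slightly slicker packaging: the class of $m$ in $\Z_k$ generates the same subgroup as the class of $\gcd(m,k)$, and $a^m(x)=x$ for all such $m$ simultaneously iff $a^{g}(x)=x$ where $g$ generates the subgroup generated by the relevant classes; applying this to the pair $\{i,j\}$ gives $g = \gcd(i,j,k)$, and the final reduction $a^{\gcd(i,j,k)}(x)=x \iff a^{\gcd(i,j)}(x)=x$ follows as above. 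Either way the proof is a few lines and the manifold structure of the $F_m$ (already recorded in the text) plays no role in this particular lemma.
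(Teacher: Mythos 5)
Your proof is correct and is essentially the same elementary argument as the paper's: the forward inclusion follows from divisibility, and the reverse inclusion is a Bézout computation. The only difference is packaging — the paper writes $d=\gcd(i,j)$, $i=pd$, $j=qd$ with $\gcd(p,q)=1$, picks $pr+qs=1$, and computes $a^{d}(x)=a^{ir}\circ a^{js}(x)=x$ directly, which never invokes $a^k=\mathrm{id}$ and so avoids your detour through the isotropy subgroup of $\Z_k$ and the reconciliation of $\gcd(i,j,k)$ with $\gcd(i,j)$.
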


\begin{proof}
If $l$ divides $h$ then $F_h \supset F_l$. Thus $F_i \cap F_j \supset F_{\gcd(i,j)}$. Denote $m=\gcd(i,j)$, so that $i=pm$ and $j=qm$ for integers $p,q\geq 1$ satisfying $\gcd(p,q)=1$. Hence we can find $r,s\in \Z$ such that $pr + qs = 1$. Let $x\in F_i\cap F_j$ be arbitrary. Denoting $g = a^{m}$ we compute
\begin{equation*}
\begin{aligned}
x &= a^{ir}\circ a^{js}(x) \\
&= a^{mpr} \circ a^{mqs}(x) \\
&= g^{pr} \circ g^{qs}(x) = g^{pr+qs}(x) = g(x)= a^{m}(x).
\end{aligned}
\end{equation*}
Thus $x \in \fix(g) = F_{m}$.
\end{proof}

\begin{corollary}\label{coro_description_F_j}
For all $j\geq 1$ we have $F_j = F_{\gcd(j,k)}$.
\end{corollary}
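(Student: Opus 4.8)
The statement is an immediate consequence of Lemma~\ref{lemma_intersection} together with the single observation that the generator $a$ of the $\Z_k$-action satisfies $a^k = \mathrm{id}_M$. Indeed, since $a$ generates an action of $\Z_k$ on $M$, its $k$-th power is the identity, and hence
\[
F_k = \fix(a^k) = \fix(\mathrm{id}_M) = M.
\]
First I would record this fact. Then, applying Lemma~\ref{lemma_intersection} with the two indices $j$ and $k$ (both are $\geq 1$, as required), one gets
\[
F_j = F_j \cap M = F_j \cap F_k = F_{\gcd(j,k)},
\]
which is exactly the claim.

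\textbf{Where the difficulty lies.} There is essentially no obstacle here: the corollary is purely formal once Lemma~\ref{lemma_intersection} is in hand. The only point worth a moment's care is that $k$ need not be the \emph{minimal} period of $a$ as a diffeomorphism of $M$ (some orbits may have smaller period, and even all orbits could, if $a$ has order a proper divisor of $k$); but this is irrelevant, since $a^k = \mathrm{id}$ holds regardless simply because $a$ is the image of $1 \in \Z_k$ under a group action of $\Z_k$. One could also phrase the whole argument without invoking Lemma~\ref{lemma_intersection} by observing directly that $x \in F_j$ iff $a^j(x) = x$ iff $a^{\gcd(j,k)}(x) = x$ (the forward implication using a B\'ezout identity $rj + sk = \gcd(j,k)$ and $a^k(x) = x$, the reverse using that $\gcd(j,k) \mid j$), but routing through Lemma~\ref{lemma_intersection} is cleaner and avoids repeating the B\'ezout computation already carried out there.
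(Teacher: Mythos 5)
Your argument is correct and is exactly the route the paper intends: the corollary is stated as an immediate consequence of Lemma~\ref{lemma_intersection}, obtained by noting $F_k=\fix(a^k)=M$ (since $a^k=\mathrm{id}$) and then writing $F_j=F_j\cap F_k=F_{\gcd(j,k)}$. No gaps; your side remark that $k$ need not be the minimal order of $a$ is accurate but, as you say, irrelevant.
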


The isotropy set $\iso \subset M$ is the set of points $x$ for which the isotropy group 
$$
\iso(x) = \{ j\in \Z/k\Z \mid a^j(x) = x \}
$$
is non-trivial. 
 
 This immediately implies
\begin{corollary}
The isotropy set can be written as $\iso = \bigcup_{d\in \div(k), d<k} F_d$.
\end{corollary}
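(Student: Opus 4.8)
The plan is to deduce the statement directly from Corollary~\ref{coro_description_F_j} after rewriting the isotropy set in terms of the fixed-point submanifolds $F_j$. First I would observe that, by definition, a point $x \in M$ belongs to $\iso$ if and only if $\iso(x) \neq \{0\}$, i.e. there is a nonzero class $j \in \Z/k\Z$ with $a^j(x) = x$. Taking the representative of $j$ in $\{1,\dots,k-1\}$, this is equivalent to $x \in F_j = \fix(a^j)$ for some $j \in \{1,\dots,k-1\}$, so that
\[
\iso = \bigcup_{j=1}^{k-1} F_j .
\]
(The case $k=1$ is degenerate: both sides are empty.)

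Next I would invoke Corollary~\ref{coro_description_F_j}, which states $F_j = F_{\gcd(j,k)}$ for every $j \geq 1$. For each $j \in \{1,\dots,k-1\}$ the integer $d := \gcd(j,k)$ is a positive divisor of $k$ satisfying $d \leq j < k$; hence $F_j = F_d$ with $d \in \div(k)$ and $d < k$, which gives the inclusion $\iso \subseteq \bigcup_{d \in \div(k),\, d<k} F_d$. For the reverse inclusion, any $d \in \div(k)$ with $d < k$ automatically satisfies $1 \leq d \leq k-1$, so $F_d$ already appears in the union $\bigcup_{j=1}^{k-1} F_j = \iso$; this yields $\bigcup_{d \in \div(k),\, d<k} F_d \subseteq \iso$. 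Combining the two inclusions proves the equality.

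I do not expect any genuine obstacle here: the argument is purely combinatorial bookkeeping with divisors and the indexing of $\Z/k\Z$. The only points deserving a line of care are the passage from classes in $\Z/k\Z$ to integer representatives in $\{1,\dots,k-1\}$ and the trivial case $k=1$; everything else is formal.
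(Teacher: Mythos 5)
Your argument is correct and is exactly the reasoning the paper intends: the paper derives this corollary directly from Corollary~\ref{coro_description_F_j} (stating ``this immediately implies''), namely writing $\iso=\bigcup_{j=1}^{k-1}F_j$ and replacing each $F_j$ by $F_{\gcd(j,k)}$, with the reverse inclusion being trivial. No issues; the careful points you flag (integer representatives and the $k=1$ case) are handled appropriately.
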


Now we construct invariant cutoff functions near compact invariant sets.

\begin{lemma}\label{lemma_invariant_nbds}
Let $K\subset M$ be any invariant compact set and $V$ be any neighborhood of $K$. Then there exists an invariant smooth function $\phi:M\to [0,1]$ such that $\supp(\phi) \subset V$ and $\phi \equiv 1$ near $K$.
\end{lemma}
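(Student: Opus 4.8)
The plan is to construct $\phi$ by first building an ordinary (non-invariant) cutoff function and then averaging it over the group, exploiting that the action is by isometries, hence by diffeomorphisms, so averaging preserves smoothness. First I would produce an auxiliary smooth function $\psi:M\to[0,1]$, not necessarily invariant, with $\supp(\psi)\subset V$ and $\psi\equiv1$ on a neighborhood of $K$; this is standard since $K$ is compact and $V$ is open (shrink $V$ to an open set $V'$ with $K\subset V'\subset\overline{V'}\subset V$ using local charts and a partition of unity, or simply take a smooth Urysohn-type function separating a compact neighborhood of $K$ from $M\setminus V$). The only point requiring a word of care is that $M$ may be non-compact, but since everything happens inside the relatively compact open set $V$ there is no difficulty: extend $\psi$ by zero outside $V$.

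Next I would set
\[
\phi(x) = 1 - \prod_{j=0}^{k-1}\bigl(1 - \psi(a^j(x))\bigr).
\]
Since each $a^j$ is a diffeomorphism of $M$, each $x\mapsto\psi(a^j(x))$ is smooth with support in $a^{-j}(V)$, so $\phi$ is smooth and takes values in $[0,1]$. Invariance is immediate: replacing $x$ by $a(x)$ permutes the factors of the product cyclically (using $a^k=\mathrm{id}$), so $\phi(a(x))=\phi(x)$, and hence $\phi$ is $\Z_k$-invariant. For the support, note that $\phi(x)\neq0$ forces $\psi(a^j(x))\neq0$ for some $j$, i.e. $a^j(x)\in V$; since $V$ need not be invariant this only gives $x\in\bigcup_j a^{-j}(V)$, which is a priori larger than $V$. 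To fix this, I would at the outset replace $V$ by the invariant open neighborhood $\widetilde V=\bigcap_{j=0}^{k-1}a^{-j}(V)$ of $K$ (invariance of $K$ guarantees $K\subset\widetilde V$), run the construction with $\psi$ supported in $\widetilde V$, and then $\supp(\phi)\subset\bigcup_j a^{-j}(\widetilde V)=\widetilde V\subset V$ as desired. Finally, near $K$ we have $\psi\equiv1$ on an invariant neighborhood $W$ of $K$ (shrink the neighborhood where $\psi\equiv1$ and intersect its $a^j$-translates), so every factor $1-\psi(a^j(x))$ vanishes there and $\phi\equiv1$ on $W$.

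The construction is elementary; there is no real obstacle, only the bookkeeping of arranging all auxiliary sets to be $\Z_k$-invariant, which is handled by the intersection trick $\widetilde V=\bigcap_{j}a^{-j}(V)$. The mild subtlety worth stating explicitly is that the product formula (rather than the arithmetic average $\frac1k\sum_j\psi\circ a^j$) is used so that the value $1$ is preserved exactly where we want it — the average would also work for $\phi\equiv1$ near $K$ but the product makes the support bound and the normalization to $[0,1]$ completely transparent without further rescaling.
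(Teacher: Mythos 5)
Your proof is correct and follows essentially the same strategy as the paper: construct an ordinary cutoff adapted to suitably invariant neighborhoods and then symmetrize it over the finite group. The only differences are cosmetic — the paper produces invariant neighborhoods of $K$ by taking unions of translates of a small neighborhood and symmetrizes via the group average $\tfrac{1}{k}\sum_{j}h\circ a^j$, whereas you make $V$ invariant by intersecting its translates and symmetrize with the product formula $1-\prod_{j}\bigl(1-\psi\circ a^j\bigr)$; both devices preserve the value $1$ near $K$ and the support condition.
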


\begin{proof}
By compactness of $K$, for every open neighborhood $\mathcal U$ of $K$ there exists a neighborhood $U$ of $K$ such that all $j\cdot U$, $j\in\Z/k\Z$ are contained in a compact subset of $\mathcal U$. Hence $\cup_{j\in\Z/k\Z} j\cdot U$ is an open invariant neighborhood of $K$ contained in a compact subset of $\mathcal U$.

Hence we find $W,W'$ invariant open neighborhoods of $K$ such that $$ \overline{W'}\subset W\subset \overline{W} \subset V. $$ Take $h:M\to[0,1]$ any smooth function satisfying $\supp(h)\subset W$ and $h\equiv 1$ on $W'$. The average $\phi$ of $h$ over the group satisfies $\phi|_{W'}\equiv 1$ and $\supp(\phi) \subset W$.
\end{proof}

In particular, we have

\begin{corollary}
\label{cor_inv_smooth_nbd}
Any invariant compact subset $K\subset M$ has an arbitrarily small invariant, compact and smooth neighborhood. By a smooth neighborhood we mean a neighborhood with smooth boundary.
\end{corollary}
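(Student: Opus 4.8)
The plan is to combine Lemma~\ref{lemma_invariant_nbds} with Sard's theorem. Fix an invariant compact set $K\subset M$ and an arbitrary open neighborhood $\mathcal U$ of $K$; the goal is to produce an invariant, compact, smooth (boundary) neighborhood of $K$ contained in $\mathcal U$.

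First I would shrink $\mathcal U$ to an \emph{invariant, relatively compact} open neighborhood $V$ of $K$ with $\cl V\subset\mathcal U$. This is exactly the content of the first paragraph of the proof of Lemma~\ref{lemma_invariant_nbds}: by compactness of $K$ one finds a small open $U\supset K$ such that $\bigcup_{j\in\Z/k\Z} j\cdot U$ is contained in a compact subset of $\mathcal U$, and this union is open and invariant; take $V$ to be this union. Next, apply Lemma~\ref{lemma_invariant_nbds} to the pair $(K,V)$ to obtain an invariant smooth function $\phi:M\to[0,1]$ with $\supp(\phi)\subset V$ and $\phi\equiv1$ on an open set containing $K$.

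Now choose, by Sard's theorem, a regular value $c\in(0,1)$ of $\phi$, and set
\[
W=\phi^{-1}\bigl([c,1]\bigr).
\]
Then $W$ is invariant since $\phi$ is invariant; it is closed and contained in $\supp(\phi)\subset V\subset\cl V$, hence compact; it contains $K$ in its topological interior because $\phi\equiv1$ near $K$ and $\phi^{-1}((c,1])$ is open; and since $c$ is a regular value lying strictly below $1$, the set $\partial W=\phi^{-1}(c)$ is a smooth embedded hypersurface, so $W$ is a compact smooth submanifold with boundary. As $W\subset\mathcal U$ and $\mathcal U$ was an arbitrary neighborhood of $K$, this proves the corollary.

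The argument is essentially routine, so I do not expect a genuine obstacle; the only two points that need a little care are the reduction to a \emph{relatively compact} $V$ (without which $W$ would only be closed, not compact) and the insistence that the chosen regular value lie in the \emph{open} interval $(0,1)$, so that $\phi^{-1}(c)$ really is the boundary of $W$ and is smooth. Both are handled by the two sentences above.
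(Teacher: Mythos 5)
Your proof is correct and follows essentially the same route as the paper, which simply takes the invariant cutoff $\phi$ from Lemma~\ref{lemma_invariant_nbds} and sets the neighborhood equal to $\phi^{-1}([c,1])$ for a regular value $c\in(0,1)$. Your extra care about first passing to a relatively compact invariant $V$ (so that $\supp(\phi)$, and hence $\phi^{-1}([c,1])$, is compact) is a detail the paper leaves implicit in the construction of $\phi$, but it is the same argument.
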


\begin{proof}
Take $\phi$ as given in Lemma~\ref{lemma_invariant_nbds} and consider $\phi^{-1}([c,1])$ where $c\in(0,1)$ is a regular value of $\phi$.
\end{proof}

Now we can also study the gradient of invariant functions near isotropy points.

\begin{lemma}\label{lemma_grad_tang}
If $f:V\to \R$ is an invariant smooth function then $\nabla^\theta f$ is tangent to $F_j$ for all $j$. In particular $\crit(f) \cap F_j = \crit(f|_{F_j})$ for all $j$.
\end{lemma}

\begin{proof}
Fix $x\in F_j$. Since $da^j|_x$ is a linear isometry of $(T_xM,\theta_x)$ we compute for any $u\in T_xM$ 
$$
\theta(\nabla f_x,u) = df_x\cdot u = df_{x} \cdot (da^j|_x)^{-1} \cdot u = \theta(da^j|_x \cdot \nabla f_x, u) .
$$
Since $u$ is arbitrary we conclude that $\nabla f_x \in \ker (da^j|_x-I) = T_xF_j$.
\end{proof}


\begin{lemma}
\label{lemma_invariant_normal_decreasing}
Let $f\colon F_j\to \R$ be a given smooth invariant function. Then there exists an open invariant neighborhood $V$ of $F_j$ in $M$ and an invariant function $\tilde f\colon V\to \R$ satisfying
\begin{itemize}
\item $\tilde f \equiv f$ on $F_j$.
\item $\crit(\tilde f) = \crit(f) \subset F_j$ and $W^s(x;\tilde f,\theta) \subset F_j$ for all $x\in \crit(\tilde f)$.
\item If $x\in \crit(f)$ is non-degenerate as a critical point of $f$, then $x$ is also non-degenerate as a critical point of $\tilde f$.
\end{itemize}
In the second property the stable manifold is taken with respect to $V$.
\end{lemma}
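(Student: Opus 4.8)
The plan is to build $\tilde f$ by transporting $f$ into an invariant tubular neighborhood of $F_j$ via the nearest-point projection and then \emph{subtracting} a multiple of the squared normal distance; the sign is chosen precisely so that the negative $\theta$-gradient flow of $\tilde f$ strictly pushes points away from $F_j$ in the normal directions, which is what forces stable manifolds of critical points to stay inside $F_j$. Concretely, I would first use the linearizability of the $\Z_k$-action recalled in Section~\ref{subsec_props}, together with the fact that $a$ is a $\theta$-isometry preserving $F_j=\fix(a^j)$, to produce an invariant open neighborhood $V$ of $F_j$ on which the nearest-point projection $\pi\colon V\to F_j$ and the squared-distance function $\rho(x)=\dist(x,F_j)^2$ are smooth; such a $V$ comes from the equivariant normal exponential map and can be taken invariant and arbitrarily small by the invariant-neighborhood results (Lemma~\ref{lemma_invariant_nbds}, Corollary~\ref{cor_inv_smooth_nbd}), while $\pi$ and $\rho$ are automatically $\Z_k$-equivariant, resp.\ invariant, since $F_j$ and $\theta$ are. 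Then set $\tilde f = f\circ\pi - \tfrac12\,\rho$ on $V$. This is smooth and invariant (for $f\circ\pi$ one uses that $f$ is invariant and $\pi$ equivariant), and $\tilde f|_{F_j}=f$ because $\pi|_{F_j}=\mathrm{id}$ and $\rho|_{F_j}\equiv 0$; this gives the first bullet.

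The computational heart is the pointwise identity $\langle \nabla^\theta(f\circ\pi),\nabla^\theta\rho\rangle\equiv 0$ on $V$: after shrinking $V$ so that its $\pi$-fibres are exactly the normal-geodesic disks, $f\circ\pi$ is constant on each fibre so $\nabla^\theta(f\circ\pi)$ is orthogonal to the fibres, whereas $\nabla^\theta\rho$ is tangent to the normal geodesic through $\pi(x)$, hence to the fibre. It follows that
\[
\langle \nabla^\theta\tilde f,\nabla^\theta\rho\rangle = -\tfrac12\,|\nabla^\theta\rho|^2 ,
\]
which vanishes exactly on $F_j$ and is strictly negative on $V\setminus F_j$. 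For the second bullet: at $x\in F_j$ one has $\nabla^\theta\rho(x)=0$ and $d\pi(x)$ is the orthogonal projection onto $T_xF_j$, so $\nabla^\theta\tilde f(x)$ equals the $\theta|_{F_j}$-gradient of $f$ at $x$; off $F_j$ the displayed inequality forces $\nabla^\theta\tilde f\neq 0$. Hence $\crit(\tilde f)=\crit(f)\subset F_j$. Moreover, along any negative gradient trajectory $\gamma$ of $\tilde f$ inside $V$,
\[
\tfrac{d}{dt}\,\rho(\gamma(t)) = -\langle\nabla^\theta\rho,\nabla^\theta\tilde f\rangle(\gamma(t)) = \tfrac12\,|\nabla^\theta\rho|^2(\gamma(t)) \ge 0 ,
\]
with strict inequality whenever $\gamma(t)\notin F_j$. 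So a trajectory starting at $y\notin F_j$ keeps $\rho(\gamma(t))\ge\rho(y)>0$ for all $t\ge0$ while it stays in $V$, hence cannot converge to any critical point of $\tilde f$ (all lying in $\{\rho=0\}$); therefore $W^s(x;\tilde f,\theta)\subset F_j$ for every $x\in\crit(\tilde f)$.

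For the third bullet, fix $x\in\crit(f)$ non-degenerate for $f$ and work in Fermi coordinates for $F_j$ at $x$, so that $T_xV=T_xF_j\oplus N_xF_j$ orthogonally and $\rho=|v|^2$ exactly in the normal coordinate $v$. Since $\nabla^\theta\tilde f(x)=0$, the Hessian is well defined and additive, and a chain-rule computation using $\hess(f\circ\pi)(x)=d\pi(x)^{*}\,\hess f(x)\,d\pi(x)$ with $d\pi(x)$ the orthogonal projection onto $T_xF_j$ gives
\[
\hess\tilde f(x) = \hess(f\circ\pi)(x) - \tfrac12\,\hess\rho(x) = \begin{pmatrix} \hess f(x) & 0 \\ 0 & -\mathrm{Id} \end{pmatrix} ,
\]
which is non-degenerate precisely when $\hess f(x)$ is (and, incidentally, the Morse index of $x$ for $\tilde f$ is that of $f$ plus $\codim F_j$). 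This finishes the argument.

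The step I expect to be the main obstacle is the first one: arranging that $V$, $\pi$ and $\rho$ are simultaneously smooth, invariant, small enough for the fibre/normal-geodesic identification to hold, and — on a possibly non-compact $M$ — globally well-behaved. This is handled exactly as in the standard construction of invariant tubular neighborhoods via the equivariant normal exponential map, using an invariant ``width'' function furnished by Lemma~\ref{lemma_invariant_nbds}; once $V$ is in place, everything else reduces to the elementary gradient and Hessian identities above, together with the fact (Lemma~\ref{lemma_grad_tang}) that $\nabla^\theta\tilde f$ is automatically tangent to $F_j$.
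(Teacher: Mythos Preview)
Your proposal is correct and takes essentially the same approach as the paper: both define $\tilde f$ as $f\circ\pi$ minus a multiple of the squared normal distance in an equivariant tubular neighborhood obtained via the normal exponential map, so that the normal Hessian is negative definite. The paper phrases this as $\tilde f(\exp(v)) = f(\pi(v)) - \|v\|^2$ on the normal bundle and simply asserts the three properties, whereas you verify them explicitly via the monotonicity of $\rho$ along the anti-gradient flow and the block-diagonal Hessian; the only cosmetic differences are your coefficient $\tfrac12$ and your use of the intrinsic $\dist(\cdot,F_j)^2$ rather than the pulled-back fibre norm, which coincide on a small enough tube.
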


\begin{proof}
By Corollary~\ref{coro_description_F_j}, there is no loss of generality to assume that $j \in \div(k)$. Let $k=jm$ and write $t^k-1 = (t^j-1)Q(t)$ where 
\[
Q(t) = 1+t^j+t^{2j}+\dots+ t^{(m-1)j}
\]
is relatively prime with $t^j-1$.

Observe that $T_xM = \ker (da^k|_x-I)$ for all $x\in M$, and $T_xF_j = \ker (da^j|_x-I)$ for all $x\in F_j$. The smooth vector bundle $N_j$ over $F_j$ with fiber $N_j|_x = \ker Q(da|_x)$ over $x\in F_j$, satisfies $TF_j \oplus N_j = TM|_{F_j}$. Moreover, $N_j$ is $da$-invariant. Let $\exp$ be the exponential map associated to the $a$-invariant metric $\theta$. Then $\exp$ is well-defined on some open neighborhood $\mathcal{N}_j$ of the zero section of $N_j$. By perhaps shrinking $\mathcal{N}_j$, the map $\exp$ defines a diffeomorphism between $\mathcal{N}_j$ and an open neighborhood $\O_j$ of $F_j$ in $M$. Note that $\O_j$ is an invariant neighborhood, in fact, $a \circ \exp  = \exp \circ da$ on $\mathcal{N}_j$ because $a$ is a $\theta$-isometry. Now define $V:=\O_j$ and consider the projection $\pi:N_j\to F_j$ onto the base point to define $\tilde f$ by 
$$
\tilde f(\exp(v)) = f(\pi(v)) - \|v\|^2
$$
for all $v\in \mathcal{N}_j$. This function is smooth and invariant, and has all the desired properties.
\end{proof}

\subsection{Technical lemmas}

The main goal of this section is to establish some properties of stable and unstable manifolds of invariant functions.

\begin{lemma}\label{lemma_crucial1}
Let $(X,g)$ be a Riemannian manifold without boundary, and $h$ be a Morse function on $X$. Let $Y\subset X$ be a submanifold without boundary such that $\nabla^g h$ is tangent to $Y$. If $x\in \crit(h) \cap Y$ and $U$ is a neighborhood of $x$ in $X$ such that $U \cap W^s(x;h,g) \subset Y$, then $W^s(x;h,g) \subset Y$.
\end{lemma}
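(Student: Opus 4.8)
The statement is essentially a continuity/invariance argument for the stable manifold under the anti-gradient flow, using the local hypothesis to bootstrap to a global one. The plan is to argue that the set $W^s(x;h,g)$ is forward-invariant in the sense that points flow *towards* $x$, hence are eventually dragged into the given neighborhood $U$, where they are seen to lie in $Y$; then invariance of $Y$ under the flow pushes them back out while staying in $Y$.

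First I would recall that since $h$ is Morse and $x$ is a critical point, $W^s(x;h,g)$ consists exactly of those points $p$ for which $\phi^t_{h,g}(p)$ is defined for all $t \in [0,+\infty)$ and converges to $x$ as $t \to +\infty$. Fix such a $p$. By definition of convergence, there exists $T \geq 0$ such that $\phi^t_{h,g}(p) \in U$ for all $t \geq T$. Now observe that for $t \geq T$ the point $\phi^t_{h,g}(p)$ still lies in $W^s(x;h,g)$: indeed it is carried to $x$ by the remaining flow, so $\phi^t_{h,g}(p) \in U \cap W^s(x;h,g) \subset Y$ by hypothesis. So the whole forward trajectory $\{\phi^t_{h,g}(p) : t \geq T\}$ lies in $Y$.

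The remaining step is to propagate this backward from time $T$ to time $0$ without leaving $Y$. Here I use that $\nabla^g h$ is tangent to $Y$: this means $Y$ is invariant under the local flow $\phi^t_{h,g}$ wherever that flow is defined, in both time directions. Since $\phi^T_{h,g}(p) \in Y$ and $\phi^{-s}_{h,g}$ applied to it (for $s \in [0,T]$) traces out exactly the trajectory segment from $p$ to $\phi^T_{h,g}(p)$ — which is defined since $p \in W^s(x;h,g)$ ensures the forward trajectory exists — and this flow preserves $Y$, we conclude $p = \phi^{-T}_{h,g}(\phi^T_{h,g}(p)) \in Y$. Since $p$ was an arbitrary element of $W^s(x;h,g)$, we get $W^s(x;h,g) \subset Y$.

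The only subtle point — and the main thing to be careful about — is the claim that tangency of $\nabla^g h$ to $Y$ implies genuine invariance of $Y$ under $\phi^t_{h,g}$, including the fact that a trajectory starting in $Y$ cannot escape $Y$ in finite forward or backward time. This follows from uniqueness of solutions of ODEs: restricting $-\nabla^g h$ to $Y$ gives a well-defined vector field on $Y$ (by tangency) whose integral curves, by uniqueness in the ambient manifold, coincide with the ambient trajectories as long as the latter remain in $X$; so any ambient trajectory touching $Y$ at one time lies in $Y$ throughout its interval of definition. I would state this as a one-line remark rather than belabor it. A minor bookkeeping point is that $Y$ has no boundary, so there is no issue of trajectories running into an edge of $Y$; this is why the hypothesis ``submanifold without boundary'' is included.
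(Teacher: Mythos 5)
Your argument coincides with the paper's proof: fix $p\in W^s(x;h,g)$, use convergence to $x$ to find $t_0$ with $\phi^t(p)\in U\cap W^s(x;h,g)\subset Y$ for all $t\ge t_0$, then propagate backward from $\phi^{t_0}(p)$ using tangency of $\nabla^g h$ to $Y$ and uniqueness of solutions of ODEs to get $\phi^t(p)\in Y$ for $t\in[0,t_0]$, hence $p\in Y$. The invariance-of-$Y$ point you flag at the end is handled in the paper at exactly the same level of detail (a single appeal to uniqueness of ODEs), so there is nothing to add.
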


\begin{proof}
Denote by $\phi^t$ the flow of $-\nabla^g h$. By Definition~\ref{def_stable_unstable_mfds}, the stable manifold $W^s(x;h,g)$ is the set of points $p\in X$ such that $\phi^t(p)$ is defined for all $t\in[0,+\infty)$ and $\phi^t(p) \to x$ as $t\to+\infty$. For all such $p$ there exists $t_0>0$ such that for all $t\geq t_0$, we have $\phi^t(p) \in U \cap W^s(x;h,g) \subset Y$. Since $\nabla^g h$ is tangent to $Y$ we conclude that $\phi^t(p) \in Y$ for all $t\in[0,t_0]$ by uniqueness of solutions of ODEs.
\end{proof}

\begin{lemma}\label{lemma_crucial4}
Let $(X,g)$ be a Riemannian manifold without boundary, $Y\subset X$ be a submanifold without boundary, and $f:X\to \R$ be a smooth function such that $\nabla^g f$ is tangent to~$Y$. Let $x_0,x_1\in\crit(f)\cap Y$ be non-degenerate, and let $c:\R\to X$ be an anti-gradient trajectory of $f$ from $x_0$ to $x_1$ contained in $Y$. 
\begin{itemize}
\item[(i)] Assume that $W^s(x_1;f,g) \subset Y$. If points of $c(\R)$ are transverse intersection points of $W^s(x_1;f,g)$ with $W^u(x_0;f,g)$ in $X$, then they must also be transverse intersection points of $W^s(x_1;f|_Y,g|_Y)$ with $W^u(x_0;f|_Y,g|_Y)$ in~$Y$.
\item[(ii)] Assume that $W^s(x_i;f,g) \subset Y$, $i=0,1$. If points of $c(\R)$ are transverse intersection points of $W^s(x_1;f|_Y,g|_Y)$ with $W^u(x_0;f|_Y,g|_Y)$ in $Y$, then they are also transverse intersection points of $W^s(x_1;f,g)$ with $W^u(x_0;f,g)$ in $X$.
\item[(iii)] Assume that $D^2f(x_0)$ is negative definite on the $g$-orthogonal complement $(T_{x_0}Y)^g$ of $T_{x_0}Y$. Then $W^s(x_0;f,g) \subset Y$.
\end{itemize}
\end{lemma}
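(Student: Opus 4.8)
The three statements are local transversality/invariance assertions about stable and unstable manifolds of a function whose gradient is tangent to a submanifold $Y$. The plan is to reduce each of them to a statement about the Hessian at the critical points, using the fact that near a non-degenerate critical point $x$ of $f$ the stable and unstable manifolds are graphs over the negative/positive eigenspaces $E^-_x,E^+_x$ of $D^2f(x)$ (in suitable coordinates, by the stable manifold theorem), together with the observation that since $\nabla^gf$ is tangent to $Y$, at any $x\in\crit(f)\cap Y$ the Hessian $D^2f(x)$ preserves the splitting $T_xX = T_xY\oplus(T_xY)^g$. Call the four resulting eigenspaces $E^-_Y, E^+_Y$ (inside $T_xY$) and $E^-_\perp, E^+_\perp$ (inside $(T_xY)^g$); then $E^-_x = E^-_Y\oplus E^-_\perp$ and $E^+_x = E^+_Y\oplus E^+_\perp$.

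For (i): the tangent space to $W^u(x_0;f,g)$ at a point $p\in c(\R)$ contains $T_pc(\R)\subset T_pY$ and is $\phi^t$-related to $E^+_{x_0}$; the hypothesis $W^s(x_1;f,g)\subset Y$ forces $E^-_{x_1}\subset T_{x_1}Y$, i.e.\ $E^-_\perp = 0$ at $x_1$, so $T_pW^s(x_1;f,g)\subset T_pY$ for all $p\in c(\R)$ by invariance of the stable manifold under the (tangent-to-$Y$) flow. Transversality in $X$ then reads $T_pW^s + T_pW^u = T_pX$; intersecting with $T_pY$ and using $T_pW^s\subset T_pY$ gives $T_pW^s + (T_pW^u\cap T_pY) = T_pY$. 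Since $W^s(x_1;f|_Y) = W^s(x_1;f,g)$ (as $W^s(x_1;f,g)\subset Y$ and the flows agree) and $W^u(x_0;f|_Y)\subset W^u(x_0;f,g)\cap Y$ with matching dimension count, one identifies $T_pW^u\cap T_pY$ with $T_pW^u(x_0;f|_Y)$, yielding transversality in $Y$. Statement (ii) is the converse bookkeeping: now both stable manifolds lie in $Y$, $W^u(x_0;f,g) = W^u(x_0;f|_Y,g|_Y)\oplus(\text{graph over }E^+_\perp)$ near $x_0$, so $T_pW^u(x_0;f,g) = T_pW^u(x_0;f|_Y) \oplus (\text{the }E^+_\perp\text{ directions})$; the latter directions lie in $(T_xY)^g$ and are contained in no tangent space of $W^s(x_1;f,g)\subset Y$, so adding $T_pW^s$ to a subspace of $T_pY$ that is already complementary in $T_pY$ gives all of $T_pX$.

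For (iii): here the hypothesis is $D^2f(x_0)$ negative definite on $(T_{x_0}Y)^g$, i.e.\ $E^+_\perp = 0$ and $(T_{x_0}Y)^g = E^-_\perp$; thus $E^+_{x_0}\subset T_{x_0}Y$, so locally $W^u(x_0;f,g)\subset Y$. To upgrade this to $W^s(x_0;f,g)\subset Y$ — which is the assertion — I would instead argue directly: near $x_0$ the stable manifold $W^s(x_0;f,g)$ is a graph over $E^-_{x_0} = E^-_Y\oplus E^-_\perp = E^-_Y \oplus (T_{x_0}Y)^g$, which is NOT contained in $T_{x_0}Y$ in general, so the naive graph argument fails; the point of the hypothesis must be exploited more carefully. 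The correct route: apply Lemma~\ref{lemma_crucial1}, whose conclusion ``$W^s(x;f,g)\subset Y$'' follows from ``$U\cap W^s(x;f,g)\subset Y$ for some neighborhood $U$.'' Hmm — but $W^s$ is genuinely not locally inside $Y$ when $E^-_\perp\ne0$. Rereading the statement, I suspect the intended conclusion of (iii) is $W^u(x_0;f,g)\subset Y$ (the unstable manifold), which follows immediately from $E^+_{x_0}\subset T_{x_0}Y$ plus the stable manifold theorem and flow-invariance, or else that $Y$ is to be replaced by a complementary object; in any case, assuming the statement as printed, I would invoke the stable manifold theorem to write $W^s(x_0;f,g)$ as a graph, identify its tangent space at $x_0$ with $E^-_{x_0}$, and show $E^-_{x_0}\subset T_{x_0}Y$ — which holds precisely when $D^2f(x_0)$ is negative definite on $(T_{x_0}Y)^g$ forces... no: that gives $E^+_\perp=0$, not $E^-_\perp=0$.

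\emph{The main obstacle}, therefore, is the apparent sign mismatch in (iii): a literal reading makes $W^s$ local containment false, so the crux is to pin down the correct interpretation. I would resolve this by checking against the usage of Lemma~\ref{lemma_crucial4}(iii) later in the paper (it is surely applied in the transversality induction of Section~\ref{sec:MS_local}, where one wants the \emph{descending} behaviour into the fixed-point stratum $F_j$); with that fixed, parts (i) and (ii) are routine linear algebra as sketched above, and (iii) becomes the one-line Hessian-splitting argument: $D^2f(x_0)$ negative definite on $(T_{x_0}Y)^g$ means $(T_{x_0}Y)^g\subset E^-_{x_0}$, hence $T_{x_0}W^s(x_0;f,g) = E^-_{x_0}\supset (T_{x_0}Y)^g$, and combined with $E^-_Y\subset T_{x_0}Y$ one gets $E^-_{x_0} = E^-_Y\oplus(T_{x_0}Y)^g$ — which, together with the stable manifold theorem and tangency of $\nabla^gf$ to $Y$, forces local and hence (by Lemma~\ref{lemma_crucial1}, applied with the roles arranged so that $Y$ there is an ambient complement) global containment. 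I would write (iii) carefully only after confirming the intended inclusion.
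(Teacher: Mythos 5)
The root problem is a flipped convention: with Definition~\ref{def_stable_unstable_mfds} the flow is that of $-\nabla^g f$, so $W^u(x;f,g)$ is tangent at $x$ to the \emph{negative} eigenspace of $D^2f(x)$ (its dimension is the Morse index) and $W^s(x;f,g)$ to the \emph{positive} eigenspace; you use the opposite identification throughout. This is exactly what produces your ``apparent sign mismatch'' in (iii): negative definiteness of $D^2f(x_0)$ on $(T_{x_0}Y)^g$ means the directions orthogonal to $Y$ are unstable, hence $T_{x_0}W^s(x_0;f,g)$ is the positive eigenspace and is contained in $T_{x_0}Y$. The statement as printed is correct (and is used in precisely this form in Section~\ref{sec:MS_local}, where one subtracts $\tfrac{c}{2}\,|(I-P)x|^2$ in normal directions to force stable manifolds into the stratum). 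Since you explicitly decline to prove (iii) until ``the intended inclusion'' is confirmed, part (iii) is simply not proved in your proposal. Once the convention is fixed the proof is short: the Hessian preserves the splitting $T_{x_0}Y\oplus(T_{x_0}Y)^g$, so $x_0$ is a non-degenerate critical point of $f|_Y$ whose stable manifold for $(f|_Y,g|_Y)$ has the same dimension as $W^s(x_0;f,g)$ and is contained in it; by uniqueness of the local stable manifold, $W^s(x_0;f,g)$ lies in $Y$ near $x_0$, and Lemma~\ref{lemma_crucial1} (using that $Y$ is invariant under the flow because $\nabla^gf$ is tangent to $Y$) globalizes the inclusion. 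This is the paper's argument.

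Part (ii) also has a genuine gap. At a point $p\in c(\R)$ far from $x_0$ the ``extra'' directions of $T_pW^u(x_0;f,g)$ complementary to $T_pW^u(x_0;f|_Y,g|_Y)$ do \emph{not} lie in $(T_pY)^g$: the gradient flow does not preserve the orthogonal splitting, so the Hessian-splitting picture is valid only at $x_0$; and in any case ``not contained in $T_pW^s(x_1;f,g)$'' is not the condition you need, which is $T_pW^u(x_0;f,g)+T_pY=T_pX$. The paper obtains this dynamically: the hypothesis $W^s(x_0;f,g)\subset Y$ gives $T_{x_0}W^s\subset T_{x_0}Y$ and hence $T_{x_0}W^u+T_{x_0}Y=T_{x_0}X$, i.e.\ $W^u(x_0;f,g)$ is transverse to $Y$ at $x_0$; by continuity of tangent planes this holds at $c(t)$ for $t\ll 0$, and since both $W^u(x_0;f,g)$ and $Y$ are flow-invariant it propagates to every point of $c(\R)$, in particular to $p$. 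Combining $T_pW^u+T_pY=T_pX$ with the in-$Y$ transversality hypothesis (which gives $T_pY\subset T_pW^u+T_pW^s$) yields $T_pW^u+T_pW^s=T_pX$. Your part (i), by contrast, is correct and essentially coincides with the paper's proof; the identification of $T_pW^u(x_0;f,g)\cap T_pY$ with $T_pW^u(x_0;f|_Y,g|_Y)$, which you justify only by ``matching dimension count,'' does hold: once $T_pW^s+T_pW^u=T_pX$ and $T_pW^s\subset T_pY$ one gets $\dim\bigl(T_pW^u\cap T_pY\bigr)=\ind(x_0)-\codim Y$, which is at most the Morse index of $f|_Y$ at $x_0$, while the inclusion $W^u(x_0;f|_Y,g|_Y)\subset W^u(x_0;f,g)\cap Y$ gives the reverse inequality.
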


\begin{proof}
Let $p\in c(\R)$. Note that $\nabla^gf$ is tangent to $Y$ and therefore, we have $W^*(x_i;f|_Y,g|_Y) = W^*(x_i;f,g)\cap Y$, for $i\in\{0,1\}$ and $*=u$ or $*=s$. Consider 
\[
\begin{array}{cccc}
E^u=T_pW^u(x_0;f,g), & E^s=T_pW^s(x_1;f,g) & \text{and} & F=T_pY
\end{array}
\]
which are all linear subspaces of $T_pX$. 

First we prove (i). Since $$ W^s(x_1;f|_Y,g|_Y) = W^s(x_1;f,g) \cap Y = W^s(x_1;f,g) $$ we have $T_pX = E^s+E^u$ and $E^s\subset F$. Let $w\in F$ and write $w=e^s+e^u$ with $e^s\in E^s$, $e^u\in E^u$. Then $e^u=w-e^s \in F$ and $$ w\in (F\cap E^u) + E^s = (F\cap E^u) + (F\cap E^s). $$ This shows that $F \subset (F\cap E^u) + (F\cap E^s)$. In other words, $p$ is a transverse intersection point of $W^s(x_1;f|_Y,g|_Y)$ with $W^u(x_0;f|_Y,g|_Y)$ in $Y$, and i) is proved. 

Now we prove (ii). The important observation is that, in this case, $x_0$ is a transverse intersection point of $W^u(x_0;f,g)$ with $Y$ because $W^s(x_0;f,g) \subset Y$. By continuity of tangent spaces, $c(t)$ is a transverse intersection point of $W^u(x_0;f,g)$ with $Y$ provided $t\sim-\infty$. Hence the same is true for every $t$. In particular this holds at $p$. Consequently $T_pX = F+E^u$, and by assumption $F = (F\cap E^u) + E^s$, hence $T_pX = E^u+E^s$ as desired.

Item (iii) follows from uniqueness of the stable manifold at $x_0$ since $Y$ is invariant under the flow of $-\nabla^gf$ by Lemma~\ref{lemma_crucial1}.
\end{proof}

\subsection{A transversality lemma}\label{ssec_transv_lemma}
The following is one of the main technical tools in our constructions. It is a transversality statement which keeps track of the $\Z_k$-symmetry.

\begin{lemma}[Transversality lemma]\label{lemma_crucial3}
Let $\Z_k$ act smoothly by isometries on the smooth Riemannian manifold $(X,\theta_0)$ without boundary. Let $\iso\subset X$ denote the isotropy set, and let $f$ be a $\Z_k$-invariant smooth Morse function on $X$. Let $V_1$ be an open neighborhood of $\iso$ such that $\crit(f)\cap V_1\subset\iso$, and let $V_0\subset X\setminus \iso$ be an open neighborhood of $\crit(f)\setminus \iso$. Assume that:
\begin{itemize}
\item[(i)] $x\in\crit(f)\cap \iso \Rightarrow W^s(x;f,\theta_0) \subset\iso$.
\item[(ii)] $\{x,y\}\subset\crit(f) \cap \iso \Rightarrow W^u(x;f,\theta_0)\pitchfork W^s(y;f,\theta_0)$.
\end{itemize}
Then for every $\ell\geq1$ there exists a residual subset $\mathcal{R}$ of the set ${\rm Met}^\ell_{\Z_k}(V_0,\theta_0)$ of $\Z_k$-invariant metrics of class $C^\ell$ coinciding with $\theta_0$ on $X\setminus V_0$, equipped with the $C^\ell$-topology, with the following property: 
$$
 \theta\in\mathcal{R}\Rightarrow W^u(x;f,\theta)\pitchfork W^s(y;f,\theta) \text{ for all } (x,y)\in\crit(f)\times\crit(f) .
 $$
\end{lemma}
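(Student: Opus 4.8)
The plan is to run a standard Sard–Smale argument, but carefully localized so that the perturbations of the metric stay inside the class $\mathrm{Met}^\ell_{\Z_k}(V_0,\theta_0)$. The point of hypotheses (i) and (ii) is that the critical points lying in the isotropy set $\iso$, and the intersections between their invariant manifolds, are already handled: since $\crit(f)\cap V_1\subset\iso$ and each $W^s(x;f,\theta_0)\subset\iso$ for $x\in\crit(f)\cap\iso$, and perturbations in $\mathcal{R}$ are supported away from $\iso$ (indeed away from a neighborhood of $\iso$, since $V_0\subset X\setminus\iso$), the whole picture over $\iso$ is frozen: stable and unstable manifolds of isotropy critical points do not move, and (ii) guarantees their mutual transversality persists. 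So the only pairs $(x,y)$ we need to worry about are those where at least one of $x,y$ lies in $\crit(f)\setminus\iso$, i.e. in $V_0$.

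First I would fix a pair $(x,y)\in\crit(f)\times\crit(f)$ with, say, $y\notin\iso$ (the case $x\notin\iso$ is symmetric, replacing $f$ by $-f$), and set up the usual universal moduli space. Take the Banach manifold $\mathcal{B}=\mathrm{Met}^\ell_{\Z_k}(V_0,\theta_0)$ of $\Z_k$-invariant $C^\ell$ metrics agreeing with $\theta_0$ outside $V_0$; this is an open subset of an affine Banach space, and crucially it is nonempty and ``large'' because $V_0$ is disjoint from $\iso$, so the $\Z_k$-action on $V_0$ is free and averaging over the group produces genuinely many invariant metrics supported in $V_0$. Consider the universal space $\mathcal{M}(x,y)=\{(p,\theta) : \theta\in\mathcal{B},\ p\in W^u(x;f,\theta)\cap W^s(y;f,\theta)\}$ and show it is a $C^\ell$ Banach manifold by verifying that the relevant section is transverse to zero. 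The transversality of the universal section at a point $p$ on a broken/unbroken trajectory from $x$ to $y$ reduces, as in Schwarz's book or the classical Floer-theoretic treatment, to showing that one can infinitesimally deform the anti-gradient flow near $p$ in an arbitrary direction normal to the trajectory, by an infinitesimal change of metric that (a) is supported in $V_0$ and (b) is $\Z_k$-invariant. Condition (b) is achieved by averaging the perturbation over $\Z_k$; condition (a) requires the trajectory from $x$ to $y$ to pass through $V_0$ at a regular (non-critical) point $q$ with $q\notin\iso$, so that the $\Z_k$-orbit of $q$ consists of $k$ distinct points and the averaged perturbation does not cancel. That $q$ exists with $q\notin\iso$: since $y\notin\iso$, points of the trajectory sufficiently close to $y$ lie in $V_0\setminus\iso$; and the perturbation can be taken supported in a small ball around such a $q$ disjoint from its $\Z_k$-translates.

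Once the universal moduli space $\mathcal{M}(x,y)$ is shown to be a Banach manifold, the projection $\pi:\mathcal{M}(x,y)\to\mathcal{B}$ is Fredholm (of index equal to $\mu(x)-\mu(y)$ in the non-degenerate case, or more generally the expected dimension), and Sard–Smale gives a residual set $\mathcal{R}_{x,y}\subset\mathcal{B}$ of regular values; for $\theta\in\mathcal{R}_{x,y}$ the manifolds $W^u(x;f,\theta)$ and $W^s(y;f,\theta)$ intersect transversely. Intersecting over the at most countably many pairs $(x,y)$ — $\crit(f)$ is discrete, and after shrinking we may assume it is finite, or in any case countable — yields the residual set $\mathcal{R}=\bigcap_{x,y}\mathcal{R}_{x,y}$, and for the pairs with $x,y\in\iso$ we simply use (ii) since nothing moved. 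I expect the main obstacle to be precisely the bookkeeping in the transversality-of-the-universal-section step: one must check that the averaged-over-$\Z_k$, supported-in-$V_0$ perturbations still span the needed directions, which hinges on producing a regular trajectory point in $V_0\setminus\iso$ with a free $\Z_k$-orbit and arguing that the averaging does not kill the perturbation there — the hypotheses $\crit(f)\cap V_1\subset\iso$, $V_0\cap\iso=\emptyset$, and $V_0\supset\crit(f)\setminus\iso$ are exactly what make this work, and keeping track of why is the delicate part. The analytic heavy lifting (Fredholm property, Sard–Smale, the implicit function theorem for $\Phi_s^{-1}$-type constructions) is entirely standard and I would cite~\cite{schwarz_book}.
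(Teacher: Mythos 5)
Your main analytic scheme is the same as the paper's: for a pair $(x,y)$ with, say, $y\notin\iso$, you set up the universal moduli space over ${\rm Met}^\ell_{\Z_k}(V_0,\theta_0)$, prove surjectivity of the linearized section by inserting a metric perturbation supported near a point $q$ of the trajectory that lies in $V_0\setminus(\iso\cup\crit(f))$ close to $y$, average over $\Z_k$ (the free orbit of $q$ guarantees no cancellation), and apply Sard--Smale; this is exactly Lemma~\ref{lemma_transversality_step} in the paper. One small detail you wave at: it is not enough that the ball around $q$ be disjoint from its $\Z_k$-translates; you must also know the trajectory does not return to the orbit of $q$ at a different time (else the pairing argument picks up contributions of unknown sign). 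The paper proves this by noting that a relation $j\cdot c(t_0)=c(t_1)$ with $t_1\neq t_0$ would force the finite orbit $\{(mj)\cdot c(0)\}$ to contain the infinite set $\{c(m(t_1-t_0))\}$. This is easy to add, but it is a needed step.

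The genuine gap is in your treatment of pairs with $\{x,y\}\subset\crit(f)\cap\iso$. You dismiss this case by asserting that ``stable and unstable manifolds of isotropy critical points do not move,'' but hypothesis (i) only puts the \emph{stable} manifolds inside $\iso$; the unstable manifold $W^u(x;f,\theta_0)$ of $x\in\crit(f)\cap\iso$ will in general leave $\iso$ and enter $V_0$, where the metric has been changed, so $W^u(x;f,\theta)$ genuinely moves and your ``frozen picture'' justification fails as stated. The conclusion is still true, but it requires an argument: one first shows $W^s(y;f,\theta)=W^s(y;f,\theta_0)$ (forward trajectories enter a neighborhood of $y$ where the metrics agree, land in $W^s(y;f,\theta_0)\subset\iso$, and then backward uniqueness together with the fact that both gradient flows coincide along $\iso$ pins them down), and then shows that $W^u(x;f,\theta)$ coincides with $W^u(x;f,\theta_0)$ on a neighborhood of the intersection locus $W^u(x;f,\theta_0)\cap W^s(y;f,\theta_0)$, by flowing a neighborhood of an intersection point back into a chart near $x$ where the metrics agree and observing that the whole backward segment stays in the region where $\theta=\theta_0$ (here (i) is used again, since the intersection locus lies in $\iso$). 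Only after these two identifications can you invoke (ii) to conclude transversality for the perturbed metric. This is precisely Case~2 in the paper's proof of the lemma, and it is not a formality you can skip with ``nothing moved.''
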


Lemma~\ref{lemma_crucial3} will be proved as a consequence of the following statement. See Section \ref{sssec_lem_crucial_3} for the proof of Lemma~\ref{lemma_crucial3}.

\begin{lemma}\label{lemma_transversality_step}
Let $(X,\theta_0)$ be a smooth Riemannian manifold without boundary where $\Z_k$ acts smoothly by isometries. Let $f:X\to\R$ be a $\Z_k$-invariant smooth Morse function, $\{x,y\}\subset\crit(f)$, $y\not\in\iso$, and let $V_0$ be an open neighborhood of $y$. Consider the set ${\rm Met}^\ell_{\Z_k}(V_0,\theta_0)$ of $C^{\ell}$ metrics ($\ell\geq1$) which are $\Z_k$-invariant and agree with $\theta_0$ on $X\setminus V_0$, equipped with the $C^\ell$-topology.

There exists a residual subset $\mathcal{R}_{x,y}\subset {\rm Met}^\ell_{\Z_k}(V_0,\theta_0)$ with the following property: if $\theta\in\mathcal{R}_{x,y}$ then $W^s(y;f,\theta)$ and $W^u(x;f,\theta)$ intersect transversely. An analogous statement holds if $x\not\in\iso$, with $V_0$ replaced by a neighborhood of $x$.
\end{lemma}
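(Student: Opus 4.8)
The plan is to run the standard Sard--Smale transversality argument for the Morse--Smale condition, but performed inside the Banach manifold of $\Z_k$-invariant metrics agreeing with $\theta_0$ outside $V_0$, exploiting the fact that the bad critical point $y$ (around which we are allowed to perturb) has trivial isotropy. First I would fix the notation: write $\phi^t_\theta$ for the flow of $-\nabla^\theta f$ and observe that since $y\not\in\iso$, the orbit $\Z_k\cdot y$ consists of $k$ distinct points $y=y_0,y_1,\dots,y_{k-1}$ (where $y_j=a^j(y)$), and after possibly shrinking $V_0$ we may assume the $k$ sets $a^j(V_0)$ are pairwise disjoint and disjoint from $\iso$. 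Consequently a $\Z_k$-invariant $C^\ell$ perturbation of $\theta_0$ supported in $\bigcup_j a^j(V_0)$ is freely determined by an arbitrary $C^\ell$-small $\theta_0$-symmetric-endomorphism-valued perturbation supported in $V_0$ alone, the other $k-1$ pieces being obtained by pushing forward with $a^j$. Thus $\mathrm{Met}^\ell_{\Z_k}(V_0,\theta_0)$ is locally modeled on an open subset of the Banach space of $C^\ell$ sections of $\mathrm{Sym}(TX)$ compactly supported in $V_0$, and transversality perturbations near $y$ behave exactly as in the non-equivariant theory.

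Next I would set up the universal moduli space. Fix a regular value $c$ of $f$ with $f(y)<c<$ the next critical value above it (or argue directly with the flow up to a small sphere around $y$); let $S$ be a small codimension-one sphere $\partial B(y,\varepsilon)$ transverse to $-\nabla^{\theta_0}f$, so that the unstable sphere $W^u(x;f,\theta)\cap S$ is, for $\theta$ close to $\theta_0$, a compact submanifold of $S$ of dimension $\mu(x)-1$ depending smoothly on $\theta$, and $W^s(y;f,\theta)\cap S$ is an embedded sphere of dimension $\dim X-\mu(y)-1$. Transversality of $W^u(x;f,\theta)$ and $W^s(y;f,\theta)$ along trajectories entering $B(y,\varepsilon)$ is equivalent to transversality of these two submanifolds of $S$. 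Form the universal intersection
\[
\mathcal{M} = \bigl\{(p,\theta) : p\in S,\ \theta\in\mathrm{Met}^\ell_{\Z_k}(V_0,\theta_0),\ p\in W^u(x;f,\theta)\cap W^s(y;f,\theta)\bigr\}.
\]
The heart of the argument is to show that the relevant section defining $\mathcal{M}$ is a submersion, i.e.\ that at any $(p,\theta)\in\mathcal M$ the tangent space to $S$ at $p$ is spanned by $T_pW^u(x;f,\theta)$, $T_pW^s(y;f,\theta)$, and the infinitesimal variations coming from perturbing $\theta$ inside $V_0$. This is where the freedom near $y$ enters: a generic $C^\ell$ variation of the metric supported in a neighborhood of an interior point of the connecting trajectory (which we may take inside $V_0$, since that trajectory enters $B(y,\varepsilon)\subset V_0$) changes the backward-flowed position of $p$ by an essentially arbitrary vector transverse to the trajectory — this is the classical computation (see Schwarz~\cite{schwarz_book}), and it is unaffected by the symmetry constraint precisely because the support $V_0$ has trivial isotropy, so the enforced symmetric copies of the perturbation live in the disjoint sets $a^j(V_0)$ and do not interfere. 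One subtlety: the perturbation must not move $x$ or destroy non-degeneracy of $x$; since $x$ may lie in $\iso$ we cannot freely perturb near $x$, but we never need to — it suffices that $x$'s unstable manifold depends continuously on $\theta$, which holds as $x$ stays a non-degenerate critical point (we only perturb in $V_0$, and if $x\in V_0$ we further shrink $V_0$ away from $\crit(f)$, harmless since $y\not\in\crit$... wait, $y\in\crit$; rather: shrink $V_0$ so it contains $y$ but no other critical point, possible since critical points are isolated).

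Granting the submersion property, the implicit function theorem makes $\mathcal M$ a $C^\ell$ Banach manifold; the projection $\mathcal M\to \mathrm{Met}^\ell_{\Z_k}(V_0,\theta_0)$ is a $C^\ell$ Fredholm map (its index equals $\dim W^u(x)+\dim W^s(y)-\dim X$, finite), so the Sard--Smale theorem (valid for $\ell$ large enough relative to the Fredholm index, and then extended to all $\ell$ by the usual approximation/Taubes-type argument) yields a residual set $\mathcal R_{x,y}$ of metrics that are regular values; for such $\theta$ the fiber is a finite-dimensional manifold of the correct dimension, which is exactly the statement that $W^u(x;f,\theta)\pitchfork W^s(y;f,\theta)$. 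The symmetric statement with $x\not\in\iso$ is proved identically, perturbing near an interior point of the trajectory which we may take inside a small neighborhood of $x$ disjoint from $\iso$ and from other critical points. I expect the main obstacle to be the careful verification of the submersion (surjectivity of the linearized operator) while simultaneously keeping the perturbation $\Z_k$-invariant and supported away from $\iso$ and away from $\crit(f)\setminus\{y\}$; once one checks that the three simultaneous constraints are compatible — which they are, thanks precisely to the hypothesis $y\notin\iso$ forcing the orbit of $V_0$ to be a disjoint union — the rest is the textbook Morse--Smale transversality machinery.
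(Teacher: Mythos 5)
Your overall architecture — a universal moduli space fibered over the Banach manifold of invariant $C^\ell$ metrics, Sard--Smale applied to the projection, and the reduction of transversality to an intersection problem on a small sphere transverse to the flow near $y$ — is viable and runs parallel to the paper's proof (which works instead with the Hilbert manifold $\P(x,y)$ of trajectories and a Fredholm section $s(c,\theta)=\dot c+\nabla^\theta f\circ c$). You also correctly locate where $y\not\in\iso$ enters: invariant perturbations are freely prescribed on one translate of a small set with trivial isotropy. However, the crux — surjectivity of the linearization under the symmetry constraint — is not justified by the reason you give. Disjointness of the translates $a^j(V_0)$ only shows that the space of invariant perturbations is as rich as the unconstrained one; it does not show that the forced copies ``do not interfere.'' The real danger is that the connecting trajectory $c$ itself may pass through $a^j(N)$ for some $j\neq 0$ at another time, where $N$ is your small support around the interior point $q=c(t_0)$ (this is a genuine threat, e.g.\ when $x$ lies in the $\Z_k$-orbit of $y$ and the trajectory lingers near the critical points $a^j(y)$). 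Then the variation of $\nabla^\theta f$ is switched on at two stations along the same trajectory with \emph{correlated} values — the second is the $a^j$-push-forward of the first — and the single-station ``move the unstable sphere arbitrarily'' computation no longer gives surjectivity, since the forced second contribution could cancel the one you designed.

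What is missing is the single-visit property: for $t_0$ large one has $\{t\in\R : c(t)\in\Z_k\cdot c(t_0)\}=\{t_0\}$, and hence, since $\overline{c(\R)}=c(\R)\cup\{x,y\}$ is compact and $\Z_k\cdot c(t_0)$ consists of noncritical points distinct from $x,y$, the trajectory meets $\Z_k\cdot N$ only in a small time window around $t_0$ once $N$ is small enough. This is precisely the hypothesis of Lemma~\ref{lemma_transv_technical}, verified at the start of the proof of Proposition~\ref{prop_surjectivity_universal} in the paper, before the cokernel argument is run; a short proof: if $c(t_1)=a^j c(t_0)$, then equivariance of $\nabla^\theta f$ and uniqueness of solutions of ODEs give $c(t+t_1)=a^j c(t+t_0)$ for all $t$, and letting $t\to+\infty$ yields $a^j y=y$, forcing $j=0$ by $y\notin\iso$, and then $t_1=t_0$ because $f\circ c$ is strictly decreasing. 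With this lemma inserted your surjectivity step is sound. Two smaller points: do not literally ``shrink $V_0$'' (that changes the space of metrics in which residuality is asserted) — keep the space fixed and use only tangent directions supported near the chosen point; and note that the invariant perturbation you construct is supported in the whole orbit of your small ball, so, as in the application in Lemma~\ref{lemma_crucial3}, one should take $V_0$ invariant (or read the statement with the saturation of $V_0$) for these to be admissible tangent directions.
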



In the following we work towards the proof of Lemma~\ref{lemma_transversality_step}, which is given in Section~\ref{sssec_trans_symmetry}. Assume that $X,\theta_0,f,x,y,V_0$ are as in the statement of this lemma. We can assume $x\neq y$, otherwise there is nothing to prove.

\subsubsection{Functional analytic set-up}

From now on we fix an exponential map $\exp$ associated to a choice of smooth background metric $g$. The dimension of $X$ is denoted by~$n$. Let $\P(x,y)$ denote the set of $W^{1,2}_{\rm loc}$ functions $c:\R\to M$ satisfying
\begin{itemize}
\item $\lim_{t\to+\infty}c(t)=y$, $\lim_{t\to-\infty}c(t)=x$.
\item If we define $v_+:[a,+\infty)\to T_yX$ and $v_-:(-\infty,-a]\to T_xX$, with $a\gg1$,  by 
\begin{equation*}
\begin{aligned}
& c(t)=\exp(v^+(t)) \ \ t\in[a,+\infty) \\
& c(t)=\exp(v^-(t)) \ \ t\in(-\infty,-a]
\end{aligned}
\end{equation*}
then $v^+$ is $W^{1,2}$ on $[a,+\infty)$, and $v^-$ is $W^{1,2}$ on $(-\infty,-a]$, where we choose arbitrary identifications $T_xX\simeq\R^n$ and $T_yX\simeq \R^n$.
\end{itemize}
The set $\P(x,y)$ so defined is independent of the choice of $g$.

\begin{theorem}\label{thm_diff_structure}
$\P(x,y)$ admits the structure of a smooth, separable and Hausdorff Hilbert manifold, modeled on $W^{1,2}(\R,\R^n)$.
\end{theorem}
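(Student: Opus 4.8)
The plan is to realize $\P(x,y)$ as a manifold by the standard construction of path spaces connecting two points via exponential charts, paying attention to the Sobolev regularity near the two endpoints $x,y$. First I would fix the smooth background metric $g$ together with its exponential map $\exp$, and for each $c\in\P(x,y)$ consider the Hilbert space $H_c$ of $W^{1,2}$-sections of the pullback bundle $c^*TX\to\R$. The model space is $W^{1,2}(\R,\R^n)$: after trivializing $c^*TX$ along $\R$ by parallel transport with respect to $g$ (this is possible since $c(\R)$ is contractible), one identifies $H_c$ with $W^{1,2}(\R,\R^n)$. The chart around $c$ is then the map $\xi\mapsto \exp_{c(\cdot)}\xi(\cdot)$, defined on a neighborhood of $0$ in $H_c$; one must check that its image lands in $\P(x,y)$, i.e. that the asymptotic conditions at $\pm\infty$ are preserved. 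This is where the choice of $W^{1,2}$ functions $v^\pm$ in the definition of $\P(x,y)$ is used: since $\xi$ is $W^{1,2}$ on $\R$, in particular $\xi(t)\to0$ as $t\to\pm\infty$ (by the Sobolev embedding $W^{1,2}(\R)\hookrightarrow C^0$ with decay), the perturbed path still converges to $y$ resp. $x$, and the local representatives $v^\pm$ of the perturbed path remain $W^{1,2}$ on the corresponding half-lines — here one uses that near a point the exponential map of $g$ and the exponential map used to define $v^\pm$ differ by a smooth diffeomorphism fixing the origin with invertible differential, so $W^{1,2}$-regularity is preserved.

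The next step is to verify that the transition maps between two such charts, centered at $c_0$ and $c_1$ with overlapping images, are smooth maps between open subsets of $W^{1,2}(\R,\R^n)$. The transition map has the form $\xi\mapsto \exp_{c_1(\cdot)}^{-1}\bigl(\exp_{c_0(\cdot)}(\Theta(\cdot)\xi(\cdot))\bigr)$ for a suitable family of linear isomorphisms $\Theta(t)$ coming from the change of trivialization; this is a fiberwise-defined nonlinear map determined by a smooth function on a neighborhood of the zero section in a vector bundle over $\R$. Smoothness of such substitution operators on $W^{1,2}(\R,\R^n)$ is the standard $\Omega$-lemma (the module/composition lemma for Sobolev spaces, see e.g.~Palais or Schwarz's book~\cite{schwarz_book}); it applies because $W^{1,2}$ in dimension $1$ is a Banach algebra and embeds in $C^0$, and the relevant geometric data are uniformly bounded along $\R$ together with all derivatives, owing to the fact that $c_0,c_1$ both converge to the fixed points $x,y$ at the ends. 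One also checks that the model is separable (as $W^{1,2}(\R,\R^n)$ is) and that the topology induced by the charts is Hausdorff — the latter because two distinct paths are separated by the $C^0_{\rm loc}$-topology, which is coarser than the chart topology.

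The main obstacle I anticipate is the endpoint analysis rather than the interior: one must make precise that the definition of $\P(x,y)$, which mixes the $W^{1,2}_{\rm loc}$ condition on $\R$ with a $W^{1,2}$ (decay) condition at the two ends expressed in ad hoc exponential coordinates, is chart-independent and genuinely produces a Hilbert manifold modeled on the single space $W^{1,2}(\R,\R^n)$ with no "exit data" at the ends. Concretely one shows: (a) the space of such $c$ is independent of the auxiliary metric $g$ (claimed already in the excerpt, and provable by the same diffeomorphism-comparison argument); (b) the asymptotic behavior forces $c(t)$ to lie in a fixed small coordinate ball around $y$ (resp. $x$) for $t$ large (resp. very negative), so that the tangent space $H_c$ really is all of $W^{1,2}(\R,\R^n)$ and not a subspace with constrained boundary values; and (c) the gluing of the "end charts" with an "interior chart" is smooth. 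Once these points are settled, the atlas axioms follow from the $\Omega$-lemma as above, and $\P(x,y)$ is a smooth separable Hausdorff Hilbert manifold modeled on $W^{1,2}(\R,\R^n)$, as claimed. I would organize the write-up as: construction of one chart; independence of $g$ and the endpoint coordinates; smoothness of transitions via the $\Omega$-lemma; separability and Hausdorffness.
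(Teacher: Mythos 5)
Your proposal is correct and follows essentially the same route as the paper: exponential charts $\zeta\mapsto\exp(\zeta)$ on $W^{1,2}$-sections of $c^*TX$, identification with the model $W^{1,2}(\R,\R^n)$ via trivializations controlled at the ends, and smoothness of transition maps by the composition/$\Omega$-lemma arguments of Eliasson adapted to the non-compact domain, with the endpoint decay handled exactly as you describe (the paper uses ``admissible'' trivializations that agree with fixed frames near $x$ and $y$ rather than parallel transport, but this is an equivalent normalization). The paper itself only sketches these points and defers the analytic details to Schwarz's book, so your level of detail is appropriate.
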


We do not provide all the analytical details of the proof of this standard theorem, see~\cite[Proposition~2.7]{schwarz_book} where the trajectory space is given an alternative but equivalent definition. However, we do describe the differentiable structure of $\P(x,y)$. The first step is to describe its topology. A sequence $c_j\in\P(x,y)$ is said to converge to $c\in\P(x,y)$ if
\begin{itemize}
\item $c_j\to c$ in $C^0_s(\R,X)$ and in $W^{1,2}_{\rm loc}(\R,X)$, where $C^0_s$ means strong $C^0$-topology.
\item Choosing $a\gg1$, if we define $$ \begin{array}{ccc} v^+_j,v^+:[a,+\infty)\to T_yX & \text{and} & v^-_j,v^-:(-\infty,-a]\to T_xX \end{array} $$ by
\begin{equation*}
\begin{aligned}
& \exp(v^+_j(t))=c_j(t), \ \exp(v^+(t))=c(t) \ \ \ \ t\in[a,+\infty) \\
& \exp(v^-_j(t))=c_j(t), \ \exp(v^-(t))=c(t) \ \ \ \ t\in(-\infty,-a]
\end{aligned}
\end{equation*}
then $v^+_j \to v^+$ in $W^{1,2}$ and $v^-_j\to v^-$ in $W^{1,2}$.
\end{itemize}
A set $\mathcal O \subset\P(x,y)$ is defined to be open if for every $c\in\mathcal O$ and every sequence $c_j\in\P(x,y)$ which converges to $c$ as above one finds $j_0$ such that $c_j\in\mathcal  O$ for all $j\geq j_0$. It is not hard to check that this is a topology which is metrizable and independent of $g$, and that $\P(x,y) \cap C^\infty(\R,X)$ is dense in $\P(x,y)$.

Now we turn to the description of the charts. Let $c\in\P(x,y)$. Then $c^*TX$ is a vector bundle of class $W^{1,2}_{\rm loc}$. A trivialization $\Psi:c^*TX \to \R\times \R^n$ is said to be {\it admissible} if one finds $U^x$ and $U^y$ open neighborhoods of $x,y\in X$ and smooth trivializations $\Phi^x:TU^x\to U^x\times \R^n$ and $\Phi^y:TU^y\to U^y\times \R^n$ such that
\begin{itemize}
\item If $t\gg1$ then $\Psi|_t$ and $\Phi^y|_{c(t)}$ coincide as linear isomorphisms $T_{c(t)}X\simeq \R^n$.
\item If $t\ll-1$ then $\Psi|_t$ and $\Phi^x|_{c(t)}$ coincide as linear isomorphisms $T_{c(t)}X\simeq \R^n$.
\end{itemize}
Using admissible trivializations one identifies $W^{1,2}(\R,\R^n)$ with a space of sections of $c^*TX$, denoted by $W^{1,2}(c^*TX)$. Also, one obtains a Hilbert structure on $W^{1,2}(c^*TX)$ by pulling back that of $W^{1,2}(\R,\R^n)$ via one of these identifications. The resulting Banachable space does not depend on the choice of admissible trivialization. The same procedure can be used to define $L^{2}(c^*TX)$, for $c\in \P(x,y)$. One can show that if $c\in\P(x,y)\cap C^\infty$ then the map
\begin{equation}\label{local_chart_pathspace}
\zeta \mapsto \exp(\zeta)
\end{equation}
is a homeomorphism between a neighborhood of the origin in $W^{1,2}(c^*TX)$ and a neighborhood of $c$ in $\P(x,y)$. This is a chart of the $C^\infty$-differentiable structure given by Theorem~\ref{thm_diff_structure}. Using that $\P(x,y)\cap C^\infty$ is dense in $\P(x,y)$ one proves that the images of such charts cover $\P(x,y)$. Moreover, the arguments from~\cite{eliasson} adapted to the non-compact domain $\R$ show that changes of coordinates are smooth. If $c\in\P(x,y) \cap C^{r}$, with $r\geq1$, then~\eqref{local_chart_pathspace} is a chart of the unique $C^r$-differentiable structure containing the above described $C^\infty$-differentiable structure.

The next step is to define a smooth Hilbert bundle $\mathcal E$ over $\P(x,y)$ with fibers modeled on $L^{2}(\R,\R^n)$. With $c\in\P(x,y)$, we define
\begin{equation*}
\begin{array}{ccccc} \mathcal{E}_c = L^{2}(c^*TX) & \text{ and } & \Pi: \mathcal{E} = \bigsqcup_{c\in\P(x,y)} \mathcal{E}_c \to \P(x,y) \end{array}
\end{equation*}
with $\Pi^{-1}(c) := \mathcal{E}_c$.

\begin{theorem}
$\Pi:\mathcal{E}\to \P(x,y)$ admits the structure of a smooth Hilbert bundle with fibers modeled on $L^{2}(\R,\R^n)$.
\end{theorem}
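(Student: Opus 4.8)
The plan is to build local trivializations of $\mathcal{E}$ over the atlas of $\P(x,y)$ described above and to check that the transition functions are smooth. Fix $c\in\P(x,y)\cap C^\infty$ and let $\mathcal{O}_c\subset\P(x,y)$ be the image of a neighborhood of the origin in $W^{1,2}(c^*TX)$ under the chart $\zeta\mapsto\exp(\zeta)$ from~\eqref{local_chart_pathspace}. Choose a smooth connection on $TX$ (for instance the Levi-Civita connection of the background metric $g$) and, for $\zeta$ small, let $P_\zeta(t):T_{c(t)}X\to T_{\exp(\zeta(t))}X$ denote parallel transport along the geodesic $s\in[0,1]\mapsto\exp(s\zeta(t))$. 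Since $\zeta$ is $W^{1,2}$, hence uniformly $C^0$-small and decaying at the ends, and since near $t=\pm\infty$ the curve $c$ is $C^0$-close to the constant curves at $y$ and $x$, the family $\{P_\zeta(t)\}_{t\in\R}$ is a uniformly bounded family of linear isomorphisms with uniformly bounded inverses, compatible with admissible trivializations near $t=\pm\infty$. Therefore $\xi\mapsto P_\zeta\cdot\xi$ is a Banachable space isomorphism $L^2(c^*TX)\to L^2((\exp\zeta)^*TX)=\mathcal{E}_{\exp(\zeta)}$, and $(\exp(\zeta),\eta)\mapsto(\zeta,P_\zeta^{-1}\eta)$ is a bijection $\Pi^{-1}(\mathcal{O}_c)\to\mathcal{O}_c\times L^2(c^*TX)$ covering the chart of $\mathcal{O}_c$. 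Fixing once and for all an admissible trivialization of $c^*TX$ identifies $L^2(c^*TX)$ with $L^2(\R,\R^n)$ and produces the desired local trivialization of $\mathcal{E}$.

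Next I would verify that the transition map between two such local trivializations, associated to $c_0,c_1\in\P(x,y)\cap C^\infty$ over the overlap $\mathcal{O}_{c_0}\cap\mathcal{O}_{c_1}$, is smooth as a map from an open subset of $W^{1,2}(c_0^*TX)$ into the Banach Lie group of bounded invertible operators of $L^2(\R,\R^n)$. In admissible trivializations it is given by pointwise multiplication $\eta(t)\mapsto G_\zeta(t)\eta(t)$ by a family $G_\zeta(t)$ of invertible matrices that depends on $\zeta$ through composition with fiberwise-smooth bundle maps and is asymptotically constant as $t\to\pm\infty$. Smoothness of such operations is exactly what the $\Omega$-lemma and Eliasson's arguments~\cite{eliasson} provide: composition with a fiberwise-smooth bundle map is a smooth operation on the relevant Sobolev sections, and multiplication $W^{1,2}\times L^2\to L^2$ is bounded bilinear in dimension one. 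The only point requiring extra care relative to the compact-domain case is uniformity at the two ends; this is controlled because on $[a,+\infty)$ and $(-\infty,-a]$ all objects are modeled, via admissible trivializations, on fixed smooth finite-dimensional data pulled back from the neighborhoods $U^y$ and $U^x$, so the estimates hold with constants independent of $t$.

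Finally, local triviality together with smoothness of the transitions, and the fact (already recorded) that the charts $\mathcal{O}_c$ with $c$ smooth cover $\P(x,y)$, show that $\Pi:\mathcal{E}\to\P(x,y)$ is a smooth Hilbert bundle with fiber $L^2(\R,\R^n)$; over a $C^r$-curve the same construction yields a $C^r$-trivialization compatible with the $C^\infty$-structure, so the smoothness class matches that of $\P(x,y)$ itself. The main obstacle is purely technical: keeping all the estimates entering the $\Omega$-lemma uniform in $t\in\R$, i.e.\ transplanting Eliasson-type arguments from loop spaces of compact domains to the line. This is precisely where the asymptotic conditions built into the definition of $\P(x,y)$ and the notion of admissible trivialization do the work.
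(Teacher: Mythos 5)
Your construction is correct and follows the same overall strategy as the paper: cover $\P(x,y)$ by the exponential charts centered at smooth curves, trivialize $\mathcal{E}$ over each chart by a connection-induced identification of fibers, and obtain smoothness of the transition maps from Eliasson-type arguments, using the asymptotic conditions in the definition of $\P(x,y)$ and admissible trivializations (plus the fact that the closure of the image of $c$ is compact) to keep the estimates uniform in $t\in\R$. The one genuine difference is the choice of fiber identification: you use parallel transport $P_\zeta(t)$ along the radial geodesics $s\mapsto\exp(s\zeta(t))$, whereas the paper uses the map $\eta\mapsto\nabla_2\exp(\zeta)\eta$ as in~\eqref{trivialization}, i.e.\ the Jacobi-field-type transport coming from the second slot of $d\exp$. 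Both give uniformly bounded isomorphisms with uniformly bounded inverses and both lead to smooth transitions, so your proof is complete at the same level of detail as the paper's sketch; the advantage of the paper's choice is purely downstream convenience: with $\nabla_2\exp$ the local representative of the section $s(c,\theta)=\dot c+\nabla^\theta f\circ c$ takes the clean form~\eqref{local_formula_of_section} with $\Theta$ and $Z$ as in~\eqref{map_Theta}--\eqref{map_Z}, while with parallel transport one would obtain an equally workable but slightly different local formula (a different zeroth-order correction term in place of $\Theta(\zeta)\dot c$), which would have to be carried through the Fredholm and transversality computations of the following subsections.
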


Again we do not provide full analytical details of the proof, see~\cite[Chapter~2]{schwarz_book} for the description of this Hilbert bundle; analytic details of the construction can be found in~\cite[appendix~A]{schwarz_book}. But we do describe the trivializations of $\mathcal{E}$. We use some of the constructions in~\cite{eliasson}. Let us denote by $K:TTX\to TX$ the connection map associated to $g$. In a local trivialization of $TTX$ induced by a chart of $X$ we have
\[
K(x,v,\delta x,\delta v) = (x,\delta v+\Gamma(x)(\delta x,v))
\]
where $\Gamma (x)(\delta x,v)^k= \sum_{ij}\Gamma^k_{ij}(x)(\delta x)^iv^j$; here $(\delta x)^i,v^j$ are local coordinates of tangent vectors and $\Gamma^k_{ij}$ are the Christoffel symbols of the Levi-Civita connection associated to $g$. Denoting by $\pi:TX\to X$ and $\tau :TTX\to TX$ the bundle projections, we have an isomorphism
\[
(\tau,d\pi,K) : TTX \to TX \oplus TX \oplus TX
\]
which turns out to be a vector bundle isomorphism if we see $TX \oplus TX \oplus TX$ as a vector bundle over the first component $TX$. Let $\mathcal{O}$ denote a neighborhood of the zero section of $TX$ where the map $\exp$ is defined, which will eventually be made smaller below. The derivative $d\exp:\tau^{-1}(\mathcal{O}) \to TX$ of $\exp:\mathcal{O}\to X$ conjugates under the above diffeomorphism to a map
\[
\nabla \exp :\mathcal{O} \oplus TX \oplus TX \to TX
\]
which is linear in the second and third components. In fact, $\nabla\exp(v,u,w) = J(1)$ where $J$ is the Jacobi field along $t\mapsto \exp(tv)$ satisfying $J(0)=u$ and $\frac{DJ}{dt}(0)=w$. We denote by $\nabla_1\exp$ and $\nabla_2\exp$ the maps 
$$
 \begin{array}{ccc} \nabla_1\exp(v) u = \nabla\exp(v,u,0) &\text{ and } & \nabla_2\exp(v) w = \nabla\exp(v,0,w) .\end{array} 
 $$
  These are linear maps $\nabla_i\exp(v):T_{\pi(v)}X \to T_{\exp(v)}X$. We take $\mathcal{O}$ small enough in such a way that these maps are linear isomorphisms.

Fixing $c\in\P(x,y)\cap C^\infty$ and the exponential chart $\zeta \mapsto \exp(\zeta)$ defined on a neighborhood of the origin in $W^{1,2}(c^*TX)$, we define a linear isomorphism
\begin{equation}\label{trivialization}
\begin{array}{ccc} \mathcal{E}_c = L^{2}(c^*TX) \to \mathcal{E}_{\exp(\zeta)} = L^{2}(\exp(\zeta)^*TX), & & \eta \mapsto \nabla_2\exp(\zeta)\eta .\end{array}
\end{equation}
Of course, here one has to prove that this indeed defines a linear isomorphism between the corresponding Hilbert spaces. This trivializes $\mathcal{E}$ on the domain of the chart, and one can show that the transition maps between such trivializations are smooth. The trivializations constructed in such a way over exponential charts~\eqref{local_chart_pathspace} centered at points in $\P(x,y) \cap C^{r}$ will be of class $C^r$, $r\geq1$.

In the following we denote ${\rm Met}^\ell_{\Z_k}(V_0,\theta_0)$ by ${\rm Met}_{\Z_k}^\ell$, for simplicity. With the $C^\ell$-topology this becomes a smooth Banach manifold. In fact, it is identified with an open set on the Banach space of symmetric $(2,0)$-tensors of class $C^\ell$ which vanish on $X\setminus V_0$, equipped with the $C^\ell$ norm. The projection
\[
\P(x,y)\times {\rm Met}_{\Z_k}^\ell \to \P(x,y)
\]
is smooth, and one can pull $\mathcal{E}$ back to a smooth bundle
\[
\mathcal{E}^{\rm univ} \to \P(x,y)\times{\rm Met}_{\Z_k}^\ell
\]
with fiber over $(c,\theta)$ given by $\mathcal{E}^{\rm univ}_{(c,\theta)} = \mathcal{E}_c$.

\subsubsection{Differential equation}

We shall now define a section 
\begin{equation*}
\begin{array}{ccc} s:\P(x,y)\times{\rm Met}_{\Z_k}^\ell \to \mathcal{E}^{\rm univ} & \text{by the equation} & s(c,\theta) = \dot c + \nabla^\theta f\circ c \end{array}.
\end{equation*}
We provide a precise description of this section. Fix $c\in\P(x,y)\cap C^r$, for some $r\geq1$. A neighborhood of $c$ is parametrized by a neighborhood of the origin in $W^{1,2}(c^*TX)$ via the chart $\zeta\mapsto \exp(\zeta)$ of class $C^r$. We compute
\begin{equation}
\begin{aligned}
\frac{d}{dt}\exp(\zeta) &= \nabla \exp(\zeta,\dot c,\nabla_t\zeta) \\
&= \nabla_1\exp(\zeta)\dot c + \nabla_2\exp(\zeta)\nabla_t\zeta \\
&= \nabla_2\exp(\zeta) (\nabla_t\zeta + \Theta(\zeta) \dot c)
\end{aligned}
\end{equation}
where $\nabla_t\zeta$ is the covariant derivative of $\zeta$ along $c$ and $\Theta:\mathcal{O} \to \mathcal{L}(TX)$ is defined by \begin{equation}\label{map_Theta}
\Theta(v) = \nabla_2\exp(v)^{-1} \circ \nabla_1\exp(v):T_{\pi(v)}X\to T_{\pi(v)}X.
\end{equation}
Consider the map
\begin{equation}\label{map_Z}
\begin{array}{ccc} Z:\mathcal{O}\times {\rm Met}^\ell_{\Z_k} \to TX, & & Z(v,\theta)=\nabla_2\exp(v)^{-1}\nabla^\theta f(\exp(v)). \end{array}
\end{equation}
One can show that $Z$ is smooth. Note that $Z(\cdot,\theta)$ denotes a smooth non-linear fiber-preserving map $\mathcal{O}\to TX$. Using the trivialization of $\mathcal{E}^{\rm univ}$ explained before, the section $s$ is represented by the map
\begin{equation}\label{local_formula_of_section}
F(\zeta,\theta) = \nabla_t\zeta + \Theta(\zeta)\dot c + Z(\zeta,\theta) .
\end{equation}
It can be proved that for some open neighborhood $\mathcal{U}$ of the origin in $W^{1,2}(c^*TX)$, $F$ defines a $C^r$-map
\[
F: \mathcal{U}\times{\rm Met}_{\Z_k}^\ell \to L^{2}(c^*TX).
\]
It follows that $s$ is a smooth section since we can take $r=\infty$ and cover $\P(x,y)$ by charts centered at points in $\P(x,y) \cap C^\infty$.

\begin{remark}
In order to study the differential of $F$ we introduce some notation taken from~\cite{eliasson}. Let $p:E\to M$ and  $p':E'\to M$ be smooth vector bundles over the same base, let $U\subset E$ be open, and let $h:U\to E'$ be a smooth map which is fiber-preserving in the sense that $p'\circ h=p$ on $U$. The fiber-derivative of $h$ is the smooth fiber-preserving map $Dh:U\to\mathcal{L}(E,E')$ characterized by
\[
Dh(e)u = \lim_{t\to0} \frac{h(e+tu)-h(e)}{t} \ \ \ \ \ \ \ \text{where} \ e\in U, \ u\in p^{-1}(p(e)).
\]
The limit in the right hand side is taken in the vector space $p'^{-1}(p(e))$.
\end{remark}

Using the above remark and some straightforward estimates one shows that the differential of the map $F$ is given by 
\begin{equation}\label{full_derivative_of_section}
DF(\zeta,\theta):(\eta,\xi) \mapsto \nabla_t\eta + (D\Theta(\zeta)\eta)\dot c + D_1Z(\zeta,\theta)\eta + D_2Z(\zeta,\theta)\xi
\end{equation}
Here $D\Theta$ denotes the derivative of $\Theta$ in the fiber-direction, $D_1Z$ denotes the derivative of $Z$ in the fiber-direction with respect to the first variable, and $D_2Z$ the derivative of $Z$ in directions tangent to ${\rm Met}_{\Z_k}^\ell$. The partial derivative $D_1F$ in the $W^{1,2}(c^*TX)$-direction is the bounded linear map
\[
D_1F(\zeta,\theta):W^{1,2}(c^*TX) \to L^{2}(c^*TX)
\]
given by
\begin{equation}
D_1F(\zeta,\theta)\eta=\nabla_t\eta+(D\Theta(\zeta)\eta)\dot c + D_1Z(\zeta,\theta)\eta.
\end{equation}

The following result is fundamental, but we state it here without a proof.

\begin{theorem}\label{thm_Fredholm_op}
The operator $D_1F(0,\theta)$ is a Fredholm operator. Its Fredholm index is ${\rm ind}(x)-{\rm ind}(y)$, where ${\rm ind}$ denotes the Morse index.
\end{theorem}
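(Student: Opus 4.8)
The plan is to analyze the operator $D_1F(0,\theta)\colon W^{1,2}(c^*TX)\to L^2(c^*TX)$ by reducing it, via an admissible trivialization, to a first-order differential operator on $\R$ of the form $L\eta = \dot\eta + S(t)\eta$, where $\eta$ now denotes an $\R^n$-valued function, $S(t)$ is a smooth path of $n\times n$ matrices, and $\dot\eta$ is the ordinary derivative. First I would compute the symbol: at $\zeta = 0$ we have $\Theta(0) = 0$ and $\nabla_2\exp(0) = \mathrm{id}$, so the term $(D\Theta(0)\eta)\dot c$ contributes a bounded zeroth-order multiplication operator (smooth and decaying, since $\dot c\to 0$ exponentially as $t\to\pm\infty$ by the Morse-Smale/hyperbolicity of the endpoints), and $D_1Z(0,\theta)\eta$ is the fiber-linearization of the gradient vector field $\nabla^\theta f$ along $c$, which along an anti-gradient trajectory converging to nondegenerate rest points is again a smooth path of matrices with well-defined limits as $t\to\pm\infty$. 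After absorbing the covariant derivative $\nabla_t$ into $\partial_t$ plus a connection matrix (which is smooth and bounded, and tends to the Christoffel data of the fixed trivializations $\Phi^x,\Phi^y$ at the ends), one arrives at $L = \partial_t + S(t)$ with $S(\pm\infty) =: S^\pm$ existing, where $S^\pm$ are conjugate (via the chosen trivializations) to the Hessians $D^2f(x)$, $D^2f(y)$ with respect to $\theta$ — in particular $S^\pm$ are symmetric and, crucially, \emph{hyperbolic}, i.e. have no eigenvalue on the imaginary axis, because $x,y$ are nondegenerate critical points.

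The next step is to invoke the standard asymptotic Fredholm theory for such operators (the classical results of Robbin--Salamon, or the treatment in Schwarz~\cite{schwarz_book}): an operator $\partial_t + S(t)$ on $\R$ with $S$ continuous and admitting hyperbolic limits $S^\pm$ defines a Fredholm operator $W^{1,2}(\R,\R^n)\to L^2(\R,\R^n)$, and its index equals the spectral flow of the path $S(t)$, equivalently $\dim E^-(S^-) - \dim E^-(S^+)$ where $E^-$ denotes the sum of stable eigenspaces (the negative eigenspace, since here $S^\pm$ are symmetric). Concretely, Fredholmness follows because near $t = \pm\infty$ the operator is a small relatively-compact perturbation of the constant-coefficient operator $\partial_t + S^\pm$, which is an isomorphism on $\R^n$-valued $W^{1,2}$ over a half-line precisely due to hyperbolicity; a parametrix is then built by patching. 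The index computation is the spectral-flow / exponential-dichotomy count: solutions decaying at $+\infty$ form a subspace of dimension $\dim E^-(S^+)$... wait — one must be careful with orientation conventions for the anti-gradient flow; with $L = \partial_t + S$ and $S^\pm$ the Hessians, the dimension of the space of exponentially decaying solutions at each end and the cokernel count combine to give $\operatorname{ind}(L) = \dim E^-(S^-) - \dim E^-(S^+)$, and since $\dim E^-(S^\pm)$ is exactly the Morse index of $x$, resp.\ $y$, this yields $\operatorname{ind}(L) = \operatorname{ind}(x) - \operatorname{ind}(y)$ as claimed.

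Finally I would check that the reduction is faithful: changing the admissible trivialization conjugates $L$ by a bounded invertible multiplication operator that is asymptotically constant, hence preserves both Fredholmness and index; and the identification of $W^{1,2}(c^*TX)$, $L^2(c^*TX)$ with $W^{1,2}(\R,\R^n)$, $L^2(\R,\R^n)$ built into the Hilbert-bundle structure above is exactly such a trivialization. The main obstacle — really the only non-formal point — is the careful bookkeeping at the ends: verifying that all the perturbing terms ($D\Theta(0)$ applied to $\dot c$, the connection term, the difference between $S(t)$ and its limits) genuinely decay in a way that makes them relatively compact perturbations of the constant-coefficient model, and getting the sign in the spectral-flow computation to land on $\operatorname{ind}(x) - \operatorname{ind}(y)$ rather than its negative. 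Both are handled by the exponential convergence $c(t)\to x,y$ guaranteed by hyperbolicity of the rest points together with the standard linear ODE dichotomy estimates; since these are entirely classical (see~\cite{schwarz_book}), I would cite rather than reprove them, which is why the theorem is stated here without proof.
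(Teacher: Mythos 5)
Your proposal is correct and is essentially the argument the paper relies on: the paper states the theorem without proof and refers to Schwarz's book, where exactly this reduction to an asymptotically hyperbolic operator $\partial_t+S(t)$ with symmetric invertible limits, Fredholmness via exponential dichotomy at the ends, and the spectral-flow computation of the index is carried out. Your sign bookkeeping also lands correctly on ${\rm ind}(x)-{\rm ind}(y)$ (kernel dimensions $\dim E^-(S^-)$ at $-\infty$ and $n-\dim E^-(S^+)$ at $+\infty$), so there is nothing to add beyond the citation the paper itself gives.
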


For a proof we refer to Schwarz~\cite{schwarz_book}.

\subsubsection{Transversality with $\Z_k$-symmetry}\label{sssec_trans_symmetry}

\indent
We start by investigating $D_2Z$ in more detail. Given $p\in X$ we have $Z(0_p,\theta)=\nabla^\theta f(p)$ where $0_p$ is the origin in $T_pX$ and $\nabla^\theta f$ is the $\theta$-gradient of $f$. Consider the space ${\rm Met}^\ell$ of $C^\ell$-metrics on $X$ which coincide with $\theta_0$ on $X\setminus V_0$. Then $T_\theta{\rm Met}^\ell$ is just the space of $(2,0)$-tensors of class $C^\ell$ which vanish on $X\setminus V_0$, and $T_\theta{\rm Met}^\ell_{\Z_k}$ is the space of those which are $\Z_k$-invariant. Moreover, the map $(v,\theta) \mapsto Z(v,\theta)$ is actually defined on the whole of $\mathcal{O}\times {\rm Met}^\ell$. The map $\xi \in T_\theta{\rm Met}^\ell \mapsto D_2Z(0,\theta)\xi$ assigns a vector field on $X$ to each $\xi$.

\begin{lemma}\label{lemma_Cinfty_linearity}
The equation $D_2Z(0,\theta)\phi\xi=\phi D_2Z(0,\theta)\xi$ holds for all $\xi\in T_\theta{\rm Met}^\ell$ and all $\phi\in C^\infty(X,\R)$.
\end{lemma}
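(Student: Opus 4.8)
The claim is that $\xi \mapsto D_2Z(0,\theta)\xi$ is $C^\infty(X,\R)$-linear, hence is induced by a bundle map. The plan is to work in a local chart on $X$ and compute $D_2Z(0,\theta)\xi$ explicitly. First I would recall from~\eqref{map_Z} that $Z(v,\theta)=\nabla_2\exp(v)^{-1}\nabla^\theta f(\exp(v))$, and that $\nabla_2\exp(0_p)=\mathrm{id}_{T_pX}$, so that $Z(0_p,\theta)=\nabla^\theta f(p)$. Hence $D_2Z(0,\theta)\xi$ is simply the derivative, at the metric $\theta$ in the direction $\xi$, of the gradient vector field $p\mapsto \nabla^\theta f(p)$. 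In local coordinates, $(\nabla^\theta f)^i = \theta^{ij}\partial_j f$ where $\theta^{ij}$ denotes the inverse matrix of $\theta_{ij}$. Differentiating with respect to $\theta$ in the direction of a symmetric tensor $\xi = \xi_{ab}$ gives the standard formula $\frac{d}{ds}\big|_{s=0}(\theta+s\xi)^{ij} = -\theta^{ia}\xi_{ab}\theta^{bj}$, so that
\[
\bigl(D_2Z(0,\theta)\xi\bigr)^i = -\theta^{ia}\xi_{ab}\theta^{bj}\partial_j f .
\]

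The key point is now immediate from this formula: the right-hand side is visibly a pointwise algebraic (indeed tensorial) expression in $\xi$ and in the values of $\theta$ and $df$ at the point $p$; it involves no derivatives of $\xi$. Therefore replacing $\xi$ by $\phi\xi$ for $\phi\in C^\infty(X,\R)$ simply multiplies the whole expression by the scalar $\phi(p)$, giving $D_2Z(0,\theta)(\phi\xi) = \phi\, D_2Z(0,\theta)\xi$ as vector fields on $X$. I would also note that, since $\xi$ vanishes on $X\setminus V_0$ by definition of $T_\theta{\rm Met}^\ell$, the resulting vector field $D_2Z(0,\theta)\xi$ is supported in $\overline{V_0}$, which is consistent with the formula.

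The only genuine care needed — and the step I expect to be slightly delicate rather than truly hard — is checking that the purely local computation above is coordinate-independent, i.e.\ that the expression $-\theta^{ia}\xi_{ab}\theta^{bj}\partial_j f$ patches to a globally well-defined vector field, and that it really does represent $D_2Z(0,\theta)$ as defined via the fiber-derivative in the Remark preceding~\eqref{full_derivative_of_section}. This is routine: $\nabla^\theta f$ is defined intrinsically by $\theta(\nabla^\theta f,\cdot)=df$, so differentiating the identity $\theta\bigl(\nabla^\theta f,\cdot\bigr)=df$ in $\theta$ gives $\xi(\nabla^\theta f,\cdot)+\theta\bigl(D_2Z(0,\theta)\xi,\cdot\bigr)=0$, i.e.\ $D_2Z(0,\theta)\xi$ is the unique vector field $W$ with $\theta(W,\cdot)=-\xi(\nabla^\theta f,\cdot)$; this characterization makes the $C^\infty$-linearity in $\xi$ manifest without any coordinates, since the right-hand side $-\xi(\nabla^\theta f,\cdot)$ is $C^\infty(X,\R)$-linear in $\xi$ and $\theta(\cdot,\cdot)$ is a fixed isomorphism $TX\to T^*X$. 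I would present the coordinate-free version as the main argument and relegate the explicit formula to a remark, since the intrinsic computation is both cleaner and makes the $\supp \subset \overline{V_0}$ statement transparent.
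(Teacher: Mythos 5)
Your proposal is correct and matches the paper's argument: the paper likewise computes in local coordinates that $D_2Z(0,\theta)\xi$ is represented by $-\theta^{-1}M\theta^{-1}\nabla f$ (with $M$ the local matrix of $\xi$), so that the $C^\infty(X,\R)$-linearity is immediate from the expression being pointwise algebraic in $\xi$. Your coordinate-free reformulation via differentiating $\theta(\nabla^\theta f,\cdot)=df$ is an equivalent, slightly cleaner packaging of the same computation, not a different route.
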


\begin{proof}
This follows trivially from local representations. In fact, the vector field $q\in X \mapsto Z(0_q,\theta) \in T_qX$ is nothing but the $\theta$-gradient  of $f$. Here we denoted by $0_q$ the origin in $T_qX$. Choose local coordinates $x_1,\dots,x_n$ defined on some open subset of $X$. Use them to locally identify the metric with a field of symmetric matrices $\theta=[\theta_{ij}]$ of class $C^\ell$. Setting $\nabla f=(f_{x_1},\dots,f_{x_n})$, the vector field $q\mapsto Z(0_q,\theta)$ is represented as $\theta^{-1}\nabla f$ in these local coordinates. Any $\xi$ can be represented locally as a field $M$ of symmetric matrices, and the vector field $q\mapsto D_2Z(0_q,\theta)\xi$ is locally represented as $-\theta^{-1}M\theta^{-1}\nabla f$. From this the desired $C^\infty(X,\R)$ -linearity is obvious, since $\phi\xi$ is represented as $\phi M$ and $-\theta^{-1}(\phi M)\theta^{-1}\nabla f = -\phi \ \theta^{-1}M\theta^{-1}\nabla f$.
\end{proof}

\begin{lemma}
Given $p\in V_0\setminus (\iso\cup\crit(f))$, $v\in T_pX$, $v\neq0$ and $\theta\in{\rm Met}^\ell_{\Z_k}$, there exists $\xi\in T_\theta{\rm Met}^\ell_{\Z_k}$ such that $g_{p}(D_2Z(0_p,\theta)\xi,v)\neq0$. 
\end{lemma}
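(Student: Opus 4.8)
The plan is to exploit the explicit local formula for $D_2Z(0_p,\theta)$ established in the proof of Lemma~\ref{lemma_Cinfty_linearity}: in local coordinates near $p$ the vector field $q\mapsto Z(0_q,\theta)$ is $\theta^{-1}\nabla f$, and if $\xi$ is represented by a field $M$ of symmetric matrices then $q\mapsto D_2Z(0_q,\theta)\xi$ is represented by $-\theta^{-1}M\theta^{-1}\nabla f$. Since $p\notin\crit(f)$ we have $\nabla^\theta f(p)=w\neq 0$. The first step is therefore purely linear-algebraic: given the nonzero vectors $v$ and $w$ at $p$, choose a symmetric matrix $M_0$ (at the single point $p$, in suitable coordinates) with $g_p(-\theta^{-1}M_0\theta^{-1}w,v)\neq 0$. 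This is always possible: the bilinear pairing $M\mapsto g_p(-\theta^{-1}M\theta^{-1}w,v)$ on the space of symmetric matrices is not identically zero because $\theta^{-1}w\neq 0$ and $v\neq 0$ (one can for instance take $M_0$ to be the symmetrization of a rank-one matrix built from $\theta^{-1}w$ and the $g$-metric dual pairing with $v$).

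The second step is to promote this pointwise choice to a genuine $\Z_k$-invariant tensor supported in $V_0$. Here is where $p\notin\iso$ is essential: the isotropy group of $p$ is trivial, so the orbit $\{a^j(p): 0\le j<k\}$ consists of $k$ distinct points. Pick a small open ball $B$ around $p$, contained in $V_0\setminus(\iso\cup\crit(f))$ and small enough that the sets $a^j(B)$, $0\le j<k$, are pairwise disjoint and still inside $V_0$; this uses Corollary~\ref{cor_inv_smooth_nbd} or simply local finiteness of the $\Z_k$-orbit together with the hypothesis $p\in V_0\setminus(\iso\cup\crit(f))$, which is an open condition. Let $\beta$ be a smooth bump function equal to $1$ near $p$ and supported in $B$, and let $\eta_0$ be the symmetric $(2,0)$-tensor on $B$ which in the chosen coordinates equals $\beta\cdot M_0$ (constant matrix $M_0$, cut off by $\beta$). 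Define $\xi$ on $V_0$ by spreading $\eta_0$ around the orbit:
\[
\xi = \sum_{j=0}^{k-1} (a^j)_*\eta_0,
\]
extended by zero outside $\bigcup_j a^j(B)$. By construction $\xi$ is $\Z_k$-invariant, is of class $C^\ell$ (indeed $C^\infty$), and vanishes on $X\setminus V_0$, so $\xi\in T_\theta{\rm Met}^\ell_{\Z_k}$.

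The third step checks that the added terms do not interfere at $p$. Near $p$ only the $j=0$ summand $(a^0)_*\eta_0 = \eta_0$ is nonzero, because the $a^j(B)$ are disjoint; and near $p$ we have $\beta\equiv 1$, so $\xi$ agrees with the tensor represented by the constant matrix $M_0$ in a neighborhood of $p$. Hence by the local formula $D_2Z(0_p,\theta)\xi = -\theta^{-1}M_0\theta^{-1}w$, and by the choice of $M_0$ in the first step, $g_p(D_2Z(0_p,\theta)\xi,v)\neq 0$. (One may also invoke Lemma~\ref{lemma_Cinfty_linearity} together with the fact that $\xi$ equals $\beta$ times a fixed invariant extension to see that only the values of $\xi$ near $p$ matter, but the disjointness of the orbit makes this immediate.)

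I do not expect a serious obstacle here. The only point requiring a little care is the separation of the orbit of $p$: one must use that $p\notin\iso$ to get $k$ distinct orbit points and hence disjoint small balls; this is exactly why the statement excludes isotropy points. Everything else is the elementary linear algebra of step one and the routine construction of an equivariant bump tensor by averaging over the (free) orbit.
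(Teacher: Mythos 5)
Your proposal is correct and follows essentially the same route as the paper: use the local formula $D_2Z(0,\theta)\xi=-\theta^{-1}M\theta^{-1}\nabla f$, choose a symmetric matrix achieving the desired nonzero pairing with $v$ (the paper even solves $D_2Z(0_p,\theta)\xi_0=v$ exactly via a rotation trick), cut off near $p$, and then make the tensor $\Z_k$-invariant by spreading/averaging over the orbit, using $p\notin\iso$ so that the other orbit summands vanish near $p$. Your summing of pushforwards over the free orbit is just the averaging in the paper up to a factor $1/k$, so there is no essential difference.
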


As before, here $0_p$ denotes the origin in $T_pX$.

\begin{proof}
As in the proof of the previous lemma, we can choose coordinates $x_1,\dots,x_n$ near $p\simeq(0,\dots,0)$ to write locally $Z(0,\theta)=\theta^{-1}\nabla f$. Here $\theta = [\theta_{ij}]$ is the local representation of the metric $\theta$ as a field of symmetric $n\times n$ matrix, and $\nabla f$ is the vector field $(f_{x_1},\dots,f_{x_n})$.

Let $v\in T_pX$ be a non-zero vector, represented as $w\in \R^n\setminus 0$ via the local coordinates. Denote $u=\nabla f(p)\neq0$. We claim that there is a symmetric $n\times n$ matrix $H$ satisfying $Hu=w$. This is obvious if $u=e_1:=(1,0,\dots,0)$, but if $u\neq e_1$ then we choose $R\in SO(n)$ satisfying $Ru=e_1$, $K$ symmetric satisfying $Ke_1=Rw$, and set $H=R^TKR$. Hence there is a field $M$ of symmetric matrices defined near $0$ satisfying $-\theta(0)^{-1}M(0)\theta(0)^{-1}u=w$. Cutting $M$ off with a cut-off function supported near zero, we get a vector $\xi_0 \in T_\theta{\rm Met}^\ell$ satisfying $D_2Z(0_p,\theta)\xi_0=v$, and which vanishes at other points of the $\Z_k$-orbit of $p$. Hence $g_p(D_2Z(0,\theta)\xi_0,v)=g_p(v,v)\neq0$. The desired $\xi$ is obtained by taking the $\Z_k$-average of $\xi_0$. It is crucial here that non-trivial  elements of $\Z_k$ move $p$ to a different point where $\xi_0$ vanishes. We have $g_p(D_2Z(0,\theta)\xi_0,v) = \frac{1}{k} g_p(D_2Z(0,\theta)\xi,v) \neq 0$ because $p\not\in \iso$ and the support of $\xi_0$ is a small neighborhood of $p$.
\end{proof}

\begin{lemma}\label{lemma_transv_technical}
Let $x,y\in \crit(f)$, $\theta\in {\rm Met}^\ell_{\Z_k}$, $c\in \P(x,y)\cap C^1$ and $\eta\in C^0(c^*TX)$ be given. Assume that 
there exists $t_0\in\R$ satisfying $c(t_0)\in V_0\setminus(\crit(f)\cup\iso)$, $\dot c(t_0)\neq 0$ and 
\[
\{ t\in \R \mid \text{$c(t)$ belongs to the $\Z_k$-orbit of $c(t_0)$} \} = \{t_0\}.
\]
Assume also that $\eta(t_0)\neq0$. Then there exists $\xi\in T_\theta{\rm Met}^\ell_{\Z_k}$ such that the function $$ t\in\R\mapsto g_{c(t)}(D_2Z(0_{c(t)},\theta)\xi,\eta(t)) \in \R $$ is everywhere non-negative, and positive at $t_0$.
\end{lemma}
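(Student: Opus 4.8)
\textbf{Proof plan for Lemma~\ref{lemma_transv_technical}.}

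The plan is to build $\xi$ by a local-to-global procedure: first produce a symmetric $(2,0)$-tensor supported in a small neighborhood of the single point $c(t_0)$ that does the job pointwise and nonnegatively along the trajectory, then average it over $\Z_k$ to restore invariance, using crucially that $c(t_0)\not\in\iso$ so that the group translates cannot interfere near $t_0$.

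First I would choose, as in the proof of Lemma~\ref{lemma_Cinfty_linearity}, local coordinates $x_1,\dots,x_n$ centered at $p := c(t_0)$, in which the vector field $q\mapsto Z(0_q,\theta)$ is represented by $\theta^{-1}\nabla f$ and the operation $\xi_0\mapsto D_2Z(0,\theta)\xi_0$ is represented by $M\mapsto-\theta^{-1}M\theta^{-1}\nabla f$, where $M$ is the local representation of $\xi_0$ as a field of symmetric matrices. Since $p\notin\crit(f)$ we have $u := \nabla f(p)\neq 0$, and since $\eta(t_0)\neq 0$ its coordinate representation $w$ is nonzero. Exactly as in the lemma preceding this one, I would pick a \emph{constant} symmetric matrix $H$ with $-\theta(p)^{-1}H\theta(p)^{-1}u = w$ (possible because the map $H\mapsto -\theta(p)^{-1}H\theta(p)^{-1}u$ hits all of $\R^n$ as $H$ ranges over symmetric matrices, since $u\neq0$: conjugate so that $u$ is a coordinate vector). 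Now, and this is the key point compared to the previous lemma, I want the resulting tensor to produce a function of $t$ that is nonnegative everywhere and strictly positive at $t_0$. For this I would not just cut off $H$; instead I would set, in coordinates, $M(q) = \rho(q)\,H$ where $\rho\geq 0$ is a smooth bump supported in a small coordinate ball $B$ around $p$ with $\rho(p) = 1$. The radius of $B$ is chosen so small that: (a) $B$ meets $\crit(f)\cup\iso$ in the empty set and $\overline B\subset V_0$; (b) the $\Z_k$-translates $a^j(B)$, $j=1,\dots,k-1$, are pairwise disjoint from $B$ (possible precisely because $p\notin\iso$); and (c) for every $t$ with $c(t)\in B$ one has $g_{c(t)}\big(D_2Z(0_{c(t)},\theta)\xi_0,\eta(t)\big)\geq 0$. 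Condition (c) is the delicate one: near $p$ the quantity $g_{c(t)}(D_2Z(0,\theta)\xi_0,\eta(t))$ equals $\rho(c(t))$ times a continuous function of $t$ which at $t=t_0$ equals $g_p(v,v)>0$ (where $v$ corresponds to $w$), so by continuity it stays positive on a neighborhood of $t_0$; if $B$ is small enough the only $t$ with $c(t)\in B$ are those near $t_0$ (this uses the hypothesis that $c(t)$ lies in the $\Z_k$-orbit of $p$ only for $t=t_0$, together with $\lim_{t\to\pm\infty}c(t)\in\crit(f)$, so $c$ cannot re-enter the shrinking ball $B$ except near $t_0$), hence the sign is controlled. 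Let $\xi_0\in T_\theta{\rm Met}^\ell_{\Z_k}$-no, $\xi_0\in T_\theta{\rm Met}^\ell$ be the global tensor extending $M$ by zero outside $B$.

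Next I would set $\xi = \tfrac1k\sum_{j=0}^{k-1}(a^j)^*\xi_0$, the $\Z_k$-average, which lies in $T_\theta{\rm Met}^\ell_{\Z_k}$ because $\theta$ is $\Z_k$-invariant. Using the equivariance of $Z$ under isometries — $Z(0_q,\theta)=\nabla^\theta f(q)$ is the gradient of an invariant function with respect to an invariant metric, hence $a$-equivariant, and differentiating in $\theta$ preserves this — one gets, for each $j$,
\[
g_{c(t)}\big(D_2Z(0,\theta)(a^j)^*\xi_0,\eta(t)\big) = \big\langle (\text{pullback of the } \xi_0\text{-contribution evaluated at } a^{-j}c(t))\big\rangle,
\]
which vanishes unless $a^{-j}c(t)\in B$, i.e. unless $c(t)\in a^j(B)$. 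By the disjointness (b), for each $t$ at most one value of $j$ contributes, and by (c) (transported by the isometry $a^j$, under which the metric and the expression are invariant) that contribution is $\geq 0$; at $t=t_0$ only $j=0$ contributes, giving $\tfrac1k g_p(v,v)>0$. Therefore $t\mapsto g_{c(t)}(D_2Z(0_{c(t)},\theta)\xi,\eta(t))$ is everywhere nonnegative and positive at $t_0$, as required. I expect the main obstacle to be item (c): one must verify that after multiplying the constant matrix $H$ by the bump $\rho$, the contracted expression $g_{c(t)}(D_2Z(0,\theta)\xi_0,\eta(t))$ really does not change sign inside $B$ — this is where the continuity of $\eta$ (only $C^0$ is assumed) and of $t\mapsto c(t)$ is used, and where one must be slightly careful that the factor multiplying $\rho$ is continuous and positive at $t_0$, so that shrinking $B$ (equivalently, shrinking the support of $\rho$) forces the whole expression to inherit the sign of $g_p(v,v)$. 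Everything else — the coordinate computation of $D_2Z$, the existence of $H$, the averaging identity — is routine given the earlier lemmas in this subsection.
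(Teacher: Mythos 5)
Your overall strategy --- localize a tensor near $c(t_0)$ via the coordinate formula for $D_2Z$, exploit the hypothesis that $c$ meets the $\Z_k$-orbit of $c(t_0)$ only at $t_0$, and restore invariance by averaging --- is close in spirit to the paper's proof, which instead first applies the preceding lemma to get an \emph{invariant} $\xi_1$ with $g_{c(t_0)}(D_2Z(0_{c(t_0)},\theta)\xi_1,\eta(t_0))>0$ and then multiplies by an invariant cutoff $\phi$ supported near the orbit, using the $C^\infty(X,\R)$-linearity of $D_2Z$ so that the pairing factors as $(\phi\circ c)(t)$ times a function already positive on the interval where $\phi\circ c$ is supported. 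However, your treatment of the averaged terms has a genuine flaw. After setting $\xi=\tfrac1k\sum_j(a^j)^*\xi_0$, you must control the sign of $g_{c(t)}\bigl(D_2Z(0_{c(t)},\theta)(a^j)^*\xi_0,\eta(t)\bigr)$ at times $t$ when $c(t)$ enters a translated ball $a^j(B)$ with $j\neq0$; the orbit hypothesis only forbids $c(t)$ from hitting the points $a^j(p)$, not their neighborhoods, so such times can exist. Your justification --- that condition (c) ``transports by the isometry $a^j$, under which the metric and the expression are invariant'' --- does not work: the pairing involves $\eta(t)$, an arbitrary continuous section of $c^*TX$ with no equivariance whatsoever (and the background metric $g$ used in the pairing is not assumed invariant either). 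Transporting by $a^j$ relates the value at $c(t)\in a^j(B)$ to a pairing at the point $a^{-j}c(t)\in B$ against the vector $da^{-j}\eta(t)$, which is not $\eta(t')$ for any $t'$; indeed $c$ need not pass through $a^{-j}c(t)$ at all. So those contributions can be negative, and the asserted global nonnegativity is unproven as written.

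The gap is repairable inside your own framework: shrink $B$ further so that $c(\R)$ is disjoint from every translate $a^j(\overline B)$ with $j\neq0$ in $\Z_k$. This follows from the same compactness argument you already invoke for (c): one has $c(t)\neq a^j(p)$ for all $t$ and all $j\neq 0$ (by the orbit hypothesis together with $p\notin\iso$), and the ends of $c$ converge to the critical points $x,y$, which differ from the $a^j(p)$ because $p\notin\crit(f)$ and $\crit(f)$ is invariant; hence $\inf_t\min_{j\neq0}\dist(c(t),a^j(p))>0$. With that extra shrinking only the $j=0$ term is ever nonzero along $c$, and your conditions (a)--(c) then give the conclusion. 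Note that the paper's ordering of the construction avoids this issue automatically, since the only quantity evaluated along $c$ is $\phi\circ c$, whose support shrinks around an interval about $t_0$ by exactly the geometric fact above.
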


\begin{proof}
By the previous lemma we find $\xi_1\in T_\theta{\rm Met}^\ell_{\Z_k}$ such that
\[
g_{c(t_0)}(D_2Z(0_{c(t_0)},\theta)\xi_1,\eta(t_0))> 0.
\]
In particular we find $\epsilon>0$ such that $g_{c(t)}(D_2Z(0_{c(t)},\theta)\xi_1,\eta(t))> 0$ for every $t\in (t_0-\epsilon,t_0+\epsilon)$. If $\phi$ is a smooth, non-negative, $\Z_k$-invariant, real-valued function on $X$ supported very near the $\Z_k$-orbit of $c(t_0)$ and satisfying $\phi(c(t_0))>0$, then the assumptions on $c$ imply that $\phi\circ c$ is a non-negative function with compact support contained in $(t_0-\epsilon,t_0+\epsilon)$. We choose such a $\phi$. Applying the $C^\infty$-linearity given by Lemma~\ref{lemma_Cinfty_linearity}, we get the formula
\[
g_{c(t)}(D_2Z(0_{c(t)},\theta)\phi\xi_1,\eta(t)) = (\phi\circ c)(t) \ g_{c(t)}(D_2Z(0_{c(t)},\theta)\xi_1,\eta(t)).
\]
The right hand side is a product of two functions, the first being non-negative and supported in the interval $(t_0-\epsilon,t_0+\epsilon)$, the second being positive in this interval. Setting $\xi=\phi\xi_1$ we get the desired conclusion.
\end{proof}


\begin{proposition}\label{prop_surjectivity_universal}
If $s(c,\theta)=0$ then $DF(0,\theta)$ is surjective.
\end{proposition}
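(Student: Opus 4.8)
The plan is to run the standard Sard--Smale transversality scheme. Since $D_1F(0,\theta)$ is Fredholm (Theorem~\ref{thm_Fredholm_op}), its image is a closed subspace of finite codimension in $L^2(c^*TX)$; because ${\rm im}\,DF(0,\theta)\supseteq{\rm im}\,D_1F(0,\theta)$ (take $\xi=0$ in~\eqref{full_derivative_of_section}, so that $DF(0,\theta)(\eta,0)=D_1F(0,\theta)\eta$), the image of $DF(0,\theta)$ is again closed. Hence surjectivity will follow once we show it is dense, i.e. that any $w\in L^2(c^*TX)$ which is $L^2$-orthogonal to ${\rm im}\,DF(0,\theta)$ must vanish. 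Such a $w$ satisfies (a) $w\perp{\rm im}\,D_1F(0,\theta)$, and (b) $\int_\R g_{c(t)}\!\left(w(t),D_2Z(0_{c(t)},\theta)\xi\right)dt=0$ for all $\xi\in T_\theta{\rm Met}^\ell_{\Z_k}$.

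From~(a): writing $D_1F(0,\theta)\eta=\nabla_t\eta+A(t)\eta$ with $A$ continuous (here $\ell\ge1$, so $\nabla^\theta f$ is $C^0$), orthogonality means $w$ is a weak solution of the first-order linear adjoint ODE $\nabla_tw=A(t)^{*}w$. Bootstrapping regularity makes $w$ a classical $C^1$ solution, and uniqueness for linear ODEs gives the dichotomy: either $w\equiv0$ (and we are done) or $w(t)\ne0$ for every $t$. Assume the latter. Also, $s(c,\theta)=0$ forces $c$ to be a non-constant negative $\theta$-gradient trajectory from $x$ to $y$ --- non-constant since $x\ne y$ --- hence $c\in C^\ell$, $f\circ c$ is strictly decreasing, and in particular $c$ is injective with $\dot c(t)\ne0$ and $c(t)\notin\crit(f)$ for all $t$.

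The geometric core is to produce a time $t_0$ fitting Lemma~\ref{lemma_transv_technical}. Because $\theta$ and $f$ are $\Z_k$-invariant, each $a^j$ carries negative $\theta$-gradient trajectories of $f$ to negative $\theta$-gradient trajectories of $f$; so $a^j\circ c$ is again such a trajectory, and by uniqueness of trajectories through a point, for $j\ne0$ either $a^j\circ c$ is a time-shift of $c$ --- which, $c$ being injective and $a^j$ of finite order, forces $a^j\circ c=c$, i.e. $c(\R)\subset F_j\subset\iso$, absurd since $c(t)\to y\notin\iso$ and $\iso$ is closed --- or $a^j(c(\R))\cap c(\R)=\varnothing$. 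In all cases $\Z_k\cdot c(t)\cap c(\R)=\{c(t)\}$ for every $t$. Now choose $t_0$ large enough that $c(t_0)\in V_0$ and $c(t_0)\notin\iso$ (possible since $y\in V_0$, $y\notin\iso$, and $\iso$ is closed); then automatically $\dot c(t_0)\ne0$, $c(t_0)\notin\crit(f)$, and $w(t_0)\ne0$. Thus Lemma~\ref{lemma_transv_technical} applies with $\eta=w$ (which is continuous) and yields $\xi\in T_\theta{\rm Met}^\ell_{\Z_k}$ for which $t\mapsto g_{c(t)}\!\left(D_2Z(0_{c(t)},\theta)\xi,w(t)\right)$ is everywhere nonnegative and strictly positive at $t_0$, hence on a neighbourhood of $t_0$. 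Integrating contradicts~(b). Therefore $w\equiv0$, so ${\rm im}\,DF(0,\theta)$ is dense as well as closed, hence equal to $L^2(c^*TX)$, proving surjectivity.

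The step I expect to require the most care is the elliptic-regularity / unique-continuation argument for $w$ --- upgrading the weak solution of the adjoint equation to a genuine $C^1$ solution of a first-order ODE, so that ``$w\not\equiv0$'' becomes ``$w$ nowhere zero'' --- together with verifying that $D_2Z(0,\theta)\xi$ truly lies in $L^2(c^*TX)$, which rests on the exponential decay of $\nabla^\theta f\circ c$ towards the asymptotic critical points and is already implicit in the well-definedness of the section $s$.
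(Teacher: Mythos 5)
Your argument is correct and is essentially the paper's own proof: closed image from the Fredholm property of $D_1F(0,\theta)$, the annihilating section upgraded to a $C^1$ solution of the adjoint ODE (hence nowhere vanishing if nonzero), and Lemma~\ref{lemma_transv_technical} applied at a large time $t_0$, where $c(t_0)\in V_0\setminus(\crit(f)\cup\iso)$ because $y\notin\iso$, to contradict orthogonality to the metric directions. Your verification of the orbit condition --- ruling out that $a^j\circ c$ is a nontrivial time-shift of $c$ using the finite order of $a^j$ and injectivity of $c$, which would force $c(\R)\subset\iso$ against $y\notin\iso$ --- is only a minor variant of the paper's counting argument (finitely many points $(mj)\cdot c(0)$ versus infinitely many $c(m(t_1-t_0))$), and in fact yields the condition for every $t_0$ rather than just $t_0\gg1$.
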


\begin{proof}
Let $(c,\theta)$ be a zero of $s$. Then $c$ is of class $C^{\ell+1}$ since the identity $s(c,\theta)=0$ is equivalent to $\dot c+\nabla^\theta f(c)=0$ and $\nabla^\theta f$ is a vector field of class $C^\ell$. 

The first is to show that $c(t)$ satisfies the assumptions of Lemma~\ref{lemma_transv_technical}. Clearly, for every $t$ we have $c(t) \not\in \crit(f)$. Using the standing assumption that $y\not\in\iso$ we know that $c(t_0)\not\in\iso$ when $t_0\sim+\infty$. Since $x\neq y$, $c(t)$ is a non-constant trajectory of the flow of $-\nabla^\theta f$. In particular $\dot c(t)$ is continuous and does not vanish. We claim that if $t_0$ is close enough to $+\infty$ then 
\begin{equation}\label{crucial_assumption_previous_lemma}
\{ t\in \R \mid \text{$c(t)$ belongs to the $\Z_k$-orbit of $c(t_0)$} \} = \{t_0\}.
\end{equation}
If not we find $t_1\in\R$ and $j\in \Z_k$ such that $t_1\neq t_0$ and $j \cdot c(t_0) = c(t_1)$. Necessarily we must have $j\neq0$. It follows from uniqueness of solutions of ODEs that the identity $j\cdot c(t+t_0) = c(t+t_1)$ holds identically in $t\in\R$. The $\Z_k$-symmetry of the vector field $\nabla^\theta f$ was used. This can be rewritten as $j\cdot c(t) = c(t+t_1-t_0)$ for all~$t$. Consider the sequence $(mj)\cdot c(0)$ with $m\in\N$. Since $mj$ varies in $\Z_k$ we find that $\{(mj)\cdot c(0)\}_{m\geq1}$ is a finite set of points. However, it follows from $t_1-t_0\neq0$ that $\{c(m(t_1-t_0))\}_{m\geq1}$ is an infinite set of points. We get a contradiction from the identity $(mj)\cdot c(0)=c(m(t_1-t_0))$ established above. This proves~\eqref{crucial_assumption_previous_lemma}.

In the local chart the point $(c,\theta)$ gets represented as $(0,\theta)$ where $0$ denotes the zero section of $c^*TX$, and $s$ gets represented by a map $F$ given by the formula~\eqref{local_formula_of_section}. For simplicity, we write in this proof $W^{1,2}$ and $L^{2}$ instead of $W^{1,2}(c^*TX)$ and $L^{2}(c^*TX)$, respectively. 

The pairing
\begin{equation}
\left< \eta_1,\eta_2 \right> = \int_{-\infty}^{+\infty} g_{c(t)}(\eta_1(t),\eta_2(t)) \ dt
\end{equation}
is non-degenerate in $L^{2}$. The operator $D_1F(0,\theta):W^{1,2}\to L^{2}$ is Fredholm by Theorem~\ref{thm_Fredholm_op}. It follows that $DF(0,\theta)$ has a closed image. In view of the Hahn-Banach theorem, to show that $DF(0,\theta)$ is onto it suffices to prove that if $\eta\in L^{2}$ satisfies $\left<DF(0,\theta)(\zeta,\xi),\eta\right>=0$ for all $(\zeta,\xi)\in W^{1,2}\times T_\theta{\rm Met}^\ell_{\Z_k}$, then $\eta=0$. 

We proceed with this goal in mind. Let $\eta\in L^{2}$ be such a section. In particular $\left<D_1F(0,\theta)\zeta,\eta\right>=0$ for all $ \zeta\in W^{1,2}$. Let $D^*$ be the formal adjoint operator of $D_1F(0,\theta)$ with respect to the pairing $\left<\cdot,\cdot\right>$. In a trivialization of $c^*TX$, the operator $D^*$ has the form $\frac{d}{dt}+A(t)$ for some path of matrices $A(t)$ of class $C^\ell$. It follows that $\eta$ is a weak solution of $\eta'+A\eta=0$. Hence $\eta$ is $C^{\ell+1}$ and $\eta=0$ if, and only if, $\eta(t_0)=0$ for some $t_0$. It also follows that $\eta\in W^{1,2}$ but we do not need this fact in our particularly simple set-up. From now on we proceed indirectly assuming that $\eta\neq 0$. Then $\eta$ is smooth and $\eta(t)\neq 0$ for all $t$. Since $y\not\in\iso$ we can apply Lemma~\ref{lemma_transv_technical} with some $t_0$ such that $t_0\gg1$ (note that $c$ is smooth and $\dot c$ is nowhere vanishing) to find $\xi\in T_\theta{\rm Met}^\ell_{\Z_k}$ satisfying $\left<D_2F(0,\theta)\xi,\eta\right>>0$. Here we have used that $D_2F(0,\theta)\xi = \{t\mapsto D_2Z(0_{c(t)},\theta)\xi\}$ is a section of $c^*TX$. This contradiction shows that $\eta=0$, as desired.
\end{proof}

Note that $F$ is the local representative of $s$ near a zero $(c,\theta)$.
Since $DF(0,\theta)$ is the direct sum of the Fredholm operator $D_1F(0,\theta)$ with the bounded operator $D_2F(0,\theta)$ one concludes from~\cite[Lemma A.3.6]{Jcurves} that $DF(0,\theta)$ has a right inverse. By the implicit function theorem, the set
\begin{equation*}
\mathcal{M}^{\rm univ}(x,y) = \{\text{vanishing locus of }s\} \subset \P(x,y) \times {\rm Met}^\ell_{\Z_k}
\end{equation*}
is a smooth separable Banach manifold. Again by~\cite[Lemma A.3.6]{Jcurves}, the projection
\begin{equation*}
{\rm pr}_2: \mathcal{M}^{\rm univ}(x,y) \to {\rm Met}^\ell_{\Z_k}
\end{equation*}
is a (smooth) Fredholm map. We define
\begin{equation}
\mathcal{R}_{x,y} = \{ \text{regular values of ${\rm pr}_2$} \}
\end{equation}
and apply the Sard-Smale theorem to conclude that $\mathcal{R}_{x,y}$ is residual in ${\rm Met}^\ell_{\Z_k}$, see~\cite[Theorem A.5.1]{Jcurves}. Yet another application of~\cite[Lemma A.3.6]{Jcurves} tells us that if $\theta\in\mathcal{R}_{x,y}$ then 
\begin{equation*}
\mathcal{M}_\theta(x,y) = \{ c\in\P(x,y) \mid \dot c+\nabla^\theta f\circ c=0\}
\end{equation*}
is a smooth manifold of dimension $\ind(x)-\ind(y)$. To see this, just note that $\mathcal{M}_\theta(x,y) \times \{\theta\} = {\rm pr_2}^{-1}(\theta)$.


\begin{lemma}\label{lemma_vanishing_props_section}
If $\theta\in\mathcal{R}_{x,y}$, $c\in\mathcal{M}_\theta(x,y)$ and $\zeta\in T_c\mathcal{M}_\theta(x,y)$, then $\zeta$ is $C^{\ell+1}$ and $\zeta=0$ if, and only if, $\zeta(t_0)=0$ for some $t_0\in\R$.
\end{lemma}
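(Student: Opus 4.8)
The plan is to reduce the lemma to the elementary theory of linear ordinary differential equations, following the argument already used for the cokernel element $\eta$ in the proof of Proposition~\ref{prop_surjectivity_universal}.

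First I would identify the tangent space explicitly. Since $\theta$ is a regular value of the Fredholm map ${\rm pr}_2 : \mathcal{M}^{\rm univ}(x,y) \to {\rm Met}^\ell_{\Z_k}$ and ${\rm pr}_2^{-1}(\theta) = \mathcal{M}_\theta(x,y)\times\{\theta\}$, the implicit function theorem gives
\[
T_c\mathcal{M}_\theta(x,y) = \ker D_1F(0,\theta) \subset W^{1,2}(c^*TX),
\]
where $(c,\theta)$ is represented by $(0,\theta)$ in the exponential chart centered at $c$ and $F$ is the local representative of $s$ from~\eqref{local_formula_of_section}. Concretely, $\zeta$ then solves the linear, homogeneous, first-order equation
\[
\nabla_t\zeta + (D\Theta(0)\zeta)\dot c + D_1Z(0,\theta)\zeta = 0
\]
along $c$.

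Next I would record the regularity of the coefficients. Because $(c,\theta)$ is a zero of $s$, the identity $\dot c + \nabla^\theta f\circ c = 0$ together with $\nabla^\theta f\in C^\ell$ forces $c\in C^{\ell+1}$, so that, after trivializing $c^*TX$ by a sufficiently regular frame, the equation above takes the form $\zeta' + A(t)\zeta = 0$ with $A$ of class $C^\ell$ — this is the same regularity as that of the coefficient path of the adjoint operator $D^*$ appearing in the proof of Proposition~\ref{prop_surjectivity_universal}, and it is obtained in the same way. A $W^{1,2}$ weak solution $\zeta$ of $\zeta' = -A\zeta$ is therefore automatically classical, and the standard one-derivative-at-a-time bootstrap yields $\zeta\in C^{\ell+1}$.

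Finally, the "if and only if" statement is just the uniqueness theorem for linear ODEs: if $\zeta(t_0)=0$ for a single $t_0\in\R$ then $\zeta\equiv 0$ on all of $\R$, and the converse is trivial. I do not anticipate a genuine obstacle here; the only points calling for a little care are the bookkeeping of the coefficient regularity (and the choice of a trivialization of $c^*TX$ carrying enough regularity for it), together with the observation that, although an element of $T_c\mathcal{M}_\theta(x,y)$ a priori already lies in $W^{1,2}$ and hence decays at the two ends, this decay is irrelevant to the pointwise vanishing conclusion.
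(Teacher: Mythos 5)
Your proposal is correct and follows essentially the same route as the paper: identify $T_c\mathcal{M}_\theta(x,y)$ with $\ker D_1F(0,\theta)$, represent this operator in an admissible trivialization as $\frac{d}{dt}+S(t)$ with $S$ of class $C^\ell$, and conclude by elliptic-regularity-for-ODEs plus uniqueness that $\zeta$ is $C^{\ell+1}$ and vanishes identically iff it vanishes at one point. (Incidentally, the term $(D\Theta(0)\zeta)\dot c$ you keep is actually zero since $D\Theta(0)=0$, but carrying it along does no harm.)
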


\begin{proof}
Obviously $c$ is $C^{\ell+1}$ and $\zeta\in T_c\mathcal{M}_\theta(x,y)$ if, and only if, $\zeta\in \ker D_1F(0,\theta)$. In an admissible trivialization of $c^*TX$ the operator $D_1F(0,\theta)$ gets represented as $\frac{d}{dt}+S(t)$ for some path of matrices $S(t)$ of class $C^{\ell}$. In particular, $\zeta(t)$ solves a linear ODE weakly. Consequently, it is $C^{\ell+1}$, and it vanishes identically if, and only if, it vanishes at some point.
\end{proof}

Before proving Lemma~\ref{lemma_transversality_step} we review some basic facts of asymptotic analysis. In the next two lemmas, $\varphi^t$ denotes the flow of $-\nabla^\theta f$ and $\theta$ is an arbitrary metric of class $C^\ell$. Consider $\sigma>0$ defined by
\begin{equation}\label{sigma_spectrum}
\sigma = \min \{ |\lambda| : \lambda \in {\rm spec}({\rm Hess}_x(f)) \cup {\rm spec}({\rm Hess}_y(f)) \}
\end{equation}
and fix $0<\delta<\sigma$ arbitrarily.

\begin{lemma}\label{lemma_asymptotic_1}
If $p\in W^s(y;f,\theta)$ then, with $a\gg1$, the map $v:[a,+\infty)\to T_yX$ defined by $\varphi^t(p)=\exp(v(t))$ satisfies $e^{\delta t}|\partial^jv(t)|\to0$ as $t\to+\infty$, for all $0\leq j\leq \ell-3$. If $p\in W^u(x;f,\theta)$ then an analogous statement holds for $t\to-\infty$.
\end{lemma}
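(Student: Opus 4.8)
The statement is a standard exponential-decay estimate for anti-gradient trajectories near a hyperbolic (Morse) rest point. The plan is to reduce to a local analysis near $y$ (the case of $x$ and $t\to-\infty$ being symmetric, applied to $-f$), work in a normal coordinate chart centered at $y$ via $\exp$, and exploit that the linearization of the vector field $-\nabla^\theta f$ at $y$ has spectrum bounded away from the imaginary axis with gap at least $\sigma$ defined in~\eqref{sigma_spectrum}.

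\textbf{Key steps.} First I would fix the $C^\ell$ metric $\theta$ and pass to coordinates: choosing $a\gg1$ so that $\varphi^t(p)$ lies in a fixed small coordinate ball around $y$ for all $t\ge a$, write $v(t)\in T_yX\simeq\R^n$ by $\varphi^t(p)=\exp(v(t))$. In this chart the anti-gradient equation becomes $\dot v = -Bv + R(v)$, where $B$ is the linearization of $-\nabla^\theta f$ at $y$ — note $B$ is $g$-conjugate to a symmetric positive operator whose eigenvalues have absolute value $\ge\sigma$, since they are eigenvalues of $\mathrm{Hess}_y(f)$ up to the metric — and $R$ is a $C^{\ell-1}$ remainder with $R(0)=0$, $DR(0)=0$, hence $|R(v)|=O(|v|^2)$ near $0$. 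Second, since $p\in W^s(y;f,\theta)$ we already know $v(t)\to0$; a Gronwall-type argument applied to $\frac{d}{dt}|v|^2 = -2\langle Bv,v\rangle + 2\langle R(v),v\rangle \le -(2\sigma - C|v(t)|)|v|^2$ shows that once $|v(t)|$ is small enough we get $|v(t)|\le Ce^{-\sigma t}$ for $t$ large. Since $0<\delta<\sigma$ this already gives $e^{\delta t}|v(t)|\to0$, the $j=0$ case.

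Third, for the derivatives $\partial^j v$ with $1\le j\le \ell-3$ I would differentiate the ODE repeatedly: $\partial^j v$ satisfies a linear inhomogeneous equation $\partial_t(\partial^j v) = -B\,\partial^j v + (\text{terms involving }DR(v)\partial^j v\text{ and lower-order derivatives of }v)$, where the inhomogeneity is a polynomial in $v,\partial v,\dots,\partial^{j-1}v$ with coefficients depending on derivatives of $R$ up to order $\le j\le\ell-3$ (so $C^{2}$ at least, which is why the constraint is $j\le\ell-3$ rather than $j\le\ell-1$: one loses derivatives passing to $R$, then needs two more to bootstrap). Inducting on $j$: assuming $e^{\delta t}|\partial^i v(t)|\to0$ for $i<j$, the inhomogeneity decays like $e^{-\sigma' t}$ for some $\sigma'\in(\delta,\sigma]$, and the variation-of-constants formula for $\partial^j v$ together with the contraction estimate $\|e^{-tB}\|\le Ce^{-\sigma t}$ gives $|\partial^j v(t)| = O(e^{-\sigma' t})$, hence $e^{\delta t}|\partial^j v(t)|\to0$.

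\textbf{Main obstacle.} The only genuinely delicate point is bookkeeping the regularity: one must track carefully how many derivatives of the $C^\ell$ data survive after writing the equation in $\exp$-coordinates (the Christoffel-type terms in $\Theta$ and in $Z$ from~\eqref{map_Theta}--\eqref{map_Z} cost one derivative) and how many further derivatives the bootstrap consumes, to justify exactly the range $0\le j\le\ell-3$. The exponential-decay estimates themselves are routine Gronwall/variation-of-constants arguments once the normal form $\dot v=-Bv+R(v)$ with $|R(v)|=O(|v|^2)$, $\mathrm{spec}(B)$ bounded away from $0$ by $\sigma$, is in place.
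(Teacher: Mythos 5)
There is a genuine gap at the very first decay estimate. You claim that $B=\theta^{-1}\mathrm{Hess}_y(f)$ is (conjugate to) a \emph{positive} symmetric operator and then run a first-order Gronwall argument from $\tfrac{d}{dt}|v|^2=-2\langle Bv,v\rangle+2\langle R(v),v\rangle\le -(2\sigma-C|v|)\,|v|^2$, and later you invoke the semigroup bound $\|e^{-tB}\|\le Ce^{-\sigma t}$. Both steps require all eigenvalues of $\mathrm{Hess}_y(f)$ to be positive, i.e.\ that $y$ is a local minimum, so that the rest point is attracting for the anti-gradient flow. The lemma, however, is stated for an arbitrary non-degenerate critical point $y$: in general the Hessian is indefinite, $\langle Bv,v\rangle$ can be negative on the unstable directions, the inequality $\tfrac{d}{dt}|v|^2\le-(2\sigma-C|v|)|v|^2$ fails, and $e^{-tB}$ is not a contraction. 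The hypothesis $p\in W^s(y;f,\theta)$ (so that $v(t)\to0$) must be used in an essential way beyond merely entering a small ball; your argument uses it only for that.

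The definition of $\sigma$ in \eqref{sigma_spectrum} as the minimum of the \emph{absolute values} of the eigenvalues is the hint: the paper's proof only uses $|Az|\ge\sigma|z|$ together with symmetry of $A$, with no sign condition. Concretely, it sets $g=\tfrac12|z|^2$ and derives the \emph{second-order} differential inequality $\ddot g\ge 4(\sigma-\epsilon)^2 g$ for large $t$ (the cross terms being absorbed since $M(t),\dot M(t)\to0$); then, using $g(t)\to0$, the auxiliary functions $g\pm\dot g/(2(\sigma-\epsilon))$ first force $\dot g<0$ eventually and then yield $g(t)\le Ce^{-2(\sigma-\epsilon)t}$, i.e.\ $|z(t)|\le B_0e^{-(\sigma-\epsilon)t}$, which suffices since $\delta<\sigma$. (An alternative correct route is to split $v$ along the spectral decomposition of $B$ and argue separately on the stable and unstable components, as in the stable manifold theorem, but the plain Lyapunov estimate on $|v|^2$ does not work.) Your treatment of the higher derivatives would also need repair, since it leans on the same contraction bound; but once the decay of $v$ is established one can, as in the paper, simply differentiate the ODE repeatedly and bound $\partial^j v$ inductively by lower-order terms, which is where the range $0\le j\le\ell-3$ and your regularity bookkeeping come in. The reduction of the $W^u(x)$ case to $t\to-\infty$ is fine.
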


\begin{proof}
We work in a local coordinate system around $y$, where $y$ corresponds to $0\in\R^n$. We denote by $Y$ the local representation of $-\nabla^\theta f$. Then $Y(0)=0$ and for $t\gg1$, the curve $\varphi^t(p)$ is represented as $z(t)$ satisfying $\dot z=Y\circ z$, $z(t)\to0$ as $t\to+\infty$. If $p=y$, there is nothing to prove, so assume $p\neq y$. Thus $z(t)$ does not vanish.

Let $A=DY(0)$, $g=\frac{1}{2}|z|^2$,
\[
M(t) = \int_0^1 DY(\tau z(t))-A \ d\tau
\]
and $\Lambda(t)$ be the $t$-dependent symmetric bilinear form
\[
\Lambda(t) = \int_0^1\int_0^1 D^2Y(s\tau z(t))\tau \ dsd\tau.
\]
Note that $A^T=A$ and 
\[
M(t)u = \Lambda(t)(z(t),u) \ \ \ \ \forall u\in\R^n.
\]
Then $\dot z=Y(z)$ is rewritten as
\begin{equation}\label{ODE_coord}
\dot z(t)=Az(t)+ M(t)z(t) = Az(t) + \Lambda(t)(z(t),z(t)).
\end{equation}
Note that from the above assumptions and equations we get:
\begin{equation}\label{conclusions}
\left\{
\begin{aligned}
& \text{($\sup_t|\Lambda(t)|<\infty$ and $\lim_{t\to+\infty}\dot\Lambda(t) = 0$) \ $\Rightarrow$ \ $\lim_{t\to+\infty}\left(|M(t)|+|\dot M(t)|\right)=0$} \\
& \text{$|\dot z|\leq C|z|$ for some $C>0$ \ $\Rightarrow$ \ $\lim_{t\to+\infty}|\dot z(t)|=0$}.
\end{aligned}
\right.
\end{equation}

Differentiating, we obtain
\[
\dot g = \left<z,\dot z\right> = \left< z,(A+M)z \right>
\]
and
\[
\begin{aligned}
\ddot g &= \left< \dot z,(A+M)z \right> + \left< z,\dot Mz \right> + \left< z,(A+M)\dot z \right> \\
&= |Az|^2 + 2\left< Az,Mz \right> + |Mz|^2 + \left< z,\dot Mz\right> + |Az|^2 + \left< Az,M^Tz \right> \\
&+ \left< Mz,Az \right> + \left< Mz,M^Tz \right> .
\end{aligned}
\]
Using~\eqref{conclusions} we conclude that for all $\epsilon\in(0,\sigma)$ one finds $t_0$ such that if $t\geq t_0$ then the right hand side of the above equation is estimated from below as follows
\[
\ddot g(t) \geq 4(\sigma-\epsilon)^2 g(t) \text{ for all } t\geq t_0.
\]
Here $\sigma$ is the number defined in \eqref{sigma_spectrum}.

We claim that $\dot g(t)<0$ provided $t$ is large enough. To see this consider $h(t) = g+\frac{\dot g}{2(\sigma-\epsilon)}$. Then for $t\geq t_0$ we estimate
\[
\dot h = \dot g+ \frac{\ddot g}{2(\sigma-\epsilon)} \geq \dot g + 2(\sigma-\epsilon)g = 2(\sigma-\epsilon)h.
\]
Suppose $t_*>t_0$ satisfies $\dot g(t_*)\geq 0$. This implies that $h(t_*)>0$. If $h>0$ on $[t_*,T]$, then the inequality $\dot h\geq 2(\sigma-\epsilon)h$ implies that $$ h(T)\geq h(t_*)e^{2(\sigma-\epsilon)(T-t_*)}\geq h(t_*). $$ This shows that $h>0$ on $[t_*,+\infty)$, and $h(t)\geq h(t_*)e^{2(\sigma-\epsilon)(t-t_*)}$ for all $t\geq t_*$. Since $g(t)\to0$ as $t\to+\infty$, we must have $\dot g(t)\to+\infty$ as $t\to+\infty$, which then contradicts $\lim_{t\to+\infty}g(t)=0$. We have proved that $\dot g<0$ provided $t$ is large enough. With this in mind, we consider the function
\[
a = g - \frac{\dot g}{2(\sigma-\epsilon)}
\]
which is positive if $t$ is large enough. We estimate
\[
\dot a = \dot g - \frac{\ddot g}{2(\sigma-\epsilon)} \leq \dot g - 2(\sigma-\epsilon)g = -2(\sigma-\epsilon)a.
\]
Since $a$ is positive for $t$ large, we find $t_1\gg1$ such that 
\[
t\geq t_1 \ \Rightarrow \ g(t) < a(t) \leq a(t_1)e^{-2(\sigma-\epsilon)(t-t_1)}.
\]
In other words, for some $B_0>0$ we have $|z(t)| \leq B_0e^{-(\sigma-\epsilon)t}$ for $t\gg1$. Taking $\epsilon$ small enough so that $\delta < \sigma-\epsilon$ we get
\begin{equation*}
|z(t)| \leq B_0e^{-\delta t} \text{ for all } t\geq t_1.
\end{equation*}
Using~\eqref{ODE_coord} and the above estimate, we get $|\dot z(t)|\leq B_1e^{-\delta t}$ for $t$ large enough, with some $B_1>0$. Differentiating~\eqref{ODE_coord} and proceeding inductively, we get estimates $|\partial^jz(t)|\leq B_je^{-\delta t}$ for $t$ large enough, with some $B_j>0$. This concludes the proof in case $p\in W^s(y;f,\theta)$. The case $p\in W^u(x;f,\theta)$ is entirely analogous.
\end{proof}

If $p\in W^s(y;f,\theta)$ and $c(t) = \varphi^t(p)$, $t\in[0,+\infty)$, then we say that a trivialization $\Psi:c^*TX\to[0,+\infty)\times\R^n$ is {\it admissible} if on over some neighborhood $U$ of $y$ there exists a trivialization $\Phi:TU\to U\times\R^n$ such that $\Psi_t$ and $\Phi_{c(t)}$ give the same linear isomorphism $T_{c(t)}X\simeq\R^n$ whenever $t$ is large enough. If $p\in W^u(x;f,\theta)$ then we could consider $c(t) = \varphi^t(p)$, $t\in(-\infty,0]$, and define admissible trivializations analogously.

\begin{lemma}\label{lemma_asymptotic_2}
If $p\in W^s(y;f,\theta)$, $v\in T_pW^s(y;f,\theta)$ and $\eta(t)=d\varphi^t(p)v$ then, setting $c(t)=\varphi^t(p)$ and identifying $c^*TX\simeq [0,+\infty)\times \R^n$ via an admissible trivialization, the section $\eta$ gets represented as a map $u:[0,+\infty)\to\R^n$ satisfying $e^{\delta t}|\partial^ju(t)|\to0$ as $t\to+\infty$, for all $0\leq j\leq \ell-3$. An analogous statement holds for $p\in W^u(x;f,\theta)$ and $v\in T_pW^u(x;f,\theta)$.
\end{lemma}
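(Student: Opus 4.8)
\medskip

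The plan is to argue as in the proof of Lemma~\ref{lemma_asymptotic_1}, reducing everything to a local computation near $y$ and using that $\eta$ solves the variational equation along $c$. If $p=y$ there is nothing to prove, so assume $p\neq y$. First I would work in local coordinates around $y$ in which $y\simeq 0$ and $\theta(y)=I$; let $Y$ be the local representative of $-\nabla^\theta f$ and set $A:=DY(0)=-D^2f(0)$, a symmetric matrix all of whose eigenvalues have absolute value $\geq\sigma$, where $\sigma$ is the number in~\eqref{sigma_spectrum}. In these coordinates $c(t)=\varphi^t(p)$ is represented by a curve $z(t)\to 0$ with $|\partial^jz(t)|\leq B_je^{-\delta t}$ for $0\leq j\leq\ell-3$ by Lemma~\ref{lemma_asymptotic_1}, and $\eta(t)=d\varphi^t(p)v$ is represented by a map $\zeta(t)$ solving the linear equation $\dot\zeta=(A+M(t))\zeta$ with $M(t)=DY(z(t))-A$; the bounds on $z$ yield $|\partial^iM(t)|\leq C_ie^{-\delta t}$ for $0\leq i\leq\ell-3$.

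The one genuinely non-formal point is the a priori exponential decay of $\zeta(t)$, and this is exactly where the hypothesis $v\in T_pW^s(y;f,\theta)$ enters: for a generic $v$ the solution $\zeta$ grows along the unstable directions of $A$, so the second-order differential inequality $\frac{d^2}{dt^2}|\zeta|^2\geq 4(\sigma-\epsilon)^2|\zeta|^2$, which holds here by the same computation as in Lemma~\ref{lemma_asymptotic_1}, is not by itself sufficient (unlike in Lemma~\ref{lemma_asymptotic_1}, where $z(t)\to 0$ was given). The remedy is to invoke the $C^\ell$ local stable manifold theorem: $W^s_{\mathrm{loc}}(y)$ is a $C^\ell$ submanifold through $y$, tangent at $y$ to the negative eigenspace $E^s$ of $A$, invariant under $\varphi^t$, and attracted to $y$; for large $t$ the curve $c(t)$ lies in it, so $\eta(t)$ is tangent to it. Parametrizing $W^s_{\mathrm{loc}}(y)$ over $E^s$ by the graph map, the restricted flow becomes $\dot w=G(w)$ with $G(0)=0$, $DG(0)=A|_{E^s}$ symmetric with spectrum in $(-\infty,-\sigma]$, and $\eta$ is represented there by a solution $\xi$ of $\dot\xi=(A|_{E^s}+M'(t))\xi$ with $|M'(t)|\leq Ce^{-\delta t}$ (since $|w(t)|\leq Ce^{-\delta t}$). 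As $\|e^{tA|_{E^s}}\|\leq e^{-\sigma t}$ for $t\geq 0$, a Duhamel representation combined with Gronwall's inequality gives $|\xi(t)|\leq Ce^{-\sigma t}$, hence $|\zeta(t)|\leq Ce^{-\sigma t}$; since $\delta<\sigma$ this already yields $e^{\delta t}|\zeta(t)|\to 0$.

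The estimates on the derivatives are then formal and carried out in the ambient equation, exactly as at the end of the proof of Lemma~\ref{lemma_asymptotic_1}. Differentiating $\dot\zeta=(A+M(t))\zeta$ gives $\zeta^{(j+1)}=(A+M(t))\zeta^{(j)}+\sum_{i=1}^{j}\binom{j}{i}M^{(i)}(t)\zeta^{(j-i)}$; since $A$ is a fixed bounded operator, $M^{(i)}$ is bounded for $i\leq\ell-3$, and $\zeta,\dots,\zeta^{(j)}$ decay at rate $\sigma$ by induction, the right-hand side decays at rate $\sigma$, so $|\zeta^{(j+1)}(t)|\leq Ce^{-\sigma t}$ as long as $j\leq\ell-3$. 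Hence $e^{\delta t}|\partial^j\zeta(t)|\to 0$ for $0\leq j\leq\ell-3$. To pass to an admissible trivialization $\Psi$ one uses that, for large $t$, $\Psi$ coincides with a fixed smooth trivialization $\Phi$ of $TU$ over a neighborhood $U$ of $y$; the change from the coordinate frame above to $\Phi$ is then a smooth bundle isomorphism along $c$ whose $t$-derivatives are bounded and which converges as $t\to\infty$, so the representative $u$ of $\eta$ in the admissible trivialization inherits $e^{\delta t}|\partial^ju(t)|\to 0$ for $0\leq j\leq\ell-3$. The case $p\in W^u(x;f,\theta)$, $v\in T_pW^u(x;f,\theta)$, $t\to-\infty$ is identical after replacing $y$ by $x$, $A$ by $-D^2f(0)$ at $x$, and $E^s$ by the positive eigenspace of that matrix, on which the backward flow contracts. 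I expect the only real obstacle to be the a priori decay of $\zeta$ discussed in the second paragraph; the rest is routine ODE bookkeeping parallel to Lemma~\ref{lemma_asymptotic_1}, including the tracking of the loss of three derivatives.
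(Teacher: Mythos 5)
Your proof is correct, and its overall skeleton (local coordinates near $y$, the linearized equation $\dot\zeta=(A+M(t))\zeta$ with $M$ exponentially small by Lemma~\ref{lemma_asymptotic_1}, then a pointwise bootstrap of derivatives and a bounded change of frame to an admissible trivialization) matches the paper's. Where you diverge is at the one step you correctly single out as non-formal: the a priori decay of $\zeta$. The paper's (sketchy) argument simply asserts that, since $\eta$ is tangent to $W^s(y;f,\theta)$, one has $u(t)\to0$ as $t\to+\infty$, and then implicitly reruns the convexity argument of Lemma~\ref{lemma_asymptotic_1}: the same computation gives $\ddot g\geq 4(\sigma-\epsilon)^2g$ for $g=\tfrac12|u|^2$ in the linear setting (using $D(t)\to A$ symmetric with spectral gap $\sigma$ from~\eqref{sigma_spectrum}), and that differential inequality together with $g\to0$ yields exponential decay exactly as before; so the hypothesis $v\in T_pW^s$ enters only through the convergence $u(t)\to0$, after which no stable manifold machinery is needed. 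You instead supply that missing input by invoking the $C^\ell$ local stable manifold theorem, parametrizing $W^s_{\rm loc}(y)$ as a graph over the contracting eigenspace and running Duhamel plus Gronwall on the restricted linearized flow. Your route is heavier but more self-contained: it actually proves the decay (indeed boundedness would already suffice to exclude the growing branch of the dichotomy hidden in the convexity argument), whereas the paper's one-line claim ``$u$ tangent to $W^s$ implies $u(t)\to0$'' itself quietly relies on the same stable-manifold regularity you make explicit. The paper's route, in exchange, stays entirely within the elementary second-order inequality framework of Lemma~\ref{lemma_asymptotic_1} and avoids graph coordinates altogether. Both arguments deliver the stated range $0\leq j\leq\ell-3$ for the same reason, namely the regularity of $DY$ along $c$.
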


\begin{proof}[Sketch of proof]
The estimates are analogous to the previous lemma. We outline the argument. It suffices to look at a point $p\in W^s(y;f,\theta)$ which lies on a small coordinate neighborhood of $y\simeq 0\in\R^n$. Writing $Y=-\nabla^\theta f$ and $z(t) = \varphi^t(p)$ locally, a solution $u(t)$ of the linearized flow along $z$ satisfies $\dot u=DY(z)u$. Note that such a coordinate system induces an admissible trivialization of $c^*TX$ along the positive end of $c$. Since $u$ is tangent to $W^s(y;f,\theta)$, we obtain $u(t)\to0$ as $t\to+\infty$. By the previous lemma, the matrix $D(t) = DY(z(t))$ satisfies $e^{t\delta}|\partial^j(D(t)-A)|\to0$ as $t\to+\infty$ for every $0\leq j\leq\ell-3$, where $A=DY(0)$. Plugging into the linear ODE satisfied by $u$ we obtain the desired conclusions.
\end{proof}

Choose $\theta\in\mathcal{R}_{x,y}$ and consider the (smooth) evaluation map
\begin{equation}\label{ev_map}
\begin{array}{ccc} {\rm ev}:\mathcal{M}_\theta(x,y) \to X & & {\rm ev}(c)=c(0). \end{array}
\end{equation}
The two lemmas above have the following consequence.

\begin{corollary}
If $\ell\geq3$ and $\theta\in\mathcal{R}_{x,y}$ then the following holds:
\begin{equation*}
\begin{aligned}
{\rm ev}(\mathcal{M}_\theta(x,y)) &= W^u(x;f,\theta)\cap W^s(y;f,\theta) \\
d\,{\rm ev}(c)(T_c\mathcal{M}_\theta(x,y)) &= T_{{\rm ev}(c)}W^u(x;f,\theta) \cap T_{{\rm ev}(c)}W^s(y;f,\theta).
\end{aligned}
\end{equation*}
The second identity holds for every $c\in\mathcal{M}_\theta(x,y)$.
\end{corollary}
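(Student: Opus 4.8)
The plan is to prove the two displayed identities in turn. The first one is almost tautological: since by definition $\mathcal{M}_\theta(x,y)\subset\P(x,y)$, every $c\in\mathcal{M}_\theta(x,y)$ satisfies $c(t)\to x$ as $t\to-\infty$ and $c(t)\to y$ as $t\to+\infty$, hence $c(0)\in W^u(x;f,\theta)\cap W^s(y;f,\theta)$, giving the inclusion $\subseteq$. For the reverse inclusion, let $p\in W^u(x;f,\theta)\cap W^s(y;f,\theta)$. By Definition~\ref{def_stable_unstable_mfds} the orbit $c(t):=\varphi^t(p)$ (with $\varphi^t$ the flow of $-\nabla^\theta f$) is defined for all $t\in\R$, it is a negative gradient trajectory with the required limits, and by Lemma~\ref{lemma_asymptotic_1} (here the hypothesis $\ell\geq 3$ is used, to get the $j=0$ estimate) the maps $v^\pm$ describing the two ends of $c$ in exponential charts decay exponentially, hence lie in $W^{1,2}$. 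Therefore $c\in\P(x,y)$, so $c\in\mathcal{M}_\theta(x,y)$ and ${\rm ev}(c)=c(0)=p$.

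For the tangent-level identity, fix $c\in\mathcal{M}_\theta(x,y)$, write $p=c(0)$, and recall that $T_c\mathcal{M}_\theta(x,y)=\ker D_1F(0,\theta)$ and that in the exponential chart $\zeta\mapsto\exp(\zeta)$ centered at $c$ the map ${\rm ev}$ becomes $\zeta\mapsto\exp_{p}(\zeta(0))$, so $d\,{\rm ev}(c)\colon\zeta\mapsto\zeta(0)$. By Lemma~\ref{lemma_vanishing_props_section} every $\zeta\in\ker D_1F(0,\theta)$ is $C^{\ell+1}$, solves the linearized flow equation along $c$ — so $\zeta(t)=d\varphi^t(p)\zeta(0)$ — and vanishes identically if it vanishes at one point; in particular $d\,{\rm ev}(c)$ is injective. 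To see that its image lies in $T_pW^u(x;f,\theta)\cap T_pW^s(y;f,\theta)$, observe that $\zeta\in W^{1,2}(c^*TX)$ forces $\zeta(t)\to 0$ as $t\to\pm\infty$; then the standard characterization of the tangent space at a point of a stable (resp. unstable) manifold as exactly the set of vectors whose forward (resp. backward) linearized orbit tends to $0$ — applied in a linearizing coordinate chart around $y$ (resp. $x$), using $\varphi^t$-invariance of $W^s,W^u$ and hyperbolicity of $D^2f$ in the relevant directions — gives $\zeta(0)\in T_pW^s(y;f,\theta)\cap T_pW^u(x;f,\theta)$. Conversely, given $w\in T_pW^u(x;f,\theta)\cap T_pW^s(y;f,\theta)$, set $\zeta(t)=d\varphi^t(p)w$: this solves the linearized equation, and by Lemma~\ref{lemma_asymptotic_2} (again using $\ell\geq 3$) its representative in an admissible trivialization decays exponentially at both ends, hence $\zeta\in W^{1,2}(c^*TX)$, so $\zeta\in\ker D_1F(0,\theta)=T_c\mathcal{M}_\theta(x,y)$ with $d\,{\rm ev}(c)\zeta=w$. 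This establishes the second identity.

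The only step that is not pure bookkeeping is the identification of $T_pW^{s}(y;f,\theta)$ and $T_pW^{u}(x;f,\theta)$ with spaces of decaying solutions of the linearized flow. This is classical — it follows from the stable/unstable manifold theorem ($W^s(y)$ is $\varphi^t$-invariant, tangent at $y$ to the stable subspace of $D^2f(y)$, and $d\varphi^t$ contracts it forward in time) — but since the metric, and hence $\nabla^\theta f$, is only of class $C^\ell$, one should phrase it in a linearizing chart and invoke the $C^{\ell}$ (in particular $C^1$) version of the theorem; the exponential-decay Lemmas~\ref{lemma_asymptotic_1}–\ref{lemma_asymptotic_2} are exactly what upgrades the statement "the (linearized) orbit tends to $0$" to the stronger "it lies in $W^{1,2}$ on the ends", which is what is needed to land in $\P(x,y)$ and in $\ker D_1F(0,\theta)=T_c\mathcal{M}_\theta(x,y)$ respectively. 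I expect this chart-level invocation of the invariant manifold theorem, together with matching up the admissible trivializations of $c^*TX$ with coordinate charts near $x$ and $y$, to be the main (mild) obstacle; everything else is routine.
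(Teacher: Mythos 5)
Your proof is correct and follows essentially the same route as the paper: both identities are obtained from Lemmas~\ref{lemma_asymptotic_1} and~\ref{lemma_asymptotic_2} exactly as in the paper's argument (exponential decay puts the orbit $\varphi^t(p)$ in $\P(x,y)$, resp.\ puts $\eta(t)=d\varphi^t(p)v$ in $W^{1,2}(c^*TX)\cap\ker D_1F(0,\theta)$, and $d\,{\rm ev}(c)\eta=v$). The only difference is that for the inclusion $d\,{\rm ev}(c)(T_c\mathcal{M}_\theta(x,y))\subset T_pW^u(x;f,\theta)\cap T_pW^s(y;f,\theta)$ the paper simply declares it clear, whereas you justify it via the classical characterization of $T_pW^s$, $T_pW^u$ as the decaying solutions of the linearized flow — a correct and slightly more detailed filling-in of that step.
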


\begin{proof}
The inclusion ${\rm ev}(\mathcal{M}_\theta(x,y)) \subset W^u(x;f,\theta)\cap W^s(y;f,\theta)$ is clear. For the other inclusion, consider $p\in W^u(x;f,\theta)\cap W^s(y;f,\theta)$. Setting $c(t) = \varphi^t(p)$ we obtain $c\in\P(x,y)$ since the exponential decay given by Lemma~\ref{lemma_asymptotic_1} immediately implies that the vector fields $v_\pm$ as in the definition of $\P(x,y)$ are of class $W^{1,2}$ on their respective domains. Hence $$ {\rm ev}(\mathcal{M}_\theta(x,y)) \supset W^u(x;f,\theta)\cap W^s(y;f,\theta) $$ since obviously $s(c,\theta)=0$. The first claim is proved.

For the second claim, the inclusion $\supset$ is again clear. For the other direction, note that $p={\rm ev}(c)$. Let $v\in T_pW^u(x;f,\theta) \cap T_pW^s(y;f,\theta)$. Then $\eta(t) = d\varphi^t(p)v$ is a section of $c^*TX$. The exponential decay given by Lemma~\ref{lemma_asymptotic_2} implies that $\eta\in W^{1,2}(c^*TX)$. Here we used that a norm on $W^{1,2}(c^*TX)$ is defined by identifying this space with $W^{1,2}(\R,\R^n)$ via admissible trivializations.

Let $\gamma(s)$ be a smooth curve defined for $|s|$ small such that $\gamma(0)=p$, $\dot\gamma(0)=v$. For every $L>0$ there exists $\epsilon>0$ such that if $|s|<\epsilon$ then there is a unique vector field $\zeta(s,t) \in T_{c(t)}X$ for $t\in[-L,L]$ satisfying $\varphi^t(\gamma(s))=\exp(\zeta(s,t))$. We claim that $\eta(t)=\frac{d}{ds}\zeta(s,t)|_{s=0}$ for all $ t\in[-L,L]$, where we see $s\mapsto \zeta(s,t)$ as a curve in the vector space $T_{c(t)}X$. To see this we compute
\begin{equation*}
\begin{aligned}
\eta(t) &= d\varphi^t(p)\dot\gamma(0) = \left.\frac{d}{ds}\right|_{s=0} \varphi^t(\gamma(s)) = \left.\frac{d}{ds}\right|_{s=0} \exp(\zeta(s,t)) \\
&= d\exp \ \left.\frac{d}{ds}\right|_{s=0} \zeta(s,t) = \nabla\exp \left( 0,0, \left.\frac{d}{ds}\right|_{s=0} \zeta(s,t) \right) = \left.\frac{d}{ds}\right|_{s=0} \zeta(s,t)
\end{aligned}
\end{equation*}
where we use that the base point of $\zeta(s,t)$ is $c(t)$ independent of $s$, and that $\zeta(0,t)$ vanishes.

The equation $\frac{d}{dt}\varphi^t(\gamma(s))+\nabla^\theta f(\varphi^t(\gamma(s)))=0$ for $t\in[-L,L]$, is equivalent to the equation $\nabla_t\zeta+\Theta(\zeta)\dot c + Z(\zeta,\theta)=0$ in view of the definition of the maps $\Theta$ and $Z$, see~\eqref{map_Theta}-\eqref{map_Z}. Differentiating with respect to $s$ and evaluating at $s=0$ we get
\[
\nabla_t\eta+D_1Z(0,\theta)\eta=0
\]
since $D\Theta(0)$ vanishes. Since $L$ can be taken arbitrarily large, we conclude that $\eta\in W^{1,2}(c^*TX)$ is a solution of $D_1F(0,\theta)\eta=0$, in other words, $\eta\in T_c\mathcal{M}_\theta(x,y)$. Since $v = \eta(0) = d{\rm ev}(c)\eta$ we get
\[
d\,{\rm ev}(c)(T_c\mathcal{M}_\theta(x,y)) \supset T_{{\rm ev}(c)}W^u(x;f,\theta) \cap T_{{\rm ev}(c)}W^s(y;f,\theta)
\]
which completes the proof.
\end{proof}

Using these lemmas, we can now prove the main results of this section.

\begin{proof}[Proof of Lemma~\ref{lemma_transversality_step}]
Choose $\theta\in\mathcal{R}_{x,y}$. By Lemma~\ref{lemma_vanishing_props_section} the map $$ d{\rm ev}(c):T_c\mathcal{M}_\theta(x,y) \to T_{c(0)}X $$ is injective. By the above corollary the following holds for all $p\in W^u(x;f,\theta)\cap W^s(y;f,\theta)$:
\[
\dim \ T_pW^u(x;f,\theta) \cap T_pW^s(y;f,\theta) = \ind(x)-\ind(y).
\]
Hence $T_pX = T_pW^u(x;f,\theta) + T_pW^s(y;f,\theta)$, as desired.
\end{proof}

\subsubsection{Proof of Lemma~\ref{lemma_crucial3}}\label{sssec_lem_crucial_3}
We can now finally prove the transversality lemma.
Define the desired set as 
$$
\mathcal{R} = \bigcap \left\{ \mathcal{R}_{x,y} \mid \{x,y\}\subset\crit(f),\ \{x,y\}\not\subset\iso \right\}. 
$$ 
By Lemma~\ref{lemma_transversality_step}, this is a residual subset of ${\rm Met}^\ell_{\Z_k}(V_0,\theta_0)$. From now on we choose $\theta\in\mathcal{R}$ arbitrarily and fix $x,y\in\crit(f)$. We consider two different cases. \\

\noindent {\it Case 1.} $\{x,y\} \not\subset\iso$.

In this case, Lemma~\ref{lemma_transversality_step} implies directly that $W^u(x;f,\theta)$ intersects $W^s(y;f,\theta)$ transversely. \\

\noindent {\it Case 2.} $\{x,y\}\subset\iso$.

As $\theta_0$ and $\theta$ coincide near $\{x,y\}$, there are neighborhoods $U^x,U^y$ such that 
\begin{equation}\label{nbds_Ux_Uy}
\begin{aligned}
W^u(x;f,\theta_0)\cap U^x &= W^u(x;f,\theta)\cap U^x \\
W^s(y;f,\theta_0)\cap U^y &= W^s(y;f,\theta)\cap U^y.
\end{aligned}
\end{equation}

We first show that $W^s(y;f,\theta_0) = W^s(y;f,\theta)$. The inclusion $W^s(y;f,\theta_0) \subset W^s(y;f,\theta)$ is obvious because $W^s(y;f,\theta_0) \subset\iso\subset V_1$ and $\theta$ coincides with $\theta_0$ on~$V_1$, by assumption. Choose $q\in W^s(y;f,\theta)$. For large positive times the $\theta$-antigradient flow maps $q$ to $W^s(y;f,\theta)\cap U^y \subset W^s(y;f,\theta_0) \subset\iso$. By uniqueness of solutions of ODEs we conclude $q\in W^s(y;f,\theta_0)$.

Now we claim that $W^u(x;f,\theta_0)$ and $W^u(x;f,\theta)$ coincide in a neighborhood of $W^u(x;f,\theta_0)\cap W^s(y;f,\theta_0)$. Consider any point $p_0$ in $W^u(x;f,\theta_0)\cap W^s(y;f,\theta_0)$. Then $\phi^t_{f,\theta}(p_0)=\phi^t_{f,\theta_0}(p_0)$ for all $ t\in\R$. One finds $T\gg1$ such that $\phi^{-T}_{f,\theta}(p_0) \in U^x$. By continuity of the flow, there is a neighborhood $N$ of $p_0$ in $W^u(x;f,\theta)$ such that $\phi^{-T}_{f,\theta}(q) \in U^x\cap W^u(x;f,\theta)$ and $\{\phi^t_{f,\theta}(q)\}_{t\in[-T,0]} \subset V_1$ for all $ q\in N$. By~\eqref{nbds_Ux_Uy} we get $\phi^{-T}_{f,\theta}(q) \in U^x\cap W^u(x;f,\theta_0)$. Since $\theta$ and $\theta_0$ coincide on $V_1$ we obtain
\[
\phi^t_{f,\theta}(\phi^{-T}_{f,\theta}(q)) = \phi^t_{f,\theta_0}(\phi^{-T}_{f,\theta}(q)) \in W^u(x;f,\theta_0) \ \ \text{ for all } t\in[0,T].
\]
Evaluating at $t=T$ yields $q\in W^u(x;f,\theta_0)$. We have proved that $N\subset W^u(x;f,\theta_0)$. The desired claim follows because both $W^u(x;f,\theta_0)$ and $W^u(x;f,\theta)$ are embedded submanifolds of the same dimension.

A point $$ p_0\in W^u(x;f,\theta)\cap W^s(y;f,\theta) $$ belongs to $W^s(y;f,\theta_0)$ since we already proved that $W^s(y;f,\theta_0) = W^s(y;f,\theta)$. By assumption, $\phi^t_{f,\theta_0}(p_0) \in \iso$ for all~$t$. Hence $\phi^t_{f,\theta}(p_0) = \phi^t_{f,\theta_0}(p_0)$ for all~$t$ and $p_0 \in W^u(x;f,\theta_0)$. This shows that $$ p_0 \in W^u(x;f,\theta_0)\cap W^s(y;f,\theta_0). $$ But, by assumption, $W^u(x;f,\theta_0)$ intersects $W^s(y;f,\theta_0)$ transversely. It follows that $W^u(x;f,\theta)$ intersects $W^s(y;f,\theta)$ transversely at $p_0$ since we proved before that $W^s(y;f,\theta_0) = W^s(y;f,\theta)$ and that $W^u(x;f,\theta_0)$ coincides with $W^u(x;f,\theta)$ in a neighborhood of $W^u(x;f,\theta_0)\cap W^s(y;f,\theta_0)$. Case~2 is complete.
\qed

\section{Local invariant Morse-Smale pairs for finite-cyclic group actions}
\label{sec:MS_local}

In this section we apply the transversality results from the previous section to prove Theorem~\ref{main1}. We start with some preliminaries, and then as a first step we reduce the problem to the case of totally degenerate critical points. These are then handled using an inductive construction on the strata of the isotropy set.

\subsection{Preliminaries}

Let $(M,\theta)$ be a smooth Riemannian manifold without boundary endowed with an action of a finite cyclic group of order $m$ generated by the isometry $$ a: M \to M. $$ There is no loss of generality to assume that it is a faithful action.

Let $p\in M$ and consider $h$ the minimal positive integer such that $a^h(p)=p$. Set $k=m/h \in \Z$. Thus $a^h$ generates a $\Z/k\Z$ action of which $p$ is a fixed point. Using the exponential map, one finds an $a^h$-invariant neighborhood $U$ of $p$ such that $a^h|_U$ is conjugated to the restriction of $da^h|_p: T_pM \to T_pM$ to a sufficiently small $\theta|_p$-ball around of the origin. Thus, since our analysis can be localized around $p$, we may assume that
\[
(M,p)=(\R^N,0),
\]
that the inner-product $\theta|_0$ is the Euclidean inner product $\theta_0$ of $\R^N$, and that the action is generated by a matrix
\[
A=da^h|_0\in O(N)
\]
satisfying $A^k=I$.

Let $D$ be a closed Euclidean ball centered at $0$. The interior of $D$ will be denoted by $\dot D$. Consider a smooth $A$-invariant function
\[
f:D\to \R
\]
having $0$ as its unique critical point.

As in Subsection~\ref{subsec_props} we write $$ F_j = \ker (A^j-I). $$ These subspaces will be referred to as the {\it linear isotropy manifolds}. Before proceeding we make some simple but important remarks about them. Considering the real Jordan decomposition of $A$, we have a splitting
\[
\R^N = \bigoplus_{\lambda^k=1} E_\lambda
\]
where $E_\lambda$ is the real generalized eigenspace of $\lambda$ if $\lambda$ is in the spectrum of $A$, or the trivial vector space if not. Since $A \in O(N)$ this is a $\theta_0$-orthogonal decomposition into $A$-invariant subspaces, where each $E_\lambda$ can be further orthogonally decomposed into $A$-invariant subspaces $E_\lambda = \oplus W$ as follows:
\begin{itemize}
\item If $\lambda \not\in \R$ then each $W$ satisfies $\dim W=2$, the $A$-action on $W$ is linearly conjugated to the action on $\C$ by multiplication by $\lambda$. 
\item If $\lambda \in \{1,-1\}$ then each $W$ satisfies $\dim W=1$, and either $A|_W$ is the identity or $A|_W$ is minus the identity.
\end{itemize}
In other words, the decomposition into the $E_\lambda$'s is the decomposition into {\it isotypical components}. For $j\in\div(k)$ note that
\begin{equation}
F_j = \bigoplus_{\lambda^j=1} E_\lambda
\end{equation}
from where we recover Lemma~\ref{lemma_intersection}: $F_j \cap F_i = F_{\gcd(i,j)}$. This can be used to prove

\begin{lemma}\label{lemma_relposition_istropy}
Let $i,j\in \div(k)$ and let $H$ be the $\theta_0$-orthogonal of $F_{\gcd(i,j)}$ inside $F_i$. Then $H \subset F_j^\bot$.
\end{lemma}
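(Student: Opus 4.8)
The plan is to reduce everything to the orthogonal isotypical decomposition $\R^N = \bigoplus_{\lambda^k=1} E_\lambda$ recalled just before the statement, under which each linear isotropy manifold $F_l$ with $l\in\div(k)$ is the partial sum $\bigoplus_{\lambda^l = 1} E_\lambda$. Note first that $i$, $j$ and $\gcd(i,j)$ all lie in $\div(k)$, so this description applies to all three subspaces involved.

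The first step is to record the elementary arithmetic fact that, for a $k$-th root of unity $\lambda$, one has $\lambda^i = 1$ and $\lambda^j = 1$ if and only if $\lambda^{\gcd(i,j)} = 1$ — this is simply the statement that the multiplicative order of $\lambda$ divides both $i$ and $j$ precisely when it divides $\gcd(i,j)$. Hence the summands of $F_{\gcd(i,j)}$ that lie inside $F_i$ are exactly the $E_\lambda$ with $\lambda^i = 1$ and $\lambda^j = 1$. Since the decomposition $F_i = \bigoplus_{\lambda^i=1} E_\lambda$ is $\theta_0$-orthogonal, it follows that the $\theta_0$-orthogonal complement $H$ of $F_{\gcd(i,j)}$ inside $F_i$ is precisely $\bigoplus_{\lambda^i = 1,\ \lambda^j \neq 1} E_\lambda$.

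The second step is to identify $F_j^\bot$. Because $\R^N = \bigoplus_{\lambda^k=1} E_\lambda$ is a $\theta_0$-orthogonal sum and $F_j$ is the partial sum over $\{\lambda \mid \lambda^j = 1\}$, the orthogonal complement is $F_j^\bot = \bigoplus_{\lambda^k = 1,\ \lambda^j \neq 1} E_\lambda$. Comparing index sets, $\{\lambda \mid \lambda^i = 1,\ \lambda^j \neq 1\} \subseteq \{\lambda \mid \lambda^k=1,\ \lambda^j \neq 1\}$, and therefore $H \subseteq F_j^\bot$, as claimed.

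There is essentially no real obstacle: the argument is pure bookkeeping with index sets of roots of unity, and the only points requiring a word of care are that $\gcd(i,j)\in\div(k)$ (so the formula $F_l=\bigoplus_{\lambda^l=1}E_\lambda$ is legitimate), which is automatic since $\gcd(i,j)\mid i\mid k$, and that one consistently uses the orthogonality of the isotypical decomposition when passing between these subspaces and their orthogonal complements.
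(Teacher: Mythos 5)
Your proof is correct and is essentially the paper's own argument: the paper's one-line proof also reads $H=\bigoplus_{\lambda^i=1,\ \lambda^{\gcd(i,j)}\neq1}E_\lambda$ off the orthogonal isotypical decomposition, which (for $\lambda$ with $\lambda^i=1$) is the same index set as your $\lambda^j\neq1$, and then compares with $F_j=\bigoplus_{\lambda^j=1}E_\lambda$. You have merely spelled out the elementary order-divisibility step that the paper leaves implicit.
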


\begin{proof}
Setting $d=\gcd(i,j)$ this follows from formula $H=\bigoplus_{\lambda^i=1,\lambda^d\neq1} E_\lambda$.
\end{proof}

\subsection{Reduction to the totally degenerate case}

We explain why is it sufficient to prove Theorem~\ref{main1} in the case $p$ is a totally degenerate critical point.

Notice that, since $f$ is $A$-invariant and $A$ is a Euclidean isometry, the splitting $$ \R^N = \ker D^2f(0) \oplus (\ker D^2f(0))^\perp $$ is preserved by both $D^2f(0)$ and $A$. Here $(\ker D^2f(0))^\perp$ denotes the orthogonal complement of $\ker D^2f(0)$ with respect to the Euclidean metric.

Up to a transformation in $O(N)$ we may assume, without any loss of generality, that $N=N_1+N_2$, $\R^N \simeq \R^{N_1} \times \R^{N_2}$ and
\begin{equation*}
\begin{aligned}
\R^{N_1} \simeq \R^{N_1} \times 0 &= \ker D^2f(0,0) \\
\R^{N_2} \simeq 0 \times \R^{N_2} &= (\ker D^2f(0,0))^\perp
\end{aligned}
\end{equation*}
By our prior arguments, $A$ assumes the form
\begin{equation*}
A = \diag (A_1,A_2) \qquad A_j \in O(N_j)
\end{equation*}
In particular, $A_j^k$ is the identity in $\R^{N_j}$, i.e. $A_j$ generates a $\Z_k$-action on $\R^{N_j}$ by Euclidean isometries. By Lemma~\ref{lemma_invariant_GM_splitting} we find an embedding $$ \Psi:U \to \R^{N_1} \times \R^{N_2} $$ defined on some $A$-invariant neighborhood $U$ of $(0,0)$ such that:
\begin{itemize}
\item $\Psi$ is $A$-equivariant.
\item $\Psi(0,0)=(0,0)$ and $D\Psi(0,0)=I$.
\item $f\circ\Psi(z_1,z_2) = g(z_1) + h(z_2)$ where $0\in\R^{N_1}$ is a totally degenerate critical point of the $A_1$-invariant function $g$, and $0\in\R^{N_2}$ is a non-degenerate critical point of the $A_2$-invariant function $h$.
\end{itemize}
Thus, it suffices to prove Theorem~\ref{main1} for the isolated critical point $0\in\R^{N_1}$ of~$g$.

\subsection{The totally degenerate case}

Total degeneracy means that $D^2f(0)=0$. In this subsection we proceed assuming total degeneracy. With $d\in\div(k)$ we set
\begin{equation}\label{set_G_d}
G_d := \bigcup_{j\in\div(k),j\leq d} F_j.
\end{equation}
We will prove inductively the following family of claims indexed by $d\in\div(k)$:
\begin{itemize}
\item[($C_d$)] For every $\epsilon>0$ there is an $A$-invariant open neighborhood $V_d$ of $G_d$ and an $A$-invariant pair $(f_d,\theta_d)$ defined on $D$ with the following properties:
\begin{itemize}
\item[(i)] $f_d|_{V_d \cap D}$ is Morse and $\crit(f_d) \cap V_d \subset G_d \cap \dot D$.
\item[(ii)] $j\in \div(k), \ j\leq d, \ x\in \crit(f_d) \cap F_j \Rightarrow W^s(x;f_d,\theta_d) \subset F_j \cap \dot D$. 
\item[(iii)] $x,y\in\crit(f_d) \cap V_d \Rightarrow W^u(x;f_d,\theta_d) \pitchfork W^s(y;f_d,\theta_d)$.
\item[(iv)] $(f_d,\theta_d)$ is $\epsilon$-close to $(f,\theta)$ in $C^2(D)$.
\end{itemize}
\end{itemize}
In (ii) and (iii) stable and unstable manifolds are taken with respect to the open manifold $\dot D$. The desired conclusion follows from ($C_k$).

Before we give the details of the proof, let us give an outline. The initial step is to obtain the Morse-Smale condition on the fixed point set near the critical point. Since the action is trivial there, standard arguments ensure transversality; this is the content of Lemma~\ref{lemma_auxiliary_inital_step}. Using the total degenericity, the argument explained immediately after Lemma~\ref{lemma_auxiliary_inital_step} shows that we can create the necessary perturbation on the fixed point set of the $\Z_k$-action in such a way that the Hessian is negative definite (and small) in normal directions. Hence stable manifolds of critical points in the fixed point set are contained in the fixed point set.

The induction to prove claims $(C_d)$, where $d$ ranges over the divisors of $k$, is as follows. Let $d'<d$ be consecutive divisors of $k$. One does not touch the pair $(f_{d'},\theta_{d'})$ on the neighborhood $V_{d'}$ of $G_{d'}$. The induced $\Z_d$ action on $F_d$ has isotropy set $F_d \cap G_{d'}$, so we can achieve the Morse-Smale condition on $F_d$ by a small perturbation supported on the free part $F_d\setminus G_{d'}$ of this action. Here is where the analysis of Subsection~\ref{ssec_transv_lemma} plays a role. This does not modify what we already have on $V_{d'}$. Regularized distance functions now come into play to keep the perturbation on $F_d$ obtained so far and simultaneously create negative Hessian in directions normal to $F_d$ at critical points in $F_d\setminus G_{d'}$. Lemma~\ref{lemma_crucial4} plays a key role to compare transversality in $F_d$ with transversality in $\dot D$, and to ensure that stable manifolds of critical points in $F_d\setminus G_{d'}$ are contained in $F_d$. This concludes the idea of the proof.

Now we give the details of the proof. Throughout the argument below, norms of vectors and tensors, as well as distances between points and sets are measured with respect to the Euclidean metric, not to be confused with the Riemannian metric $\theta$ on $D$. We use the notation $\bot$ to denote Euclidean orthogonal complements of subspaces. We denote by $P_j:\R^N \to \R^N$ the Euclidean orthogonal linear projection onto $F_j$, and by $\iota_j:F_j \hookrightarrow \R^N$ the inclusion. Note that 
\begin{equation}\label{projections_commute_with_A}
P_jA=AP_j, \qquad \qquad \iota_jA=A\iota_j.
\end{equation}

\subsubsection{Starting the induction}

Here we prove ($C_1$). Let $c_1>0$ be fixed arbitrarily.

\begin{lemma}\label{lemma_auxiliary_inital_step}
For every $\delta_1>0$ we find a smooth function $h_1:F_1 \to \R$ and a smooth symmetric tensor $\lambda_1:F_1 \to F_1^* \otimes F_1^*$ with the following properties.
\begin{itemize}
\item[I.] The data $(h_1,\lambda_1)$ is compactly supported in $F_1 \cap \dot D$ and is $\delta_1$-close to $(0,0)$ in the $C^2(F_1\cap D)$-topology.
\item[II.] The pair $(\iota_1^*f+h_1,\iota_1^*\theta+\lambda_1)$ is Morse-Smale on $F_1 \cap \dot D$.
\item[III.] Any critical point $x$ of $f+h_1\circ P_1$ belongs to $\frac{1}{2}D$ and satisfies 
\[
|D^2(f+h_1\circ P_1)(x)|\leq \frac{c_1}{10}. 
\]
\end{itemize}
\end{lemma}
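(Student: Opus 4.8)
The plan is to reduce everything to classical Morse--Smale theory on $F_1$, using crucially that $A$ acts as the identity on $F_1$ (so no symmetry must be preserved there) and that total degeneracy makes condition III essentially free. Concretely, write $f_1:=\iota_1^*f$ and $\theta_1:=\iota_1^*\theta$. By Lemma~\ref{lemma_grad_tang}, $\crit(f_1)=\crit(f)\cap F_1=\{0\}$, and since $D^2f(0)=0$ we also have $D^2f_1(0)=D^2f(0)|_{F_1}=0$; in particular $F_1\cap\dot D$ is an isolating open neighborhood for $(f_1,0)$. I would then fix, once and for all, a radius $\rho>0$ (Euclidean balls $B_\rho$ about $0$) small enough that $\overline{B_\rho}\subset\frac12 D$ and $|D^2f(z)|\le c_1/20$ for all $z\in\overline{B_\rho}\cap D$; the latter is possible because $z\mapsto D^2f(z)$ is continuous and vanishes at $0$. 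Put $\epsilon_\rho:=\min\{\|df(z)\|:z\in D\setminus B_\rho\}>0$. (If $F_1=\{0\}$ the statement is trivial, so assume $F_1\neq\{0\}$.)

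Next I would invoke the standard existence of Morse--Smale pairs on isolating neighborhoods (the same fact used in Section~\ref{ssec_invariant}): there is a pair $(h_1,\lambda_1)$, with $h_1$ and $\lambda_1$ compactly supported in $B_\rho\cap F_1$ and of arbitrarily small $C^2$-norm, such that $f_1+h_1$ is Morse and $(f_1+h_1,\theta_1+\lambda_1)$ satisfies the transversality condition (ii) of Definition~\ref{def_MS}; condition (iii) then holds automatically by the Remark following that definition, since we are on an isolating neighborhood and the perturbation is small. The only point requiring a word is that the supports can be confined to $B_\rho\cap F_1$: for a sufficiently small perturbation the perturbed critical points and all connecting gradient trajectories remain in a preassigned small ball about $0$ (continuation of the isolated invariant set, using that gradient flows admit no homoclinics), so the Morse and Smale genericity perturbations can be localized there, away from the critical points. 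Choosing the $C^2$-size of $(h_1,\lambda_1)$ to be at most $\delta_1':=\min\{\delta_1,\ \epsilon_\rho/2,\ c_1/20\}$ then yields I and II (for $\lambda_1$ this small, $\theta_1+\lambda_1$ is still a metric).

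For III I would argue directly. The function $f+h_1\circ P_1$ is defined on $D$ because $P_1(D)\subset F_1\cap D$, and $\|h_1\circ P_1\|_{C^2(D)}\le\|h_1\|_{C^2(F_1\cap D)}\le\delta_1'$ since $\|P_1\|=1$. Hence $\|d(f+h_1\circ P_1)-df\|_{C^0(D)}\le\delta_1'<\epsilon_\rho$, so every critical point $x$ of $f+h_1\circ P_1$ in $D$ lies in $B_\rho\subset\frac12 D$. At such an $x$ one has $D^2(f+h_1\circ P_1)(x)=D^2f(x)+D^2(h_1\circ P_1)(x)$, with $|D^2f(x)|\le c_1/20$ by the choice of $\rho$ and $|D^2(h_1\circ P_1)(x)|\le\|h_1\|_{C^2}\le c_1/20$, so $|D^2(f+h_1\circ P_1)(x)|\le c_1/10$.

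The main obstacle here is not analytic depth but bookkeeping: one must choose $\rho$ before the perturbation and then arrange that the single pair $(h_1,\lambda_1)$ is simultaneously small enough to be $\delta_1$-close, to be Morse--Smale, and to force the critical points of the \emph{ambient} function $f+h_1\circ P_1$ into the region where total degeneracy makes $D^2f$ small. Since all three are smallness requirements and there is no symmetry to respect on $F_1$, they do not compete; the one step that genuinely deserves care is the confinement of the perturbed invariant set near the totally degenerate point $0$, which is what lets the supports of $h_1$ and $\lambda_1$ stay away from $\partial D$.
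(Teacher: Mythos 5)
Your proof is correct and follows essentially the same route as the paper: apply the standard existence of (non-equivariant) Morse--Smale perturbations on the isolating neighborhood $F_1\cap\dot D$ for properties I--II, then use $C^2$-smallness of $h_1$ to confine the critical points of $f+h_1\circ P_1$ near the origin, where total degeneracy ($D^2f(0)=0$) makes the Hessian bound of III automatic. Your explicit choice of $\rho$ and $\epsilon_\rho$ just makes quantitative what the paper states qualitatively, and the extra restriction of the supports to $B_\rho\cap F_1$ is harmless.
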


\begin{proof}
Properties I and II follow from the usual construction of local Morse homology explained in Section~\ref{sec_properties}. The necessary transversality results are, in fact, contained as a special case of the results in Section~\ref{sec_prelim_transv} for the trivial group action (empty isotropy). Let us give more details.

Consider the smooth compact manifold with boundary $F_1 \cap D$ equipped with the pair $(\iota_1^*f,\iota_1^*\theta)$. Since the $\theta$-gradient of $f$ is tangent to $F_1 \cap D$, the origin is the unique critical point of $\iota_1^*f$. It follows that we can choose $h_1$ $C^2$-small and supported near the origin in such a way that $\iota_1^*f+h_1$ is Morse in $F_1\cap D$ and all its critical points are very close to the origin. Using the results from Section~\ref{sec_prelim_transv}, with a trivial group action, we find $\lambda_1$ compactly supported in $F_1\cap\dot D$ and arbitrarily $C^2$-small, such that the Morse-Smale condition (Definition~\ref{def_MS}) is satisfied by the pair $(\iota_1^*f+h_1,\iota_1^*\theta+\lambda_1)$ in the smooth manifold without boundary $F_1\cap\dot D$.

As was remarked above, by the $C^2$-smallness of $h_1$ we can be sure that all critical points of $f+P_1^*h_1$ are contained in $\dot D$ and lie very close to the origin. Moreover, by the total degeneracy assumption on the unperturbed data, we can be sure that the Hessian of $f+P_1^*h_1$ at its critical points is very small, and can be made arbitrarily small if $\delta_1$ are small enough. Thus, property III can be achieved if $\delta_1$ is small enough.
\end{proof}

We now show the claim of ($C_1$). Consider the $A$-invariant pair $(f_1,\theta_1)$ defined by 
\begin{equation}\label{defining_f_1}
\begin{array}{ccc}
f_1(x) = f(x) + h_1\circ P_1(x) - \frac{c_1}{2}|(I-P_1)x|^2 & & \theta_1 = \theta + P_1^*\lambda_1
\end{array}.
\end{equation}
Note that $\crit(\iota_1^*f+h_1) = \crit(f_1) \cap F_1$ because both $f+h_1\circ P_1$ and $f_1$ are $A$-invariant, their gradients with respect to $A$-invariant Riemannian metrics must be tangent to $F_1$ at points of $F_1$, and coincide over $F_1$; see Lemma~\ref{lemma_grad_tang}. The bilinear form $D^2f_1(x)$ is negative definite on $F_1^\bot$ whenever $x\in\crit(f_1)\cap F_1$, in fact, at such a critical point we have an estimate
\begin{equation}\label{estimate_normal_hessian_C_1}
\begin{aligned}
D^2f_1(x)(u,u) &= D^2(f+h_1\circ P_1)(x)(u,u) - c_1|u|^2 \\
& \leq \frac{c_1}{10}|u|^2 - c_1|u|^2 = -\frac{9}{10}c_1|u|^2 \ \ \ \forall u \in F_1^\bot.
\end{aligned}
\end{equation}
Thus every $x\in\crit(f_1) \cap F_1$ is a non-degenerate critical point of $f_1$ and 
\[
W^s(x;f_1,\theta_1) \subset F_1 \cap \dot D
\]
in view of (iii) in Lemma~\ref{lemma_crucial4}. We have shown that (ii) in ($C_1$) holds for every $x\in \crit(f_1) \cap F_1$. 

Obviously the $C^2(D)$-norm of the difference between $(f,\theta)$ and $(f_1,\theta_1)$ is bounded from above in terms of $\delta_1,c_1$, and hence it can be made smaller than any $\epsilon$ because $\delta_1$ and $c_1$ can be taken arbitrarily small. In other words (iv) in ($C_1$) holds.

For $r>0$ consider $V_1 = \{x : |(I-P_1)x|<r\}$. Since all critical points in $\crit(f_1) \cap F_1$ are non-degenerate, they must be finite in number because they are contained in $\frac{1}{2}D$. Hence if $r$ is small enough then $V_1$ is a neighborhood of $F_1$ for which (i) in ($C_1$) holds. Finally, (iii) in ($C_1$) holds in view of II above and of item (ii) in Lemma~\ref{lemma_crucial4}.

\subsubsection{The inductive step}

Let $d'<d$ be two consecutive divisors of $k$. We assume by induction that ($C_{d'}$) holds, i.e., for all $\epsilon'>0$, there exist $V_{d'}$ and $(f_{d'},g_{d'})$ satisfying (i)-(iv) in ($C_{d'}$), where $\epsilon'$ is the quantifier in (iv): $(f_{d'},g_{d'})$ is $\epsilon'$-close to $(f,g)$ in the $C^2(D)$-topology. We always consider $\epsilon'$ small enough so that $\crit(f_{d'}) \subset \dot D$. 

\begin{remark}\label{rmk_tangencies_isotropy_strata}
Throughout it is important to keep in mind that the gradient of any $A$-invariant pair on $D$ or $\dot D$ is necessarily tangent to $F_j$, for all $j\in\div(k)$.
\end{remark}

By (i) in ($C_{d'}$) we have $\crit(f_{d'}) \setminus G_{d'} \subset \R^N \setminus V_{d'}$. The set
\[
\Omega := (F_d \cap \dot D) \setminus G_{d'}
\]
is an $A$-invariant open subset of $F_d$, and $\crit(f_{d'}) \cap \Omega$ is a compact subset of $F_d \setminus V_{d'}$. Note that $A$ induces a $\Z_d$-action on $F_d \cap \dot D$ which is free on $\Omega$ because the isotropy set of this action is precisely $G_{d'}\cap F_d \cap\dot D$.

On the quotient $\Omega/\Z_d$ the function $f_{d'}$ induces a smooth function $\widehat f_{d'}$ which has a compact critical set $(\crit(f_{d'}) \cap \Omega)/\Z_d$. Here we used that the gradient of $f_{d'}$ with respect to an $A$-invariant metric must be tangent to $F_j$, see Remark~\ref{rmk_tangencies_isotropy_strata}. In particular it follows that $\crit(f_{d'}) \cap \Omega=\crit(f_{d'}|_{\Omega})$. Hence there exists an arbitrarily $C^2$-small function $\widehat\alpha:\Omega/\Z_d \to \R$ supported on an arbitrarily small neighborhood of the compact set $(\crit(f_{d'}) \cap \Omega)/\Z_d$ such that $\widehat f_{d'} + \widehat\alpha$ is Morse on $\Omega/\Z_d$ and has finitely many critical points there. Pulling $\widehat\alpha$ back to $\Omega$ we obtain an $A$-invariant function $\alpha:F_d \cap \dot D\to\R$ supported near $\crit(f_{d'}) \cap \Omega$ such that 
\begin{equation}\label{Morse_perturbation_1}
f_{d'}|_{F_d\cap \dot D}+\alpha
\end{equation}
is a Morse function on~$\Omega$. 

The function~\eqref{Morse_perturbation_1}
is Morse on $V_{d'}\cap F_d\cap\dot D$ because $f_{d'}$ is assumed to be Morse on $V_{d'}\cap\dot D$ and the $\theta_{d'}$-gradient of $f_{d'}$ is tangent to $F_d$, see Remark~\ref{rmk_tangencies_isotropy_strata}. Moreover, $F_d\cap\dot D$ can be covered by two relatively open sets
\[
F_d \cap \dot D = \Omega \cup (V_{d'} \cap F_d \cap \dot D).
\]
It follows that~\eqref{Morse_perturbation_1} 
is Morse on $F_d \cap \dot D$ with finitely many critical points, all of which lie close to the origin. Shrinking $V_{d'}$ we may also assume that $\alpha$ vanishes on $\overline{V_{d'}} \cap F_d\cap D$. Hence
\begin{equation}\label{nice_critical_position}
\overline{V_{d'}} \cap F_d \cap \dot D \cap \crit(f_{d'}|_{F_d\cap\dot D}+\alpha) \subset G_{d'} \cap F_d \cap \dot D.
\end{equation}
Once again we used $A$-invariance and Remark~\ref{rmk_tangencies_isotropy_strata}.

The next and important step is to apply the Transversality Lemma~\ref{lemma_crucial3} to the open Riemannian manifold $(F_d \cap \dot D,\iota_d^*\theta_{d'})$ equipped with the $\Z_d$-action by isometries induced by $A|_{F_d}$, and the $\Z_d$-invariant Morse function~\eqref{Morse_perturbation_1}.
Note that $A$ generates a $\Z_d$-action on $F_d\cap\dot D$ with isotropy set $G_{d'}\cap F_d\cap\dot D$. Note also that $\alpha$ vanishes on $\overline{V_{d'}} \cap F_d\cap D$. Note that $f_{d'}|_{F_d\cap \dot D}+\alpha$ has finitely many critical points because this is a Morse function with a compact critical set.

Consider any
\[
x \in \crit(f_{d'}|_{F_d \cap\dot D}+\alpha) \cap F_j \cap F_d \cap\dot D = \crit(f_{d'}|_{F_d \cap\dot D}) \cap F_j \cap F_d \cap\dot D
\]
where $j\leq d'$ is some divisor of $k$. To simplify the notation we write
\[
\begin{aligned}
& W^s(x) = W^s(x;f_{d'}|_{F_d \cap\dot D},\iota_d^*\theta_{d'}) \\ 
& W^s_\alpha(x) = W^s(x;f_{d'}|_{F_d \cap\dot D}+\alpha,\iota_d^*\theta_{d'})
\end{aligned}
\]
where stable manifolds are taken with respect to the open manifold $F_d \cap\dot D$.

We need to check conditions (i) and (ii) of Lemma~\ref{lemma_crucial3}. To check (i) it suffices to prove that
\[
W^s_\alpha(x) \subset F_j\cap F_d\cap \dot D.
\]
Let 
\begin{equation*}
\text{$\phi_\alpha^t$ be the anti-gradient flow of $(f_{d'}|_{F_d \cap\dot D}+\alpha,\iota_d^*\theta_{d'})$ on $F_d\cap \dot D$}
\end{equation*}
and
\begin{equation*}
\text{$\phi^t$ be the anti-gradient flow of $(f_{d'}|_{F_d \cap\dot D},\iota_d^*\theta_{d'})$ on $F_d\cap \dot D$.}
\end{equation*}
Let $y\in W^s_\alpha(x)$. This means that $\phi_\alpha^t(y)$ is defined for $t\in[0,+\infty)$ and $\phi_\alpha^t(y) \to x$ as $t\to+\infty$. Since $\alpha$ vanishes on a neighborhood of $x$, we get that $W^s(x)$ coincides with $W^s_\alpha(x)$ near $x$. Hence there exists $\tau$ large such that
\[
\phi_\alpha^\tau(y) \in W^s(x) \subset W^s(x;f_{d'},\theta_{d'}) \subset F_j \cap F_d \cap\dot D.
\]
But $(f_{d'}|_{F_d\cap \dot D}+\alpha,\iota_d^*\theta_{d'})$ is $A$-invariant, in particular, the flow $\phi_\alpha^t$ leaves $F_j\cap F_d\cap \dot D$ invariant. Thus $y = \phi_\alpha^{-\tau} (\phi_\alpha^\tau(y)) \in F_j \cap F_d \cap \dot D$. We are done checking (i) in Lemma~\ref{lemma_crucial3}.

We next check the condition (ii). Let
\[
\begin{aligned} 
x,y \in & \ \crit(f_{d'}|_{F_d \cap \dot D}+\alpha)\cap G_{d'}\cap F_d\cap \dot D \\ 
& = \crit(f_{d'}|_{F_d \cap \dot D})\cap G_{d'}\cap F_d\cap \dot D \\ 
& \subset \crit(f_{d'}) \cap V_{d'} .
\end{aligned} 
\]
The last inclusion uses tangency to $F_d$ of the gradient of $f_{d'}$ with respect to invariant metrics, see Remark~\ref{rmk_tangencies_isotropy_strata}. We simplify notation by writing
\[
\begin{aligned}
& W^s(y) = W^s(y;f_{d'}|_{F_d \cap\dot D},\iota_d^*\theta_{d'}) \\ 
& W^s_\alpha(y) = W^s(y;f_{d'}|_{F_d \cap\dot D}+\alpha,\iota_d^*\theta_{d'}) \\
& W^u(x) = W^u(x;f_{d'}|_{F_d \cap\dot D},\iota_d^*\theta_{d'}) \\ 
& W^u_\alpha(x) = W^u(x;f_{d'}|_{F_d \cap\dot D}+\alpha,\iota_d^*\theta_{d'})
\end{aligned}
\]
and again we let $\phi_\alpha^t$ denote the anti-gradient flow of $f_{d'}|_{F_d \cap\dot D}+\alpha$ on $F_d\cap \dot D$, and $\phi^t$ denote the anti-gradient flow of $f_{d'}|_{F_d \cap\dot D}$ on $F_d\cap \dot D$, both taken with respect to the metric $\iota_d^*\theta_{d'}$.

We claim that
\begin{equation}\label{stable_alpha_coincide}
W^s(y) = W^s_\alpha(y).
\end{equation}
Note $W^s(y) \subset W^s(y;f_{d'},\theta_{d'})\subset F_m$ for some $m\leq d'$. Hence $W^s(y) \subset W^s_\alpha(y)$ since $\alpha$ vanishes near $F_m$. To prove the other inclusion, let $q\in W^s_\alpha(y)$. Taking $T$ large enough, $\phi^{T}_\alpha(q)$ gets very close to $y$. The manifolds $W^s_\alpha(y)$ and $W^s(y)$ coincide near $y$ since $\alpha$ vanishes near $y$. Hence $\phi^{T}_\alpha(q) \in W^s(y)$ if $T\gg1$. Then also $\phi^t(\phi^{T}_\alpha(q)) \in W^s(y) \subset F_m$ for all $t$ for which this is well-defined. But by uniqueness this coincides with $\phi^t_\alpha(\phi^{T}_\alpha(q))$ since $\alpha$ vanishes on $F_m$, hence this curve is defined for $t\in[-T,0]$. Plugging $t=-T$ we obtain $q\in W^s(y)$. This proves~\eqref{stable_alpha_coincide}.

Now we claim that
\begin{equation}\label{unstable_alpha_coincide_locally}
\begin{aligned}
& W^u(x) \cap U = W^u_\alpha(x) \cap U \\
& \text{for some neighborhood $U$ of $W^u(x)\cap W^s(y)$ in $F_d\cap \dot D$.}
\end{aligned}
\end{equation}
In fact, let $p\in W^u(x) \cap W^s(y)$. There exists a neighborhood $U^x$ of $x$ in $F_d\cap \dot D$ such that $W^u(x) \cap U^x = W^u_\alpha(x) \cap U^x$, since $\alpha$ vanishes near $x$. By continuity of the flow there exists $T\gg1$ and a neighborhood $W$ of $p$ in $W^u(x)$ such that $\phi^{-T}(W)\subset U^x$. Hence $\phi^{-T}(W)\subset W^u_\alpha(x)$. The trajectory $\phi^t(p)$, which is well-defined for all $t\in\R$, is contained in $G_{d'}$ since $W^s(y) \subset W^s(y;f_{d'},\theta_{d'})\subset F_m$ for some $m\leq d'$; this follows from (ii) in ($C_{d'}$). Since $\alpha$ vanishes on $V_{d'}$ we can invoke uniqueness to conclude that $\phi^t(p)=\phi^t_\alpha(p)$ for all~$t$. Thus, after further shrinking $W$, we may assume that for all $t\in[0,T]$ $\phi^t_\alpha$ is well-defined on $\phi^{-T}(W)$ and $\phi^t_\alpha(\phi^{-T}(W))\subset V_{d'}$. Since $\alpha$ vanishes on $V_{d'}$ we see that $\phi^t(z)=\phi^t_\alpha(z)$ for all $t\in[0,T]$ and $z\in \phi^{-T}(W)$, by uniqueness of solutions. Plugging $t=T$ we get
\[
W = \phi^T(\phi^{-T}(W)) = \phi^T_\alpha(\phi^{-T}(W))\subset W^u_\alpha(x).
\]
This concludes the proof of~\eqref{unstable_alpha_coincide_locally}.

Finally we note that 
\begin{equation}\label{unstable_stable_transverse_F_d}
W^u(x) \pitchfork W^s(y)
\end{equation}
follows from (iii) in ($C_{d'}$) together with (i) in Lemma~\ref{lemma_crucial4}.

Now we can finally explain why (ii) in Lemma~\ref{lemma_crucial3} follows from~\eqref{stable_alpha_coincide},~\eqref{unstable_alpha_coincide_locally} and~\eqref{unstable_stable_transverse_F_d}. Namely, we claim that
\begin{equation*}
W^u_\alpha(x) \pitchfork W^s_\alpha(y).
\end{equation*}
To see this consider a point $p$ in $W^u_\alpha(x) \cap W^s_\alpha(y)$. By~\eqref{stable_alpha_coincide} the latter set is equal to $W^u_\alpha(x) \cap W^s(y)$. The trajectory of $p$ is contained in $G_{d'}$ since $W^s(y) \subset G_{d'}$. It follows that $\phi^t(p)=\phi^t_\alpha(p)$ for all $t$. Hence $p\in W^u(x)$. We have shown that $p\in W^u(x) \cap W^s(y)$. It follows from~\eqref{unstable_alpha_coincide_locally} that locally near $p$ the manifolds $W^u(x)$ and $W^u_\alpha(x)$ coincide. From~\eqref{unstable_stable_transverse_F_d} we conclude that $p$ is a point where $W^u_\alpha(x)$ and $W^s_\alpha(y)$ meet transversely. 

As we have now verified the conditions of Lemma~\ref{lemma_crucial3}, we can now use it to find an $A$-invariant symmetric smooth tensor $\lambda:F_d\cap D \to F_d^* \otimes F_d^*$, compactly supported on $F_d\cap\dot D$ and  with an arbitrarily small $C^2$-norm, such that the pair $$ (f_{d'}|_{F_d\cap\dot D}+\alpha,\iota_d^*\theta_{d'}+\lambda) $$ is Morse-Smale on $F_d \cap \dot D$. In other words, the {\it preliminary pair}
\begin{equation}\label{preliminary_MS_pair}
(f_{d'}+P_d^*\alpha,\theta_{d'}+P_d^*\lambda)
\end{equation}
is $A$-invariant and restricts to a Morse-Smale pair on $F_d\cap \dot D$.
Lemma~\ref{lemma_crucial3} allows us to pick $\lambda$ in such a way that its support does not intersect $\overline{V_{d'}} \cap F_d \cap \dot D$.

One crucial remark that needs to be made at this point is that~\eqref{preliminary_MS_pair} keeps all properties (i)-(iv) of ($C_{d'}$). This is because it coincides with the pair $(f_{d'},\theta_{d'})$ on $V_{d'}$, perhaps after shrinking $V_{d'}$. In fact, $z\in G_{d'} \cap \dot D \Rightarrow P_dz\in G_{d'}\cap F_d\cap \dot D$ by Lemma~\ref{lemma_relposition_istropy}.

By Theorem~\ref{thm_existence_regularized_distance} we can find $A$-invariant regularized distance functions:
\begin{align*}
& \delta_0 \ \text{ the $A$-invariant regularized distance to } \ \overline{V_{d'}} \\
& \delta_1 \ \text{ the $A$-invariant regularized distance to } \ \overline{V_{d'}} \cup F_d.
\end{align*}
These functions are continuous and defined on all of $\R^N$, however $\delta_0$ is smooth on $\R^N\setminus \overline{V_{d'}}$ and $\delta_1$ is smooth on $\R^N\setminus(\overline{V_{d'}} \cup F_d)$. In fact, Theorem~\ref{thm_existence_regularized_distance} provides these distance functions without any mention to $A$-invariance, but then we can average over the group to obtain $A$-invariance. By the properties of regularized distance functions described in Theorem~\ref{thm_existence_regularized_distance}, there exists $M>0$ depending only on $N$ such that
\begin{equation}\label{ineq_derivatives_delta_01}
\begin{aligned}
& |\nabla\delta_0(x)| \leq M, \ |\nabla^2\delta_0(x)| \leq M/\delta_0(x) \ \ \forall x\in \R^N\setminus \overline{V_{d'}} \\
& |\nabla\delta_1(x)| \leq M, \ |\nabla^2\delta_1(x)| \leq M/\delta_1(x) \ \ \forall x\in \R^N\setminus (\overline{V_{d'}}\cup F_d). \\
\end{aligned}
\end{equation}
It follows that the equations
\begin{equation}\label{ineq_derivatives_delta_01_square}
\begin{aligned}
& |\nabla(\delta_i^2)|=|2\delta_i\nabla\delta_i| \leq 2M \delta_i \\
& |\nabla^2(\delta_i^2)|=|2\nabla\delta_i\otimes\nabla\delta_i+2\delta_i\nabla^2\delta_i| \leq 2M^2+2M
\end{aligned}
\end{equation}
hold pointwise in the respective domains. Moreover we have
\begin{equation}\label{ineq_delta_0_delta_1}
\delta_1 \leq c \ \dist(\cdot,\overline{V_{d'}} \cup F_d) \leq c \ \dist(\cdot,\overline{V_{d'}}) \leq c' \delta_0
\end{equation}
with suitable constants $c,c'>0$ depending only on $N$. In Appendix~\ref{app_reg_dist_functions} we prove Proposition~\ref{prop_refinement_thm_reg_function} which states that we can assume that
\begin{equation}\label{property_delta_1}
\delta_1 \ \text{ agrees with } \ \dist(\cdot,F_d) \ \text{ on a neighborhood of } \ F_d \setminus \overline{V_{d'}}.
\end{equation}

Now let us consider $\phi_\delta(s)=\phi(s/\delta)$ where $\phi:\R\to[0,1]$ vanishes identically near $(-\infty,0]$ and takes the constant value $1$ near $[1,+\infty)$. Consider the family of functions 
\[
g_\delta = \phi_\delta(\delta_0^2)\delta_1^2.
\]
Note that $g_\delta$ defines a smooth function on $\R^N$ in view of~\eqref{ineq_delta_0_delta_1} and~\eqref{property_delta_1}. Here smoothness of $\dist(\cdot,F_d)^2$ on $\R^N$ was used.

We claim that on points of $D$ all derivatives  of $g_\delta$ of order at most $2$ can be bounded uniformly and independently of $\delta$. Let us prove this claim. Setting $$ B := \max_{i=0,1} \{\|\delta_i\|_{L^\infty(D)}\} $$ we compute derivatives of $g_\delta$ using~\eqref{ineq_derivatives_delta_01} and~\eqref{ineq_derivatives_delta_01_square}. Note that $\phi_\delta'(\delta_0^2)\neq0 \Rightarrow \delta_0\leq \sqrt{\delta}$ since the support of $\phi'_\delta$ is contained in $[0,\delta]$. Using this and the previous estimates, for the first order derivatives we get
\[
\nabla g_\delta = \phi'_\delta(\delta_0^2) [\nabla (\delta_0^2)] \delta_1^2 + \phi_\delta(\delta_0^2)\nabla(\delta_1^2),
\]
which, combined with~\eqref{ineq_delta_0_delta_1}, implies that
\[
\|\nabla g_\delta\|_{L^\infty(D)} \leq \delta^{-1}\|\phi'\|_{L^\infty}2BM (c'\delta_0)^2 + 2BM \leq 2BM(1+|c'|^2\|\phi'\|_{L^\infty}).
\]
For second derivatives, we estimate similarly
\[
\begin{aligned}
\nabla^2g_\delta &= \phi''_\delta(\delta_0^2)[\nabla(\delta_0^2) \otimes \nabla(\delta_0^2)] \delta_1^2 \\
&+ \phi'_\delta(\delta_0^2) [\nabla^2(\delta_0^2)] \delta_1^2 + \phi'_\delta(\delta_0^2) \nabla(\delta_1^2)\otimes\nabla(\delta_0^2) \\
&+ \phi'_\delta(\delta_0^2) \nabla(\delta_0^2) \otimes \nabla(\delta_1^2) + \phi_\delta(\delta_0^2) \nabla^2(\delta_1^2) .
\end{aligned}
\]
Again this implies
\[
\begin{aligned}\|\nabla^2g_\delta\|_{L^\infty(D)} &\leq \|\phi''\|_{L^\infty}\delta^{-2} 2M\sqrt{\delta}2M\sqrt{\delta} (c')^2\delta + \|\phi'\|_{L^\infty}\delta^{-1}(2M^2+2M) (c')^2\delta \\
&+ \|\phi'\|_{L^\infty}\delta^{-1}2M\sqrt{\delta}\ 2Mc' \sqrt{\delta} + \|\phi'\|_{L^\infty}\delta^{-1}2M\sqrt{\delta}\ 2Mc'\sqrt{\delta} \\
&+ 2M^2+2M.
\end{aligned}
\]
Since powers of $\delta$ cancel in the estimate, we get the desired conclusion.

We are finally ready to conclude our induction step. Let $c_d>0$ and $\nu \in (0,1)$ be fixed arbitrarily. Then the pair~\eqref{preliminary_MS_pair} with all the properties established so far can be taken $C^2$-close enough to $(f_0,\theta_0)$ in such a way that all critical points $x$ lie very close to the origin, and the hessian of $f_{d'}+P_d^*\alpha$ at these points satisfies
\[
\begin{array}{ccc} |D^2(f_{d'}+P_d^*\alpha)(x)| \leq \nu c_d & & \|\lambda\|_{L^\infty} \leq\nu \end{array}.
\]
The choice of $(\alpha,\lambda)$ for this to be true depends on $(c_d,\nu)$. Now define the pair $(f_d,\theta_d)$ by 
\begin{equation}\label{final_pair_f_d_theta_d}
\begin{array}{ccc}
\theta_d := \theta_{d'} + P_d^*\lambda & & f_d := f_{d'}+P_d^*\alpha - \frac{c_d}{2} g_\delta
\end{array}
\end{equation}
where $\delta>0$ is chosen in such a way that $\phi_\delta(\delta_0^2)$ is equal to $1$ near critical points of $f_{d'}|_{F_d\cap \dot D}+\alpha$ belonging to $(F_d\cap \dot D)\setminus\overline{V_{d'}}$. Such a $\delta$ exists in view of~\eqref{nice_critical_position}. Note that the ratio $\nu/c_d$ can be chosen arbitrarily small. \\

This completes the construction of the inductive step and it remains to check the claimed properties that there exists an open $A$-invariant neighborhood $V_d$ of $G_d$ such that $V_d$ and the pair $(f_d,\theta_d)$ satisfy all properties (i)-(iv) of ($C_d$), with $\epsilon$ arbitrarily small. Let us verify these properties.

\subsubsection*{Property (iv) of claim ($C_d$)}

Note that (iv) in $(C_d)$ is clear since $\nu$ and $c_d$ can be chosen arbitrarily small as a consequence of our estimates on the $C^2$-norm of $g_\delta$ (derivatives up to second order of $c_dg_\delta$ can be bounded in terms of $c_d$ independently of~$\delta$).

\subsubsection*{Property (i) of claim ($C_d$)}

Note that $(f_d,\theta_d)$ coincides with $(f_{d'},\theta_{d'})$ near $G_{d'}$. In fact, $g_\delta$ vanishes on $\cl{V_{d'}}$, so $f_d$ coincides with $f_{d'}+P_d^*\alpha$ near $G_{d'}$. Let $z\in D$, and let $w\in G_{d'}$ satisfy $\dist(z,G_{d'})=|z-w|$. There exists $j\leq d'$ such that $w\in F_j$. Since $P_d$ commutes with $A^j$ we conclude that $P_dw\in F_j\cap F_d$. Since $$ |P_dz-P_dw|\leq |z-w|=\dist(z,G_{d'}) $$ we conclude that if $\dist(z,G_{d'})$ is small enough then $P_dz\in \cl{V_{d'}} \cap F_d \cap D$. It follows that $P_d^*\alpha$ and $P_d^*\lambda$ vanish at $z$, as we wanted to show.

If $\nu/c_d$ is small enough and $x$ is a critical point of $f_d$ in $(F_d\cap \dot D) \setminus \overline{V_{d'}}$ then the Hessian $D^2f_d(x)$ is negative-definite along the $\theta_d(x)$-orthogonal of $F_d$ at $x$:
\begin{equation}\label{neg_def_along_orthogonal}
\begin{aligned}
&x \in (\crit(f_d) \cap F_d\cap \dot D) \setminus \overline{V_{d'}} = \crit(f_{d'}|_{F_d\cap \dot D}+\alpha) \setminus \overline{V_{d'}}, \ u \bot F_d, \ u\neq0 \\
&\Rightarrow \ D^2f_d(x)(u,u) <0 .
\end{aligned}
\end{equation}
Here we strongly used that $\delta_1^2$ coincides with $\dist(\cdot,F_d)^2$ near $F_d\setminus \overline{V_{d'}}$. It also follows that critical points on $F_d \setminus \overline{V_{d'}}$ are non-degenerate. We can take 
\[
V_d := V_{d'} \cup \{y\in\R^N : |(I-P_d)y|<r\}
\]
with some $r>0$ small so that critical points on $V_d\setminus \overline{V_{d'}}$ must lie on $F_d$. Critical points of $f_d$ which lie on $\cl{V_{d'}}$ actually lie on $G_{d'}$ by construction since, as shown above, $f_d$ coincides with $f_{d'}$ near $G_{d'}$. This proves that (i) in ($C_d$) holds.

\subsubsection*{Property (ii) of claim ($C_d$)}

In the following, we will denote by $\phi_{d'}^t$ and $\phi^t_d$ the antigradient flows of $(f_{d'},\theta_{d'})$ and $(f_d,\theta_d)$, respectively. By (iii) in Lemma~\ref{lemma_crucial4} and~\eqref{neg_def_along_orthogonal} $W^s(y;f_d,\theta_d)\subset F_d$ for all critical points $y$ in $(F_d\cap \dot D)\setminus \cl{V_{d'}}$. These are precisely the critical points in $V_d\setminus \cl{V_{d'}}$. 

Now let $y$ be a critical point of $f_d$ in $\cl{V_{d'}}$. Then $y\in F_j$ for some $j\leq d'$ since $f_d$ coincides with $f_{d'}$ near $G_{d'}$ and (i) in ($C_{d'}$) holds. The inclusion $$ W^s(y;f_{d'},\theta_{d'}) \subset W^s(y;f_d,\theta_d) $$ holds because if $p\in W^s(y;f_{d'},\theta_{d'})$ then $\phi^t_{d'}(p) \in F_j$ for all $t\geq 0$ by (ii) in ($C_{d'}$). Hence by uniqueness $\phi^t_{d'}(p)=\phi^t_d(p)$ for all $t\geq 0$, so $p\in W^s(y;f_d,\theta_d)$. Now let $p\in W^s(y;f_d,\theta_d)$. Then, since $W^s(y;f_d,\theta_d)$ coincides with $W^s(y;f_{d'},\theta_{d'})$ near $y$, we get that $\phi^T_d(p) \in W^s(y;f_{d'},\theta_{d'})$ if $T\gg1$. By (ii) in ($C_{d'}$), we have $\phi^t_{d'}(\phi^T_d(p)) \in F_j$ for all $t$ for which this is well-defined. Since $(f_{d'},\theta_{d'})$ and $(f_d,\theta_d)$ coincide near $F_j$, we get $\phi^t_{d'}(\phi^T_d(p)) = \phi^t_d(\phi^T(p))$ for all $t\in [-T,0]$. Setting $t=-T$ we obtain $p\in W^s(y;f_{d'},\theta_{d'})$. We have shown that
\begin{equation}\label{coincidence_some_stable_mfds}
y\in \crit(f_d) \cap \cl{V_{d'}} \ \Rightarrow \ W^s(y;f_{d'},\theta_{d'}) = W^s(y;f_{d},\theta_{d})
\end{equation}
holds. Summarizing we have shown that (ii) in ($C_d$) follows.

\subsubsection*{Property (iii) of claim ($C_d$)}

We next prove Property (iii) in the claim ($C_d$).
Let $\{x,y\} \subset \crit(f_d)\cap V_d$. Denote $\M_d(x,y) = W^u(x;f_d,\theta_d) \cap W^s(y;f_d,\theta_d)$ and assume that $\M_d(x,y) \neq \emptyset$. As in the induction start, we consider two cases.

Let us first analyze the case $\{x,y\} \subset V_{d'}$. In particular, $\{x,y\}\subset G_{d'}$ and we find that $\M_d(x,y)\subset G_{d'}$ by what was proved above. We claim that there exists a neighborhood $\mathcal{N}$ of $\M_d(x,y)$ in $\dot D$ such that $$ W^u(x;f_d,\theta_d) \cap \mathcal{N} = W^u(x;f_{d'},\theta_{d'}) \cap \mathcal{N}. $$ In fact, take $p\in \M_d(x,y)$. Since $(f_{d'},\theta_{d'})$ and $(f_d,\theta_d)$ coincide near $x$, we know that $W^u(x;f_d,\theta_d)$ and $W^u(x;f_{d'},\theta_{d'})$ coincide near $x$. By continuity of the flow, we find a neighborhood $V_1$ of $p$ in $W^u(x;f_d,\theta_d)$ and $T_1\gg1$ such that $$ \phi^{-T_1}_d(V_1)\subset W^u(x;f_{d'},\theta_{d'}). $$ Again by continuity, there exists a neighborhood $V_2\subset V_1$ of $p$ in $W^u(x;f_d,\theta_d)$ such that $\phi_d^{[-T_1,0]}(V_2)$ is contained on a neighborhood of $G_{d'}$ where $(f_d,\theta_d)$ and $(f_{d'},\theta_{d'})$ coincide. $V_2$ exists because $\phi_d^{[-T_1,0]}(p) \subset G_{d'}$. By uniqueness of solutions $\phi^t_{d'}(\phi^{-T_1}_d(q)) = \phi^t_d(\phi^{-T_1}_d(q))$ for all $q\in V_2$ and $t\in[0,T_1]$. Setting $t=T_1$ we get $V_2\subset W^u(x;f_{d'},\theta_{d'})$. It follows that the neighborhood $\mathcal{N}$ as required exists. Hence, if $\{x,y\} \subset V_{d'}$, then (iii) in ($C_{d'}$) implies that $W^u(x;f_d,\theta_d)$ intersects $W^s(y;f_d,\theta_d)$ transversely.

Now we analyze the case $\{x,y\}\not\subset V_{d'}$. Then $y\not\in \cl{V_{d'}}$ because if $y\in \cl{V_{d'}}$ then $\{x,y\}\in G_{d'}$ since $\M_d(x,y)\neq\emptyset$ and $G_{d'}$ is closed and invariant under the flow. This shows that $y\in V_d\setminus \cl{V_{d'}}$ and, hence, $y\in F_d\cap \dot D$. By the properties of the preliminary pair~\eqref{preliminary_MS_pair} and (ii) in Lemma~\ref{lemma_crucial4} we conclude that  $W^u(x;f_d,\theta_d)$ intersects $W^s(y;f_d,\theta_d)$ transversely. Summing up we proved (iii) in ($C_d$). 

The induction step is complete and therefore also the proof of Theorem~\ref{main1}.

\section{Global invariant Morse-Smale pair for finite-cyclic group actions}
\label{sec:MS_global}

In this section we will show the existence of an invariant global Morse-Smale pair for any closed manifold endowed with a $\Z/k\Z$-action. The proof is by induction and similar to the local one. It is actually simpler, since it is not a perturbative argument where we have to care about the $C^2$-norm of the perturbation. In particular, we do not have to deal with regularized distance functions.

As in the previous sections, let $F_j = \{x \in M \mid a^j(x)=x\}$, $\iota_j: F_j \to M$ be the inclusion and given $d \in \div(k)$ define
\begin{equation}
\label{sets_G_d}
G_d = \bigcup_{j\in\div(k),j\leq d} F_j.
\end{equation}
Consider also for each $x\in M$ the integer $i_x\geq 1$ defined by
\begin{equation}
i_x = \min \{ i\geq 1 \mid x\in F_i \}.
\end{equation}
Clearly $i_x<k$ if, and only if, $x$ is a point with non-trivial isotropy group. Moreover, if $x\in F_d$ then $i_x$ divides $d$.

Similarly to the argument in the local case, consider the following family of claims indexed by $d\in\div(k)$:
\begin{itemize}
\item[$(C_d)$] There exists an arbitrarily small invariant open neighborhood $V_d$ of $G_d$, a metric $\theta_d$ on $M$ and a smooth function $f_d:V_d \to \R$ satisfying the following properties:
\begin{itemize}
\item[(i)] $(f_d,\theta_d|_{V_d})$ is invariant, and $W^s_{V_d}(x;f_d,\theta_d)$ intersects $W^u_{V_d}(y;f_d,\theta_d)$ transversely, for all $x,y \in \crit(f_d)$.
\item[(ii)] $\crit(f_d) \subset G_d$.
\item[(iii)] $W^s_{V_d}(x;f_d,\theta_d) \subset F_{i_x}$ for all $x\in \crit(f_d)$.
\end{itemize}
\end{itemize}
Above we wrote $W^s_{V_d},W^u_{V_d}$ with subscript $V_d$ to emphasize that these stable/unstable manifolds are taken with respect to the open set $V_d$. In what follows we shall carry out an inductive construction proving the claims $(C_d)$. Notice that $(C_k)$ gives our desired Morse-Smale invariant pair.

\subsubsection*{Starting the induction}

Consider a metric $\theta$ on $F_1$ and a function $f: F_1 \to \R$ such that $(f,\theta)$ is Morse-Smale on $F_1$. The existence of $(f,\theta)$ follows from standard arguments. Consider a $\Z_k$-invariant extension of $\theta$ to a metric $\theta_1$ on $M$. Applying Lemmas \ref{lemma_invariant_normal_decreasing} and \ref{lemma_crucial4} (ii) we get an invariant neighborhood $V_1$ of $G_1=F_1$ and an invariant function $f_1: V_1 \to \R$ satisfying the desired properties.

\subsubsection*{The inductive step}

Fix two consecutive integers $d'<d$ in $\div(k)$ and assume that $(C_{d'})$ holds. The action on $M$ induces a $\Z/d\Z$ action on $F_d$ which is generated by $a|_{F_d}$. 
Using  Lemma~\ref{lemma_intersection}, we compute its isotropy set
\begin{equation}\label{isotropy_locus_F_d}
\begin{aligned}
F_d \cap \left( \bigcup_{l\in\div(k),l<d} F_l \right) = F_d \cap \left( \bigcup_{l\in\div(k),l\leq d'} F_l \right) = F_d \cap G_{d'}.
\end{aligned}
\end{equation}
 Moreover, $F_d\cap V_{d'}$ is an invariant open neighborhood of $F_d \cap G_{d'}$ in $F_d$ and $f_{d'}|_{F_d\cap V_{d'}}$ is an invariant smooth function there. 

Let $h:F_d \to \R$ be any smooth $a|_{F_d}$-invariant function coinciding with $f_{d'}|_{F_d \cap V_{d'}}$ near $F_d \cap G_{d'}$. This can be obtained by taking any smooth function on $F_d$ coinciding with $f_{d'}|_{F_d \cap V_{d'}}$ near $F_d \cap G_{d'}$ and averaging it over the $\Z/d\Z$ action. After shrinking $V_{d'}$ we can assume that $h$ coincides with $f_{d'}$ on $F_d \cap V_{d'}$.

We claim that, possibly after perturbing $h$ on a compact subset of $F_d \setminus G_{d'}$, we may assume in addition that $h$ is Morse. As a matter of fact, by the properties of $f_{d'}$, the function $h$ has no critical points in the $a|_{F_d}$-invariant open set $(F_d \cap V_{d'})\setminus G_{d'}$. This is true because $h$ and $f_{d'}$ coincide on $F_d \cap V_{d'}$, the $\theta_{d'}$-gradient of $f_{d'}$ is tangent to $F_d$ at points in this set, and the critical set of $f_{d'}$ is contained in $G_{d'}$.

The $\Z/d\Z$ action on $F_d \setminus G_{d'}$ is free in view of the description~\eqref{isotropy_locus_F_d} of its isotropy locus. So we obtain a smooth manifold $X_d$ by the quotient of $F_d \setminus G_{d'}$ by the $\Z/d\Z$ action. Let $\Pi : F_d \setminus G_{d'} \to X_d$ denote the quotient map. The function $h|_{F_d \setminus G_{d'}}$ descends to a smooth function $h'\colon X_d \to \R$ with no critical points on the open subset $$ \mathcal O_d := \Pi((F_d \cap V_{d'})\setminus G_{d'}). $$ Clearly $\mathcal{O}_d$ is an end of $X_d$, and $X_d \setminus \mathcal O_d$ is a compact set containing the critical points of $h'$. Consequently we can $C^\infty$-slightly perturb $h'$ so that it becomes a Morse function on $X_d$ and, perhaps after shrinking $V_{d'}$ we may also assume that $h'$ remains unchanged in $\mathcal O_d$. As a consequence, the lift $h$ of $h'$ to $F_d\setminus G_{d'}$ has the desired properties.

Now we note that unstable and stable manifolds (taken in $F_d$ with respect to the metric $\iota_d^*\theta_{d'}$) of critical points of $h$ in $F_d \cap V_{d'}$ intersect transversely (in $F_d$). Namely, consider critical points $x_0$ and $x_1$ of $h$ on $F_d \cap V_{d'}$ which are connected by an anti-gradient trajectory $c$ of $h$ from $x_0$ to $x_1$ in $F_d$. Since $h$ coincides with $f_{d'}$ on $F_d\cap V_{d'}$ and the $\theta_{d'}$-gradient of $f_{d'}$ is tangent to $F_d$, we conclude that $x_0$ and $x_1$ are also critical points of $f_{d'}$, in particular, they belong to $G_{d'}$.

By (iii) in ($C_{d'}$), and uniqueness of solutions of ODEs, we know that the stable manifold of $x_1$ on $F_d$ with respect to the pair $(h,\iota_d^*\theta_{d'})$ is equal to $W^s_{V_{d'}}(x_1;f_{d'},\theta_{d'}) \subset F_{i_{x_1}}$. Since $F_{i_x}$ is compact, we get $x_0 \in F_{i_{x_1}}$. By (i) in ($C_{d'}$) we can apply Lemma~\ref{lemma_crucial4} item i) to conclude that $W^u(x_0;h,\iota_d^*\theta_{d'})$ intersects $W^s(x_1;h,\iota_d^*\theta_{d'})$ transversally along $c$, as desired.

 
By Lemma~\ref{lemma_crucial3}, the metric $\theta_{d'}$ can be slightly $C^\infty$-perturbed into an invariant metric $\theta_d$ such that $\theta_d-\theta_{d'}$ is compactly supported in a neighborhood of $\crit(h|_{F_d \setminus V_{d'}})$ and $(h,\iota_d^*\theta_d)$ is Morse-Smale on $F_d$.


Now we are ready to define $f_d$ and $V_d$ with the properties claimed in $(C_d)$. In order to do this, consider the extension $\bar h$ of $h$ to a small neighborhood of $F_d$ given by an application of Lemma~\ref{lemma_invariant_normal_decreasing} to $h$ and $F_d$. We need the following

\begin{lemma}
\label{lemma_consecutive_bumps}
Let $V'$ be an open neighborhood of $G_{d'}$ and $V$ a neighborhood of $F_d$. There are smooth invariant functions $\phi_0,\phi_1:M\to[0,1]$ satisfying
\begin{itemize}
\item[a)] $\supp(\phi_0)\subset V'$ and $\supp(\phi_1) \subset V$,
\item[b)] $\phi_0+\phi_1 \equiv 1$ on a neighborhood of $G_d = G_{d'} \cup F_d$,
\item[c)] $\supp(\phi_1) \cap G_{d'} = \emptyset$.
\end{itemize}
\end{lemma}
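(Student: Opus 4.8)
The statement is a refinement of Lemma~\ref{lemma_invariant_nbds} for the compact invariant set $G_d = G_{d'}\cup F_d$, where we additionally insist that the two bump functions have disjoint-from-$G_{d'}$ and shrinkable-into-prescribed-neighborhood properties. The key structural fact to exploit is that $G_{d'}$ is closed and invariant, and that $F_d\setminus G_{d'}$ is a relatively open invariant subset of $F_d$ whose closure still meets $G_{d'}$ only in $F_d\cap G_{d'}\subset G_{d'}$. So the idea is to first build $\phi_0$ as an invariant bump around $G_{d'}$, supported in $V'$, equal to $1$ on an invariant neighborhood $W_0\supset G_{d'}$ (this is exactly Corollary~\ref{cor_inv_smooth_nbd}/Lemma~\ref{lemma_invariant_nbds}); then build $\phi_1$ as an invariant bump that is $1$ near the part of $F_d$ not yet covered by $\{\phi_0=1\}$, supported in $V$ and disjoint from $G_{d'}$.

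\textbf{Key steps.} First I would apply Lemma~\ref{lemma_invariant_nbds} to the invariant compact set $G_{d'}$ and the open neighborhood $V'$ to get an invariant $\phi_0:M\to[0,1]$ with $\supp(\phi_0)\subset V'$ and $\phi_0\equiv 1$ on an invariant open neighborhood $W_0$ of $G_{d'}$. Next, consider the invariant compact set $K := F_d\setminus W_0$. Since $W_0\supset G_{d'}\supset F_d\cap G_{d'}$, we have $K\subset F_d\setminus G_{d'}$, which is disjoint from the closed invariant set $G_{d'}$; hence $\dist(K,G_{d'})>0$ and we may choose an invariant open set $V''$ with $K\subset V''\subset \overline{V''}\subset V\setminus G_{d'}$ (using that an invariant compact set has arbitrarily small invariant neighborhoods, Corollary~\ref{cor_inv_smooth_nbd}, intersected with $V$ and with the complement of $G_{d'}$). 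Apply Lemma~\ref{lemma_invariant_nbds} again to $K$ and $V''$ to obtain an invariant $\phi_1:M\to[0,1]$ with $\supp(\phi_1)\subset V''\subset V$, $\phi_1\equiv 1$ on an invariant neighborhood $W_1$ of $K$, and automatically $\supp(\phi_1)\cap G_{d'}=\emptyset$. This gives (a) and (c) directly.

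\textbf{Checking (b).} On the invariant open set $W_0$ we have $\phi_0\equiv 1$, so $\phi_0+\phi_1\geq 1$, but we want equality; this is where a small care is needed. The clean fix is to arrange at the outset that $\supp(\phi_1)\cap W_0=\emptyset$: replace $W_0$ by a slightly smaller invariant neighborhood $W_0'$ of $G_{d'}$ with $\overline{W_0'}\subset W_0$, redo $\phi_0$ so that $\phi_0\equiv 1$ on $W_0'$ and $\supp(\phi_0)\subset V'$ still, and then take $K := F_d\setminus W_0'$ but build $\phi_1$ supported inside $V''\subset (V\setminus G_{d'})\setminus \overline{W_0'}$ — possible because $\overline{W_0'}$ is invariant compact and $K\setminus W_0'$... more simply, since $K=F_d\setminus W_0'$ and the part of $K$ inside $W_0$ is where $\phi_0=1$; one shrinks $V''$ away from $\{\phi_0=1\}$ wherever convenient. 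Then on the invariant neighborhood $W_0'\cup W_1$ of $G_d$: points of $W_0'$ have $\phi_0=1$, $\phi_1=0$; points of $W_1\setminus W_0'$ have $\phi_1=1$ and $\phi_0$ either $0$ or $1$ — to force $\phi_0+\phi_1\equiv 1$ we further shrink so $\supp(\phi_0)\cap W_1=\emptyset$ outside... The cleanest route, which I would actually write, is: choose nested invariant neighborhoods so that $\supp(\phi_0)$ and $\supp(\phi_1)$ are disjoint (possible since $G_{d'}$ and $F_d\setminus W_0'$ are disjoint closed invariant sets), $\phi_0\equiv 1$ near $G_{d'}$, $\phi_1\equiv 1$ near $F_d\setminus(\text{nbhd of }G_{d'})$; then $\phi_0+\phi_1$ takes values in $[0,1]$ and equals $1$ on a neighborhood of $G_d$, since every point of $G_d$ lies in $\{\phi_0=1\}$ or in $\{\phi_1=1\}$.

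\textbf{Main obstacle.} The only subtlety — and it is minor — is the simultaneous fulfillment of (b) and (c): making $\phi_1$ avoid $G_{d'}$ while still summing to $1$ with $\phi_0$ on all of $G_d$ near $F_d\cap G_{d'}$. This is handled precisely because near $F_d\cap G_{d'}$ one already has $\phi_0\equiv 1$, so $\phi_1$ is allowed (and required) to vanish there, and there is no conflict; the disjoint-support construction makes the sum locally constant equal to $1$. Everything else is a routine invariant partition-of-unity argument built on Lemma~\ref{lemma_invariant_nbds} and Corollary~\ref{cor_inv_smooth_nbd}, using that $G_{d'}$ and $F_d$ are closed invariant submanifolds with $F_d\cap G_{d'}$ their intersection.
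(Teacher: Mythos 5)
Your construction of $\phi_0$ is fine and matches the paper's first step, but the way you produce $\phi_1$ does not work, and your ``clean fix'' via disjoint supports is topologically impossible in exactly the situation the lemma is designed for. Suppose a connected component $C$ of $F_d$ meets $G_{d'}$ but is not contained in $V'$ (the typical case, since $V'$ is to be taken small). If $\supp(\phi_0)\cap\supp(\phi_1)=\emptyset$, then at each point at most one of $\phi_0,\phi_1$ is nonzero, so $\phi_0+\phi_1\equiv1$ on a neighborhood $N$ of $G_d$ forces $N\subset\{\phi_0=1\}\cup\{\phi_1=1\}$, a union of two disjoint closed sets. The connected set $C\subset N$ would then have to lie entirely in one of them; but $C\cap G_{d'}\neq\emptyset$ forces $C$ to meet $\{\phi_0=1\}$ (with $\phi_1=0$ there, by (c)), while $C\not\subset V'\supset\supp(\phi_0)$ forces $C$ to meet $\{\phi_1=1\}$ --- a contradiction. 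The same tension kills your intermediate variants: if $\phi_0\equiv1$ on an open set $W$ and you ask $\phi_1\equiv1$ on a neighborhood of $F_d\setminus W$, then at points of $F_d\cap\partial W$ continuity gives $\phi_0=1$ while membership in $F_d\setminus W$ gives $\phi_1=1$, so the sum is $2$; if instead you keep $\supp(\phi_1)$ away from $\overline{W}$, then $\phi_1$ cannot be $\equiv 1$ near those same points, and along the transition region of $F_d$ between $\{\phi_0=1\}$ and $\{\phi_1=1\}$ nothing forces the sum to equal $1$. In short, two independently chosen bumps, each equal to $0$ or $1$ near $G_d$, cannot interpolate across $F_d$.

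The paper resolves this by not choosing $\phi_1$ independently: it sets $\phi_1=\chi\cdot(1-\phi_0)$, where $\chi$ is an invariant cutoff with $\supp(\chi)\subset V$ and $\chi\equiv1$ on a neighborhood of the compact set $F_d$ (via Lemma~\ref{lemma_invariant_nbds} and Corollary~\ref{cor_inv_smooth_nbd}). Then on $\{\chi=1\}$ one has $\phi_0+\phi_1=\phi_0+(1-\phi_0)=1$ regardless of the intermediate values of $\phi_0$, and on the open set where $\phi_0\equiv1$ one has $\phi_1=0$, so (b) holds on a neighborhood of $G_{d'}\cup F_d$; (a) is clear, and (c) holds because $\{\phi_1\neq0\}$ avoids the open neighborhood of $G_{d'}$ on which $\phi_0\equiv1$, hence so does its closure. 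If you want to salvage your outline, the repair is this multiplicative construction (or an invariant partition of unity subordinate to the cover $\{V',\,V\setminus G_{d'},\,M\setminus G_d\}$), not disjoint supports.
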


\begin{proof}
Define $\phi_0$ to be the average over the action of any function which is equal to one on a small neighborhood of $G_{d'}$ and supported in $V'$. Then define $\phi_1$ by $1-\phi_0$ and use an invariant cutoff function to achieve $\supp(\phi_1)\subset V$. As the submanifold $F_d$ is compact, such an invariant cutoff function can be constructed using the shell between two invariant neighborhoods as given by Corollary \ref{cor_inv_smooth_nbd}.
\end{proof}

By the previous lemma, we have bump functions $\phi_0$ and $\phi_1$ such that $\supp(\phi_0)\subset V_{d'}$ which implies that $\phi_1\equiv 1$ on a neighborhood of $F_d \setminus V_{d'}$. Define
\begin{equation}
f_d = \phi_0 f_{d'} + \phi_1 \bar h.
\end{equation}

It remains to check (i), (ii) and (iii) and we begin with (ii). Clearly, $f_d$ is invariant since so are $\phi_0$, $\phi_1$, $f_{d'}$ and $\bar h$. Moreover $f_d|_{F_d} \equiv h$, and $f_d$ coincides with $f_{d'}$ near $G_{d'}$ by item c) in Lemma~\ref{lemma_consecutive_bumps}.

We claim that if $x\in \crit(f_d) \cap G_d$ then one of the following holds:
\begin{itemize}
\item[A)] If $x \in G_{d'}$ then there exists an open neighborhood $U_x$ in $M$ such that $f_d|_{U_x} \equiv f_{d'}|_{U_x}$.
\item[B)] If $x \in F_d \setminus G_{d'}$ then there exists an open neighborhood $U_x$ in $M$ such that $f_d|_{U_x} \equiv \bar h|_{U_x}$.
\end{itemize}
To see this we start by noting that if $x\in \crit(f_d) \cap G_d$ then either $x\in G_{d'}$ or $x\in F_d\setminus G_{d'}$ because $G_d = F_d \cup G_{d'}$. Note that A) holds since $f_d$ coincides with $f_{d'}$ near $G_{d'}$. Let us check B). If $x\in F_d\setminus G_{d'}$ then $x \in F_d\setminus V_{d'}$ because $f_d$ coincides with $h|_{F_d \cap V_{d'}} = f_{d'}|_{F_d \cap V_{d'}}$ on $F_d\cap V_{d'}$ and $f_{d'}$ has no critical points in $(F_d \setminus G_{d'}) \cap V_{d'}$. Thus $\phi_1$ is identically equal to $1$ near $x$, so that $f_d\equiv\bar h$ near $x$.

In particular, all critical points in $\crit(f_d) \cap G_d$ are non-degenerate. This is obviously true in case A) since $f_{d'}$ is Morse. In case B), this follows from the construction in the proof of Lemma~\ref{lemma_invariant_normal_decreasing} because $h$ is Morse and $x$ is non-degenerate as a critical point of $\bar h$. As a consequence we find a small invariant open neighborhood $V_d$ of $G_d$ where 
\begin{equation}\label{crit_f_d}
\crit(f_d) \cap V_d \subset G_d.
\end{equation}
This proves condition (ii) of the Claim $(C_d)$.

Next, we prove condition (iii). Let $x\in \crit(f_d) \cap V_d$. By~\eqref{crit_f_d}, either $x$ falls into case A) or into case B).  In case~A) we can apply Lemma~\ref{lemma_crucial1} to find $$ W^s_{V_d}(x;f_d,\theta_d) = W^s_{V_{d'}}(x;f_{d'},\theta_{d'}) \subset F_{i_x} \subset G_{d'}. $$ In case~B), since $f_d|_{U_x}=\bar h|_{U_x}$, we can again apply Lemma~\ref{lemma_crucial1} to conclude that $$ W^s_{V_d}(x;f_d,\theta_d) \subset F_d, $$ but note that $F_{i_x}=F_d$ in this case.  Thus, in either case we find $$ W^s_{V_d}(x;f_d,\theta_d) \subset F_{i_x} \subset G_d $$ for every $x\in \crit(f_d) \cap V_d$.

To conclude the proof that $(f_d|_{V_d},\theta_d)$ satisfies $(C_d)$, it remains to establish condition (i).
Consider two critical point $x_0$ and $x_1$ in $\crit(f_d) \cap V_d$ and let $c$ be an anti-gradient trajectory of $f_d$ from $x_0$ to $x_1$. Again we consider different cases, analogous to cases A) and B) above. If $x_1 \in G_{d'}$, then we find that also $x_0 \in G_{d'}$ since 
\[
x_0 \in \overline{W^s_{V_d}(x_1;f_d,\theta_d)} \subset \overline{F_{i_{x_1}}} =  F_{i_{x_1}} \subset G_{d'}
\]
and we can apply $(C_{d'})$. Moreover, $c$ is an anti-gradient trajectory of $f_{d'}|_{V_{d'}}$.  
By (iii) in ($C_{d'}$) (notice that, by construction, we have $\theta_d|_{V_{d'}}=\theta_{d'}|_{V_{d'}}$) we get that $W^u_{V_d}(x_0;f_d,\theta_d)$ intersects $W^s_{V_d}(x_1;f_d,\theta_d)$ transversely along $c$. 

In the second case, i.e., if $x_1 \in F_d\setminus G_{d'}$, we also have $x_0\in F_d$ since 
\[
x_0 \in \cl{W^s_{V_d}(x_1;f_d,\theta_d)} \subset F_{i_{x_1}} = F_d.
\]
Thus $W^s_{V_d}(x_0;f_d,\theta_d) \subset F_{i_{x_0}} \subset F_d$ and $c$ is an anti-gradient of $f_d|_{F_d} = h$.  Since $(h,\iota_d^*\theta)$ is Morse-Smale we apply Lemma~\ref{lemma_crucial4} item ii) to find that $W^u_{V_d}(x_0;f_d,\theta_d)$ intersects $W^s_{V_d}(x_1;f_d,\theta_d)$ transversely along $c$. This completes the proof of the induction step and thus the Claims $(C_d)$ are established for all $d$.

\appendix

\section{Regularized distance functions}\label{app_reg_dist_functions}

The statement below is found in chapter VI of Stein's book~\cite{stein}.

\begin{theorem}\label{thm_existence_regularized_distance}
Let $X\subset \R^N$ be any closed set. Then there exists a function $\delta_X:\R^N\to[0,+\infty)$ with the following properties.
\begin{itemize}
\item[a)] $\delta_X$ is continuous on $\R^N$, and is smooth on $\R^N\setminus X$.
\item[b)] $c_1 \ \dist(x,X) \leq \delta_X(x) \leq c_2 \ \dist(x,X)$ for all $x\in \R^N$, with constants $c_1,c_2>0$ independent of $X$.
\item[c)] $|D^\alpha\delta_X(x)| \leq B_\alpha \dist(x,X)^{1-|\alpha|}$ for all $x\not\in X$, with constants $B_\alpha>0$ independent of $X$.
\end{itemize}
\end{theorem}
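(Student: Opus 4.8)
The plan is to reproduce the classical Whitney-decomposition construction of Stein, which is what makes all the constants depend only on the dimension $N$. First I would fix a Whitney covering of the open set $\Omega := \R^N \setminus X$: a countable family of closed dyadic cubes $\{Q_k\}$ with pairwise disjoint interiors, $\bigcup_k Q_k = \Omega$, and $\diam(Q_k) \le \dist(Q_k,X) \le 4\,\diam(Q_k)$. Writing $\ell_k$ for the sidelength of $Q_k$ and $Q_k^* := \frac{9}{8}Q_k$ for the slight dilation, the standard geometric properties are: (i) every point of $\Omega$ lies in at most $N_0 = N_0(N)$ of the cubes $Q_k^*$; (ii) if $Q_j^* \cap Q_k^* \neq \emptyset$ then $\frac14 \le \ell_j/\ell_k \le 4$; (iii) for $x \in Q_k^*$ one has $c\,\ell_k \le \dist(x,X) \le C\,\ell_k$ with $c,C$ dimensional. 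Next I would take a fixed smooth bump and rescale/translate it to obtain $\phi_k \in C^\infty_c(Q_k^*)$ with $0 \le \phi_k \le 1$, $\phi_k \equiv 1$ on $Q_k$, and $|D^\gamma \phi_k| \le A_\gamma \ell_k^{-|\gamma|}$; since $\sum_k \phi_k \ge 1$ on $\Omega$ and is locally a sum of at most $N_0$ smooth terms, the normalised functions $\varphi_k := \phi_k / \sum_j \phi_j$ form a smooth partition of unity on $\Omega$ subordinate to $\{Q_k^*\}$, and by the quotient rule together with (i)--(ii) one gets $|D^\gamma \varphi_k| \le A'_\gamma \ell_k^{-|\gamma|}$ by induction on $|\gamma|$.

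Then I would set
\[
\delta_X(x) = \sum_k \ell_k\,\varphi_k(x) \quad (x \in \Omega), \qquad \delta_X(x) = 0 \quad (x \in X),
\]
and verify the three assertions termwise. Smoothness on $\Omega$ is immediate from local finiteness of the covering. For b): if $\varphi_k(x) \neq 0$ then $x \in Q_k^*$, so $\ell_k$ is comparable to $\dist(x,X)$ by (iii); since $\sum_k \varphi_k \equiv 1$ on $\Omega$, $\delta_X(x)$ is a convex combination of numbers each comparable to $\dist(x,X)$, giving $c_1\,\dist(x,X) \le \delta_X(x) \le c_2\,\dist(x,X)$ on $\Omega$, and in particular $\delta_X(y) \to 0$ as $y \to X$, which yields continuity across $X$. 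For c): for $|\alpha| \ge 1$ and $x \in \Omega$, differentiating the locally finite sum gives $D^\alpha \delta_X(x) = \sum_{k \,:\, x \in Q_k^*} \ell_k\, D^\alpha \varphi_k(x)$, a sum of at most $N_0$ terms each bounded by $A'_\alpha \ell_k^{1-|\alpha|} \le A'_\alpha C^{|\alpha|-1}\dist(x,X)^{1-|\alpha|}$, hence $|D^\alpha \delta_X(x)| \le B_\alpha \dist(x,X)^{1-|\alpha|}$ with $B_\alpha = B_\alpha(N)$. (If one wanted the sharpest possible constant one could first fix an index $k_0$ with $x \in Q_{k_0}^*$ and use $\sum_k D^\alpha \varphi_k = D^\alpha 1 = 0$ to replace $\ell_k$ by $\ell_k - \ell_{k_0}$, but (ii) shows this only gains a dimensional factor and is not needed for the stated estimate.)

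The only genuinely delicate points are the bookkeeping ones: establishing the bounded-overlap constant $N_0$ and the comparabilities (i)--(iii) for the Whitney cubes, and propagating the derivative bounds from $\phi_k$ to the normalised $\varphi_k$. Both are standard and, crucially, involve only the geometry of dyadic cubes in $\R^N$, so every constant is independent of the closed set $X$ --- which is exactly what is needed for the uniform use of $\delta_{\overline{V_{d'}}}$ and $\delta_{\overline{V_{d'}} \cup F_d}$ throughout the inductive steps in Sections~\ref{sec:MS_local} and~\ref{sec:MS_global}. Finally, for the $A$-invariant versions invoked there, when $X$ is $A$-invariant one simply averages $\delta_X$ over the finite group; since the group acts by isometries fixing $X$, averaging commutes with $\dist(\cdot,X)$, so continuity, the two-sided comparability and the derivative bounds all persist, with constants enlarged by at most a factor $k$.
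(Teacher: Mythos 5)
Your proposal is correct and takes essentially the same route as the source the paper relies on: Theorem~\ref{thm_existence_regularized_distance} is quoted from Stein, and the paper's own Appendix~\ref{app_reg_dist_functions} argument (for Proposition~\ref{prop_refinement_thm_reg_function}) uses exactly this Whitney decomposition, bump functions with $|D^\alpha\varphi_k|\leq A_\alpha(\diam Q_k)^{-|\alpha|}$, and the quotient $\Delta/\Phi$, which coincides with your partition-of-unity normalization $\sum_k \ell_k\phi_k/\sum_j\phi_j$ up to the harmless discrepancy between $\ell_k$ and $\diam Q_k$. Your concluding group-averaging remark for the $A$-invariant version is also precisely how the paper obtains invariance.
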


The purpose of this appendix is to prove the following refinement.

\begin{proposition}\label{prop_refinement_thm_reg_function}
Let $Y\subset\R^N$ be closed and $E\subset \R^N$ be a linear subspace, both invariant under a linear map $A\in O(N)$ satisfying $A^k=I$ for some $k\geq 1$. Setting $X=Y\cup E$, there exists a function $\delta_X:\R^N\to[0,+\infty)$ that satisfies all the properties of Theorem~\ref{thm_existence_regularized_distance} and is, in addition, $A$-invariant and coincides with $\dist(x,E)$ on a neighborhood of $E\setminus Y$.
\end{proposition}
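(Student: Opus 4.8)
The plan is to build the required function by \emph{gluing}, at the correct length scale, the genuine distance $\dist(\cdot,E)$ --- which in a neighborhood of $E$ is already an excellent regularized distance --- to Stein's regularized distance for the full set $X=Y\cup E$, and then to symmetrize over the $\Z_k$-action. First I would dispose of the degenerate case $Y=\emptyset$: then $X=E$ and one simply takes $\delta_X(x)=\dist(x,E)$. This is continuous on $\R^N$, smooth on $\R^N\setminus E$ (because $\dist(x,E)^2=\langle(I-P)x,x\rangle$ is a quadratic polynomial, $P$ being the orthogonal projection onto $E$, so its square root is smooth where positive), is $A$-invariant since $AP=PA$ (as $A\in O(N)$ preserves $E$ and hence $E^\perp$), satisfies b) with constants $1$, and satisfies c) because $\dist(x,E)=|(I-P)x|$ with $I-P$ linear of norm $\le1$. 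From now on assume $Y\neq\emptyset$.

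Next I would fix Stein's regularized distances $\delta_Y$ for $Y$ and $\delta^0$ for $X$, given by Theorem~\ref{thm_existence_regularized_distance}, and write $q(x):=\dist(x,E)$, smooth off $E$ with $q^2$ a polynomial. I would choose a smooth $\psi:\R\to[0,1]$ with $\psi\equiv1$ on $(-\infty,\tfrac14]$, $\psi\equiv0$ on $[\tfrac34,\infty)$, $\psi>0$ on $(-\infty,\tfrac34)$, and a constant $c>0$ depending only on $N$ (through Stein's universal upper constant for $\delta_Y$) so large that
\[
c\,\frac{q(x)^2}{\delta_Y(x)^2}\le\tfrac34 \ \Longrightarrow\ q(x)\le\tfrac12\dist(x,Y)\qquad(x\in\R^N\setminus Y).
\]
Then I would set $\Psi:=\psi\!\left(c\,q^2/\delta_Y^2\right)$ on $\R^N\setminus Y$, extended by $0$ on $Y$, and define
\[
\delta_X:=\Psi\,q+(1-\Psi)\,\delta^0 .
\]
The geometric consequence of the implication above is that on $\supp\Psi$ one has $q(x)\le\tfrac12\dist(x,Y)$, hence $\dist(x,X)=\min(q(x),\dist(x,Y))=q(x)$; this coincidence is what drives all the estimates.

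The verification I would then carry out is: (1) $\Psi$ is smooth on $\R^N\setminus X$ (immediate, as there $x\notin Y$), equals $1$ on a neighborhood of $E\setminus Y$ (near such a point $q\to0$ while $\delta_Y$ stays bounded below, so the argument of $\psi$ tends to $0$), and $\delta_X$ extends continuously across $Y$ by $0$ (near $Y\setminus E$ the argument of $\psi$ tends to $+\infty$, so $\Psi q\to0$; near $Y\cap E$ one has $q\to0$, and $\delta^0\to0$ on all of $X$); (2) for property b) one uses $q=\dist(\cdot,X)$ on $\supp\Psi$ together with Stein's comparison for $\delta^0$ to see that the convex combination $\delta_X$ is comparable to $\dist(\cdot,X)$, while off $\supp\Psi$ one simply has $\delta_X=\delta^0$; (3) for property c) one writes $\delta_X=\delta^0+\Psi(q-\delta^0)$ and applies Leibniz, using on $\supp\Psi$ the bounds $|D^\beta q|,|D^\beta\delta^0|\le C\dist(\cdot,X)^{1-|\beta|}$ (again because $\dist(\cdot,X)=q$ and we are off $E$ and off $Y$ there) and the chain-rule bound $|D^\gamma\Psi|\le C\dist(\cdot,X)^{-|\gamma|}$, the latter obtained from $|D^\gamma(q^2/\delta_Y^2)|\le C\dist(\cdot,X)^{-|\gamma|}$ valid on $\supp(D\Psi)$, where $q\simeq\delta_Y\simeq\dist(\cdot,X)$. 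Finally I would symmetrize: $\bar\delta_X(x):=\tfrac1k\sum_{j=0}^{k-1}\delta_X(A^jx)$. Since $A\in O(N)$ and $AX=X$, one has $\dist(A^jx,X)=\dist(x,X)$, so a) and b) are preserved verbatim; for c), for the linear orthogonal map $A^j$, $D^\alpha(\delta_X\circ A^j)$ is a finite combination --- with coefficients the entries of $A^j$, all of modulus $\le1$ --- of $(D^\beta\delta_X)\circ A^j$ with $|\beta|=|\alpha|$, so the estimate survives averaging with constants still depending only on $N$. Since $E\setminus Y$ is $A$-invariant, if $\delta_X=q$ on a neighborhood $U$ of $E\setminus Y$ then $\delta_X=q$ on $\bigcap_j A^{-j}U$, and there $\delta_X(A^jx)=\dist(A^jx,E)=\dist(x,E)=q(x)$, so $\bar\delta_X=q$ near $E\setminus Y$; taking $\delta_X:=\bar\delta_X$ finishes the argument.

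I expect the main obstacle to be the bookkeeping in the derivative estimates of step (3): one must consistently take the denominator to be $\dist(\cdot,X)$, which coincides with $q=\dist(\cdot,E)$ only on $\supp\Psi$ --- precisely the region where $\Psi$ is nonconstant --- so one must check that this region stays at a definite distance, at the relevant scale, from both $E$ and $Y$, making the individual bounds for $q$, $\delta^0$ and $\delta_Y$ simultaneously available. The continuity statement at points of $Y\cap E$, where both $q$ and $\delta_Y$ degenerate, needs a small separate remark, but it is harmless since such points lie in $X$ and $\Psi$ is not required to be smooth there.
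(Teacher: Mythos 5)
Your construction is correct, but it follows a genuinely different route from the paper. The paper works inside Stein's proof: it takes the Whitney cube decomposition of $\R^N\setminus X$ (Theorem~\ref{thm_covering}), splits the cubes according to whether their expansions $Q^*$ lie in $U=\{\dist(\cdot,E)<\dist(\cdot,Y)\}$, and in the defining sum for the regularized distance replaces the weight $\diam Q_k$ by $\dist(\cdot,E)$ on the cubes deep inside $U$; the coincidence with $\dist(\cdot,E)$ then holds on the smaller region $\{c\,\dist(\cdot,E)<\dist(\cdot,Y)\}$, the derivative bounds are re-derived from the cube geometry, and $A$-invariance comes from averaging at the end, exactly as you do. You instead use Theorem~\ref{thm_existence_regularized_distance} twice as a black box ($\delta^0$ for $X$, $\delta_Y$ for $Y$ alone) and glue $\dist(\cdot,E)$ to $\delta^0$ with the cutoff $\Psi=\psi(c\,q^2/\delta_Y^2)$; the key points you need -- that wherever $\Psi>0$ one has $q\le\tfrac12\dist(\cdot,Y)$ and hence $\dist(\cdot,X)=q$, and that wherever $D^\gamma\Psi\neq0$ (for $|\gamma|\ge1$) one additionally has $q\simeq\delta_Y\simeq\dist(\cdot,Y)$, so that the Leibniz/Fa\`a di Bruno bounds close up with dimensional constants -- are exactly the ones you identify, and they do hold with your choice of $c$ (e.g.\ $c=3c_2^2$ with $c_2$ the universal upper constant for $\delta_Y$). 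The averaging step and the persistence of the coincidence on $\bigcap_j A^{-j}U$ are also fine, since $AE=E$, $AY=Y$ and $A\in O(N)$ give $\dist(A^jx,E)=\dist(x,E)$ and $\dist(A^jx,X)=\dist(x,X)$. What your approach buys is that Stein's theorem is never reopened -- no cube bookkeeping -- at the price of the chain-rule estimates for $\Psi$ on the transition region; what the paper's approach buys is that all estimates come directly from the cube geometry already set up for Theorem~\ref{thm_existence_regularized_distance}, so no second regularized distance $\delta_Y$ and no cutoff calculus are needed. Your treatment of the degenerate case $Y=\emptyset$ and of continuity at points of $Y\cap E$ is correct and harmless, as you say.
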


To prove this proposition we follow the proof of Theorem~\ref{thm_existence_regularized_distance} from~\cite{stein} closely. By a cube in $\R^N$ with side $l>0$ we mean a product of half-open intervals $Q = \prod_{i=1}^N [a_i,b_i)$ where $l=b_i-a_i$ for all $i$. Sets of the form $\cl{Q}$ and $\intr(Q)$ will be referred to as closed and open cubes. Cubes $Q$ also have a center, the unique point $x$ with the following property: if $Q$ has side $l$ then $\overline Q$ is the closed ball with radius $l/2$ centered at $x$ with respect to the $\|\cdot\|_\infty$-norm.

We will say that a cube $Q$ touches another cube $Q'$ if $\cl{Q} \cap \cl{Q'} \neq \emptyset$. We might use the same terminology for open or closed cubes. If $Q$ is a (possibly open or closed) cube with center $x$ then we denote by $Q^*$ the cube
\[
Q^* := x + \frac{9}{8}(Q-x).
\]
It follows that
\begin{equation}\label{ineq_distQ_Q*}
\dist(x,Q) \leq \frac{1}{16}\diam Q \ \ \forall x\in Q^*
\end{equation}
and
\begin{equation}\label{ineq_diam_Q*}
\diam Q^* = \frac{9}{8} \diam Q.
\end{equation}

We start by recalling the following theorem also proved in~\cite[chapter VI]{stein}.

\begin{theorem}\label{thm_covering}
Let $X\subset \R^N$ be a closed set. There exists a countable collection of cubes $\F = \{Q_1,Q_2,\dots\}$ such that
\begin{itemize}
\item[a)] $\R^N \setminus X = \cup_k Q_k$.
\item[b)] Cubes in $\F$ are pairwise disjoint.
\item[c)] $\diam Q_k \leq \dist(Q_k,X) \leq 4\ \diam Q_k$ for all $k$.
\item[d)] If $Q\in \F$ touches $Q'\in\F$ then $\frac{1}{4} \diam Q \leq \diam Q' \leq 4\diam Q$.
\item[e)] If $Q\in\F$ then at most $(12)^N$ cubes in $\F$ touch $Q$.
\item[f)] Every $x\not\in X$ has a neighborhood  which intersects at most $(12)^N$ cubes in $\F^* = \{Q^* \mid Q\in \F\}$.
\end{itemize}
\end{theorem}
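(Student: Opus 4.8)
The plan is to carry out the classical Whitney dyadic decomposition. I may assume $X\neq\emptyset$, since otherwise $\R^N\setminus X=\R^N$ and the statement is vacuous as written. For $j\in\Z$ let $\mathcal{M}_j$ denote the standard mesh of dyadic cubes of side $2^{-j}$; I will use repeatedly that two dyadic cubes are disjoint or nested, that every dyadic cube has a unique parent of twice the side, and that $\diam Q=\sqrt N\,2^{-j}$ for $Q\in\mathcal{M}_j$. Call a dyadic cube $Q$ \emph{good} if $\diam Q\le\dist(Q,X)$, and note that goodness is inherited by dyadic subcubes, since $Q\subset Q'$ forces $\diam Q\le\diam Q'$ and $\dist(Q',X)\le\dist(Q,X)$. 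For each $x\in\Omega:=\R^N\setminus X$, I would check that the good dyadic cubes containing $x$ are exactly those of side at most $2^{-j_0(x)}$ for some $j_0(x)\in\Z$: nonemptiness of this family follows from $\dist(Q,X)\ge\dist(x,X)-\diam Q$ for cubes of small side containing $x$, and boundedness from above follows because $\dist(Q,X)$ stays finite while $\diam Q\to\infty$ (here $X\neq\emptyset$ enters). Then I define $Q(x)$ to be the largest good dyadic cube containing $x$ and set $\F=\{Q(x):x\in\Omega\}$. Since there are only countably many dyadic cubes, $\F$ is countable; it covers $\Omega$, giving (a); and distinct members are disjoint, giving (b), because a strict nesting $Q(x)\subsetneq Q(y)$ would exhibit a good dyadic cube containing $x$ strictly larger than $Q(x)$, contradicting maximality (the reverse nesting being ruled out symmetrically).

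Next I would verify (c)--(d). The lower bound in (c) is just goodness. For the upper bound, the dyadic parent $Q'$ of $Q=Q(x)\in\F$ contains $x$ and is strictly larger, hence is not good, so $\dist(Q',X)<\diam Q'=2\diam Q$; then for $w\in\cl Q$ and a near-minimizer $w'\in\cl{Q'}$ of $\dist(\cdot,X)$ the triangle inequality gives $\dist(Q,X)\le|w-w'|+\dist(w',X)\le\diam Q'+\dist(Q',X)<4\diam Q$. For (d), if $Q,Q'\in\F$ touch and $z\in\cl Q\cap\cl{Q'}$, then $\diam Q\le\dist(Q,X)\le\dist(z,X)\le\dist(Q',X)+\diam Q'\le 5\diam Q'$ by (c); since $\diam Q/\diam Q'$ is a power of $2$ bounded by $5$ it is at most $4$, and symmetry finishes (d).

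For the finiteness statements (e) and (f) the tool is a packing estimate. For (e): by (d) every $Q'\in\F$ touching a fixed $Q\in\F$ has side at least a quarter of that of $Q$ and at most four times it, and its closure lies in the cube concentric with $Q$ of side $(1+2\cdot 4)$ times that of $Q$; since these $Q'$ are pairwise disjoint, comparing their total volume with the volume of that enclosing cube bounds their number by a constant depending only on $N$. For (f): given $x\notin X$ with $d_0=\dist(x,X)>0$, I would take $B=B(x,d_0/100)$ and observe, using \eqref{ineq_distQ_Q*} together with (c), that any $Q\in\F$ with $Q^*\cap B\neq\emptyset$ has $\diam Q$ comparable to $d_0$ with dimension-free constants, and therefore $Q\subset Q^*\subset B(x,Cd_0)$ for a dimensional $C$; disjointness of such $Q$ again bounds their number, hence the number of the $Q^*$ meeting $B$, by a dimensional constant. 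In both cases a careful accounting of the permitted side ratios yields the explicit bound $(12)^N$.

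I expect the conceptual part --- defining $\F$ via maximal good dyadic cubes and deducing (a)--(d) --- to be routine, built entirely from the dyadic structure and the triangle inequality for $\dist(\cdot,X)$. The main obstacle is purely bookkeeping: extracting the precise exponent $(12)^N$ in (e) and (f) requires tracking all the side-length ratios permitted by (c) and (d) and optimizing the volume-packing counts; this is where I would spend the effort, following Stein's treatment in~\cite{stein} to obtain the stated constant.
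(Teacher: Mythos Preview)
The paper does not actually prove this theorem: it merely recalls the statement from \cite[chapter VI]{stein} and uses it as input to the proof of Proposition~\ref{prop_refinement_thm_reg_function}. Your proposal reproduces precisely the classical Whitney dyadic decomposition argument from Stein's book --- maximal good dyadic cubes for (a)--(b), failure of goodness of the parent for the upper bound in (c), the power-of-two trick for (d), and volume packing for (e)--(f) --- so it is correct and coincides with the source the paper cites.
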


\begin{proof}[Proof of Proposition~\ref{prop_refinement_thm_reg_function}]
Let $\F=\{Q_k\}$ be a covering of $\R^N\setminus X$ given by Theorem~\ref{thm_covering}.

Let $C_0$ be the unit cube centered at the origin, and choose a smooth function $\varphi:\R^N \to [0,1]$ satisfying $\supp(\varphi) \subset \intr(C_0^*)$ and $\varphi|_{\overline{C_0}}\equiv1$. For each $k$ consider $\varphi_k(x) = \varphi((x-x_k)/l_k)$ where $x_k$ and $l_k$ are the center and the side of $Q_k$ respectively. Then $\supp(\varphi_k)\subset \intr(Q^*_k)$ and $\varphi_k|_{\overline{Q_k}} \equiv 1$. Note that
\begin{equation}\label{ineq_derivatives_varphi_k}
|D^\alpha \varphi_k(x)| \leq A_\alpha (\diam Q_k)^{-|\alpha|}
\end{equation}
where $A_\alpha$ depends only on $\varphi$. The function
\begin{equation}
\Phi(x) := \sum_k \varphi_k(x)
\end{equation}
is smooth on $\R^N\setminus X$. By f) in Theorem~\ref{thm_covering}, $\Phi$ satisfies the uniform pointwise estimate
\begin{equation}\label{ineq_bounds_big_phi}
1 \leq \Phi(x) \leq (12)^N
\end{equation}
for all $x\not\in X$.

We claim that
\begin{equation}\label{ineq_dist_diam_above_below}
\frac{3}{4} \ \diam Q_k \leq \dist(x,X) \leq 6 \ \diam Q_k \ \ \ \forall x\in Q_k^*.
\end{equation}
To see this choose any $x\in Q_k^*$. The second inequality follows from considering $p\in X$ and $q\in \overline Q_k$ satisfying $|p-q| = \dist(Q_k,X)$, and using~\eqref{ineq_diam_Q*} and c) in Theorem~\ref{thm_covering} to estimate
\begin{equation*}
\dist(x,X) \leq |x-p| \leq |x-q|+|q-p| \leq \diam Q_k^* + \dist(Q_k,X) \leq 6 \ \diam Q_k.
\end{equation*}
The first inequality in~\eqref{ineq_dist_diam_above_below} follows from taking $p\in X$ such that $\dist(x,X)=|x-p|$ and $q\in \overline Q_k$ such that $\dist(x,Q_k)=|x-q|$, and estimating with the help of c) in Theorem~\ref{thm_covering} and of~\eqref{ineq_distQ_Q*} as follows
\begin{equation*}
\begin{aligned}
\diam Q_k &\leq \dist(Q_k,X) = \dist(\overline Q_k,X) \leq |q-p| \leq |q-x|+|x-p| \\
&= \dist(x,Q_k) + \dist(x,X) \\
&\leq \frac{1}{16}\diam Q_k + \dist(x,X).
\end{aligned}
\end{equation*}
This proves \eqref{ineq_dist_diam_above_below}.

We now set
\[
U = \{ x\in \R^N \mid \dist(x,E) < \dist(x,Y) \}.
\]
This is an open neighborhood of $E\setminus Y$ in $\R^N\setminus Y$. Now consider
\[
\F_U = \{ Q\in \F \mid Q^* \subset U \}, \ \ \ \ \F'_U = \F \setminus \F_U.
\]

We claim that with these choices, there exists $c>0$ such that
\begin{equation}\label{ineq_comp_E}
x\in Q^*,Q\in \F'_U \Rightarrow \dist(x,Y) \leq c\ \dist(x,E).
\end{equation}
In fact, let us fix $x\in Q^*$, so with $z\in\R^N$ arbitrary and $e\in E$ such that $\dist(x,E)=|x-e|$ we can estimate
\[
\dist(z,E) \leq |z-e| \leq |z-x|+|x-e| = |z-x| + \dist(x,E).
\]
Assuming $Q\in \F'_U$ there exists $z\in Q^*$ such that $\dist(z,Y)\leq \dist(z,E)$, and we can choose $p\in Y$ such that $|z-p|=\dist(z,Y)$. Plugging this into the above inequality we get
\[
\begin{aligned}
\dist(x,Y) &\leq |x-p| \leq |x-z|+|z-p| \\
&\leq \diam Q^* + \dist(z,Y) \\
&\leq \diam Q^* + \dist(z,E) \\
&\leq \diam Q^* + |z-x| + \dist(x,E) \\
&\leq 2\diam Q^* + \dist(x,E) \\
&\leq \frac{9}{4} \diam Q + \dist(x,E) \\
&\leq 4\ \dist (x,E)
\end{aligned}
\]
as desired. Here we used~\eqref{ineq_dist_diam_above_below}.

Analogously to~\cite{stein} we define a function $\Delta:\R^N \setminus X \to (0,+\infty)$ by
\begin{equation}\label{defn_Delta}
\Delta(x) := \left( \sum_{Q_k \in \F'_U} \diam Q_k \ \varphi_k(x) \right) + \left( \sum_{Q_k \in \F_U} \dist(x,E) \ \varphi_k(x) \right).
\end{equation}
We claim that
\begin{equation}\label{ineq_Delta_below}
\Delta(x) \geq \frac{1}{6} \dist(x,X) \ \ \ \forall x\not\in X.
\end{equation}
If $x\in Q_k$ then $\varphi_k(x)=1$ and there are two possibilities: either $Q_k \in \F'_U$ and in this case we get $\Delta(x) \geq \diam Q_k \geq (1/6) \dist(x,X)$ in view of~\eqref{ineq_dist_diam_above_below}, or $Q_k \in \F_U$ and in this case $\Delta(x) \geq \dist(x,E) \geq \dist(x,X)$. Thus~\eqref{ineq_Delta_below} is proved.
Now we show that
\begin{equation}\label{ineq_Delta_above}
\Delta(x) \leq (12)^N (4/3) \dist(x,X) \ \ \ \forall x\not\in X.
\end{equation}
Fix $x\not\in X$ and consider the collection $\F_x = \{Q\in\F \mid x\in Q^*\}$. According to f) in Theorem~\ref{thm_covering} we have $\#\F_x \leq (12)^N$. If $Q_k \in \F_x \cap \F_U$ then the corresponding term in the sum~\eqref{defn_Delta} is $\varphi_k(x) \dist(x,E) \leq \dist(x,E) \leq\dist(x,X)$. Now note that if $Q_k \in \F_x \cap \F'_U$ then the corresponding term in the sum~\eqref{defn_Delta} is $$ \varphi_k(x) \diam Q_k \leq \diam Q_k \leq (4/3)\dist(x,X) $$ by~\eqref{ineq_dist_diam_above_below}. We have shown that all of the terms in~\eqref{defn_Delta} which do not vanish at $x$ are at most $(4/3)\dist(x,X)$. Since there are at most $(12)^N$ such terms,~\eqref{ineq_Delta_above} is proved.

Let $c>0$ be the constant in~\eqref{ineq_comp_E}. The set
\[
W = \{ x\in \R^N \mid c\ \dist(x,E) < \dist(x,Y) \} \subset \R^N \setminus Y
\]
is an open neighborhood of $E\setminus Y$. We note that~\eqref{ineq_comp_E} can be rewritten as $$ Q_k \in \F'_U \Rightarrow Q_k^* \cap W = \emptyset. $$ Thus we have $x\in W\setminus X \ \Rightarrow \ \Delta(x) = \Phi(x) \dist(x,E)$. Finally we consider
\[
{\widehat{\delta}_X}(x) = \frac{\Delta(x)}{\Phi(x)}.
\]
We claim that $\widehat\delta_X$ would be our desired function if we were not interested in $A$-invariance. By~\eqref{ineq_bounds_big_phi},~\eqref{ineq_Delta_below} and~\eqref{ineq_Delta_above} we get constants $c_1,c_2>0$ independent of $Y$ and $E$ such that
\[
c_1 \ \dist(x,X) \leq {\widehat{\delta}_X}(x) \leq c_2 \ \dist(x,X)
\]
for all $x\not\in X$. It follows from this that ${\widehat{\delta}_X}$ can be continuously extended to $\R^N$ by setting it equal to zero on $X$. Moreover, a) and b) of Theorem~\ref{thm_existence_regularized_distance} are true for $\widehat{\delta}_X$.  Note that
\begin{equation}\label{values_delta_hat_near_E}
x\in W \ \Rightarrow \ {\widehat{\delta}_X}(x) = \dist(x,E).
\end{equation}

Let us prove that $\widehat\delta_X$ satisfies c) in Theorem~\ref{thm_existence_regularized_distance}. For this we need to investigate the derivatives of $\Phi$ and $\Delta$. By~\eqref{ineq_derivatives_varphi_k} and f) in Theorem~\ref{thm_covering} we get
\[
|\alpha|\geq 1 \ \Rightarrow \ |D^\alpha\Phi(x)| \leq (12)^N A_\alpha \max \{ (\diam Q_k)^{-|\alpha|} \mid x\in Q_k^* \}.
\]
By~\eqref{ineq_dist_diam_above_below} we get
\[
|\alpha| \geq 1 \ \Rightarrow \ |D^\alpha\Phi(x)| \leq (12)^NA_\alpha 6^{|\alpha|} \ \dist(x,X)^{-|\alpha|}
\]
from where it follows that
\[
|\alpha| \geq 1 \ \Rightarrow \ |D^\alpha(1/\Phi)(x)| \leq M_\alpha \ \dist(x,X)^{-|\alpha|}
\]
for suitable constants $M_{\alpha}>0$ independent of $X$. Here we strongly used the bounds~\eqref{ineq_bounds_big_phi}. We now turn to derivatives of $\Delta$.
\begin{equation}\label{derivatives_Delta}
\begin{aligned}
&D^\alpha\Delta(x) \\
&= \left( \sum_{Q_k \in \F'_U} \diam Q_k \ D^\alpha\varphi_k(x) \right) + \left( \sum_{Q_k \in \F_U} D^\alpha[\dist(\cdot,E)\varphi_k](x) \right) \\
&= T_1 + T_2.
\end{aligned}
\end{equation}
With $x\not\in X$ fixed, the first term is bounded as 
\[
\begin{aligned}
T_1 &\leq (12)^NA_\alpha\max\{(\diam Q_k)^{1-|\alpha|} \mid x\in Q_k^*\} \\
&\leq (12)^NA_\alpha 6^{|\alpha|-1} \dist(x,X)^{1-|\alpha|}.
\end{aligned}
\]
Here f) in Theorem~\ref{thm_covering} was strongly used. The general estimate
\[
|D^\beta(\dist(\cdot,E))(z)| \leq C_\beta \dist(z,E)^{1-|\beta|}
\]
for all $z\in\R^N\setminus E$ together with the product rule and the estimates~\eqref{ineq_derivatives_varphi_k} implies that the second term in~\eqref{derivatives_Delta} is bounded from above by
\[
\begin{aligned}
T_2 &\leq \Gamma_\alpha \sum_{\beta_1+\beta_2=\alpha} |D^{\beta_1}(\dist(\cdot,E))(x)||D^{\beta_2}\varphi_k(x)| \\
&\leq \Gamma'_\alpha \sum_{\beta_1+\beta_2=\alpha} \dist(x,E)^{1-|\beta_1|}(\diam Q_k)^{-|\beta_2|} \\
&\leq \Gamma''_\alpha \sum_{\beta_1+\beta_2=\alpha} \dist(x,X)^{1-|\beta_1|}\dist(x,X)^{-|\beta_2|} \\
&= \Gamma'''_\alpha \ \dist(x,X)^{1-|\alpha|}.
\end{aligned}
\]
The third inequality used~\eqref{ineq_dist_diam_above_below} and that $\dist(x,E)=\dist(x,X)$ for a point $x\in Q_k \in \F_U$. Also f) in Theorem~\ref{thm_covering} was used. The conclusion is that
\begin{equation}
|D^\alpha\Delta(x)| \leq H_\alpha \ \dist(x,X)^{1-|\alpha|}
\end{equation}
for all $ x\not\in X$ with a suitable constant $H_\alpha$ independent of $X$. Using these estimates and the estimates for the derivatives of $1/\Phi$ obtained above, it follows again from the product rule that there exist constants $B_\alpha>0$ independent of $X$ such that
\[
|D^\alpha{\widehat{\delta}_X}(x)| \leq B_\alpha \ \dist(x,X)^{1-|\alpha|}
\]
as desired.

To obtain $A$-invariance we finally define
\[
\delta_X(x) = \frac{1}{k} \sum_{i=0}^{k-1} \widehat\delta_X(A^ix).
\]
Note that $Y,E,X$ are $A$-invariant sets, and consequently a) and b) of Theorem~\ref{thm_existence_regularized_distance} continue to hold since $\dist(\cdot,X)$ is an $A$-invariant function. By the same token~\eqref{values_delta_hat_near_E} continues to hold on $W$ (note that $\dist(\cdot,E)$ and $W$ are $A$-invariant). Again by $A$-invariance of $X$ and $\dist(\cdot,X)$ estimates like c) in Theorem~\ref{thm_existence_regularized_distance} hold for $\delta_X$. The proof is complete.
\end{proof}

\section{Invariance of local Morse homology with symmetries}\label{app_invariance}

Here we prove Proposition~\ref{prop_GM_pairs}. Let $\theta,f,x,U$ be as in the statement: $(f,\theta)$ is a pair consisting of a smooth function and a smooth metric on a manifold without boundary, $x$ is an isolated critical point of $f$ and $U$ is an open relatively compact isolating neighborhood for $(f,x)$, all of which are invariant under a smooth action of~$\Z_k$.

We need to find a $\Z_k$-invariant Gromoll-Meyer pair $(W,W_-)$ in $U$ and some $C^2$-neighborhood $\mathcal{N}_1$ of $(f,\theta)$ with the following properties: if $(f',\theta') \in \mathcal{N}_1$ is $\Z_k$-invariant and Morse-Smale on $U$ then the map~\eqref{invariance_map_local_hom} $$ \tilde\Psi_{(f',\theta',U)} : H_*(W,W_-) \stackrel{\sim}{\to} \HM_*(f',\theta',U) $$ in Definition~\ref{def_GM_pairs} is $\Z_k$-equivariant.

Choose an open $\Z_k$-invariant neighborhood $\O$ of $X\setminus U$ such that $x\not\in \cl{\O}$. First we need two preliminary lemmas.

\begin{lemma}\label{lemma_crossing_energy}
For every pair $(V_1,V)$ of open neighborhoods of $x$ satisfying $\cl{V_1}\subset V \subset \cl{V} \subset U$ we can find a $C^2$-neighborhood $\mathcal{N}$ of $(f,\theta)$ and $\epsilon>0$ with the following property:
\begin{equation}\label{time_delays_pre}
\begin{aligned}
&(f',\theta') \in \mathcal N, \ p \in \cl{V_1}, \ t > 0, \ \phi'_t(p) \not\in V, \ \phi'_{[0,t]}(p) \subset U \\ 
&\Rightarrow \ f'(p) - f'(\phi'_t(p)) > \epsilon.
\end{aligned}
\end{equation}
Here $\phi'_\cdot$ stands for the negative gradient flow of $(f',\theta')$.
\end{lemma}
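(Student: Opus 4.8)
The statement is a uniform ``crossing energy'' estimate, and the plan is to reduce the (somewhat loose) hypothesis to a single clean crossing segment of the trajectory, and then to bound that segment's energy below by the product of a lower bound for $|\nabla^{\theta'}f'|$ on the annular region $\overline{V}\setminus V_1$ and a lower bound for the time the segment needs to cross it. The only place the hypotheses on $(f,\theta)$ enter is that $U$ is isolating for $(f,x)$ with $x\in V_1$: this forces $f$ to have no critical point on the compact set $\overline{V}\setminus V_1$, so $|\nabla^\theta f|_\theta$ is bounded below there; the point is that this persists, with slightly worse constants, for every $(f',\theta')$ in a suitable $C^2$-neighborhood, because the critical points of such $f'$ inside $\overline{U}$ stay trapped in $V_1$.

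First I would fix constants depending only on $(f,\theta)$: a number $d_0:=\dist_\theta(\partial V_1,\partial V)>0$ (positive since $\overline{V_1}\subset V$, so $\partial V_1$ and $\partial V$ are disjoint compacta), a number $c_0>0$ with $|\nabla^\theta f|_\theta\geq 2c_0$ on $\overline{V}\setminus V_1$, and a number $c_1>0$ with $|\nabla^\theta f|_\theta\leq c_1/2$ on $\overline{U}$. Then I would shrink the $C^2$-neighborhood $\mathcal N$ of $(f,\theta)$ so that every $(f',\theta')\in\mathcal N$ satisfies: (a) $\crit(f')\cap\overline{U}\subset V_1$; (b) $|\nabla^{\theta'}f'|_{\theta'}\geq c_0$ on $\overline{V}\setminus V_1$; (c) $|\nabla^{\theta'}f'|_\theta\leq c_1$ on $\overline{U}$. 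Properties (b) and (c) follow immediately from $C^1$-closeness of $f'$ and $C^0$-closeness of $\theta'$ on the compact sets involved; property (a) is a routine compactness argument: if it failed there would be $f'_n\to f$ in $C^1$ and $x_n\in\crit(f'_n)\cap(\overline{U}\setminus V_1)$, and any subsequential limit of the $x_n$ would lie in $\crit(f)\cap(\overline{U}\setminus V_1)=\emptyset$.

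Given $(f',\theta')\in\mathcal N$ and $p\in\overline{V_1}$, $t>0$ as in the hypothesis, set $t_2:=\inf\{s\in[0,t]:\phi'_s(p)\notin V\}$ and $t_1:=\sup\{s\in[0,t_2]:\phi'_s(p)\in\overline{V_1}\}$. Continuity of the flow together with $\overline{V_1}\subset V\subset\overline{V}\subset U$ gives $0\leq t_1<t_2\leq t$, $\phi'_{t_1}(p)\in\partial V_1$, $\phi'_{t_2}(p)\in\partial V$, and $\phi'_s(p)\in\overline{V}\setminus V_1$ for all $s\in[t_1,t_2]$, so by (a) the segment $\phi'_{[t_1,t_2]}(p)$ meets no critical point of $f'$. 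Integrating $|\dot\phi'_s|_\theta=|\nabla^{\theta'}f'(\phi'_s(p))|_\theta\leq c_1$ over $[t_1,t_2]$ and comparing with $\dist_\theta(\phi'_{t_1}(p),\phi'_{t_2}(p))\geq d_0$ gives $t_2-t_1\geq d_0/c_1$. Since $f'$ is non-increasing along $\phi'$ and $0\leq t_1<t_2\leq t$,
\[
f'(p)-f'(\phi'_t(p))\ \geq\ f'(\phi'_{t_1}(p))-f'(\phi'_{t_2}(p))\ =\ \int_{t_1}^{t_2}\bigl|\nabla^{\theta'}f'(\phi'_s(p))\bigr|^2_{\theta'}\,ds\ \geq\ c_0^2\,(t_2-t_1)\ \geq\ \frac{c_0^2 d_0}{c_1},
\]
so $\epsilon:=c_0^2 d_0/c_1$ does the job for this choice of $\mathcal N$.

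\textbf{Main obstacle.} There is no real analytic difficulty; the single point that needs care is the uniformity in the pair $(f',\theta')$ — namely property (a), confining the critical points of all nearby $f'$ to $V_1$ — and keeping honest track of the fact that only $C^1$-closeness of $f'$ and $C^0$-closeness of $\theta'$ (both implied by $C^2$-closeness of the pair) are used. Everything else is the standard ``speed $\times$ time'' lower bound for the energy of a gradient flow line crossing an annulus on which the gradient does not vanish.
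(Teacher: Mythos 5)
Your proof is correct and follows essentially the same route as the paper's: isolate a segment of the trajectory crossing from $\cl{V_1}$ out of $V$, bound the crossing time from below by (distance)/(gradient sup bound), and bound the energy drop from below by (gradient inf bound)$^2\times$(crossing time), with all constants made uniform over a $C^2$-neighborhood by compactness (the paper packages the three constants into a single $L$ and runs the segment from the last exit of $\cl{V_1}$ to time $t$, using the gradient lower bound on $\cl U\setminus V_1$ rather than on $\cl V\setminus V_1$). Your extra property (a) confining $\crit(f')$ to $V_1$ is harmless but not actually needed, since (b) already excludes critical points from the crossing region.
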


\begin{proof}
There exists a $C^2$-neighborhood $\mathcal{N}$ and $L>0$ such that $(f',\theta') \in \mathcal N $ implies that $ |\nabla^{\theta'}f'|_{\theta'} \leq L$ holds pointwise on $\cl{U}$. Further shrinking $\mathcal N$ and increasing $L$ we may assume that $\dist_{\theta'}(\cl{V_1},X\setminus V) > 1/L$ and $|\nabla^{\theta'}f'|_{\theta'} \geq 1/L$ pointwise on $\cl{U} \setminus V_1$ whenever $(f',\theta') \in \mathcal N$. Since $\phi'_t(p) \not\in V$, there exists $0\leq t'<t$ such that $\phi'_{t'}(p) \in \cl{V_1}$ and $\phi'_{\tau}(p) \not\in \cl{V_1} $ for all $\tau \in (t',t]$. Thus
\[
\frac{1}{L} 
< \dist_{\theta'}(\cl{V_1},X\setminus V) \leq \int_{t'}^t |\nabla^{\theta'}f'(\phi'_\tau(p))|_{\theta'} \ d\tau \leq (t-t')L 
\]
which implies that
\[
 t-t' > \frac{1}{L^2}.
\]
We then compute
\[
\begin{aligned}
f'(p) - f'(\phi'_t(p)) 
&\geq f'(\phi'_{t'}(p)) - f'(\phi'_t(p)) = \int_{t'}^t |\nabla^{\theta'}f'(\phi'_\tau(p))|^2_{\theta'} \ d\tau \\
&\geq \frac{t-t'}{L^2} > \frac{1}{L^4}.
\end{aligned}
\]
Setting $\epsilon = L^{-4}$ finishes the proof.
\end{proof}

\begin{lemma}\label{lemma_local_pairs_support}
Let $V_1,V$ be $\Z_k$-invariant open neighborhoods of $x$ satisfying $\cl{V_1}\subset V\subset\cl{V} \subset U$. Let $\mathcal{N}$ be a $C^2$-neighborhood of $(f,\theta)$. There exists a $C^2$-neighborhood $\mathcal{N}_1\subset \mathcal{N}$ with the following properties: 
\begin{enumerate}
\item If $(f_1,\theta_1) \in \mathcal{N}_1$ then all critical points of $f_1|_U$ and all $\theta_1$-gradient trajectories of $f_1$ connecting them are contained in $V_1$.
\item If $(f_1,\theta_1) \in \mathcal{N}_1$ then there exists $(f',\theta') \in \mathcal{N}$ such that all critical points of $f'|_U$ and all $\theta'$-gradient trajectories of $f'$ connecting them are contained in $V_1$, $(f',\theta')$ coincides with $(f_1,\theta_1)$ on $V_1$ and coincides with $(f,\theta)$ on $X\setminus \cl{V}$. Moreover, if $(f_1,\theta_1)$ is $\Z_k$-invariant then $(f',\theta')$ can be taken $\Z_k$-invariantly.
\end{enumerate}
\end{lemma}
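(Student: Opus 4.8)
The plan is to produce $\mathcal{N}_1$ in two stages: first establish a uniform \emph{trapping} statement --- a $C^2$-neighborhood $\mathcal{N}'$ of $(f,\theta)$ for which every pair confines its critical points and connecting orbits to $V_1$ --- and then shrink $\mathcal{N}'$ to the desired $\mathcal{N}_1$ by an invariant cutoff/gluing construction that lands back inside $\mathcal{N}'$. Property (1) will then be immediate from $\mathcal{N}_1\subset\mathcal{N}'$, and property (2) will follow by applying the trapping statement to the glued pair.

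\emph{Step 1 (uniform trapping).} Fix open neighborhoods $V_1''\subset\cl{V_1''}\subset V_1'\subset\cl{V_1'}\subset V_1$ of $x$. Apply Lemma~\ref{lemma_crossing_energy} to the pair $(V_1',V_1)$ to obtain a $C^2$-neighborhood of $(f,\theta)$ and a number $\epsilon_0>0$ controlling the crossing energy, and shrink $V_1''$ so that the oscillation of $f$ over $V_1''$ is $<\epsilon_0/3$. Because $x$ is the only critical point of $f$ in the compact set $\cl U$, the quantity $|\nabla^\theta f|_\theta$ is bounded below on $\cl U\setminus V_1''$; hence, after intersecting with a small enough further $C^2$-neighborhood, I obtain $\mathcal{N}'$ such that every $(g,\gamma)\in\mathcal{N}'$ satisfies $\crit(g)\cap\cl U\subset V_1''$ and $\|g-f\|_{C^0(\cl U)}<\epsilon_0/3$. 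I then claim no trajectory $c$ of $-\nabla^\gamma g$ contained in $U$ and connecting critical points $p_-,p_+$ (necessarily in $V_1''$) can leave $V_1$: if $c(t_2)\notin V_1$, pick $t_1<t_2$ with $c(t_1)\in\cl{V_1'}$ --- which exists since $c(t)\to p_-\in V_1'$ as $t\to-\infty$ --- so that Lemma~\ref{lemma_crossing_energy} gives $g(c(t_1))-g(c(t_2))>\epsilon_0$, while $g(c(t_1))\le g(p_-)$, $g(c(t_2))\ge g(p_+)$, and
\[
|g(p_-)-g(p_+)|\le 2\|g-f\|_{C^0(\cl U)}+\mathrm{osc}_{V_1''}(f)<\epsilon_0 ,
\]
a contradiction. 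Thus $(1)$ holds for any $\mathcal{N}_1\subset\mathcal{N}'$.

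\emph{Step 2 (invariant gluing).} Since $\cl{V_1}$ is an invariant compact subset of $U$ and $V$ is an invariant neighborhood of it, Lemma~\ref{lemma_invariant_nbds} supplies an invariant smooth $\beta\colon X\to[0,1]$ with $\beta\equiv1$ on a neighborhood of $\cl{V_1}$ and $\supp\beta\subset V$. For a pair $(f_1,\theta_1)$ near $(f,\theta)$ set $f'=\beta f_1+(1-\beta)f$ and $\theta'=\beta\theta_1+(1-\beta)\theta$. Then $\theta'$ is a Riemannian metric (a convex combination of positive-definite tensors), $(f',\theta')$ is invariant whenever $(f_1,\theta_1)$ is, it agrees with $(f_1,\theta_1)$ on $V_1$ and with $(f,\theta)$ on $X\setminus\cl V$. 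Since $f'-f=\beta(f_1-f)$ and $\theta'-\theta=\beta(\theta_1-\theta)$ are supported in the fixed compact set $\cl V$, the Leibniz rule bounds their $C^2$-norms by a $\beta$-dependent constant times the $C^2$-distance of $(f_1,\theta_1)$ from $(f,\theta)$; so I may choose $\mathcal{N}_1\subset\mathcal{N}'$ small enough that $(f',\theta')\in\mathcal{N}'$ for all $(f_1,\theta_1)\in\mathcal{N}_1$. Property $(1)$ holds because $\mathcal{N}_1\subset\mathcal{N}'$, and property $(2)$ follows: the glued pair $(f',\theta')$ lies in $\mathcal{N}'\subset\mathcal{N}$, has (by Step 1) all critical points in $U$ and all connecting trajectories in $V_1$, coincides with $(f_1,\theta_1)$ on $V_1$ and with $(f,\theta)$ off $\cl V$, and is invariant when $(f_1,\theta_1)$ is.

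The substantive point is Step 1, namely making the confinement of \emph{connecting trajectories} uniform over a $C^2$-neighborhood. Confining critical points is the elementary gradient lower bound away from $x$; confining the flow lines genuinely needs the crossing-energy estimate of Lemma~\ref{lemma_crossing_energy}, coupled with the observation that once all critical points are squeezed into a tiny neighborhood $V_1''$ of $x$, all critical values lie within $\epsilon_0$ of one another, leaving no ``energy budget'' for a trajectory to exit $V_1$ and still limit onto a critical point. Step 2 is then a routine invariant partition-of-unity argument, the only care being continuity of the gluing in the $C^2$-topology and $\Z_k$-invariance of the cutoff, which is exactly what Lemma~\ref{lemma_invariant_nbds} provides.
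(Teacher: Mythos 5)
Your proof is correct and follows essentially the same route as the paper: first a uniform trapping neighborhood obtained from the crossing-energy Lemma~\ref{lemma_crossing_energy} combined with the smallness of critical-value differences once all critical points are squeezed near $x$, then gluing $(f_1,\theta_1)$ to $(f,\theta)$ with an invariant bump function and shrinking $\mathcal{N}_1$ so the glued pair stays in the trapping neighborhood. The paper's proof is the same two-step argument, just stated more tersely (it applies the crossing-energy lemma to a small $V_2$ with $\cl{V_2}\subset V_1$ and writes the glued pair as $(f+\beta(f_1-f),\theta+\beta(\theta_1-\theta))$, which is exactly your convex combination).
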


\begin{proof}
Choose a $\Z_k$-invariant bump function $\beta:X\to[0,1]$ such that $\beta|_{\cl{V_1}}\equiv1$ and $\supp(\beta) \subset V$. There exists a $C^2$-neighborhood $\mathcal{N}_0\subset\mathcal{N}$ of $(f,\theta)$ such that if $(f_0,\theta_0) \in \mathcal{N}_0$ then all critical points of $f_0|_U$ and all $\theta_0$-gradient trajectories of $f_0$ connecting them are contained in $V_1$. This follows from Lemma~\ref{lemma_crossing_energy} applied to $V_1$ and a small open neighborhood $V_2$ of $x$ satisfying $\cl{V_2}\subset V_1$. Indeed, if $\mathcal{N}_0$ is small enough, the difference between critical values of $f_0|_U$ is smaller than any positive constant fixed {\it a priori}. 

There exists $\mathcal{N}_1 \subset \mathcal{N}_0$ such that if $(f_1,\theta_1) \in \mathcal{N}_1$ then
$$
 (f',\theta') := (f+\beta(f_1-f),\theta+\beta(\theta_1-\theta)) \in \mathcal{N}_0
$$
satisfies the required properties, by the properties of $\mathcal N_0$ and by the fact that $\beta$ is identically $1$ on $V_1$.
\end{proof}

Let us get started with the proof of Proposition~\ref{prop_GM_pairs}. 
Suppose that $f(x)=0$, without loss of generality. Fix an open neighborhood $V_0$ of $x$ satisfying $\cl{V_0} \subset U\setminus \cl{\O}$. Choose a smooth $\Z_k$-invariant function $\beta_0:X\to [0,1]$ such that $\beta_0$ vanishes identically on a neighborhood of $X\setminus \O$ and $\supp(1-\beta_0)$ is a compact subset of $U$. In particular, $\supp(d\beta_0) \subset \O \cap U$ is compact. Note that $\beta_0$ can be chosen $\Z_k$-invariantly since $U$ and $\O$ are $\Z_k$-invariant.

Applying Lemma~\ref{lemma_crossing_energy} to the pair $(V_0,U\setminus\cl{\O})$ we find a $C^2$-neighborhood $\mathcal{N}_0$ of $(f,\theta)$ and $\epsilon>0$ such that
\begin{equation}\label{time_delays}
\begin{aligned}
&(f',\theta') \in \mathcal N_0, \ p \in \cl{V_0}, \ t > 0, \ \phi'_t(p) \in \cl{\O}, \ \phi'_{[0,t]}(p) \subset U \\ 
&\Rightarrow \ f'(p) - f'(\phi'_t(p)) > \epsilon.
\end{aligned}
\end{equation}
Perhaps after further shrinking $\mathcal{N}_0$, we can further assume that for all $(f_0,\theta_0) \in \mathcal N_0$ we have
\begin{equation}\label{first_aux_estimate-f_0}
2|df|_{\theta} > |df_0|_{\theta_0} > \frac{1}{2} |df|_{\theta} > 0 \ \ \text{ pointwise on} \ \ \cl{U} \cap \cl{\O}.\end{equation}
Select $a,b>0$ such that $a+b<\epsilon$, and for all $(f_0,\theta_0) \in \mathcal N_0$ the estimate
\begin{equation}\label{second_aux_estimate-f_0}
(a+b)|d\beta_0|_{\theta_0} < \frac{1}{2} |df_0|_{\theta_0} \ \ \text{holds pointwise on} \ \ \cl{U} \cap \cl{\O}.
\end{equation}
We shall complete the proof of the proposition by showing that
\begin{equation}
\begin{array}{ccc} W = \{ f\leq a \} \cap U & & W_- = \{ f-(a+b)\beta_0 \leq -b \} \cap U \end{array}
\end{equation}
is the desired pair.

Choose a compact neighborhood $K$ of $x$ satisfying
\[
K \subset V_0 \cap \{-b/2 \leq f \leq a/2\}
\]
and a $C^2$-neighborhood $\mathcal N\subset\mathcal N_0$ of $(f,\theta)$ such that 
$$
(f',\theta') \in \mathcal N \Rightarrow f'(K) \subset \left[ -\frac{2b}{3},\frac{2a}{3} \right]. 
$$
 Finally choose $\Z_k$-invariant open neighborhoods $V_1,V$ of $x$ such that 
 $$
  \cl{V_1}\subset V \subset \cl{V} \subset K .
  $$
   We obtain a $C^2$-neighborhood $\mathcal N_1 \subset \mathcal N$ by applying Lemma~\ref{lemma_local_pairs_support} to these choices of $V_1,V,\mathcal{N}$.

Let $(f_1,\theta_1) \in \mathcal N_1$ be a $\Z_k$-invariant pair which is Morse-Smale on $U$. Such a pair exists by Theorem~\ref{main1}. We derive some consequences of our previous constructions. All critical points of $f_1|_U$ are contained in $V_1$, as well as all $\theta_1$-gradient trajectories connecting them. Moreover, there is a $\Z_k$-invariant pair $(f',\theta')\in\mathcal N$ coinciding with $(f_1,\theta_1)$ on $V_1$ such that all critical points of $f'|_U$ are contained in $V_1$, as well as all $\theta'$-gradient trajectories connecting them. Furthermore, $(f',\theta')$ coincides with $(f,\theta)$ on $X\setminus \cl{V}$ and $f'(K) \subset [-2b/3,2a/3]$. In particular
\[
\begin{array}{ccc} 
\{f=a\} \cap U = \{f'=a\} \cap U & \text{and} & \{f=-b\} \cap U = \{f'=-b\} \cap U.
\end{array}
\]
It follows that $(f',\theta')$ is Morse-Smale on $U$ and 
$$
(\CM(f',\theta',U),\partial^{\rm Morse}) = (\CM(f_1,\theta_1,U),\partial^{\rm Morse}). 
$$

For $p$ and $q$ critical points of $f'|_U$ consider the sets $$ W_p = W^u(p) \cap \{f\geq -b\}, \ \ \ \ \ M(p,q)=W^u(p) \cap W^s(q). $$ Unstable and stable manifolds here are taken with respect to the negative gradient flow $\phi'_t$ of $(f',\theta')$ and the open set $U$ (Definition~\ref{def_stable_unstable_mfds}). Since $a+b<\epsilon$, $p\in\cl{V_0}$, $f'(p) \in [-2b/3,2a/3]$ for all $p\in\crit(f'|_U) = \crit(f') \cap U$, it follows from~\eqref{time_delays} that $W_p\cap\{f>-b\}\subset U$ is a smoothly embedded copy of a open $\ind(p)$-cell. By the Morse-Smale condition the $M(p,q)$ are smooth submanifolds of $U$. Standard compactness results for Morse trajectories tell us that for all $0\leq i\leq j\leq n=\dim X$ the set
\[
M_{ij} := \bigcup \ \{ M(p,q) \mid p,q \in\crit(f'|_U), \ i\leq\ind(q)\leq\ind(p)\leq j\}
\]
is a compact subset of $\{z\in U \mid f'(z) \in(-b,a)\}$. Here we strongly used~(iii) in Definition~\ref{def_MS}. Consider also
\[
M_{-1j} := \bigcup \ \{ W_p \mid p \in \crit(f'|_U), \ \ind(p)\leq j\}
\]
for every $0\leq j\leq d$. Similarly, $M_{-1j}$ are compact subsets of $(f')^{-1}([-b,a))$. 
All $M_{ij}$ are $\Z_k$-invariant.

Each $M_{ij}$ is dynamically isolated in the sense that there exist arbitrarily small open neighborhoods $B_{ij}\subset U$ of $M_{ij}$ such that for all $z\in B_{ij}\setminus M_{ij}$, there exists some $t_z\in\R$ such that $\phi'_{t_z}(z)\not\in B_{ij}$. 
To see this in the case $0\leq i\leq j$, choose a small compact neighborhood $C\subset U$ of $M_{ij}$ such that no critical point of index $\mu\not\in[i,j]$ belongs to $C$. If $z \in C$ and $\phi'_t(z)\in C$ for all $ t\in\R$ then there are critical points $q,p\in K$ of $f'$ such that $\phi'_t(z)$ converges to $q,p$ when $t\to+\infty,t\to-\infty$ respectively. By our choice of $K$, the critical points $q,p$ have indices in $[i,j]$, from where it follows that $z \in M_{ij}$, which is a contradiction. The case $i=-1$ is handled similarly.

Now we follow the non-trivial, yet elementary, arguments of Conley~\cite{conley} with obvious modifications, keeping track of the $\Z_k$-symmetry. Define 
\begin{align*}
& N_{-1} = \{f\leq -b\} \cap U.
\end{align*}
We claim that there exist subsets $N_0,\dots,N_n$ of $U$ satisfying (a)-(e) below:
\begin{itemize}
\item[(a)] $N_{-1} \subset N_0 \subset \dots \subset N_{n-1} \subset N_n$, $N_n\setminus N_{-1} \subset U\setminus \cl{\O}$, each $N_i$ is closed in $U$.
\item[(b)] Each $N_i$ is $\Z_k$-invariant. For every $y$ in the boundary $E_i$ of $N_i$ in $U$, there exists $\delta>0$ such that $\phi'_t(y) \in \intr(N_i)$ for all $ t\in (0,\delta]$ and $\phi'_t(y) \not\in N_i$ for all $t\in [-\delta,0)$.
\item[(c)] All $N_i$ are positive invariant under $\phi'$. Moreover, for all $0\leq i\leq j\leq n$ we have $M_{ij} \subset \intr(N_j\setminus N_{i-1})$, for all $ z\in N_j \setminus (N_{i-1}\cup M_{ij})$ there is some $t\in\R$ such that $\phi'_t(z) \in N_{i-1}$ or $\phi'_t(z) \not\in N_j$, and for all $ z\in N_j\setminus N_{i-1}$ there exists $ t>0$ such that $\phi'_{[0,t]}(z) \subset N_j$.
\item[(d)] $N_i$ is homotopy equivalent to $N_{i-1}\cup (\cup_pW^u(p))$ where the union is taken over all critical points $p$ of $f'|_U$ with $\ind(p)=i$.
\end{itemize}
Before stating (e) we explore some consequences of these first four conditions. First of all, condition (c) tells us that $(N_j,N_{i-1})$ is some kind of index pair for $M_{ij}$; more details about index pairs can be found in~\cite{salamon_index_theory,salamon}. Again (c) implies that $N_i$ contains no critical points $p$ of $\ind(p)>i$.

Consider groups
\[
\mathscr{C}_j = H_j(N_j,N_{j-1}).
\]
Given a critical point $p$ with $\ind(p)=i$, trajectories in $W^u(p)\setminus(\{p\}\cup N_{i-1})$ will hit $N_{i-1}$ before they can hit $\cl{\O}$. This follows from $p\in V_0$, $f'(p)\in[a,b]$, $a+b<\epsilon$ and~\eqref{time_delays}. Hence, using the topological transversality of the flow at $E_i$ described in (b), $\cl{W^u(p)\setminus N_{i-1}}$ is a compact topological disk of dimension $\ind(p)$ in $W^u(p)$, denoted by $D_p$, and $D_p \cup N_{i-1} = W^u(p) \cup N_{i-1}$. Orient $D_p$ from the orientation of $W^u(p)$ that one chooses in the definition of the differential of the Morse complex $(\CM(f',\theta',U),\partial^{\rm Morse})$. Then $D_p$ induces a homology class $\mathscr{D}_p\in \mathscr{C}_i$.

By conditions (a)-(d), the set $\{\mathscr{D}_p \mid \ind(p)=i\}$ is a basis for~$\mathscr{C}_i$. Note also that $H_s(N_j,N_{i-1}) = 0$ if $s\not\in[i,j]$. This follows from an induction argument using exact sequences of appropriate triples. $\mathscr{C}_*$ is a chain complex with differential
\[
\Delta : \mathscr{C}_j \to \mathscr{C}_{j-1}
\]
given by the connecting homomorphism of the long exact sequence of the triple $(N_j,N_{j-1},N_{j-2})$. The obvious $\Z_k$-action on $\mathscr{C}_*$ is an action by chain maps. There is an isomorphism of groups 
\begin{equation*}
T : (\CM_*(f',\theta',U),\partial^{\rm Morse}) \to  (\mathscr{C}_*,\Delta)
\end{equation*}
defined on generators by $T:p\mapsto\mathscr{D}_p$. This is $\Z_k$-equivariant by construction. We can now state property (e) which reads
\begin{itemize}
\item[(e)] $T$ is a $\Z_k$-equivariant chain map.
\end{itemize}

We now construct the sets $N_i$ for $i\geq 0$ as in \cite{conley} keeping track of the group action, see also~\cite[Theorem 3.1]{salamon}. The construction of $\{N_i\}$ is done inductively. For each $j$ such that the $N_0,\dots,N_j$ have been constructed satisfying (a)-(d) up to index $j$, we shall need to consider the subcomplex $\CM^{\leq j}$ generated by critical points of $\ind\leq j$ and their connecting trajectories, and the subcomplex $\mathscr{C}^{\leq j}$ defined as above using indices up to $j$. Then there is  a $\Z_k$-equivariant homomorphism $T^{\leq j}$ between these complexes as before, and we may consider the property
\begin{itemize}
\item[($e_j$)] $T^{\leq j}$ is a $\Z_k$-equivariant chain map.
\end{itemize}

Let us start the induction argument. For each $p\in\crit(f'|_U)$ with $\ind(p)=0$, denote by $B_p$ the connected component  of the set $\{f'\leq f'(p)+r\}$ containing $p$. If $r>0$ is sufficiently small then the $B_p$ are disjoint, do not intersect $N_{-1}$ and are diffeomorphic to $d$-dimensional Euclidean balls. Define $N_0= N_{-1} \cup (\cup_pB_p)$. Clearly (a)-(d) hold up to index $0$, and ($e_0$) also holds trivially.

Assume that $N_0,\dots,N_j$ have been constructed such that (a)-(d) hold up to index $j$ and also that ($e_j$) holds. For each $p\in\crit(f'|_U)$ with $\ind(p)=j+1$, we can choose a small Conley pair $(Q_p,Q_p^-)$ for the isolated invariant set $\{p\}$ given by a compact thickening of the pair consisting of a small smooth compact ($j+1$)-dimensional disk in $W^u(p)$ around $p$ and its boundary. The crucial point here is the obvious fact that we can choose $(Q_p,Q_p^-)$ to be invariant under the isotropy group of $p$. This property will be used as follows. Let $a$ be the diffeomorphism inducing the action of $1\in \Z_k$. We can select these small pairs in such a way that
\begin{equation}\label{equivariance_of_pairs}
(Q_{a^s(p)},Q^-_{a^s(p)}) = (a^s(Q_p),a^s(Q_p^-))
\end{equation}
for all $s\in\Z_k$.

To this end, we first choose a base point $p_*$ in each $\Z_k$-orbit of critical points of $f'|_U$ with index $j+1$, choose a small pair for $p_*$ as explained above which is invariant under the isotropy group of $p_*$, and move this pair by the group action. Now we use the forward flow to expand the pairs $(Q_{p},Q^-_{p})$ and obtain longer pairs $(\hat Q_{p},\hat Q^-_{p})$ satisfying $\hat Q_p \cap \cl{\O}=\emptyset$ and $\hat Q^-_{p} \subset \intr(N_j)$. Define $N_{j+1} = N_j \cup(\cup_{p}\hat Q_{p})$. This is a closed (in $U$) $\Z_k$-invariant neighborhood of $N_j \cup(\cup_pW^u(p))$. Moreover, using the transversality in (b) for $N_j$, one shows that $N_j \cup(\cup_pW^u(p))$ is a deformation retract of $N_{j+1}$, that $N_{j+1}$ is an attractor, and that the transversality in (b) holds for $N_{j+1}$. Hence $N_0,\dots,N_{j+1}$ satisfy (a)-(d) up to index $j+1$. By the arguments from~\cite{conley} without symmetry,  the map $T^{\leq j+1}$ is a chain map. Since it is $\Z_k$-equivariant by construction, we get property $(e_{j+1})$. This completes the induction step.

So far we know that $(\mathscr{C}_*,\Delta)$ computes $\HM(f,x)$, and that the subcomplex of $(\mathscr{C}_*,\Delta)$ consisting of $\Z_k$-invariant chains computes the homology of the subcomplex of $(\CM(f',\theta',U),\partial^{\rm Morse})$ consisting of $\Z_k$-invariant chains.

It follows from a $\Z_k$-symmetric version of the arguments in~\cite[appendix A]{milnor_book_ch} that the subcomplex of $(\mathscr{C}_*,\Delta)$ consisting of $\Z_k$-invariant chains computes the homology of the subcomplex of $C_*(N_n,N_{-1})$ consisting of $\Z_k$-invariant chains.

By~\eqref{first_aux_estimate-f_0} and~\eqref{second_aux_estimate-f_0}, if $(f_0,\theta_0) \in \mathcal N_0$ then 
\begin{equation}\label{aux_estimate_h}
\begin{aligned}
d(f_0 - (a+b)\beta_0) \cdot \nabla^{\theta_0}f_0 &= df_0 \cdot \nabla^{\theta_0} f_0 - (a+b) d\beta_0 \cdot \nabla^{\theta_0} f_0 \\
&\geq |df_0|_{\theta_0}^2 -(a+b) |d\beta_0|_{\theta_0}|\nabla^{\theta_0} f_0|_{\theta_0} \\
&\geq \frac{1}{2} |df_0|_{\theta_0}^2 > 0 \ \ \text{ on } \ \ \cl{U} \cap \cl{\O}.
\end{aligned}
\end{equation}
In other words, $f_0$ is a Lyapunov function for the negative $\theta_0$-gradient flow of $f_0 - (a+b)\beta_0$ whenever $(f_0,\theta_0) \in \mathcal N_0$. Consider $h = f' - (a+b)\beta_0$ and the pair $(\tilde N_n,\tilde N_{-1})$ defined by  
\[
\begin{array}{ccc} \tilde N_{-1} = \{h\leq -b\} \cap U & & \tilde N_n = \tilde N_{-1} \cup (N_n\setminus N_{-1}) .\end{array}
\]
Note that $W_- = \tilde N_{-1}$. By the definition of $\beta_0$,~\eqref{aux_estimate_h} and~(a), the number $-b$ is a regular value of $h$ on $U$, and $$ \tilde N_n \setminus \cl{\O} = N_n \setminus \cl{\O}, \qquad \tilde N_{-1} \setminus \cl{\O} = N_{-1} \setminus \cl{\O}. $$ Simple excision arguments show that the inclusion $(N_n,N_{-1}) \to (\tilde N_n,\tilde N_{-1})$ induces an isomorphism on relative singular homology. Now the transversality condition in (b) and the fact that the functions $h$ and $f'$ coincide on $U\setminus \cl{\O}$ imply that we can use the negative $\theta'$-gradient of $f'$ to construct a deformation retraction of $W = \{f\leq a\}\cap U$ onto $\tilde N_n$ which is stationary on $W_- = \tilde N_{-1}$. By $\Z_k$-equivariance of this flow, we find the last arrow in a sequence of $\Z_k$-equivariant isomorphisms
\begin{equation*}
\HM_*(f',\theta',U) \to H_*(\mathscr{C},\Delta) \to H_*(N_n,N_{-1}) \to H_*(\tilde N_n,\tilde N_{-1}) \to H_*(W,W_-)
\end{equation*}
as desired.

\section{Comparing with the Borel construction}\label{app_Borel}

Our goal here is to explain how the Borel construction in Morse homology, first implemented by Viterbo~\cite{viterbo}, can be used to define $\Z_k$-equivariant local Morse homology. These groups turn out to be isomorphic to the $\Z_k$-invariant local Morse homology groups defined in Section~\ref{sec_properties}. 

Let $(M,\theta)$ be a Riemannian manifold without boundary, equipped with an action of $\Z_k$ by isometries. Let $f:M\to\R$ be smooth and let $p\in M$ be an isolated critical point of $f$ which is also a fixed point of the $\Z_k$-action. For the arguments to be given below there is no loss of generality to assume that the anti-gradient vector field of $(f,\theta)$ is complete.

The group $\Z_k$ acts on $S^{2N+1}\subset\C^{N+1}$ as $$ m\cdot (z_0,\dots,z_N)=(e^{i2\pi\frac{m}{k}}z_0,\dots,e^{i2\pi\frac{m}{k}}z_N), \qquad m\in\Z_k. $$ This action is free, the orbit space is a lens space. Let $\theta_N$ denote the Euclidean metric on $\C^{N+1}$ pulled back to $S^{2N+1}$ by the inclusion map.

One can find a sequence of smooth functions $h_N:S^{2N+1}\to\R$ with the following properties:
\begin{itemize}
\item[(i)] Each pair $(h_N,\theta_N)$ is $\Z_k$-invariant and Morse-Smale.
\item[(ii)] $S^{2N-1}$ is a normally hyperbolic invariant manifold for the anti-gradient flow of $(h_N,\theta_N)$ such that the $\theta_N$-Hessian of $h_N$ has only positive eigenvalues in directions transverse to $S^{2N-1}$.
\item[(iii)] Morse indices of critical points of $h_N$ in $S^{2N+1} \setminus S^{2N-1}$ are equal to $2N$ or to $2N+1$.
\end{itemize}

The pair $(h_N,\theta_N)$ descends to a Morse-Smale pair $(\bar h_N,\bar\theta_N)$ on $S^{2N+1}/\Z_k$. It follows from (ii) and (iii) that inclusions of critical points induce chain maps
\begin{equation*}
(\CM(\bar h_N),\partial^{(\bar h_N,\bar\theta_N)}) \to (\CM(\bar h_{N+1}),\partial^{(\bar h_{N+1},\bar\theta_{N+1})})
\end{equation*}
on the associated Morse complexes. We use $\Q$-coefficients throughout this discussion. The associated maps on homology fit into a directed system, with respect to which we can take a limit 
\begin{equation*}
\lim_{N\to\infty} \HM(\bar h_N,\bar\theta_N).
\end{equation*}
One checks that this limit is isomorphic to the singular homology of $B\Z_k$.

The diagonal $\Z_k$-action on $M\times S^{2N+1}$ is free. The pair $(f+h_N,g\oplus \theta_N)$ on the product $M\times S^{2N+1}$ is $\Z_k$-invariant. It descends to a pair $(\overline{f+h_N},\overline{g\oplus \theta_N})$ on $(M\times S^{2N+1})/\Z_k$. The submanifold $(\{p\}\times S^{2N+1})/\Z_k$ is an isolated invariant set\footnote{Given a flow $\phi^t$, a compact invariant set $K$ is called isolated if it admits an open neighborhood $U$ such that $K = \{ p\in U \mid \phi^t(p) \in U \ \forall t\in\R\}$.} for the anti-gradient flow of $(\overline{f+h_N},\overline{g\oplus \theta_N})$. The homology of the associated Conley index is just the ``local'' Morse homology associated to the data consisting of $\overline{f+h_N}$, $\overline{g\oplus \theta_N}$ and $(\{p\}\times S^{2N+1})/\Z_k$, which we denote by
\begin{equation}\label{local_homology_ndbtimessphere_quotient}
\HM(\overline{f+h_N},\overline{g\oplus \theta_N},(\{p\}\times S^{2N+1})/\Z_k).
\end{equation}
Here we made use of a construction which is simple but maybe not too well-known, let us explain. A small (even in $C^\infty$) perturbation of the pair will make all critical points near $(\{p\}\times S^{2N+1})/\Z_k$ non-degenerate, and all connections between them transversely cut-out. Compactness for spaces of connecting trajectories between these critical points comes from the fact that $p$ is an isolated critical point of~$f$. The homology of the associated Morse complex is independent of the small perturbation, since we can also achieve transversality and compactness for local continuation maps. These provide canonical isomorphisms at the level of homology. In Section~\ref{sec_properties} we explained this construction in more detail for an isolated critical point. 

The properties of $(h_N,\theta_N)$ allow to fit the homology groups~\eqref{local_homology_ndbtimessphere_quotient} into a directed system, with maps induced by inclusions $(\{p\}\times S^{2N-1})/\Z_k \subset (\{p\}\times S^{2N+1})/\Z_k$. We finally define the $\Z_k$-equivariant local homology of $f$ at $p$ by
\begin{equation}\label{defn_local_equiv_Morse_homology}
\HM^{\Z_k}(f,p) = \lim_{N\to\infty} \HM(\overline{f+h_N},\overline{g\oplus \theta_N},(\{p\}\times S^{2N+1})/\Z_k).
\end{equation}
As the notation suggests, this turns out to be independent of the metric $g$ and of the data $\{(h_N,\theta_N)\}_{N\geq1}$. One could prove at this point, without referring to any isomorphism with our invariants, continuation properties identical to those stated in Proposition~\ref{prop_invariance_with_symmetries}.

Now we move on to describe an isomorphism with our $\Z_k$-invariant Morse homology groups. More precisely, we would like to build a canonical isomorphism 
\begin{equation}\label{iso_equiv_inv}
\HM^{\Z_k}(f,p) \simeq \HM(f,p)^{\Z_k} 
\end{equation}
in the sense that it commutes with continuation maps. We will merely provide a description, technical details of proofs will be omitted. We follow closely the case of closed manifolds explained in~\cite[appendix]{GHHM}.

Let $U$ be an isolating neighborhood for $(f,p)$. Let $(f',g')$ be a $C^2$-small perturbation of $(f,g)$ which is $\Z_k$-invariant, and Morse-Smale on $U$ in the sense of Definition~\ref{def_MS}. The local Morse homology of $(f,p)$ was defined as the homology of the chain complex
\[
(\CM(f',g',U),\partial^{(f',g',U)})
\]
generated by the critical points of $f'$ in $U$. The differential counts rigid anti-gradient trajectories connecting them. This complex inherits a $\Z_k$-action by chain maps, and $\HM(f,p)^{\Z_k}$ is defined as the homology of the subcomplex of $\Z_k$-invariant chains.

Similarly one considers a chain complex
\begin{equation}\label{local_Morse_ndbtimessphere}
(\CM(f'+h_n,g'\oplus \theta_N,U\times S^{2N+1}),\partial^{(f'+h_N,g'\oplus\theta_N,U\times S^{2N+1})})
\end{equation}
generated by critical points in $U\times S^{2N+1}$, with a differential that counts rigid anti-gradient trajectories of $(f'+h_n,g'\oplus \theta_N)$ between them. We need two important facts which can be checked by following definitions. \\

\noindent {\it Fact 1.} There is a natural identification of chain complexes
\begin{equation}\label{identification_tensor}
\begin{aligned}
& (\CM(f'+h_n,g'\oplus \theta_N,U\times S^{2N+1}),\partial^{(f'+h_N,g'\oplus\theta_N,U\times S^{2N+1})}) \\
&\simeq (\CM(f',g',U),\partial^{(f',g',U)}) \otimes (\CM(h_N,\theta_N),\partial^{(h_N,\theta_N)})
\end{aligned}
\end{equation}
This identification intertwines the $\Z_k$-action by chain maps on~\eqref{local_Morse_ndbtimessphere} with the diagonal $\Z_k$-action by chain maps on the tensor product. \\

\noindent {\it Fact 2.} The homology groups~\eqref{local_homology_ndbtimessphere_quotient} coincide with the homology of the subcomplex of $\Z_k$-invariant chains of the complex~\eqref{local_Morse_ndbtimessphere}. \\

We can write a direct sum of $\Z_k$-invariant subcomplexes 
\begin{equation}
\CM(f',g',U) = \CM(f',g',U)^{\Z_k} \oplus \bigoplus_\sigma V_\sigma
\end{equation}
where $\CM(f',g',U)^{\Z_k}$ denotes the set of $\Z_k$-invariant chains, i.e. the isotypical component associated to the trivial action, and the $V_\sigma$ denote the subcomplexes associated to the other isotypical components. Similarly we can write
\begin{equation}
\CM(h_N,\theta_N) = \CM(h_N,\theta_N)^{\Z_k} \oplus \bigoplus_\eta W_\eta^N.
\end{equation}
It follows that the subcomplex of $\Z_k$-invariant chains of the tensor product in~\eqref{identification_tensor} is
\begin{equation}\label{decomp_isotypical_comps}
\begin{aligned}
& \left( (\CM(f',g',U),\partial^{(f',g',U)}) \otimes (\CM(h_N,\theta_N),\partial^{(h_N,\theta_N)}) \right)^{\Z_k} \\
&= \left( \CM(f',g',U)^{\Z_k} \otimes \CM(h_N,\theta_N)^{\Z_k} \right) \ \oplus \ \bigoplus_{\sigma,\eta} \left( V_\sigma\otimes W_\eta^N \right)^{\Z_k}
\end{aligned}
\end{equation}

The complexes $W_\eta^N$ have trivial homology. This is so because $\HM(h_N,\theta_N)$ (no symmetry) turns out to be the homology of the subcomplex $\CM(h_N,\theta_N)^{\Z_k}$: one generator in degree zero and another in degree $2N+1$, both represented by $\Z_k$-invariant cycles. It follows that all $W^N_\eta$ are acyclic. Hence so are all $V_\sigma\otimes W_\eta^N$.

Putting these remarks together with {\it Fact 1} and {\it Fact 2}, we can pass to homology in~\eqref{decomp_isotypical_comps} and obtain 
\begin{equation}\label{prefinal_identification}
\begin{aligned}
& \HM(\overline{f+h_N},\overline{g\oplus \theta_N},(\{p\}\times S^{2N+1})/\Z_k) \simeq \HM(f,p)^{\Z_k} \otimes H(S^{2N+1}/\Z_k;\Q)
\end{aligned}
\end{equation}
Finally it is possible to check that, in the above isomorphism, the chain maps on Morse homologies~\eqref{local_homology_ndbtimessphere_quotient} induced by the inclusions $S^{2N-1} \subset S^{2N+1}$ commute with corresponding chain maps on singular homology of lens spaces on the right-hand side of~\eqref{prefinal_identification}. Taking limits on both sides, and noting that only one generator in degree zero survives in $\lim_{N\to\infty}H(S^{2N+1}/\Z_k;\Q)$, we get the isomorphism~\eqref{iso_equiv_inv}. 

\begin{remark}
We hope that the recipes described here, which are needed to implement Viterbo's construction at a local level, will convince the reader of two facts. Firstly, it is quite hard to work with the $\Z_k$-symmetry at the chain level using definition~\eqref{defn_local_equiv_Morse_homology} of local $\Z_k$-equivariant homology. Secondly, when applying definition~\eqref{defn_local_equiv_Morse_homology} to discrete action functionals, it will be quite hard to prove iteration properties. Not to mention that the definition of $\HM(f,p)^{\Z_k}$ is a lot simpler and more geometrically transparent than that of $\HM^{\Z_k}(f,p)$.
\end{remark}


\begin{thebibliography}{BEHWZ}

\bibitem[BO]{BO} F. Bourgeois and A. Oancea. {\it $S^1$-Equivariant Symplectic Homology and Linearized Contact Homology.} Int. Math. Res. Notices.,  doi:10.1093/imrn/rnw029.

\bibitem[Ch1]{chaperon1} M. Chaperon. {\it Une id\'ee du type ``g\'eod\'esiques bris\'ees'' pour les syst\`emes hamiltoniens.} C. R. Acad. Sci. Paris S\'er. I Math. 298(13), 293--296 (1984).

\bibitem[Ch2]{chaperon2} M. Chaperon. {\it An elementary proof of the Conley-Zehnder theorem in symplectic geometry.} Dynamical systems and bifurcations (Groningen, 1984). Lecture Notes in Mathematics, vol. 1125, Springer, Berlin, pp. 1--8 (1985).

\bibitem[Co]{conley} C. C. Conley. {\it Isolated invariant sets and the Morse index.}  CBMS Regional Conf. Ser. Math. 38 (Amer. Math. Soc., Providence, R.I., 1978).

\bibitem[DX]{xia} Y. Deng and Z. Xia, {\it Conley-Zehnder index and bifurcation of fixed points of Hamiltonian maps}, preprint. To appear in Ergodic Theory and Dynamical Systems.


\bibitem[Eli]{eliasson} H.~Eliasson. {\em Geometry of manifolds of maps.} J. Differential Geometry {\bf 1} 1967 169--194.






\bibitem[Gi]{Gi} V.~Ginzburg {\em The Conley conjecture.} Ann. of Math. (2) {\bf 172} (2010), no. 2, 1127--1180.

\bibitem[GHHM]{GHHM} V.L. Ginzburg, D. Hein, U.L. Hryniewicz, L. Macarini. {\it
Closed Reeb orbits on the sphere and symplectically degenerate maxima}, Acta Math. Vietnam,  \textbf{38} (2013), 55-78.

\bibitem[GG\"o]{GGo} V. L. Ginzburg and Y. G\"oren. {\it Iterated index and the mean Euler characteristic.} J. Topol. Anal., 7, no. 3, pp. 453--481 (2015).

\bibitem[GG\"u1]{GG} V.~Ginzburg and B.~G\"urel, {\em Local Floer homology and the action gap.} J. Symplectic Geom. {\bf 8} (2010), no. 3, 323--357.

\bibitem[GG\"u2]{GG2} V.~Ginzburg and B.~G\"urel, {\em Lusternik-Schnirelmann Theory and Closed Reeb Orbits.} Preprint arXiv:1601.03092, 2016.

\bibitem[GH]{GH} M.~Goresky and N.~Hingston, {\it Loop products and closed geodesics.} Duke Math. J. 150 (2009), 117--209.

\bibitem[GM1]{GM}  D. Gromoll, W. Meyer. {\it On differentiable functions with isolated critical points}, Topology, vol 8 (1969), 361-369.


\bibitem[GM2]{GM2} D. Gromoll and W. Meyer. \emph{Periodic geodesics on compact Riemannian manifolds.} J. Differential Geometry \textbf{3} (1969), 493--510.

\bibitem[HHM]{HHM_prep} D. Hein, U. Hryniewicz and L. Macarini. In preparation.

\bibitem[Hi]{Hi} N. Hingston. \emph{Subharmonic solutions of Hamiltonian equations on tori.} Ann. of Math. (2) \textbf{170} (2009), no. 2, 529--560.

\bibitem[HM]{HM} U. Hryniewicz and L. Macarini. {\it Local contact homology and
applications.} J. Topol. Anal., 7 (2015), 167--238.



\bibitem[Ho]{hofer} H. Hofer. {\it Polyfolds and Fredholm Theory.} arXiv:1412.4255.

\bibitem[HWZ1]{HWZ1} H.~Hofer, K.~Wysocki and E.~Zehnder, {\it A general Fredholm theory. I. A splicing-based differential geometry.} J. Eur. Math. Soc. {\bf 9} (2007), 841--876.

\bibitem[HWZ2]{HWZ2} H.~Hofer, K.~Wysocki and E.~Zehnder, {\it A general Fredholm theory. II. Implicit function theorems.} Geom. Funct. Anal. {\bf 19} (2009), 206--293.

\bibitem[HWZ3]{HWZ3} H.~Hofer, K.~Wysocki and E.~Zehnder, {\it A general Fredholm theory. III. Fredholm functors and polyfolds.} Geom. Topol. {\bf 13} (2009), 2279--2387.

\bibitem[Hi1]{hingston} N. Hingston. {\it On the growth of the number of closed geodesics on the two-sphere.} Internat. Math. Res. Notices 9 (1993), 253--262.

\bibitem[Hi2]{hingston2} N. Hingston. {\it On the lengths of closed geodesics on a two-sphere.} Proc. Amer. Math. Soc. 125 (1997), no. 10, 3099--3106.


\bibitem[Hu]{Hu_blog} M. Hutchings. Blog, https://floerhomology.wordpress.com/2015/07/16/an-alternate-definition-of-local-contact-homology/

\bibitem[LL1]{LL1} C. Liu and Y. Long. {\it An optimal increasing estimate of the iterated Maslov-type indices.} Chin. Sci. Bull. {\bf 43} (1998), 1063--1066.

\bibitem[LL2]{LL2} C. Liu and Y. Long. {\it Iteration inequalities of the Maslov-type index theory with applications.} J. Differ. Equ. {\bf 165} (2000), 355--376.

\bibitem[Lo]{Lon02} Y. Long. {Index Theory for Symplectic Paths with Applications.} Birkh\"auser, Basel, 2002.

\bibitem[Ma]{mazz_SDM} M.~Mazzucchelli. {\it Symplectic degenerate maxima via generating functions.} Math. Z. {\bf 275} (2013), 715--739.

\bibitem[MS98]{McDSal} D. McDuff and D. Salamon, {\it Introduction to Symplectic Topology}, Oxford Math. Monogr., The Clarendon Press, Oxford (1998).


\bibitem[MS04]{Jcurves} D. McDuff  and D. Salamon. \textit{$J$-holomorphic curves and symplectic topology.} Amer. Math. Soc. Colloq. Publ. {\bf 52} (2004).

\bibitem[Mi1]{milnor} J. W. Milnor, {\it Morse Theory}, Annals of Mathematical Studies, {\bf 51}, Princeton University Press, NJ, 1963.

\bibitem[Mi2]{milnor_book_ch} J. W. Milnor and J. D. Stasheff, {\it Characteristic Classes}, Annals of Mathematical Studies, {\bf 76}, Princeton University Press, NJ, 1974.

\bibitem[Po]{poincare} H. Poincar\'e. \textit{Sur un th\'eor\`eme de g\'eom\'etrie.} Rend. Circ. Mat. Palermo, {\bf 33} (1912), 375--407.

\bibitem[RS1]{RSindex} J. Robbin and D. Salamon. {\it The Maslov index for paths.} Topology, Vol. 32, {\bf 4} (1993), 827--844.

\bibitem[RS2]{RSpath} J. Robbin and D. Salamon. {\it Phase functions and path integrals, symplectic geometry.}  London Mathematical Society Lecture Note Series, vol. 192, pp. 203--226, Cambridge University Press, Cambridge (1993).

\bibitem[Sa1]{salamon_index_theory} D. Salamon, {\it Connected Simple Systems and the Conley Index of Isolated Invariant Sets.} Trans. Amer. Math. Soc., Vol. 291, No. 1 (Sep., 1985), pp. 1--41.

\bibitem[Sa2]{salamon} D. Salamon, {\em Morse theory, the Conley index and Floer homology}. Bull. London Math. Soc. 22 (1990) 113--140.

\bibitem[SZ]{SZ} D. Salamon and E. Zehnder, {\it Morse theory for periodic solutions of Hamiltonian systems and the Maslov index}, Comm. Pure Appl. Math. 45 (1992), no. 10, 1303--1360.

\bibitem[Sch1]{schwarz_book} M.~Schwarz, {\em Morse homology.} Progr. Math. {\bf 111}, Birkh\"auser Verlag, Basel, 1993.


\bibitem[Sp]{spanier_book} E.~Spanier, {\em Algebraic Topology.} McGraw-Hill, 1966 (reprinted by Springer-Verlag).

\bibitem[St]{stein} E. Stein. {\it Singular integrals and differentiability properties of functions}, Princeton University Press, 1970.

\bibitem[V]{viterbo} C. Viterbo. {\it Functors and computations in Floer homology with applications. I.} Geometric and Functional Analysis 9, no. 5 (1999): 985--1033.

\end{thebibliography}
\end{document}